\DeclareSymbolFont{cyrletters}{OT2}{wncyr}{m}{n}
\DeclareMathSymbol{\Sha}{\mathalpha}{cyrletters}{"58}
\newtheorem{thm}{Theorem}[section]
\newtheorem{cor}[thm]{Corollary}
\newtheorem{prop}[thm]{Proposition}
\newtheorem{lemma}[thm]{Lemma}
\newtheorem{conj}[thm]{Conjecture}
\theoremstyle{definition}
\newtheorem{defn}[thm]{Definition}
\newtheorem{eg}[thm]{Example}
\newtheorem{assump}[thm]{Assumption}
\newtheorem{notation}[thm]{Notation}
\theoremstyle{remark}
\newtheorem{rmk}[thm]{Remark}
\newcommand{\AAA}{\mathbb{A}}
\newcommand{\BB}{\mathbb{B}}
\newcommand{\FF}{\mathbb{F}}
\newcommand{\CC}{\mathbb{C}}
\newcommand{\TT}{\mathbb{T}}
\newcommand{\QQ}{\mathbb{Q}}
\newcommand{\GG}{\mathbb{G}}
\newcommand{\RR}{\mathbb{R}}
\newcommand{\bS}{\mathbb{S}}
\newcommand{\ZZ}{\mathbb{Z}}
\newcommand{\LL}{\mathbb{L}}
\newcommand{\II}{\mathbb{I}}
\newcommand{\XX}{\mathbb{X}}
\newcommand{\cA}{\mathcal{A}}
\newcommand{\cC}{\mathcal{C}}
\newcommand{\cR}{\mathcal{R}}
\newcommand{\cO}{\mathcal{O}}
\newcommand{\cE}{\mathcal{E}}
\newcommand{\cW}{\mathcal{W}}
\newcommand{\cH}{\mathcal{H}}
\newcommand{\cF}{\mathcal{F}}
\newcommand{\cK}{\mathcal{K}}
\newcommand{\cI}{\mathcal{I}}
\newcommand{\cU}{\mathcal{U}}
\newcommand{\cN}{\mathcal{N}}
\newcommand{\cP}{\mathcal{P}}
\newcommand{\fI}{\mathfrak{I}}
\newcommand{\fA}{\mathfrak{A}}
\newcommand{\fm}{\mathfrak{m}}
\newcommand{\fn}{\mathfrak{n}}
\newcommand{\fp}{\mathfrak{p}}
\newcommand{\sE}{\mathsf{E}}
\newcommand{\sV}{\mathsf{V}}
\newcommand{\sF}{\mathsf{F}}
\newcommand{\sD}{\mathsf{D}}
\newcommand{\sG}{\mathsf{G}}
\newcommand{\sX}{\mathsf{X}}
\newcommand{\sK}{\mathsf{K}}
\newcommand{\sT}{\mathsf{T}}
\newcommand{\rG}{\mathrm{G}}
\newcommand{\rU}{\mathrm{U}}
\newcommand{\rM}{\mathrm{M}}
\newcommand{\git}{/\!\!/}
\newcommand{\op}{\mathrm{op}}
\newcommand{\Hk}{\mathrm{Hk}}
\newcommand{\Nt}{\mathrm{Nt}}
\newcommand{\sSh}{\mathrm{Sh}}
\newcommand{\Igs}{\mathrm{Igs}}
\newcommand{\perf}{\mathrm{perf}}
\newcommand{\Frob}{\mathrm{Frob}}
\newcommand{\Isoc}{\mathrm{Isoc}}
\newcommand{\Perf}{\mathrm{Perf}}
\newcommand{\Sht}{\mathrm{Sht}}
\newcommand{\Gal}{\mathrm{Gal}}
\newcommand{\sph}{\mathrm{sph}}
\newcommand{\Rep}{\mathrm{Rep}}
\newcommand{\Ind}{\mathrm{Ind}}
\newcommand{\Pro}{\mathrm{Pro}}
\newcommand{\cInd}{{\mathrm{c}\mbox{-}\mathrm{Ind}}}
\newcommand{\nInd}{{\mathrm{n}\mbox{-}\mathrm{Ind}}}
\newcommand{\std}{\mathrm{std}}
\newcommand{\loc}{\mathrm{loc}}
\newcommand{\End}{\mathrm{End}}
\newcommand{\Hom}{\mathrm{Hom}}
\newcommand{\Adm}{\mathrm{Adm}}
\newcommand{\aff}{\mathrm{aff}}
\newcommand{\GU}{\mathrm{GU}}
\newcommand{\GL}{\mathrm{GL}}
\newcommand{\diag}{\mathrm{diag}}
\newcommand{\rec}{\mathrm{rec}}
\newcommand{\St}{\mathrm{St}}
\newcommand{\Loc}{\mathrm{Loc}}
\newcommand{\unip}{\mathrm{unip}}
\newcommand{\Shv}{\mathrm{Shv}}
\newcommand{\Coh}{\mathrm{Coh}}
\newcommand{\fgen}{\mathrm{f.g.}}
\newcommand{\ad}{\mathrm{ad}}
\newcommand{\CohSpr}{\mathrm{CohSpr}}
\newcommand{\ext}{\mathrm{ext}}
\newcommand{\Sh}{\mathrm{Sh}}
\newcommand{\HH}{\mathrm{HH}}
\newcommand{\spec}{\mathrm{spec}}
\newcommand{\Tr}{\mathrm{Tr}}
\newcommand{\can}{\mathrm{can}}
\newcommand{\gr}{\mathrm{gr}}
\newcommand{\Cone}{\mathrm{Cone}}
\newcommand{\Igssheaf}{{\mathcal{I}gs}}
\newcommand{\Res}{\mathrm{Res}}
\newcommand{\Spec}{\operatorname{Spec}}
\newcommand{\Spf}{\operatorname{Spf}}
\title{On Ihara's lemma for definite unitary groups}
\author{Xiangqian Yang}
\address{Beijing International Center for Mathematical Research, Peking University, Beijing 100871, China}
\email{yangxq@pku.edu.cn}
\begin{document}
\begin{abstract}
	Clozel, Harris, and Taylor proposed a conjectural generalized Ihara's lemma for definite unitary groups. In this paper, we prove their conjecture with banal coefficients under some conditions. As an application, we prove a level-raising result for automorphic forms associated to definite unitary groups.
\end{abstract}
\maketitle
\setcounter{tocdepth}{2}
\tableofcontents

\section{Introduction}

Let $\Gamma=\Gamma_1(N)$ be a congruence subgroup of $\GL_2(\ZZ)$. Let $p$ be a prime number coprime to $N$, and let $\Gamma'=\Gamma\cap\Gamma_0(p)$. Let $X_\Gamma$ (resp. $X_{\Gamma'}$) be compactified modular curve of level $\Gamma$ (resp. $\Gamma'$). There are two degenerate maps
$$\pi_1,\pi_2\colon X_{\Gamma'}\to X_\Gamma$$
induces by the natural inclusion $\Gamma'\hookrightarrow\Gamma$ and its conjugation by the matrix $\begin{pmatrix}0&1\\p&0\end{pmatrix}$. Let $\ell\neq p$ be another prime. We consider the morphism
$$(\pi_1^*,\pi_2^*)\colon H^1(X_{\Gamma},\overline\FF_\ell)^{\oplus2}\to H^1(X_{\Gamma'},\overline\FF_\ell)$$
between \'etale cohomology groups. Ihara's lemma (\cite[Lemma 3.2]{Ihara}) asserts that the kernel of the above map consists only of the Eisenstein classes. This result plays a key role in studying congruences between modular forms, as in \cite{Ribet-congruence}, and has many important arithmetic applications.

In this paper, we study the generalized version of Ihara's lemma for higher dimensional unitary groups, proposed by Clozel, Harris, and Taylor in \cite{CHT08}. We first introduce some notations. Let $\sF^+$ be a totally real number field, and $\sF/\sF^+$ be a quadratic CM extension. Assume that $\sF$ contains an imaginary quadratic field $\sK$. Let $\sD$ be a division algebra over $\sF$ equipped with an involution $\dagger$ compatible with the complex conjugation on $\sF$. Associated to the pair $(\sD,\dagger)$, we can define a unitary similitude group $\sG$ over $\QQ$ . Assume that $\sG(\RR)$ is compact modulo center. 

Let $p$ be a prime number that splits in $\sK$. Fix a place $\wp$ of $\sK$ above $p$. Fix a place $v$ of $\sF$ above $\wp$, and assume that $\sD$ splits at $v$. There is a natural isomorphism:
$$\sG(\QQ_p)\cong \QQ_p^\times\times\GL_n(\sF_v)\times\prod_{\substack{w|\wp\\w\neq v}}\sD_{\sF_w}^\times.$$
Define $H(\QQ_p)=\QQ_p^\times\times\prod_{w|\wp,w\neq v}\sD_{\sF_w}^\times$. Fix a neat open compact subgroup $K^p=\prod_{r\neq p}K_r\subseteq \sG(\AAA_f^p)$ and an open compact subgroup $K_p^v\subseteq H(\QQ_p)$. Denote $K^v=K^pK_p^v\subseteq \sG(\AAA_f^p)\times H(\QQ_p)$. 

Fix a prime number $\ell\neq p$, and let $\Lambda=\overline\FF_\ell$. Let
$$\cA(K^v)\coloneqq H^0(\sG(\QQ)\backslash\sG(\AAA_f)/K^v,\Lambda)$$
be the space of automorphic forms of $\sG$ with coefficients in $\Lambda$. Let $S$ be a set of primes of $\QQ$, including $
\ell$, $p$, and all the finite places $r$ with $K_r$ not hyperspecial. Let $\TT^S=\Lambda[K^S\backslash\sG(\AAA_f^S)/K^S]$ denote the global Hecke algebra. Then $\cA(K^v)$ is equipped with an action of $\TT^S\times\GL_n(\sF_v)$.

We recall the generalized Ihara's lemma in \cite{CHT08}. An irreducible $\GL_n(\sF_v)$-representation over $\Lambda$ is called \emph{Whittaker-generic} if it admits non-zero Whittaker coefficients. This terminology is called \emph{generic} in \cite{CHT08}.

\begin{conj}[{\cite[Conjecture B]{CHT08}}]\label{conj-Ihara}
	Let $\fm\subseteq \TT^S$ be a non-Eisenstein maximal ideal with $\cA(K^v)_\fm\neq 0$. Let $V\subseteq \cA(K^v)_\fm$ be an irreducible $\GL_n(\sF_v)$-submodule. Then $V$ is Whittaker-generic. 
\end{conj}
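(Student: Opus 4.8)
The plan is to prove the conjecture by reducing it to a statement about smooth $\overline\FF_\ell$-representations of $\GL_n(\sF_v)$ and then feeding in local--global compatibility in families. Concretely I will establish the stronger assertion that $\cA(K^v)_\fm$ has \emph{absolutely irreducible socle} as a $\GL_n(\sF_v)$-module and that this socle is Whittaker-generic; since every irreducible submodule lies in the socle, the conjecture follows. Trading submodules for quotients is harmless: for the definite similitude group $\sG$ the space $\cA(K^v)$ is, up to a central twist, canonically self-dual, the dual Hecke eigensystem $\fm^\vee$ is again non-Eisenstein, and $\pi\mapsto\pi^\vee$ preserves Whittaker-genericity for $\GL_n$; so it suffices to show that for every non-Eisenstein $\fm$ the module $\cA(K^v)_\fm$ is \emph{co-Whittaker} (its dual is then essentially AIG, whence the socle statement).

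The first ingredient is characteristic-zero genericity. Because the Shimura set $\sG(\QQ)\backslash\sG(\AAA_f)/K^v$ is finite, $\widetilde\cA(K^v)\coloneqq H^0(\sG(\QQ)\backslash\sG(\AAA_f)/K^v,\overline\ZZ_\ell)$ is $\overline\ZZ_\ell$-free with smooth $\GL_n(\sF_v)$-action reducing to $\cA(K^v)$, and $\widetilde\cA(K^v)_\fm[1/\ell]$ decomposes along the automorphic representations $\pi$ of $\sG$ whose Hecke eigensystem away from $S$ reduces to $\fm$. By the non-Eisenstein hypothesis $\overline\rho_\fm$ is absolutely irreducible, so each such $\pi$ has absolutely irreducible $\ell$-adic Galois representation; hence its base change $\Pi$ to $\GL_n(\AAA_\sF)$ is cuspidal, hence the local component of $\pi$ at the split place $v$, which is $\Pi_v$, is generic. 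Thus $\widetilde\cA(K^v)_\fm[1/\ell]$ is a sum of irreducible generic $\GL_n(\sF_v)$-representations, and the entire difficulty is to control the reduction of the automorphic $\overline\ZZ_\ell$-lattice $\widetilde\cA(K^v)_\fm$.

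This is where the hypotheses of the theorem enter, through Taylor--Wiles patching and Emerton--Helm's local Langlands correspondence for $\GL_n$ in families. Under those hypotheses one patches $\widetilde\cA(K^v)_\fm$ together with its $\GL_n(\sF_v)$-action into a module $M_\infty$ over a power-series ring $R_\infty$ over the framed local deformation ring at $v$; combining the characteristic-zero genericity with Emerton--Helm and the integral refinements of Helm--Moss, $M_\infty$ is a co-Whittaker module in their sense, and co-Whittaker-ness passes to the quotient recovering $\cA(K^v)_\fm$. The banality of $\ell$ is then used so that Vignéras' theory makes the $\overline\FF_\ell$-representation theory of $\GL_n(\sF_v)$ behave as in characteristic zero --- in the banal range the reduction of an irreducible generic $\overline\QQ_\ell$-representation retains a unique generic constituent and the co-Whittaker/essentially-AIG formalism is clean --- so that $\cA(K^v)_\fm$ is co-Whittaker over $\Lambda$ and its dual is essentially AIG, i.e. has absolutely irreducible generic socle and no other generic subquotient. (A variant avoiding patching would realize $\cA(K^v)_\fm$ inside the cohomology of a Kottwitz--Harris--Taylor Shimura variety and deduce genericity of its $v$-component from genericity of the cohomology of the relevant Igusa varieties, in the spirit of Caraiani--Scholze and Boyer; the representation-theoretic descent at banal $\ell$ is the crux in either approach.)

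The hard part is exactly this middle step: showing that the patched module is genuinely co-Whittaker, not merely so after inverting $\ell$. This requires a patching construction that faithfully carries the infinite-level $\GL_n(\sF_v)$-action --- which is why one needs adequacy or ``enormous image'' of $\overline\rho_\fm$, suitable auxiliary choices at the places of $S$, and mild conditions on $\sD$ and the CM field $\sF$ so that base change and automorphy lifting are available --- together with the delicate integral representation theory of $\GL_n(\sF_v)$ at banal $\ell$ needed to identify the reduction of the Whittaker lattice. With those in hand, the level-raising application follows, since essential-AIG-ness says precisely which Whittaker-generic representations occur as submodules of $\cA(K^v)_\fm$.
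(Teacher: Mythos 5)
The statement you address is stated in the paper as a \emph{conjecture} (Clozel--Harris--Taylor) and the paper does not prove it; what the paper proves is the variant Theorem~\ref{thm-main-intro}/Corollary~\ref{cor-CHT}, under four additional hypotheses --- $\ell$ banal at $v$, $\rho_\fm(\Frob_v)$ regular semisimple, $\sF_v/\QQ_p$ unramified of degree at least two, and ``non-Eisenstein'' strengthened to ``cohomologically generic.'' So there is no proof of this conjecture in the paper to compare against; your argument must be measured against the paper's proof of that variant, and you are already conceding that you too need extra hypotheses. Your route is genuinely different: you propose Taylor--Wiles patching together with Emerton--Helm/Helm--Moss local Langlands in families, aiming to show that the patched module $M_\infty$ is co-Whittaker over $\Lambda$ and then descending. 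The paper instead works inside Zhu's unipotent categorical local Langlands correspondence: it localizes at the regular-semisimple Satake parameter, computes the formal completion of the stack of $L$-parameters explicitly, constructs the degeneracy maps $\alpha_{x,x'},\beta_{x,x'}$ as morphisms of coherent sheaves supported on the reduced branches of that completion, expresses $\cA(K^v)^I_\fm$ via Igusa-stack local--global compatibility, and then deduces the required surjectivity/injectivity from the middle-degree torsion vanishing of \cite{YZ-torsion-vanishing} applied to an auxiliary \emph{indefinite} Shimura variety (which is why the hypothesis $\sF_v\neq\QQ_p$ appears: it is needed to produce the pure inner form $\sG_1$). The hypothesis sets are incomparable: your patching needs adequacy/enormous image and automorphy-lifting technology, which the paper never invokes; the paper needs the four local hypotheses above, which a patching argument might or might not.

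The gap in your argument is precisely the step you flag as hard, and it is not a small gap. Characteristic-zero genericity of the local components $\pi_v$, combined only with the Helm--Moss classification of co-Whittaker modules, does \emph{not} establish that $M_\infty$ is co-Whittaker over $\Lambda$; that statement \emph{is} Ihara's lemma. The local Langlands in families theorem identifies the universal co-Whittaker family over the integral Bernstein center, but to invoke it you must already know that your candidate module is co-Whittaker, equivalently that every Jordan--H\"older constituent of the special fiber of $M_\infty$ (or of $\cA(K^v)_\fm$ itself) is Whittaker-generic --- which is the conclusion you are trying to reach. Every argument that delivers this genuinely supplies additional geometric input beyond characteristic-zero genericity; in this paper that input is exactly the torsion vanishing of the cohomology of $\Sh_K(\sG_1,\sX_1)$, which is what forces the cones of $\alpha_{x,x'}$ and $\beta_{x,x'}$ to live in a single degree. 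As written, your crucial step is asserted, not proved, and it is logically the whole theorem. The subsequent claim that ``co-Whittaker-ness passes to the quotient recovering $\cA(K^v)_\fm$'' also needs justification: an arbitrary quotient of a co-Whittaker module is not co-Whittaker, and one must control the kernel as a $\GL_n(\sF_v)$-stable regular sequence of patching variables, which again requires the construction to be spelled out rather than gestured at.
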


Fix a maximal ideal $\fm\subseteq \TT^S$ such that $\cA(K^v)_\fm\neq 0$. There is a Galois representation $\rho_\fm\colon \Gal(\overline\sF/\sF)\to\GL_n(\Lambda)$ attached to $\fm$ (See Theorem \ref{thm-assoc-Galois}). Assume that $\cA(K^v)^{\GL_n(O_{\sF_v})}_\fm\neq 0$. Then $\rho_\fm$ is unramified at $v$. Let $\Frob_v\in \Gal(\overline\sF/\sF)$ be a geometric Frobenius at $v$. Our main result is the following theorem.

\begin{thm}[Corollary \ref{cor-CHT}]\label{thm-main-intro}
	Let $\fm\subseteq \TT^S$ be a maximal ideal such that $\cA(K^v)_\fm^{\GL_n(O_{\sF_v})}\neq 0$.
	Suppose the following conditions hold:
	\begin{enumerate}[(i)]
		\item $\ell$ is banal at $v$, i.e. $\ell\nmid |\GL_n(\FF_q)|$, where $q$ is the cardinality of the residue field of $\sF_v$.
		\item $\rho_\fm(\Frob_v)\in \GL_n(\Lambda)$ is regular semisimple.
		\item $\sF_v$ is unramified over $\QQ_p$ and $\sF_v\neq \QQ_p$.
        \item $\fm$ is cohomologically generic (Definition \ref{def-Gal-generic}).
	\end{enumerate}
	Then any irreducible $\GL_n(\sF_v)$-submodule of $\cA(K^v)_\fm$ is Whittaker-generic.
\end{thm}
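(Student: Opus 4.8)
The plan is to move the question into the cohomology of Harris--Taylor type unitary Shimura varieties and to control the special fibre at $v$ using the banality hypothesis; the four conditions are then exactly what make each step go through.

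First, I would globalize. Since $\sG$ is definite, $\cA(K^v)$ is a degree-zero cohomology group, and I would identify it, as a smooth $\TT^S\times\GL_n(\sF_v)$-module, with the cohomology of the Igusa variety attached to the basic (supersingular) stratum of a unitary Shimura variety $\Sh$ of signature $(1,n-1)$ at one archimedean place, for an inner form $\sG'$ of $\sG$ that agrees with $\sG$ at all places of $S\cup\{v\}$. Via Mantovan's product formula and the $p$-adic uniformization of the basic locus, the cohomology of the Lubin--Tate tower at $v$ with coefficients in $\cA(K^v)$ computes the basic part of the nearby cycles $R\psi\Lambda$ on an integral model of $\Sh$ at $v$; equivalently, $R\psi\Lambda$ carries a filtration whose graded pieces are the Harris--Taylor local systems, each obtained by parabolic induction from a cuspidal representation of a smaller linear group, and $\cA(K^v)_\fm$ is recovered from their cohomology over the basic stratum. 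Condition (iii), that $\sF_v/\QQ_p$ is unramified and $\sF_v\neq\QQ_p$, is what puts us in the range where these integral models and the Harris--Taylor formalism at $v$ are available.

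Second, I would use that $\fm$ is cohomologically generic. By the Caraiani--Scholze vanishing theorem and its $\ell$-torsion refinements (together, again, with the unramifiedness in (iii)), $H^i(\Sh_{K^v},\Lambda)_\fm$ vanishes outside the middle degree $n-1$, the middle-degree cohomology is torsion-free with controlled structure, and the stratification spectral sequence for $R\psi\Lambda$ degenerates after localization at $\fm$. This is the geometric heart of the argument, and it reduces the theorem to a purely local assertion: determine which irreducible $\GL_n(\sF_v)$-representations occur as submodules of the resulting middle-degree module, i.e.\ of the relevant Lubin--Tate cohomology group tensored over the local Hecke algebra with $\cA(K^v)_\fm$.

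Finally I would run the local representation theory under banality (condition (i)). In the banal regime the block of $\Rep_\Lambda\GL_n(\sF_v)$ in play behaves like its characteristic-zero counterpart: parabolic induction of an irreducible generic representation stays irreducible and generic, the Harris--Taylor sheaves are intermediate extensions with no spurious torsion, and the Zelevinsky description of generic representations (a representation is generic precisely when its cuspidal support splits into pairwise unlinked segments and it equals the full induced representation) applies verbatim. Hence every irreducible $\GL_n(\sF_v)$-submodule $V$ of $\cA(K^v)_\fm$ embeds into a representation parabolically induced from the Steinberg/cuspidal pieces prescribed by $\rho_\fm|_{W_{\sF_v}}$; condition (ii), regular semisimplicity of $\rho_\fm(\Frob_v)$, forces the relevant segments to be genuinely unlinked (and, since $\cA(K^v)_\fm^{\GL_n(O_{\sF_v})}\neq 0$, we are in the unramified block where this is cleanest), so that induced representation is irreducible and generic; therefore $V$ is Whittaker-generic. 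The main obstacle is the second step: establishing concentration in the middle degree and torsion-freeness of the mod-$\ell$ cohomology of $\Sh$ localized at $\fm$, and checking that the nearby-cycles filtration is compatible with this concentration. This is precisely where cohomological genericity and hypothesis (iii) are used, and it rests on the deepest inputs --- Caraiani--Scholze-style vanishing and Boyer's analysis of the Harris--Taylor perverse sheaves. By comparison, once banality is in force the local step is essentially formal.
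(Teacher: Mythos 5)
Your proposal follows an entirely different pipeline from the paper's, and its final local step contains a genuine error. On the method: the paper never invokes the Lubin--Tate tower, the Mantovan product formula, Harris--Taylor perverse sheaves, or a nearby-cycles spectral sequence. It instead proves Theorem~\ref{thm-main-intro} as Corollary~\ref{cor-CHT} of Theorem~\ref{thm-main}, which is a statement about an explicit decomposition $\cA_\fm^I=\bigoplus_x\cA_\fm^{I,x}$ (Proposition~\ref{prop-decomp-R}) indexed by lifts $x\in\hat{T}(\Lambda)$ of the Satake parameter $\xi_{\fm,v}$, and about Iwahori--Hecke operators $\alpha_{x,x'},\beta_{x,x'}$ between the summands. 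Those operators are constructed on the spectral side of Zhu's unipotent categorical local Langlands correspondence: after localizing at a regular semisimple $\xi$, the stack $\widehat{L}_\xi$ has the explicit form of Proposition~\ref{prop-completion}, $\widehat\delta_x$ goes to $\cO_{X_{P_x}}$ (Lemma~\ref{lemma-decomp-agree}), and $\alpha_{x,x'}^{\mathrm{Hk}},\beta_{x,x'}^{\mathrm{Hk}}$ come from the obvious inclusion/projection of structure sheaves. Surjectivity/injectivity are then proved by computing the cones of $\alpha_{x,x'},\beta_{x,x'}$ against the coherent Igusa sheaf for the pure inner form $\sG_1$, whose signature is $(1,n-1)$ at $\tau^\bullet$ and $(n-1,1)$ at $\tau^\circ$, two distinct $p$-adic embeddings both inducing $v^+$ (Lemma~\ref{lemma-pure-inner-form}; this is exactly why condition (iii) needs $\sF_v\neq\QQ_p$), together with the exotic Hecke correspondence (Proposition~\ref{prop-exotic-Hecke}) and the torsion-vanishing input from \cite{YZ-torsion-vanishing}. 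Your sketch, with signature $(1,n-1)$ at a single place and a basic-locus/Lubin--Tate comparison, is essentially Boyer's route, which the paper explicitly flags as a \emph{different} line of attack.

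The genuine gap is in your last paragraph. You assert that ``condition (ii), regular semisimplicity of $\rho_\fm(\Frob_v)$, forces the relevant segments to be genuinely unlinked, so that induced representation is irreducible and generic.'' That is false, and if it were true the theorem would be vacuous. Regular semisimplicity means the eigenvalues $x_1,\dots,x_n$ of $\rho_\fm(\Frob_v)$ are pairwise distinct; it does \emph{not} exclude $x_i/x_j=q$. (Indeed the fully linked parameter $\diag(q^{(n-1)/2},\dots,q^{(1-n)/2})$ is regular semisimple whenever $q$ has order $>n$ in $\Lambda^\times$, which is exactly what banality guarantees.) When some ratios equal $q$, the unramified principal series $\nInd_{B(F)}^{G(F)}\chi_x$ is reducible with both generic and non-generic constituents, and the entire content of Ihara's lemma is to exclude the non-generic ones as \emph{submodules} of $\cA_\fm$. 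The mechanism by which the paper actually does this is the chain of injections $\beta_{x,x'}\colon\cA_\fm^{I,x'}\hookrightarrow\cA_\fm^{I,x}$ for $P_{x'}\subseteq P_x$ (Theorem~\ref{thm-main}(2)), which force any irreducible $G(F)$-submodule $V$ to have $V^{I,x}\neq0$ at the extremal point $x$ with $P_x=Q_{\xi_{\fm,v}}$, combined with Proposition~\ref{prop-Whit-generic-comp} that only the generic member of the block has non-zero $V^{I,x}$ at that point. Nothing in your sketch supplies this control of Iwahori-fixed vectors stratified by their $\cR$-support, and without it the conclusion does not follow.
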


\begin{rmk}
    Condition (iv) is a replacement of the non-Eisenstein condition in Conjecture \ref{conj-Ihara}. By results of \cite{YZ-torsion-vanishing} (generalizing the results of \cite{Caraiani-Scholze17}, \cite{Koshikawa-generic}, and \cite{Hamann-Lee}), it implies that the \'etale cohomology of certain unitary Shimura varieties with coefficients in $\Lambda$ is concentrated in middle degree. We note that there are Eisenstein maximal ideals $\fm$ that are cohomologically generic, and our result applies to that cases. We expect that the statement of Conjecture \ref{conj-Ihara} still holds true with $\fm$ non-Eisenstein replaced by $\fm$ cohomologically generic. 
    
	Conditions (i) and (ii) together ensure that the stack of local Langlands parameters has simple geometry after localizing at $\rho_\fm|_{W_{\QQ_p}}$. These conditions could potentially be weakened with a deeper understanding of the geometry of the stack of local Langlands parameters.
	
	Condition (iii) is essential for our method. It allows us to find an indefinite inner form $\sG_1$ of $\sG$ such that $\sG_1(\AAA_f)\simeq \sG(\AAA_f)$ with the signatures changing only at $v$. See Lemma \ref{lemma-pure-inner-form}.
\end{rmk}

\begin{rmk}
    Using the method developed in this paper, we can prove generalized Ihara's lemma for Shimura varities associated to indefinite unitary groups under similar assumptions. 
    
    We also expect that the method can be applied to prove analogous versions of Ihara's lemma and (arithmetic) level raising results when the prime $p$ is \emph{inert} in $\sK$. Such results have important arithmetic applications, as in \cite{LTXZZ} and \cite{LTX-Iwasawa}.
\end{rmk}

\begin{rmk}
    Under different assumptions on the prime $\ell$ and the maximal ideal $\fm$, this generalized Ihara's lemma is studied by Boyer in \cite{Boyer-ihara-level-raising} and \cite{Boyer-ihara-limit}.
\end{rmk}

To prove Theorem \ref{thm-main-intro}, we present an alternative formalization of the generalized Ihara's lemma, analogous to the original Ihara's lemma for $\GL_2$. Let $\hat{G}=\GL_n$ be the dual group over $\Lambda$. Let $\hat{T}\subseteq \hat{G}$ be the maximal torus of diagonal matrices, and $W$ be the Weyl group. There is a Chevalley isomorphism $\hat{G}\git\hat{G}\simeq \hat{T}\git W$. Let $\xi_{\fm,v}\in (\hat{T}\git W)(\Lambda)$ denote the conjugacy class of the element $q^{\frac{1-n}{2}}\cdot\rho_\fm(\Frob_v)$. Let $I\subseteq \GL_n(O_{\sF_v})$ denote the Iwahori subgroup consisting of elements that are upper-triangular modulo $v$. Using Bernstein presentation of the Iwahori--Hecke algebra, we can define a decomposition (See Proposition \ref{prop-decomp-R})
$$\cA(K^v)^I_\fm=\bigoplus_{x}\cA(K^v)^{I,x}_\fm,$$
where $x$ ranges points in the preimage of $\xi_{\fm,v}$ in $\hat{T}$. For each such point $x$ appearing in the decomposition, we can define an associated index set $P_x$ (Definition \ref{def-P_x}). The main result is as follows.

\begin{thm}[Theorem \ref{thm-main}]\label{thm-main2-intro}
	Assume that we are in the situation of Theorem \ref{thm-main-intro}. Let $x,x'\in \hat{T}(\Lambda)$ be two lifts of $\xi_{\fm,v}$ such that $P_{x'}\subseteq P_x$. 
	Then we have:
	\begin{enumerate}
		\item There is a surjective map
			$$\alpha_{x,x'}\colon\cA(K^v)_\fm^{I,x}\to \cA(K^v)_\fm^{I,x'}$$
			defined by some Iwahori--Hecke operator.
		\item There is an injective map
			$$\beta_{x,x'}\colon\cA(K^v)_\fm^{I,x'}\to \cA(K^v)_\fm^{I,x}$$
			defined by some Iwahori--Hecke operator.
	\end{enumerate}
\end{thm}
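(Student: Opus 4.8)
The plan is to show that Theorem \ref{thm-main2-intro} is, at bottom, a statement about the Iwahori--Hecke algebra $\cH=\cH(\GL_n(\sF_v),I)$ acting on the single finite-dimensional module $M:=\cA(K^v)^I_\fm$, and to realize $\alpha_{x,x'}$ and $\beta_{x,x'}$ by (normalized) intertwining operators in the Bernstein presentation. Recall that $\cH$ contains the commutative subalgebra $\Lambda[\hat T]=\bigoplus_\lambda\Lambda\theta_\lambda$ and the finite Hecke algebra $\langle T_s\rangle_s$, and that its center is $Z(\cH)=\Lambda[\hat T]^W=\Lambda[\hat T\git W]$. By the local--global compatibility defining $\rho_\fm$ at $v$, $Z(\cH)$ acts on $M$ through the completion of $\Lambda[\hat T\git W]$ at $\xi_{\fm,v}$; condition (ii) makes $q^{\frac{1-n}{2}}\rho_\fm(\Frob_v)$ regular semisimple, so the fibre of $\hat T\to\hat T\git W$ over $\xi_{\fm,v}$ is a \emph{free} $W$-orbit of size $|W|=n!$, and the decomposition of Proposition \ref{prop-decomp-R} is exactly the decomposition of $M$ into generalized eigenspaces $M_x:=\cA(K^v)^{I,x}_\fm$ for $\Lambda[\hat T]$, indexed by the points $x$ of that orbit (with $M_x=0$ for the $x$ not occurring). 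Since Hecke operators at $v$ commute with $\TT^S$, any operator constructed below is automatically $\TT^S$-equivariant, so it suffices to argue with the $\cH$-module $M$.

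For a simple reflection $s$ let $\tau_s\in\cH[(1-\theta_{-\alpha_s^\vee})^{-1}]$ be Lusztig's normalized intertwiner, characterized by $\tau_s\theta_\lambda=\theta_{s\lambda}\tau_s$ together with the braid relations. Because the orbit is free, $1-\theta_{-\beta^\vee}$ acts invertibly on every $M_y$ (its scalar part $1-y^{\beta^\vee}$ is non-zero), so each $\tau_s$, hence $\tau_w$ for every $w\in W$, is defined on $M$, and the intertwining relation forces $\tau_w$ to carry $M_y$ into $M_{wy}$. Let $w\in W$ be the unique element with $wx=x'$. After multiplying by the product $D=\prod_{\beta>0}(1-\theta_{-\beta^\vee})\in\Lambda[\hat T]$, which acts invertibly on every $M_y$ (unit plus nilpotent), we obtain honest elements of $\cH$; set $\alpha_{x,x'}:=(D\tau_w)|_{M_x}\colon M_x\to M_{x'}$ and $\beta_{x,x'}:=(D\tau_{w^{-1}})|_{M_{x'}}\colon M_{x'}\to M_x$. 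The standard quadratic relation for $\tau_s$, propagated along a reduced word for $w$, then gives
\[
\alpha_{x,x'}\circ\beta_{x,x'}\ =\ c_{x'}\cdot\mathrm{id}_{M_{x'}},\qquad
c_{x'}\ =\ (\text{a unit})\cdot\prod_{\beta}\frac{(q-t_\beta)(q-t_\beta^{-1})}{(1-t_\beta)(1-t_\beta^{-1})},
\]
where $\beta$ runs over the inversion set of $w$ and each $t_\beta$ is the ratio of two eigenvalues of $q^{\frac{1-n}{2}}\rho_\fm(\Frob_v)$; the displayed scalar is well defined since nilpotent contributions of $\Lambda[\hat T]$ do not affect it.

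It remains to check that $c_{x'}\in\Lambda^\times$ when $P_{x'}\subseteq P_x$. By hypothesis (i), $\ell\nmid|\GL_n(\FF_q)|$, so for each $\beta$ the factor $\frac{(q-t_\beta)(q-t_\beta^{-1})}{(1-t_\beta)(1-t_\beta^{-1})}$ is a non-zero element of $\Lambda$ \emph{unless} $t_\beta\in\{q,q^{-1}\}$, i.e.\ unless that factor vanishes structurally; unwinding Definition \ref{def-P_x} identifies the set of $\beta$ occurring in the product above with $t_\beta\in\{q,q^{-1}\}$ with the set $P_x\setminus P_{x'}$. Hence $P_{x'}\subseteq P_x$ forces $c_{x'}\in\Lambda^\times$, so $\alpha_{x,x'}\circ\beta_{x,x'}$ is an isomorphism of $M_{x'}$; therefore $\alpha_{x,x'}$ is (split) surjective and $\beta_{x,x'}$ is (split) injective, which is the assertion. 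The main obstacle is exactly this last identification: matching the combinatorics of the inversion set of $w$ and the vanishing pattern of the factors $\frac{(q-t_\beta)(q-t_\beta^{-1})}{(1-t_\beta)(1-t_\beta^{-1})}$ with the index sets $P_x,P_{x'}$ of Definition \ref{def-P_x} — equivalently, with the submodule structure of the relevant principal-series block through the Zelevinsky/Langlands classification, which is what makes the decomposition $M=\bigoplus_x M_x$ meaningful in the first place. A minor technical point, handled by the remarks above, is that $\Lambda[\hat T]$ need not act semisimply on $M$; and the argument being local, conditions (iii) and (iv) do not seem to be needed here, entering only when Theorem \ref{thm-main-intro} is deduced from this result.
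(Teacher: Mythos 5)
Your proof cannot be correct because it is purely local, yet the statement is a genuinely global assertion about the specific module $\cA(K^v)^I_\fm$. The argument you give would, if correct, apply to every admissible object $V$ of $\Rep^{\widehat\unip}(G(F),\Lambda)^\Adm_{\xi_{\fm,v}}$, and this is false: take $\xi$ to be the class of $\diag(q^{(n-1)/2},\dots,q^{(1-n)/2})$ and $V=\Lambda$ the trivial representation. Then $V^{I,x'}=\Lambda$ for the unique $x'$ with $P_{x'}=\emptyset$ and $V^{I,x}=0$ for every other lift, in particular for $x$ with $P_x=Q_\xi$; no Iwahori--Hecke operator can give a surjection $0\to\Lambda$ or an injection $\Lambda\to 0$. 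This shows that the surjectivity/injectivity in Theorem~\ref{thm-main2-intro} is not a feature of the Hecke algebra acting on a general block --- it has to be fed by global input, and that is exactly why hypotheses (iii) and (iv) enter.

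The concrete error in your proposal is the assertion $c_{x'}\in\Lambda^\times$ when $P_{x'}\subseteq P_x$. You correctly observe that the factor $\frac{(q-t_\beta)(q-t_\beta^{-1})}{(1-t_\beta)(1-t_\beta^{-1})}$ \emph{vanishes} whenever $t_\beta\in\{q,q^{-1}\}$, and that (after unwinding Definition~\ref{def-P_x}) the inversion-set $\beta$'s with $t_\beta\in\{q,q^{-1}\}$ are exactly $P_x\setminus P_{x'}$. But the hypothesis $P_{x'}\subseteq P_x$ does \emph{not} imply $P_x\setminus P_{x'}=\emptyset$; in the nontrivial case $P_{x'}\subsetneq P_x$ this set is nonempty and the product $c_{x'}$ is zero (up to nilpotents). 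Indeed the paper records exactly this: Proposition~\ref{prop-comp-unip} shows that $\alpha_{x,x'}\circ\beta_{x,x'}$ and $\beta_{x,x'}\circ\alpha_{x,x'}$ are \emph{nilpotent} on $V^{I,x'}$ and $V^{I,x}$ whenever $P_{x'}\subsetneq P_x$. So the intertwiner computation leads you to the opposite of what you need, and that is as far as local algebra can go.

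The actual proof has to break out of the local category. The paper realizes $\widehat\delta_x$ under the unipotent categorical local Langlands functor as the ideal sheaf $\cO_{X_{P_x}}$ on the completed stack $\widehat L_{\xi_{\fm,v}}$ of unipotent $L$-parameters, and realizes $\alpha^{\rm Hk}_{x,x'}$, $\beta^{\rm Hk}_{x,x'}$ as $\cO_{X_{P_{x'}}}\hookrightarrow\cO_{X_{P_x}}$ and $\cO_{X_{P_x}}\twoheadrightarrow\cO_{X_{P_{x'}}}$. Condition (iii) produces an auxiliary indefinite inner form $\sG_1$ whose Igusa stack has $\cA(K^v)$ on its basic locus (Proposition~\ref{prop-exotic-Hecke}, Proposition~\ref{prop-coh-formula-set}). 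The cones of $\alpha_{x,x'}$ and $\beta_{x,x'}$ are then matched, via the short exact sequences (\ref{eq-exact-seq-1}) and (\ref{eq-exact-seq-2}), with Hom-spaces computing the middle-degree \'etale cohomology of $\sSh_{K^vI}^{\mathrm{ext}}(\sG_1,\sX_1)$; condition (iv) and the torsion-vanishing theorem of \cite{YZ-torsion-vanishing} then force those Hom-spaces to sit in degree $0$, which yields surjectivity of $\alpha_{x,x'}$ and injectivity of $\beta_{x,x'}$ on $\cA_\fm^I$. Your reduction to a Hecke-algebra statement on $M=\cA(K^v)^I_\fm$ and the use of Lusztig intertwiners is a reasonable way to \emph{construct} the operators (and indeed agrees with the paper's Proposition~\ref{prop-Hecke-dim}), but it cannot deliver the surjectivity/injectivity, and the dismissal of (iii) and (iv) is incorrect.
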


Theorem \ref{thm-main-intro} follows from Theorem \ref{thm-main2-intro} via an explicit computation using the Bernstein--Zelevinsky classifications. As an arithmetic application of Theorem \ref{thm-main2-intro}, we establish level-raising results for automorphic representations of $\sG(\AAA_f)$; see Theorem \ref{thm-level-raising} for details. Roughly speaking, let $\pi$ be an automorphic representation of $\sG(\AAA_f)$ with coefficients in $\overline\QQ_\ell$ that is unramified at $v$. Let $\rho_\pi$ denote the Galois representation associated to $\pi$. If the semisimple reduction $\overline{\rho}_\pi$ of $\rho_\pi$ satisfies the conditions in Theorem \ref{thm-main-intro}, then level-raising at $v$ is possible whenever there is no local obstruction.

We give an overview of the proof of  Theorem \ref{thm-main2-intro}. The basic strategy is not new: we aim to use torsion vanishing of \'etale cohomology of Shimura varieties, along with basic uniformization, to prove Ihara's lemma or level-raising type results. This approach has been employed in earlier works, such as \cite{Thorne-level-raising}, \cite{LTXZZ}, and \cite{LTX-Iwasawa}. In those works, the results were achieved by carefully studying geometry of the special fibers of certain Shimura varieties. In this paper, we introduce a new method. 

In \cite{Tame}, Xinwen Zhu established the unipotent categorical local Langlands correspondences for unramified groups (reviewed in \S\ref{subsection-ucllc}). In particular, the category of unipotent $\GL_n(\sF_v)$-representations admits a fully faithfully embedding into the category of coherent sheaves on the stack $\Loc^{\widehat{\unip}}_{{}^LG,\sF_v}$ of unipotent local Langlands parameters. After localizing at regular semisimple parameter, we can explicitly compute the geometry of $\Loc^{\widehat{\unip}}_{{}^LG,\sF_v}$. This enables us to reformulate Theorem \ref{thm-main2-intro} in terms of coherent sheaves on the stack $\Loc^{\widehat{\unip}}_{{}^LG,\sF_v}$.

By Condition (3) in Theorem \ref{thm-main-intro}, there exists a pure inner form $\sG_1$ of $\sG$ such that there are isomorphisms $\sG_1(\AAA_f)\simeq\sG(\AAA_f)$ and 
$$\sG_1(\RR)\simeq \rG(\rU(1,n-1)\times\rU(n-1,1)\times \rU(0,n)^{[\sF^+:\QQ]-2}).$$
Let $\Sh_K(\sG_1,\sX_1)$ denote the Shimura variety associated to $\sG_1$ (with Iwahori level at $v$). According to \cite{Igusa}, there is a perfect Igusa stack $\Igs^1_{K^v}$ associated to $\Sh_K(\sG_1,\sX_1)$. By \cite[\S 6]{Tame} (See also \cite[\S 4]{YZ-torsion-vanishing}), we can express the \'etale cohomology of $\Sh_K(\sG_1,\sX_1)$ using coherent sheaves on the stack $\Loc^{\widehat{\unip}}_{{}^LG,\sF_v}$. By basic uniformization, the basic locus of $\Igs^1_{K^v}$ is precisely given by the Shimura set associated to $\sG$. Using this, we are able to compare the cohomology of $\Sh_K(\sG_1,\sX_1)$ and $\cA(K^v)$ directly using coherent sheaves on the stack $\Loc^{\widehat{\unip}}_{{}^LG,\sF_v}$. In particular, the cones of the maps in Theorem \ref{thm-main2-intro} can be computed using the cohomology of $\Sh_K(\sG_1,\sX_1)$. This can be viewed as a geometric Jacquet--Langlands correspondence. Finally, applying the main result of \cite{YZ-torsion-vanishing} on torsion vanishing of Shimura varieties, we conclude that the cones of the maps in Theorem \ref{thm-main2-intro} are concentrated in desired cohomological degrees. 

The organization of this paper is as follows: 

In \S\ref{section-rep}, we setup the notations of automorphic forms and recall some properties of representations of $p$-adic groups. We then introduce our main Theorem \ref{thm-main}(=Theorem \ref{thm-main2-intro}).

In \S\ref{section-CLLC}, we construct the Iwahori--Hecke operators needed in Theorem \ref{thm-main}. To do this, we use unipotent categorical local Langlands correspondences developed in \cite{Tame}.

In \S\ref{section-proof}, we prove our main result Theorem \ref{thm-main}. The main tool is the local-global compatibility results on the \'etale cohomology of Shimura varieties developed in \cite[\S 6]{Tame}.

In \S\ref{section-coh-sheaf}, we study the coherent sheaf associated to $\cA(K^v)_\fm$ under the unipotent categorical local Langlands correspondence. Using methods similar to those in \S\ref{section-proof}, we prove that it is a genuine coherent sheaf that is determined by its global sections.

In \S\ref{section-level-raising}, we prove some level-raising results, which follows directly from Theorem \ref{thm-main}.

\subsection{Notations and Conventions}
We use the language of $\infty$-categories. When we say ``category'', we always mean $(\infty,1)$-category unless otherwise stated. In particular, if $R$ is a commutative ring, then $R\mbox{-}\mathrm{mod}$ is the stable $\infty$-category of complexes of $R$-modules, and $R$-linear categories are stable $\infty$-categories enriched over $R\mbox{-}\mathrm{mod}$. All the functors are derived.

However, if we have an ordinary commutative ring $R$ with some elements $x_i\in R, i\in I$, we denote by $R/(x_i)_{i\in I}$ the \emph{classical} quotient ring of $R$ by the ideal $(x_i)_{i\in I}$. We will not use derived quotient ring (usually denoted by $R/^L(x_i)_{i\in I}$) in this paper.

\subsection{Acknowledgements}
I sincerely thank my advisor, Liang Xiao, for his invaluable guidance and constant encouragement. I am especially thankful to Xinwen Zhu for sharing an early draft of his work \cite{Tame} and for very helpful discussions. I thank Yuanyang Jiang for very helpful discussions.

Part of this work was completed while I was visiting Princeton University and Stanford University, and I sincerely thank them for their hospitality.

\section{Representation theoretic preparation}\label{section-rep}
\subsection{Unitary similitude groups}\label{subsection-def-auto}
Let $\sF^+$ be a totally real field. Let $\sF/\sF^+$ be a CM quadratic extension. Assume that $\sF$ contains an imaginary quadratic field $\sK$. We denote by $c\colon \sF\simeq \sF$ the complex conjugation. Let $\sD$ be a central simple $\sF$-algebra of dimension $n^2$. Write $\sV=\sD^\op$ viewed as a left $\sD^\op$-module. Then $\sD=\mathrm{Aut}_{\sD^\op}(\sV)$. Let $\star$ be a positive involution on $\sD^\op$. Let 
$$\langle-,-\rangle\colon \sV\times\sV\to \QQ$$
be a non-degenerated $\star$-Hermitian form on $\sV$ such that $\langle v,w\rangle=-\langle w,v\rangle$ and  $\langle bv,w\rangle=\langle v,b^\star w\rangle$ for $b\in\sD^\op$, $v,w\in \sV$. Then $(\sD^\op,\star,\sV,\langle-,-\rangle)$ defines a PEL structure of type A. The pairing $\langle-,-\rangle$ defines an involution $\dagger$ on $\sD$ that is compatible with the complex conjugation on $\sF$. Define $\sG=\GU(\sD,\dagger)$ to be the unitary similitude group over $\QQ$ whose points in any $\QQ$-algebra $R$ are given by 
$$\sG(R)\coloneqq\{x\in \sD\otimes_\QQ R| x x^\dagger\in R^\times\}.$$
Define the unitary group $\sG^1$ over $\sF^+$ whose points in any $\sF^+$-algebra $R$ are given by
$$\sG^1(R)\coloneqq \{x\in\sD\otimes_{\sF^+}R| xx^\dagger=1\}.$$
There is a short exact sequences 
$$1\to \Res_{\sF^+/\QQ}\sG^1\to \sG\to \GG_m\to 1$$
of groups over $\QQ$. Here $\Res_{\sF^+/\QQ}\sG^1$ is the Weil restriction of $\sG^1$ from $\sF^+$ to $\QQ$.

Fix a complex embedding $\tau_0\colon \sK\hookrightarrow \CC$.
Let $\Sigma_\infty$ be the set of real embeddings of $\sF^+$. For each $\tau\in \Sigma_\infty$, together with $\tau_0$, we obtain a complex embedding $\bar\tau\colon \sF\hookrightarrow \CC$. Let $(a_\tau,n-a_\tau)$ with $ 0\leq a_\tau\leq n$ be the signature of $(\sD,\dagger)$ at $\tau\in\Sigma_\infty$. Denote $J_{a_\tau}=\diag(\underbrace{1,\dots,1}_{a_\tau},\underbrace{-1,\dots,-1}_{n-a_\tau})\in \GL_n(\CC)$. We define the unitary similitude group over $\RR$ with signatures $(a_\tau,n-a_\tau)_{\tau\in \Sigma_\infty}$:
$$\rG(\prod_{\tau\in\Sigma_\infty}\rU(a_\tau,n-a_\tau))\coloneqq\bigg\{(x_\tau)\in \prod_{\tau\in\Sigma_\infty}\GL_n(\CC)\bigg| xJ_{a_\tau}{}^t\bar{x}=s\cdot J_{a_\tau} \text{ for some $s\in\RR^\times$ independent of $\tau$}\bigg\}.$$
Here we identify $\rU(a_\tau,n-a_\tau)$ as a subgroup of $\GL_n(\CC)$ via the embedding $\bar\tau$.
There is an isomorphism of real Lie groups
$$\sG(\RR)\simeq \rG(\prod_{\tau\in \Sigma_\infty}\rU(a_\tau,n-a_\tau)).$$
We note that $\sG(\RR)$ is compact modulo center if and only if $a_\tau=0$ or $a_\tau=n$ for each $\tau\in \Sigma_\infty$.

Let $\bS=\Res_{\CC/\RR}\GG_m$ be the Deligne torus. Consider the homomorphism
$$h\colon \bS\to \sG_\RR,\quad z\mapsto \prod_{\tau\in \Sigma_\infty}\mathrm{diag}(\underbrace{z,\dots,z}_{a_\tau},\underbrace{\bar{z},\dots,\bar{z}}_{n-a_\tau}).$$
Let $\sX$ be set of $\sG(\RR)$-conjugacy classes of $h$. Then $\sX$ is endowed with a natural complex analytic structure. The pair $(\sG,\sX)$ form a Shimura datum of PEL type. Let $\mu\colon\GG_{m,\CC}\to \sG_\CC$ be the Hodge cocharacter defined as the composition of $h$ with the map $\GG_{m,\CC}\to \bS_\CC\cong\GG_{m,\CC}\times\GG_{m,\CC}, z\mapsto (z,1)$. Under the  isomorphism $\sG_\CC\simeq \GG_{m,\CC}\times\prod_{\tau\in\Sigma_\infty}\GL_{n,\CC}$, the Hodge cocharacter $\mu$ is given by
$$\mu(z)= (z,\prod_{\tau\in\Sigma_\infty}\mathrm{diag}(\underbrace{z,\dots,z}_{a_\tau},\underbrace{1,\dots,1}_{n-a_\tau})).$$
Denote by $\sE=\sE_\mu\subset \CC$ the reflex field of $\mu$, i.e. the minimal subfield of $\CC$ such that the conjugacy class of $\mu$ is defined. For each \emph{neat} open compact subgroup $K\subset \sG(\AAA_f)$, we have a Shimura variety $\sSh_K(\sG,\sX)$ defined over $\sE$. The $\CC$-points of $\sSh_K(\sG,\sX)$ is identified with the double quotient set
$$\sG(\QQ)\backslash\sX\times\sG(\AAA_f)/K.$$
The group $\sG$ is anisotropic modulo center. Therefore $\sSh_K(\sG,\sX)$ is proper smooth over $\sE$.

Fix a prime number $p$ that splits in $\sK$. Let $v$ be a finite place of $\sF$ above $p$. Denote by $\wp$ (resp. $v^+$) the place of $\sK$ (resp. $\sF^+$) lying below $v$. Then $v^+$ splits into $v$ and $v^c$ in $\sF$. Let $\Pi_p$ denote the set of places of $\sF$ lying above $\wp$. We have an isomorphism
$$\sD\otimes_\QQ\QQ_p\cong  \prod_{v'\in\Pi_p} (\sD_{\sF_{v'}}\times \sD_{\sF_{v'^c}}).$$
Here if $v'$ is a place of $\sF$, we denote by $v'^c$ the complex conjugate of $v'$. The involution $\dagger$ on $\sD$ defines isomorphisms $\sD_{\sF_{v'}}\simeq \sD^\op_{\sF_{v'}}\otimes_{\sF_{v'},c}\sF_{v'^c} $. Thus there is an isomorphism
$$\sG_{\QQ_p}\cong \GG_m\times \prod_{v'\in \Pi_p}\Res_{\sF_{v'}/\QQ_p}\sD_{\sF_{v'}}^\times$$
We make the following assumptions:
\begin{assump}
	\begin{enumerate}
	\item The group $\sG(\RR)$ is compact modulo center. In other words, the signature $a_\tau$ is either $0$ or $n$ for any real embedding $\tau\in \Sigma_\infty$.
	\item $v$ is unramified over $p$. 
	\item The central simple algebra $\sD$ splits at $v$ and $v^c$. We fix an isomorphism $\sD_{\sF_v}\simeq \rM_n(\sF_v)$.
\end{enumerate}
\end{assump}

Denote $F=\sF_{v}$. The above assumptions induce an isomorphism
$$\sG_{\QQ_p}\cong \Res_{F/\QQ_p}\GL_n\times H$$
where $H\coloneqq \GG_m\times\prod_{v'\in \Pi_p\backslash\{v\}}\Res_{\sF_{v'}/\QQ_p}\sD_{\sF_{v'}}^\times$. By Assumption (1) above, the locally symmetric space $\sX$ is trivial. Hence the Shimura variety $\sSh_K(\sG,\sX)$ is 0-dimensional, and its $\CC$-points are given by the finite discrete set $$\sSh_K(\sG,\sX)(\CC)= \sG(\QQ)\backslash \sG(\AAA_f)/K.$$

Assume that the open compact subgroup $K$ factorizes as $K=\prod_{r}K_r$ where $r$ runs through finite places of $\QQ$ and $K_r\subset\sG(\QQ_r)$ is open compact. We further assume that $K_p=K_vK_p^v$ for open compact subgroups $K_v\subset \GL_n(F)$ and $K_p^v\subset H(\QQ_p)$. Denote $K^p=\prod_{r\neq p}K_r$ and $K^v=K^pK_p^v$. We call $K^v$ a prime-to-$v$ level group of $\sG$.

Fix a prime number $\ell\neq p$. Let $\Lambda=\overline{\FF}_\ell$. 

\begin{defn}
	Fix a neat prime-to-$v$ level $K^v\subseteq \sG(\AAA_f^p)\times H(\QQ_p)$. We define the space of automorphic forms of $\sG$ with coefficients in $\Lambda$ and prime-to-$v$-level $K^v$ to be
	$$\cA=\cA(K^v)\coloneqq H^0(\sG(\QQ)\backslash \sG(\AAA_f)/{K^v},\Lambda).$$
	The space $\cA$ carries a smooth action of $\GL_n(F)$. If $K_v\subset \GL_n(F)$ is an open compact subgroup, we denote by
	$$\cA^{K_v}=H^0(\sG(\QQ)\backslash\sG(\AAA_f)/K_vK^v,\Lambda)$$
	 the subspace of $K_v$-invariant vectors.
\end{defn}

Fix a finite set $S$ of primes of $\QQ$ such that $\sF$ and $\sG$ is unramified away from $S$, and the open compact subgroup $K^S=\prod_{r\notin S}K_r$ is a product of hyperspecial subgroups. Also assume that $S$ contains $p$ and $\ell$. Define the abstract Hecke algebra
$$\TT^S\coloneqq\bigotimes'_{r\notin S} \Lambda [K_r\backslash \sG(\QQ_r)/K_r].$$
Note that $\TT^S$ is a commutative $\Lambda$-algebra under the convolution product, and $\TT^S$ acts on $\cA$ by convolution. 

We need to define some Hecke operators. Let $r\notin S$ be a prime number that splits in $\sK$. Fix a place $w$ of $\sF$ above $r$. Thus we have an isomorphism
$$\sG(\QQ_r)\simeq \QQ_r^\times\times \prod_{w'}\GL_n(\sF_{w'}),$$
where $w'$ runs through places of $\sF$ above $r$ lying over the same place of $\sK$ as $w$. We can assume 
$$K_r=\ZZ_r^\times\times \prod_{w'}\GL_n(O_{\sF_{w'}}).$$
For $1\leq i\leq n$, denote by $T_{i,w}\in \Lambda[K_r\backslash \sG(\QQ_r)/K_r]$ the characteristic function on
$$\ZZ_r^\times\times \GL_n(O_{\sF_{w}})\diag(\underbrace{\varpi_{w},\dots\varpi_{w}}_{i},\underbrace{1,\dots,1}_{n-i})\GL_n(O_{\sF_{w}}) \times\prod_{w'\neq w}\GL_n(O_{\sF_{w'}}).$$

For any maximal ideal $\fm\subset \TT^S$ such that the localization $\cA_\fm$ is non-zero, we can attach a Galois representation.

\begin{thm}\label{thm-assoc-Galois}
	Let $\fm\subset \TT^S$ be a maximal ideal such that the localization $\cA_\fm$ is non-zero. Then there is a (unique) semisimple Galois representation
	$$\rho_\fm\colon \Gal(\overline{\sF}/\sF)\to \GL_n(\Lambda)$$
	unramified outside the places above $S$, such that $\rho_\fm^c\simeq \rho_\fm^\vee \otimes \varepsilon^{1-n}$ where $\varepsilon$ is the cyclotomic character.
    For an prime number $r\notin S$ that splits in $\sK$ and a place $w$ of $\sF$ above $r$, the characteristic polynomial of $\rho_\fm(\Frob_w)$ is given by the image of
	$$X^n- T_{1,w}X^{n-1}+\cdots + (-1)^i q_w^{\frac{i(i-1)}{2}}T_{i,w}X^{n-i}+\cdots + (-1)^n q_w^{\frac{n(n-1)}{2}}T_{n,w}$$
	in $\Lambda[X]$. Here $\Frob_w$ is the geometric Frobenius at $w$, $q_w$ is the cardinality of the residue field of $w$, and $T_{i,w}\in \TT^S$ are Hecke operators defined above.
\end{thm}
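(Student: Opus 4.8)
The plan is to lift to characteristic zero, invoke the known attachment of Galois representations to the automorphic representations of $\sG$ contributing to $\cA$, assemble these into a pseudocharacter over the Hecke algebra, and reduce it modulo $\fm$. Since $\cA=\colim_{K_v}\cA^{K_v}$ carries a smooth $\TT^S$-action, $\cA_\fm\ne 0$ forces $\cA^{K_v}_\fm\ne 0$ for some open compact $K_v\subset\GL_n(F)$; fix such a $K_v$. As $\sG(\QQ)\backslash\sG(\AAA_f)/K_vK^v$ is a finite set, $\cA^{K_v}$ is the reduction modulo $\ell$ of the finite free $\cO$-module $M:=H^0(\sG(\QQ)\backslash\sG(\AAA_f)/K_vK^v,\cO)$, where $\cO$ is the ring of integers of a sufficiently large finite extension $E/\QQ_\ell$, and the operators $T_{i,w}$ act $\cO$-integrally. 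Let $\TT\subseteq\End_\cO(M)$ be the finite flat $\cO$-subalgebra they generate together with the $\TT^S$-action; then $\fm$ cuts out a maximal ideal of $\TT$ with residue field $\Lambda$, and $\TT\otimes_\cO E=\prod_\pi E_\pi$ runs over the finitely many automorphic representations $\pi$ of $\sG(\AAA_f)$ of trivial weight with $\pi^{K_vK^v}\ne 0$, giving Hecke eigensystems $\lambda_\pi\colon\TT^S\to E_\pi$.

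Because $\sG(\RR)$ is compact modulo centre, each such $\pi$ has finite-dimensional archimedean component, and its base change $\Pi$ to $\GL_n/\sF$ (with the auxiliary similitude character over $\sK$) is \emph{regular} algebraic --- the $\rho$-shift in the archimedean parameter makes the infinitesimal character regular despite the trivial coefficient system --- and conjugate self-dual. By the standard results on base change for unitary similitude groups and on the Galois representations attached to regular algebraic conjugate self-dual cuspidal automorphic representations of $\GL_n$ over CM fields (Harris--Taylor, Shin, Chenevier--Harris, Caraiani, et al., with the usual adaptations for isobaric sums), there is a continuous semisimple $\rho_\pi\colon\Gal(\overline\sF/\sF)\to\GL_n(\overline\QQ_\ell)$, unramified outside $S$, with $\rho_\pi^c\cong\rho_\pi^\vee\otimes\varepsilon^{1-n}$ after the standard normalisation, whose characteristic polynomial at $\Frob_w$, for $w$ over a rational prime split in $\sK$ with $w\nmid S$, is $\lambda_\pi$ applied to the stated degree-$n$ polynomial in the $T_{i,w}$ (the $q_w$-powers recording the shift between the Satake-normalised and the geometric parametrisations). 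Assembling the $\rho_\pi$ yields a continuous pseudocharacter --- more precisely a determinant in Chenevier's sense, which is what one uses at residue characteristic $\ell$ --- $T\colon\Gal(\overline\sF/\sF)\to\TT\otimes_\cO E$ whose characteristic polynomial at each such $\Frob_w$ has coefficients the images of the $T_{i,w}$ up to $q_w$-twists; in particular $T$ is $\TT$-valued on this set of Frobenii.

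A standard Chebotarev argument (applied over $\QQ$, using that a rational prime unramified in a number field splits in $\sK$ exactly when its Frobenius fixes $\sK$) shows that the $\Frob_w$, for $w$ over a rational prime split in $\sK$ and prime to $S$, are dense in $\Gal(\overline\sF/\sF)$. As $T$ is continuous with bounded image and $\TT$ is $\ell$-adically closed in $\TT\otimes_\cO E$, it follows that $T$ is valued in $\TT$, and it is unramified outside $S$ since each $\rho_\pi$ is. Localising at $\fm$ and reducing modulo $\fm$ gives a continuous pseudocharacter valued in $\TT_\fm/\fm\TT_\fm=\Lambda$, hence by Taylor's theorem the trace of a semisimple $\rho_\fm\colon\Gal(\overline\sF/\sF)\to\GL_n(\Lambda)$, unique up to isomorphism; independence of the auxiliary $K_v$ is immediate since an eigensystem is seen at a single level. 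The remaining claims descend from the $\rho_\pi$: unramifiedness outside $S$ from that of $T$; the polarisation $\rho_\fm^c\cong\rho_\fm^\vee\otimes\varepsilon^{1-n}$ from the equality of the pseudocharacters of $\rho_\pi^c$ and $\rho_\pi^\vee\otimes\varepsilon^{1-n}$ for all $\pi$ (hence of the reductions, hence --- by uniqueness --- of the representations); and the Frobenius characteristic polynomials because a pseudocharacter determines all characteristic polynomials, compatibly with reduction modulo $\fm$.

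The genuinely deep input here is external --- base change from $\sG$ to $\GL_n$ together with the construction of automorphic Galois representations for regular algebraic conjugate self-dual forms over CM fields --- so the main obstacle internal to this argument is bookkeeping: keeping the normalisations (the central/similitude character, the $q_w$-twists in the Hecke polynomial, and the precise cyclotomic twist in the polarisation) consistent across the colimit in $K_v$ and across reduction modulo $\ell$, together with the standard-but-not-formal density of split Frobenii needed to descend the pseudocharacter to $\TT$.
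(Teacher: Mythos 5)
Your argument is correct, but it is worth comparing it to what the paper actually does: the paper's entire proof is a one-line citation to \cite[Proposition 3.4.2]{CHT08}, preceded by the remark that one restricts automorphic forms from $\sG$ to $\Res_{\sF^+/\QQ}\sG^1$ so as to land in the exact setting of that proposition. What you have written is essentially a reconstruction of the proof \emph{inside} CHT rather than a use of it as a black box: lift $\cA^{K_v}$ to a finite free $\cO$-lattice, decompose the Hecke algebra over the automorphic representations $\pi$ of $\sG(\AAA_f)$ contributing after inverting $\ell$, base change each $\pi$ to a (possibly isobaric) regular algebraic conjugate self-dual automorphic representation of $\GL_n/\sF$ (the discrete-series $\rho$-shift making the infinitesimal character regular), attach Galois representations, glue them into a determinant over $\TT$, and reduce at $\fm$. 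The two routes buy slightly different things. The paper's restriction to $\Res_{\sF^+/\QQ}\sG^1$ is exactly what lets it invoke CHT verbatim, with no need to rediscuss base change for the similitude group, the similitude-character twist, or the normalisation of the Hecke polynomial --- all of that bookkeeping is already done in CHT. Your route keeps the similitude group throughout and therefore has to carry a similitude twist; this is fine, but it is the source of most of the ``bookkeeping'' burden you flag at the end, and it is precisely what the paper side-steps by restricting to $\sG^1$. Two places where you are slightly terser than the argument really requires: the ``usual adaptations for isobaric sums'' conceals the definition of $\rho_\pi$ as a direct sum over the base-changed constituents and the verification that the Hecke-polynomial formula persists; and the density of $\Frob_w$ for $w$ over rational primes split in $\sK$ should really be run as a Chebotarev argument for the Galois closure over $\QQ$, using that any lift of $\sigma\in\Gal(M/\sF)$ already fixes $\sK$ so that the corresponding rational prime is automatically split --- the phrase ``its Frobenius fixes $\sK$'' is pointing at this but does not quite spell it out. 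Neither is a gap; both are precisely the details that CHT's Proposition 3.4.2 encapsulates, which is why the paper cites it rather than reproving it.
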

\begin{proof}
	This follows from \cite[Proposition 3.4.2]{CHT08} by restricting the automorphic forms to $\Res_{\sF^+/\QQ}\sG^1$. 
\end{proof}

\subsection{Recollection on representations of $\GL_n(F)$ with banal coefficients}\label{subsection-local-rep}
Recall the notation $F=\sF_v$. Let $q$ be the cardinality of the residue field of $F$. Let $\varpi\in O_F$ be a uniformizer.

In \S\ref{subsection-local-rep} and \S\ref{subsection-irrep}, we assume either $\Lambda=\overline\FF_\ell$ for $\ell\nmid |\GL_n(\FF_q)|$, or $\Lambda=\overline{\QQ}_\ell$. We fix a square root $\sqrt{q}\in\Lambda$ of $q$ from now on. 

Denote $G=\GL_{n,F}$. Let $B\subset G$ be the Borel subgroup of upper-triangular matrices. Let $T\subset B$ be the maximal torus of diagonal matrices. Let $\XX^\bullet(T)$ (resp. $\XX_\bullet(T)$) be the weight (resp. coweight) lattice of $T$. Let $\XX_\bullet^+(T)$ (resp. $\XX_\bullet^-(T)$) be the set of dominant (resp. anti-dominant) elements in $\XX_\bullet(T)$ associated to $B$. Let $N\subset B$ be the unipotent radical. Let $W=N_G(T)/T$ be the Weyl group of $G$. Note that $W$ is isomorphic to the symmetric group $\Sigma_n$. If $\mu\colon \GG_m\to T$ is a cocharacter, we denote $\varpi^\mu\coloneqq \mu(\varpi)\in T(F)$. Let $\hat{G}=\GL_n$ be the dual group of $G$ over $\Lambda$. Let $\hat{T}\subset \hat{B}\subset \hat{G}$ be the dual Borel subgroup and dual maximal torus. Let $\varepsilon_i\in \XX^\bullet(\hat{T})=\XX_\bullet(T)$ for $i=1,\dots,n$ be the fundamental characters. Let $\rho\in \frac{1}{2}\XX^\bullet(T)$ be the half sum of positive roots. 

Let $G(O_F)$ be the hyperspecial subgroup of $G(F)$. Denote by 
$$\cH_\sph\coloneqq\Lambda[G(O_F)\backslash G(F)/G(O_F)]$$ 
the \emph{spherical Hecke algebra}. Recall the Satake isomorphism
$$\cH_\sph \cong \Lambda[\XX_\bullet(T)]^W\cong \cO(\hat{T}\git W),$$
where $\hat{T}\git W\simeq \hat{G}\git \hat{G}$ is the GIT quotient of $\hat{G}$ by the adjoint action over $\Lambda$. Let $\wedge^i\std$ be $i$-th exterior product of the standard representation of $\GL_n$. Then $\wedge^i\std$ is the highest weight representation of $\GL_n$ with highest weight $\mu_i=(1^i,0^{n-i})$. Let $\chi_{\wedge^i\std}\in\cO(\hat{T}\git W)$ denote the character of $\wedge^i\std$. Under the Satake isomorphism, the Hecke operator
$$T_i=\text{characteristic function on }G(O_F) \varpi^{\mu_i} G(O_F)$$
is sent to the function $q^{\frac{i(n-i)}{2}}\chi_{\wedge^i\std}\in \cO(\hat{T}\git W)$.

We have the following local-global compatibility result.

\begin{prop}\label{prop-local-global-comp-hyperspecial}
	Assume that the localization $\cA^{G(O_F)}_\fm$ is non-zero. Then the representation $\rho_\fm\colon \Gal(\overline{\sF}/\sF)\to \GL_n(\Lambda)$ is unramified at $v$. Let $\xi_{\fm,v}\in (\hat{T}\git W)(\Lambda)$ be the conjugacy class of $q^{\frac{1-n}{2}}\cdot\rho(\Frob_v)^\mathrm{ss}$. Then as an $\cH_\sph$-module, the space $\cA_\fm^{G(O_F)}$ is set theoretically supported on the point $\xi_{\fm,v}$ under the Satake isomorphism. 
\end{prop}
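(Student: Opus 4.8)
The plan is to reduce to classical local--global compatibility at the unramified place $v$ by lifting $\fm$ to characteristic zero. Since $\cA(K^v)^{G(O_F)}$ is the space of $\Lambda$-valued functions on the finite set $\sG(\QQ)\backslash\sG(\AAA_f)/G(O_F)K^v$, it carries an evident $\TT^S\otimes\cH_\sph$-stable $\overline{\ZZ}_\ell$-lattice $\cA_{\overline{\ZZ}_\ell}$ reducing to it modulo $\ell$, and I would let $\mathbf{T}$ be the (finite, $\overline{\ZZ}_\ell$-flat) subring of $\End_{\overline{\ZZ}_\ell}(\cA_{\overline{\ZZ}_\ell})$ generated by the characteristic-zero Hecke operators $T_{i,w}$ ($w\notin S$) and the spherical Hecke algebra at $v$; working over a large enough finite extension of $\ZZ_\ell$, $\mathbf{T}$ is a finite product of complete local rings. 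Because $\cH_\sph\cong\cO(\hat{T}\git W)$ is commutative and $\cA^{G(O_F)}_\fm$ has finite length over it, its support is exactly the set of characters $\chi\colon\cH_\sph\to\Lambda$ that occur, i.e.\ for which $h\cdot x=\chi(h)x$ for some nonzero $x\in\cA^{G(O_F)}_\fm$; so it suffices to show that every such $\chi$ is the evaluation character $h\mapsto h(\xi_{\fm,v})$.

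Fix such a $\chi$. Together with $\fm$ it determines a maximal ideal $\mathfrak{n}\subset\mathbf{T}$ lying over $\ell$ with $\cA_{\overline{\ZZ}_\ell,\mathfrak{n}}\neq0$; since $\mathbf{T}$ is a product of local rings, this localization is a direct $\overline{\ZZ}_\ell$-summand of the free module $\cA_{\overline{\ZZ}_\ell}$, hence free, so $\cA_{\overline{\QQ}_\ell,\mathfrak{n}}:=\cA_{\overline{\ZZ}_\ell,\mathfrak{n}}\otimes\overline{\QQ}_\ell\neq0$. Now $\cA_{\overline{\ZZ}_\ell}\otimes\overline{\QQ}_\ell$ decomposes along the discrete automorphic spectrum of $\sG(\AAA_f)$, and on each isotypic piece $\pi^{G(O_F)K^v}$ (nonzero only when $\pi_v$ is unramified) the commutative algebra $\TT^S\otimes\cH_\sph$ acts through a single character; hence this module is semisimple, and I can choose an automorphic representation $\pi$ of $\sG(\AAA_f)$ over $\overline{\QQ}_\ell$ occurring in $\cA_{\overline{\QQ}_\ell,\mathfrak{n}}$ with $\pi_v$ unramified, with $\TT^S$ acting via a character $\lambda_\pi$ reducing to $\fm$, and with $\cH_\sph$ acting on $\pi_v^{G(O_F)}$ via a character $\psi_\pi$ reducing to $\chi$. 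Write $s_{\pi_v}\in\GL_n(\overline{\QQ}_\ell)$ for the semisimple Satake parameter attached to $\psi_\pi$.

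Next I would attach to $\pi$ the Galois representation $\rho_\pi\colon\Gal(\overline{\sF}/\sF)\to\GL_n(\overline{\QQ}_\ell)$ via \cite{CHT08} (restricting the automorphic forms to $\Res_{\sF^+/\QQ}\sG^1$ and base changing, as in the proof of Theorem \ref{thm-assoc-Galois}); it is unramified outside $S$, and since $v$ is \emph{split} over $\sF^+$ (recall that $v^+$, the place of $\sF^+$ below $v$, splits in $\sF$) and $\pi_v$ is unramified, local--global compatibility at $v$ shows that $\rho_\pi$ is unramified at $v$ and that the characteristic polynomial of $\rho_\pi(\Frob_v)$ is $X^n-\psi_\pi(T_1)X^{n-1}+\cdots+(-1)^n q^{n(n-1)/2}\psi_\pi(T_n)$, the same expression as in Theorem \ref{thm-assoc-Galois}; unwinding the Satake normalization $T_i\mapsto q^{i(n-i)/2}\chi_{\wedge^i\std}$, this says that $\rho_\pi(\Frob_v)^{\mathrm{ss}}$ is conjugate to $q^{(n-1)/2}s_{\pi_v}$. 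After choosing a stable lattice and reducing modulo $\ell$, the semisimple representation $\overline{\rho_\pi}^{\mathrm{ss}}$ is unramified outside $S$ and, using $\overline{\lambda_\pi}=\fm$ and Theorem \ref{thm-assoc-Galois}, has the same Frobenius characteristic polynomial as $\rho_\fm$ at every place $w\notin S$ split over $\sF^+$; the uniqueness clause of Theorem \ref{thm-assoc-Galois} then forces $\overline{\rho_\pi}^{\mathrm{ss}}=\rho_\fm$. In particular $\rho_\fm$ is unramified at $v$, which is the first assertion of the proposition, and reducing the identity of characteristic polynomials modulo $\ell$ shows that $q^{(1-n)/2}\rho_\fm(\Frob_v)^{\mathrm{ss}}$ and the semisimple class corresponding to $\chi$ have the same characteristic polynomial, hence define the same point of $\hat{T}\git W\cong\hat{G}\git\hat{G}$. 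By definition that point is $\xi_{\fm,v}$, so $\chi$ is the evaluation character at $\xi_{\fm,v}$; as $\chi$ was arbitrary and $\cA^{G(O_F)}_\fm\neq0$, the support is exactly $\{\xi_{\fm,v}\}$.

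The integral-lattice manipulations and the semisimplicity of the characteristic-zero Hecke action are routine. The step requiring care — which I expect to be the main point — is local--global compatibility at $v$ with the precise normalization of the Satake--Frobenius matching (the $q^{(n-1)/2}$ twist), together with the standard fact (to be cited precisely from \cite{CHT08}) that a semisimple conjugate self-dual representation of $\Gal(\overline{\sF}/\sF)$ is determined by its Frobenius characteristic polynomials at the places split over $\sF^+$. If preferred, the unramified case of this compatibility is already intrinsic to the construction of $\rho_\pi$ via the trace formula on $\sG^1$, which matches Satake parameters with Frobenius precisely at such places.
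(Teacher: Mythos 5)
Your argument is correct and is essentially the proof underlying the paper's one-line citation of \cite[Proposition 3.4.4]{CHT08}: lift to characteristic zero using the finiteness of the Shimura set, invoke local--global compatibility for $\rho_\pi$ at the split unramified place $v$, then reduce modulo $\ell$ and identify $\overline{\rho}_\pi^{\mathrm{ss}}$ with $\rho_\fm$ via the Chebotarev/uniqueness clause of Theorem \ref{thm-assoc-Galois}. The Satake normalization and $q^{(1-n)/2}$-twist bookkeeping you carry out matches the paper's conventions, so the characters $\chi$ occurring in $\cA^{G(O_F)}_\fm$ are indeed all equal to evaluation at $\xi_{\fm,v}$.
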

\begin{proof}
	It follows from \cite[Proposition 3.4.4]{CHT08}.
\end{proof}

Define the Iwahori subgroup $I\subset G(O_F)$ as the preimage of $B(\FF_q)$ under the projection $G(O_F)\to G(\FF_q)$. Denote by
$$\cH\coloneqq \Lambda[I\backslash G(F)/I]$$
the Iwahori--Hecke algebra of $G(F)$. Let $\widetilde{W}\coloneqq N_G(T)(F)/ T(O_F)\cong \XX_\bullet(T)\rtimes W$ be the extended affine Weyl group of $G$. The image of a coweight $\lambda\in \XX_\bullet(T)$ in $\widetilde{W}$ is given by the class of $\varpi^\lambda\in T(F)$. For each $w\in \widetilde{W}$, denote by $IwI$ the $I$-double coset in $G(F)$ corresponding to $w$. Let $T_w=1_{IwI}$ be the characteristic function on $IwI$. Then $T_w$ for $w\in\widetilde{W}$ form a $\Lambda$-basis of $\cH$.

We recall the Bernstein presentation of $\cH$.

\begin{defn}[Bernstein presentation]
	For $\lambda\in \XX_\bullet^-(T)$ anti-dominant, define the element $\theta_\lambda= q^{\langle\rho,\lambda\rangle} 1_{I\varpi^\lambda I}$. In general, if $\lambda\in \XX_\bullet(T)$, we can write $\lambda=\mu-\nu$ for $\mu$ and $\nu$ anti-dominant. Then we define $\theta_\lambda=\theta_\mu\theta_\nu^{-1}$.
\end{defn}
 
\begin{prop}\label{prop-Bernstein-pres}
	For $\lambda\in \XX_\bullet(T)$, the elements $\theta_\lambda$ are well-defined. Let $\cR\subset \cH$ be the $\Lambda$-subalgebra generated by $\theta_\lambda$ for $\lambda\in \XX_\bullet(T)$. Then $\cR$ is commutative and is isomorphic to $\cO(\hat{T})=\Lambda[\XX_\bullet(T)]$. Moreover, under this isomorphism, the center $Z(\cH)$ of $\cH$ is identified with $\Lambda[\XX_\bullet(T)]^W=\cO(\hat{T}\git W)$.
\end{prop}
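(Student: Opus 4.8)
The plan is to follow the standard construction of the Bernstein presentation, adapted to the banal-coefficient (or $\overline{\QQ}_\ell$) setting, where the relevant integrality and invertibility statements still hold because $q$ is invertible in $\Lambda$ and we have fixed a square root $\sqrt q$. First I would verify that for anti-dominant $\lambda$ the element $\theta_\lambda = q^{\langle\rho,\lambda\rangle}1_{I\varpi^\lambda I}$ satisfies the multiplicativity relation $\theta_\lambda \theta_\mu = \theta_{\lambda+\mu}$ for $\lambda,\mu\in\XX_\bullet^-(T)$; this comes from the fact that $\ell(\varpi^\lambda\varpi^\mu)=\ell(\varpi^\lambda)+\ell(\varpi^\mu)$ for $\lambda,\mu$ anti-dominant, so that $1_{I\varpi^\lambda I}\cdot 1_{I\varpi^\mu I}=1_{I\varpi^{\lambda+\mu}I}$ in $\cH$, combined with the additivity of $\langle\rho,-\rangle$. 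In particular each $\theta_\lambda$ with $\lambda$ anti-dominant is a non-zero-divisor, indeed invertible in an appropriate localization, and this makes the formula $\theta_\lambda = \theta_\mu\theta_\nu^{-1}$ (for $\lambda=\mu-\nu$, $\mu,\nu$ anti-dominant) make sense; well-definedness is the statement that $\theta_\mu\theta_\nu^{-1}=\theta_{\mu'}\theta_{\nu'}^{-1}$ whenever $\mu-\nu=\mu'-\nu'$, which follows from the already-established multiplicativity on the anti-dominant cone together with the fact that any two decompositions $\lambda=\mu-\nu=\mu'-\nu'$ are related by $\mu+\nu'=\mu'+\nu$, an equality of anti-dominant coweights.

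Next I would show that $\cR$, the $\Lambda$-subalgebra generated by all $\theta_\lambda$, is commutative and free of rank $|W|$ over $\Lambda[\XX_\bullet(T)]^W\cong Z(\cH)$-flavoured considerations aside, is isomorphic as an algebra to $\Lambda[\XX_\bullet(T)]$. Commutativity: the $\theta_\lambda$ for $\lambda$ anti-dominant commute with each other since $\theta_\lambda\theta_\mu=\theta_{\lambda+\mu}=\theta_\mu\theta_\lambda$, and passing to the localization inverting these, all $\theta_\lambda$ ($\lambda\in\XX_\bullet(T)$) commute. The map $\XX_\bullet(T)\to\cR^\times$, $\lambda\mapsto\theta_\lambda$, is a group homomorphism by the multiplicativity relation, hence extends to a $\Lambda$-algebra map $\Lambda[\XX_\bullet(T)]\to\cR$ which is surjective by definition of $\cR$. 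For injectivity I would invoke that the $T_w$ for $w\in\widetilde W$ form a $\Lambda$-basis of $\cH$ and that, by a triangularity argument (each $\theta_\lambda$ expands as $T_{\varpi^\lambda}$ plus a $\Lambda$-combination of $T_{w}$ with $w$ strictly smaller in the Bruhat-type order, or more precisely the images of the $\theta_\lambda$ in $\cH$ are $\Lambda$-linearly independent because distinct monomials $\theta_{\lambda_1}T_{w_1},\dots$ give a basis — this is the standard Bernstein basis theorem $\cH=\bigoplus_{\lambda\in\XX_\bullet(T),\,w\in W}\Lambda\,\theta_\lambda T_w$).

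Finally, for the identification of the center: I would show $Z(\cH)=\cR^W$ and then $\cR^W\cong\Lambda[\XX_\bullet(T)]^W\cong\cO(\hat T\git W)$. The inclusion $\cR^W\subseteq Z(\cH)$ follows from the Bernstein relation $T_{s}\theta_\lambda - \theta_{s(\lambda)}T_s = (\text{explicit element of }\cR)$ for a simple reflection $s$ (the relation involves a factor built from $q$, which is where invertibility of $q$ in $\Lambda$ is used, and here the banal hypothesis guarantees no denominators vanish): if $z\in\cR$ is $W$-invariant then $T_s z = z T_s$ for all simple $s$, and since the $T_s$ together with $\cR$ generate $\cH$, $z$ is central. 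For the reverse inclusion $Z(\cH)\subseteq\cR^W$, write a central element in the Bernstein basis $\sum_{w\in W}c_w T_w$ with $c_w\in\cR$; commuting with $\theta_\lambda$ for all $\lambda$ and using that $\XX_\bullet(T)$ is Zariski-dense in $\hat T$ (here $\Lambda$ being algebraically closed of any characteristic is harmless since we only need the monomials to be linearly independent) forces $c_w=0$ for $w\neq e$ and $c_e\in\cR^W$.

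The main obstacle I anticipate is the careful verification of the Bernstein quadratic/commutation relations $T_s\theta_\lambda-\theta_{s\lambda}T_s=(q-1)\frac{\theta_\lambda-\theta_{s\lambda}}{1-\theta_{-\alpha^\vee}}$-type identities with the correct normalization by powers of $\sqrt q$, and checking that the banal hypothesis $\ell\nmid|\GL_n(\FF_q)|$ (equivalently $q^j\not\equiv 1$ appropriately, together with $\ell\neq p$) is exactly what is needed so that these relations — and the consequent structural statements — survive reduction mod $\ell$; over $\overline{\QQ}_\ell$ this is classical (Lusztig, Bernstein), and the content here is purely that nothing degenerates in the banal modular setting. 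I would treat this by reducing to the rank-one $\SL_2$-type computation inside each parabolic and citing the standard references for the $\overline{\QQ}_\ell$ case, then noting the identities are defined over $\ZZ[\sqrt q, q^{-1}]$ and specialize.
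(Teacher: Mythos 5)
Your proposal is correct and follows the standard Bernstein--presentation argument; this is exactly what the paper delegates to the citation \cite[\S 5.1]{Achar-Riche}, so the route is the same, just fully written out. Two small points of calibration are worth making, however. First, you repeatedly invoke the banal hypothesis $\ell\nmid|\GL_n(\FF_q)|$ as ``exactly what is needed'' for the Bernstein relations to survive reduction mod $\ell$, but that is too strong: as the paper's own remark notes, the proposition holds over an arbitrary $\ZZ[\frac{1}{p},\sqrt{q}]$-algebra. The apparent denominator in the rank-one relation, namely $1-\theta_{-\alpha^\vee}$, already divides $\theta_\lambda-\theta_{s\lambda}$ integrally in $\Lambda[\XX_\bullet(T)]$, so there is no quantity that could degenerate mod $\ell$; all one needs is $q$ (and $\sqrt q$) invertible. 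The banality assumption is essential elsewhere in the paper (for the unipotent block, admissibility versus compactness, the decomposition of $V^I$, etc.), but not here. Second, the well-definedness of $\theta_\lambda=\theta_\mu\theta_\nu^{-1}$ does not require passing to a localization: since $q\in\Lambda^\times$, each $T_s$ is already invertible in $\cH$ via $T_s^2=(q-1)T_s+q$, hence so is every $T_w$ for $w\in\widetilde W$, and in particular $\theta_\nu$ for $\nu$ anti-dominant is a unit of $\cH$ itself.
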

\begin{proof}
	See for example, \cite[\S 5.1]{Achar-Riche}. The isomorphism $\cR\cong \Lambda[\XX_\bullet(T)]$ is given by $\theta_\lambda\mapsto e^\lambda$. We note that the proposition holds when $\Lambda$ is an arbitrary $\ZZ[\frac{1}{p},\sqrt{q}]$-algebra.
\end{proof}

The local-global compatibility can be extended to the Iwahori level. Let $\Rep^{\widehat\unip}(G(F),\Lambda)$ be the category of unipotent $G(F)$-representations over $\Lambda$ as in \cite[Definition 4.116]{Tame}. Since $\ell$ is a banal prime and $G(F)=\GL_n(F)$, the object $\cInd_I^{G(F)}\Lambda$ is a compact projective generator of $\Rep^{\widehat\unip}(G(F),\Lambda)$. More concretely, a representation $V\in \Rep(G(F),\Lambda)$ is unipotent if and only if each cohomology $H^i(V)$ is generated by its $I$-invariant elements.

Recall that an object $V\in \Rep(G(F),\Lambda)$ is called \emph{admissible} if $\Hom_{\Rep(G(F),\Lambda)}(C,V)$ is a perfect $\Lambda$-module for any compact objects in $\Rep(G(F),\Lambda)$, cf. \cite[Example 7.31 (4)]{Tame}. We denote by $\Rep(G(F),\Lambda)^\Adm$ the category of admissible representations. In particular, an unipotent representation $V\in\Rep^{\widehat\unip}(G(F))$ is admissible if and only if $V^I$ is a perfect $\Lambda$-module. 

We define the category of unipotent admissible representations with fixed central characters.
\begin{defn}\label{def-rep-localize}
	Let $\xi\in(\hat{T}\git W)(\Lambda)$ be a $\Lambda$-point. Define
	$$\Rep^{\widehat\unip}(G(F),\Lambda)^\Adm_{\xi}$$
	to be the category of $G(F)$-representations $V$ such that the following conditions hold:
	\begin{enumerate}
		\item $V$ is admissible and unipotent.
		\item When viewed as a $Z(\cH)$-module,  $V^I$ is set theoretically supported at the point $\xi$. 
	\end{enumerate}
	Note that $\Rep^{\widehat\unip}(G(F),\Lambda)^\Adm_{\xi}$ is a block in the category of unipotent admissible $G(F)$-representations.
\end{defn}

Since $\Rep^{\widehat\unip}(G(F),\Lambda)$ has a compact generator $\cInd_I^{G(F)}\Lambda$, the Bernstein center $Z_{G(F)}^{\widehat\unip}$ of $\Rep^{\widehat\unip}(G(F),\Lambda)$ is identified with $Z(\cH)$. In Definition \ref{def-rep-localize}, the condition (2) is equivalent to that the action of $Z_{G(F)}^{\widehat\unip}$ on $V$ is set theoretically supported on $\xi$. Therefore the above definition agree with the definition in \cite[\S 3.2.3]{YZ-torsion-vanishing}. We record the following useful lemma.

\begin{lemma}\label{lemma-adm-unip-imply-compact}
	Admissible objects in $\Rep^{\widehat\unip}(G(F),\Lambda)$ are compact.
\end{lemma}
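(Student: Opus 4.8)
The plan is to transport the statement across the equivalence provided by the compact projective generator, and then to reduce it to the homological regularity of the Iwahori--Hecke algebra. Set $\cC:=\Rep^{\widehat\unip}(G(F),\Lambda)$. Since $P:=\cInd_I^{G(F)}\Lambda$ is a compact projective generator of $\cC$, the functor $V\mapsto \Hom_\cC(P,V)=V^I$ is a $\Lambda$-linear equivalence $\cC\simeq\cH\mbox{-}\mathrm{mod}$, where $\cH=\End_\cC(P)=\Lambda[I\backslash G(F)/I]$ is the (discrete) Iwahori--Hecke algebra. Under this equivalence $P$ corresponds to $\cH$, so the compact objects of $\cC$ correspond exactly to the perfect complexes over $\cH$; and, by the characterization of admissibility recalled before Definition \ref{def-rep-localize}, $V$ is admissible precisely when $V^I$ is a perfect $\Lambda$-complex. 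Thus the lemma is equivalent to the assertion that an $\cH$-module complex whose underlying $\Lambda$-complex is perfect is itself perfect over $\cH$.

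I would prove this in three steps. First, such a complex $M$ has bounded cohomology with each $H^i(M)$ finite dimensional over $\Lambda$, hence finitely generated over $\cH$ (it is already generated over $\Lambda\subseteq\cH$ by a finite basis); equivalently, each classical representation $H^i(V)$ is unipotent with finite-dimensional $I$-invariants, hence generated by finitely many vectors over $G(F)$. So $M$ lies in the full subcategory of $\cH\mbox{-}\mathrm{mod}$ of bounded complexes with finitely generated cohomology. Second, $\cH$ is left and right Noetherian: by the Bernstein presentation (Proposition \ref{prop-Bernstein-pres}) it is module-finite over its center $Z(\cH)\cong\cO(\hat T\git W)=\Lambda[\XX_\bullet(T)]^W$, which is a finitely generated $\Lambda$-algebra. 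Third, $\cH$ has finite global dimension. Granting the third point, every finitely generated $\cH$-module admits a finite resolution by finitely generated projectives, and an induction on cohomological amplitude then shows that every bounded complex with finitely generated cohomology is perfect; hence $M$ is perfect over $\cH$ and $V$ is compact.

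The third point is the only nontrivial input, and it is where I expect the real work to lie. It is here that banality enters: $\ell$ banal forces $\ell>n$, so $|W|=n!$ is invertible in $\Lambda$ and the finite Hecke algebra is semisimple. One route is to realize $\cH$ as the fibre, at our value of $q$ — which satisfies $q\not\equiv 1\bmod\ell$ by banality — of the generic affine Hecke algebra of $\GL_n$: the latter is free of finite rank over $\cO(\hat T)[q^{\pm1}]$, its fibre at $q=1$ is the crossed product $\cO(\hat T)\rtimes W$, which is of finite global dimension since $|W|$ is invertible and $\cO(\hat T)$ is regular, and a flatness/semicontinuity argument then bounds the global dimension of the remaining fibres, $\cH$ among them. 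Alternatively, one may simply invoke that affine Hecke algebras of $\GL_n$ at banal parameters have finite global dimension, which is standard, and take that as the input. Either way the difficulty is confined to this homological regularity of $\cH$; the remaining steps are formal.
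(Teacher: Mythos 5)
Your overall strategy mirrors the paper's: transport to $\cH$-modules via the compact projective generator $\cInd_I^{G(F)}\Lambda$, observe that admissibility gives bounded complexes with cohomology finitely generated over the Noetherian ring $\cH$, and then reduce to a finite homological dimension statement to conclude perfectness. You have correctly isolated the crux of the matter — the finite global dimension of $\cH$ — and the first two steps are both correct and essentially identical to the paper's argument (the paper builds the resolution directly in $\Rep^{\widehat\unip}(G(F),\Lambda)^\heartsuit$ rather than transporting, but this is cosmetic).

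The gap is in your third step. Your proposed ``flatness/semicontinuity argument'' for transferring finite global dimension from the $q=1$ fibre of the generic affine Hecke algebra to the fibre at $q=|k_F|$ does not work: global dimension is not semicontinuous along specializations of $q$. Indeed, at the non-banal values (e.g.\ when $q$ becomes an $\ell$-th root of unity) the global dimension of the fibre becomes infinite, so nothing is being bounded by the $q=1$ fibre. Some extra input distinguishing the banal fibres is required, and the semicontinuity sketch does not supply it. Your alternative — simply citing that banal affine Hecke algebras for $\GL_n$ have finite global dimension — is acceptable as a reference, but observe that the standard route to that fact is precisely the one the paper takes: the finite $\ell$-cohomological dimension of $G(F)$ for $\ell$ banal (\cite[Lemma 3.59]{Tame}). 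This group-theoretic input gives, with no further work, that the kernel $D_d$ of $C_{d-1}\to C_{d-2}$ in a resolution by finite sums of $\cInd_I^{G(F)}\Lambda$ is projective, and finite generation then makes it a direct summand of such a finite sum. So I would replace your specialization sketch by this direct appeal to the cohomological dimension of the group; the rest of your argument then goes through.
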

\begin{proof}
	Let $V\in\Rep^{\widehat\unip}(G(F),\Lambda)^\Adm$. We may assume that $V$ lies in the heart of $\Rep^{\widehat\unip}(G(F),\Lambda)^\Adm$. There is a surjective morphism
	$$V^I\otimes_\Lambda\cInd_I^{G(F)}\Lambda\twoheadrightarrow V$$
	of $G(F)$-representations. Let $W$ be the kernel of the above map. Taking $I$-invariance induces a short exact sequence
	$$0\to W^I\to V^I\otimes_\Lambda\cH\to V^I\to 0$$
	of $\cH$-modules. As $\cH$ is a finitely generated module over $Z(\cH)$, we see that $W^I$ is a finitely generated module over $Z(\cH)$. Pick a finite set of generators $S$ of $W^I$ over $Z(\cH)$, we obtain a surjective morphism 
	$$\Lambda[S]\otimes_\Lambda \cInd_I^{G(F)}\Lambda\twoheadrightarrow W.$$
	Continuing this process, we construct a projective resolution $C_\bullet$ of $V$ with each term equals to a finite direct sum of $\cInd_I^{G(F)}\Lambda$. By \cite[Lemma 3.59]{Tame}, the $\ell$-cohomological dimension of $G(F)$ is finite, as $\ell$ is banal for $G(F)$. Say the $\ell$-cohomological dimension is $d$. Let $D_d$ be the kernel of $C_{d-1}\to C_{d-2}$. Thus $D_d$ is projective in $\Rep^{\widehat\unip}(G(F),\Lambda)^\heartsuit$ by the standard homological algebra argument. The finite resolution
	$$0\to D_d\to C_{d-1}\to \cdots\to C_0\to V\to 0$$
	satisfies that each term $C_i$ with $i\in 0,\dots,d-1$ is a finite direct sum of $\cInd_I^{G(F)}\Lambda$, and $D_d$ is a direct summand of a finite direct sum of $\cInd_I^{G(F)}\Lambda$. It follows that $V$ is compact in $\Rep^{\widehat\unip}(G(F),\Lambda)$.
\end{proof}

\begin{prop}\label{prop-local-global-auto}
	If $\cA_\fm^{G(O_F)}$ is non-zero, then $\cA_\fm$ lies in the category $\Rep^{\widehat\unip}(G(F),\Lambda)^\Adm_{\xi_{\fm,v}}$ with $\xi_{\fm,v}$ defined in Proposition \ref{prop-local-global-comp-hyperspecial}.  
\end{prop}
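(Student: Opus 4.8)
The plan is to verify directly, for the module $\cA_\fm$, the three defining properties of the category $\Rep^{\widehat\unip}(G(F),\Lambda)^\Adm_{\xi_{\fm,v}}$: that it is admissible, that it is unipotent, and that $\cA_\fm^I$ is set-theoretically supported at $\xi_{\fm,v}$ as a $Z(\cH)$-module. For admissibility, note that since $\sG$ is anisotropic modulo center the double coset set $\sG(\QQ)\backslash\sG(\AAA_f)/K_vK^v$ is finite for every open compact $K_v\subseteq\GL_n(F)$, so each $\cA^{K_v}$ is a finite free $\Lambda$-module and $\cA$ is an admissible smooth $\GL_n(F)$-representation. The algebra $\TT^S$ commutes with the $\GL_n(F)$-action and its image in $\End_\Lambda(\cA^{K_v})$ is a finite $\Lambda$-algebra, hence a finite product of local rings; therefore $\cA^{K_v}$ is the direct sum of its localizations at the finitely many maximal ideals in its support, and passing to the colimit over $K_v$ shows that $\cA_\fm$ is a $\TT^S[\GL_n(F)]$-direct summand of $\cA$. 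In particular $\cA_\fm$ is again admissible, and every irreducible subquotient of $\cA_\fm$ is an irreducible subquotient of $\cA$.

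Next I would reduce unipotence and the support statement to a statement about irreducible subquotients at $v$. Because $\ell$ is banal, $\cInd_I^{G(F)}\Lambda$ is a compact projective generator of $\Rep^{\widehat\unip}(G(F),\Lambda)$, so $(-)^I$ is exact on smooth $\Lambda[\GL_n(F)]$-modules and identifies the unipotent block with the category of $\cH$-modules; consequently an irreducible smooth $\Lambda$-representation of $\GL_n(F)$ has nonzero $I$-invariants if and only if it is unipotent, and a smooth representation all of whose irreducible subquotients are unipotent is itself unipotent (otherwise its quotient by the subrepresentation generated by the $I$-invariants would be a nonzero representation with no $I$-invariants, hence with a non-unipotent irreducible subquotient). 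Thus it suffices to prove that every irreducible subquotient $\sigma$ of $\cA_\fm$ is unipotent; and granting this, since $\cA_\fm^I$ is a perfect $\Lambda$-module, $\cA_\fm$ decomposes as a finite direct sum over the points of $\hat T\git W$ in the support of $\cA_\fm^I$ by means of the idempotents of $Z(\cH)=Z_{G(F)}^{\widehat\unip}$, so it only remains to check that $Z(\cH)$ acts on each $\sigma^I$ through the point $\xi_{\fm,v}$.

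Finally I would feed in local--global compatibility at $v$. By hypothesis $\cA_\fm^{G(O_F)}\neq 0$, so Proposition \ref{prop-local-global-comp-hyperspecial} applies and $\rho_\fm$ is unramified at $v$; in particular $\rho_\fm|_{I_F}$ is trivial. On the other hand $\cA$ is the mod-$\ell$ avatar of spaces of $\overline\QQ_\ell$-automorphic forms for $\sG$ of prime-to-$v$ level; restricting to $\Res_{\sF^+/\QQ}\sG^1$ and descending to $\GL_n/\sF$ as in the proof of Theorem \ref{thm-assoc-Galois}, together with local--global compatibility of the local Langlands correspondence at the split place $v$ (as used in \cite[\S 3]{CHT08} and \cite[\S 3.2.3]{YZ-torsion-vanishing}), the Bernstein component of $\sigma$ at $v$ is controlled by $\rho_\fm|_{I_F}$, and when $\sigma$ is Iwahori-spherical $Z(\cH)$ acts on $\sigma^I$ through the conjugacy class of $q^{\frac{1-n}{2}}\rho_\fm(\Frob_v)^{\mathrm{ss}}$. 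Since $\rho_\fm|_{I_F}$ is trivial and $\ell$ is banal — so that no nontrivial inertial type at $v$ can become trivial modulo $\ell$ — every $\sigma$ lies in the Iwahori block, hence is unipotent, and $Z(\cH)$ acts on $\sigma^I$ through $\xi_{\fm,v}$. Combining this with the reduction of the previous paragraph gives $\cA_\fm\in\Rep^{\widehat\unip}(G(F),\Lambda)^\Adm_{\xi_{\fm,v}}$, as desired.

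The main obstacle is precisely the input of the last paragraph: the mod-$\ell$ local--global compatibility at $v$, asserting that the Bernstein block and Satake parameter of the local component at $v$ of a torsion automorphic class for $\sG$ are read off from the restriction of $\rho_\fm$ to a decomposition group at $v$. Banality of $\ell$ is essential here, both to transport inertial types between characteristic $0$ and characteristic $\ell$ and to guarantee that the unipotent block is an honest Bernstein block equivalent to $\cH$-modules; one must also use the compatibility of the Satake normalization of $\cO(\hat T\git W)\cong\cH_\sph$ with the Bernstein normalization of $Z(\cH)$ under the embedding $\cH_\sph=e_{G(O_F)}\cH e_{G(O_F)}\hookrightarrow\cH$, so that the point $\xi_{\fm,v}$ of Proposition \ref{prop-local-global-comp-hyperspecial} agrees with the one acting on the $\cA_\fm^I$. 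Once this compatibility is in hand, the remaining steps are formal manipulations with idempotents and Iwahori-invariants.
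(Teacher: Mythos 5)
Your overall strategy agrees with the paper's: establish admissibility from finiteness of the double coset spaces, reduce unipotence and the support condition to statements about irreducible subquotients, and then feed in CHT-style local--global compatibility at $v$ together with the banal hypothesis. The first two steps are carried out correctly (in particular your reduction of unipotence to irreducible subquotients is sound: with $\ell$ banal, $(-)^I$ is exact, so if $U\subseteq V$ is generated by $V^I$ then $(V/U)^I=V^I/U^I=0$, and a nonzero $V/U$ would have an irreducible subquotient with no Iwahori invariants). However, the crucial last paragraph is stated rather than proved: the assertion that "the Bernstein component of $\sigma$ at $v$ is controlled by $\rho_\fm|_{I_F}$" for an irreducible $\Lambda$-subquotient $\sigma$ of $\cA_\fm$ is precisely what needs an argument, and there is no direct mod-$\ell$ local--global compatibility to quote here.

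The paper fills this in with a careful detour through characteristic $0$ which your proposal only gestures at. Concretely, one passes to $O_L$-coefficients $\cA_{O_L}$, fixes a minimal prime $\fp\subseteq\TT^{S,\mathrm{im}}_{O_L}(K_v)$ below $\fm$ cutting out an automorphic representation $\pi=\boxtimes'\pi_r$ with $\overline\QQ_\ell$-coefficients, and invokes \cite[Proposition 3.3.4]{CHT08} to obtain $\rho_\fp$ compatible with local Langlands at $v$; since $\rho_\fm$ is unramified at $v$ and $\ell$ is banal, $\rho_\fp|_{\Gal(\overline F/F)}$ kills wild inertia and sends $I_F$ to unipotent matrices, whence $\pi_v$ is unipotent. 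To propagate this to $\cA_\fm$ one then needs two further inputs you omit: the semisimplicity of $\cA_{O_L,\fm}\otimes_{O_L}\overline\QQ_\ell$ (\cite[Corollary 3.3.3]{CHT08}), which upgrades "every irreducible submodule is unipotent" to "every irreducible subquotient is unipotent" in characteristic $0$; and the reduction-mod-$\ell$ result \cite[1.5]{Vig01}, which then shows every irreducible subquotient of $\cA_\fm$ is unipotent. For the support statement, the argument is again via characteristic $0$: for each such $\fp$ the action of $Z(\cH_{O_L})$ on $\pi_v^I$ factors through $\xi_{\fp,v}$, which is congruent to $\xi_{\fm,v}$, and ranging over all $\fp$ below $\fm$ shows the $Z(\cH_{O_L})$-action on $\cA^I_{O_L,\fm}$ factors through $Z(\cH_{O_L})_{(\xi_{\fm,v})}$; reducing mod $\varpi_L$ and using that $\cA_\fm^I$ is finite-dimensional then gives set-theoretic support at $\xi_{\fm,v}$. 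So the approach is the right one, but the content of your "main obstacle" is the bulk of the actual proof, and these specific intermediate lemmas — especially the semisimplicity and the Vignéras reduction — are not optional inputs; without them the passage from $\rho_\fm$ to irreducible subquotients of $\cA_\fm$ does not go through.
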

\begin{proof}
Let $\kappa$ be a finite extension of $\FF_\ell$ such that the Galois representation $\rho_\fm$ is defined with coefficients in $\kappa$. Let $L$ be a finite extension of $\QQ_\ell$ with residue field $\kappa$. Consider the space of automorphic forms for $\sG$ with coefficients in $O_L$:
	$$\cA_{O_L}= H^0(\sG(\QQ)\backslash \sG(\AAA_f)/{K^v},O_L).$$
	Let $\TT^S_{O_L}=\bigotimes'_{r\notin S} O_L [K_r\backslash \sG(\QQ_r)/K_r]$ be the global Hecke algebra with $O_L$-coefficients. Then $\fm$ can be viewed as a maximal ideal of $\TT^S_{O_L}$. For any open compact subgroup $K_v\subset \GL_n(F)$, the space $\cA^{K_v}_{O_L,\fm}$ of $K_v$-invariant elements is a finite free $O_L$-module. 
	
	Fix a compact open subgroup $K_v\subset \GL_n(F)$. Denote by $\TT^{S,\mathrm{im}}_{O_L}(K_v)$ the image of $\TT_{O_L}^S\to \End_{O_L}(\cA^{K_v}_{O_L})$. Let $\fp\subset \TT^{S,\mathrm{im}}_{O_L}(K_v)$ be a minimal ideal. It cuts out an automorphic representation
	$$\pi=\boxtimes'_{r\nmid \infty}\pi_r$$
	of $\sG(\AAA_f)=\prod'_{r\nmid\infty}\sG(\QQ_r)$ with coefficients in $\overline\QQ_\ell$. We can further write 
	$$\pi_p=\pi_v\boxtimes\pi_p^v$$ 
	as a representations of $\sG(\QQ_p)=\GL_n(F)\times H(\QQ_p)$. After enlarging $L$ if necessary, by \cite[Proposition 3.3.4]{CHT08}, there is a Galois representation 
	$$\rho_\fp\colon \Gal(\overline{\sF}/\sF)\to \GL_n(L)$$
	associated to $\pi$ which is compatible with local Langlands correspondence for $\GL_n$ at split places. In particular, 
	let $\rec(\pi_v)\colon \Gal(\overline{\sF}_v/\sF_v)\to\GL_n(L)$ be the Galois representation associated to $\pi_v$ under local Langlands correspondence. Then there is an isomorphism
	$$(\rho_\fp|_{\Gal(\overline{\sF}_v/\sF_v)})^{\mathrm{ss}}\simeq \rec(\pi_v)^\mathrm{ss}\otimes \varepsilon^{\frac{1-n}{2}}$$
	after semisimplifications.
	
	By \cite[Proposition 3.4.2]{CHT08}, for any minimal prime ideal $\fp\subset \TT^{S,\mathrm{im}}_{O_L}(K_v)$ contained in $\fm$, the Galois representation $\rho_\fm\colon \Gal(\overline{\sF}/\sF)\to\GL_n(\Lambda)$ is isomorphic to the semisimple reduction of $\rho_\fp$. By Proposition \ref{prop-local-global-comp-hyperspecial}, $\rho_\fm$ is unramified at $v$. Let $I_F\subset \Gal(\overline{F}/F)$ be the inertia subgroup and $P_F\subset I_F$ be the wild inertia subgroup. Since $\ell$ is a banal prime, the representation $\rho_\fp|_{\Gal(\overline{F}/F)}$ is trivial on $P_F$ and sends elements in $I_F$ to unipotent matrices. Thus the representation $\pi_v$ is unipotent. By \cite[Corollary 3.3.3]{CHT08}, the $G(F)$-module $\cA_{O_L,\fm}\otimes_{O_L} \overline\QQ_\ell$ is semisimple. Therefore $\cA_{O_L,\fm}\otimes_{O_L} \overline\QQ_\ell$ is unipotent as all the irreducible submodules of $\cA_{O_L,\fm}\otimes_{O_L} \overline\QQ_\ell$ is unipotent. By \cite[1.5]{Vig01}, any irreducible subquotient of $\cA_\fm$ is unipotent. Therefore $\cA_{\fm}$ is unipotent.
	
	Let $\cH_{O_L}$ be the Iwahori--Hecke algebra with coefficients in $O_L$. Fix a minimal prime ideal $\fp$ of $\TT^{S,\text{im}}_{O_L}$ contained in $\fm$. By local-global compatibility in characteristic 0 coefficients, we know that the action of $Z(\cH_{O_L})$ on $\pi_v$ factors through the homomorphism $\xi_{\fp,v}\colon Z(\cH_{O_L})\to \overline\QQ_\ell$ defined by the conjugacy classe of $q^{\frac{1-n}{2}}\rho_\fp(\Frob_v)$ 
	in $(\hat{T}\git W)(\overline\QQ_\ell)$. By congruence, the homomorphism $\xi_{\fp,v}\colon Z(\cH_{O_L})\to \overline\QQ_\ell$ factors through $Z(\cH_{O_L})_{(\xi_{\fm,v})}$, where $Z(\cH_{O_L})_{(\xi_{\fm,v})}$ is the localization of $Z(\cH_{O_L})$ at the maximal ideal defining $\xi_{\fm,v}$. As $\fp$ runs through minimal prime ideals of $\TT^{S,\text{im}}_{O_L}$ lying below $\fm$, we see that the action of $Z(\cH_{O_L})$ on $\cA^{I}_{O_L,\fm}$ factors through the localization $Z(\cH_{O_L})_{(\xi_{\fm,v})}$. Since $\cA^{I}_{\fm}$ is finite-dimensional over $\Lambda$, we see that it is set theoretically supported on the point $\xi_{\fm,v}$.
\end{proof}

An element $\xi\in (\hat{T}\git W)(\Lambda)$ is called \emph{regular semisimple} if it is the conjugacy classes of an regular semisimple elements in $\hat{T}(\Lambda)$. Concretely, an element $x=\diag(x_1,\dots,x_n)\in\hat{T}(\Lambda)$ is regular semisimple if and only if $x_i\neq x_j$ for any $i\neq j$.

The Iwahori invariance of objects in $\Rep^{\widehat\unip}(G(F),\Lambda)^\Adm_{\xi}$ can be decomposed using the action of $\cR$. Let $\varsigma\colon \hat{T}\to \hat{T}\git W$ denote the projection. 

\begin{prop}\label{prop-decomp-R}
	Let $\xi\in (\hat{T}\git W)(\Lambda)$ be an element. Let $V\in\Rep^{\widehat\unip}(G(F),\Lambda)^\Adm_{\xi}$. There is a natural decomposition
	$$V^I=\bigoplus_{x\in \varsigma^{-1}(\xi)} V^{I,x},$$
	compatible with $\cR$-actions such that $V^{I,x}$ is set theoretically supported at $x\in \hat{T}(\Lambda)$ when viewed as an $\cR$-module.
\end{prop}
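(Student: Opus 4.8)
The plan is to obtain the decomposition from the idempotents attached to the finite map $\varsigma\colon\hat T\to\hat T\git W$ after completing at $\xi$; no input about $G(F)$-representations is needed beyond Proposition~\ref{prop-Bernstein-pres}.

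\emph{Step 1 (commutative algebra).} First I would record that $\cR$ is commutative, $\cR\cong\cO(\hat T)=\Lambda[\XX_\bullet(T)]$, and $Z(\cH)\cong\cO(\hat T\git W)=\Lambda[\XX_\bullet(T)]^W$ sits inside $\cR$; since $W$ is finite, $\cR$ is a finite $Z(\cH)$-algebra. Let $\fm_\xi\subset Z(\cH)$ be the maximal ideal of $\xi$ and $\widehat{Z(\cH)}_{\fm_\xi}$ its completion. Then $\widehat\cR_\xi:=\cR\otimes_{Z(\cH)}\widehat{Z(\cH)}_{\fm_\xi}$ is the $\fm_\xi\cR$-adic completion of $\cR$, a finite algebra over a complete Noetherian local ring, hence a finite product of complete local rings indexed by its maximal ideals. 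Those maximal ideals are exactly the primes of $\cR$ lying over $\fm_\xi$, i.e.\ the points of the set-theoretic fibre $\varsigma^{-1}(\xi)$; writing $\fm_x\subset\cR$ for the maximal ideal of $x$, this gives a canonical splitting $\widehat\cR_\xi\cong\prod_{x\in\varsigma^{-1}(\xi)}\widehat\cR_{\fm_x}$ together with a complete system of orthogonal idempotents $\{e_x\}$, all of this being independent of $V$.

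\emph{Step 2 (applying it to $V^I$).} Since $V$ is admissible, $V^I$ is a cohomologically bounded object of $D(\Lambda)$ with finite-dimensional cohomology, carrying an $\cR$-action through $\cR\subset\cH$; by hypothesis its $Z(\cH)$-support is $\{\xi\}$, so $\fm_\xi$ acts nilpotently on each $H^i(V^I)$. Hence the natural map $V^I\to V^I\otimes_{Z(\cH)}\widehat{Z(\cH)}_{\fm_\xi}$ is an equivalence (completion is flat, and is the identity on $\fm_\xi$-power-torsion finite-length modules), so the $\cR$-action on $V^I$ canonically upgrades to a $\widehat\cR_\xi$-action. Setting $V^{I,x}:=e_x\cdot V^I$ then produces $V^I=\bigoplus_{x\in\varsigma^{-1}(\xi)}V^{I,x}$ as a direct sum of $\cR$-submodules (using commutativity of $\cR$); and because $\widehat\cR_{\fm_x}$ is local with maximal ideal $\fm_x\widehat\cR_{\fm_x}$ while $V^{I,x}$ has finite-length cohomology, $\fm_x$ acts nilpotently on $H^\ast(V^{I,x})$, i.e.\ $V^{I,x}$ is set-theoretically supported at $x$ as an $\cR$-module. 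Naturality in $V\in\Rep^{\widehat\unip}(G(F),\Lambda)^\Adm_\xi$ is then automatic: the $e_x$ are fixed elements of $\widehat\cR_\xi$, and any morphism $V\to V'$ induces an $\cR$-linear, hence $\widehat\cR_\xi$-linear, map on $I$-invariants, which therefore respects the idempotent projections.

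\emph{Main obstacle.} I do not anticipate a genuine difficulty: the heart of the matter is the textbook splitting of a finite algebra over a complete (equivalently, Henselian) local ring into its local factors. The only points requiring care are the passage to the completion in the derived setting, which is harmless by the finite-length/boundedness remark above (it makes derived and classical completion agree), and arranging the idempotents inside $\widehat\cR_\xi$ rather than in a $V$-dependent Artinian quotient $\cR/\fm_\xi^N\cR$, so that functoriality in $V$ is manifest.
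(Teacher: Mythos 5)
Your proof is correct and is essentially the argument the paper intends: its one-line proof (``this is clear because the action of $\cR$ on $V^I$ is supported on the finite set $\varsigma^{-1}(\xi)$'') is exactly the complete-local-ring/idempotent decomposition you spell out, using finiteness of $\cR$ over $Z(\cH)$ and the hypothesis that $\fm_\xi$ acts nilpotently on the (finite) cohomology of $V^I$. The idempotents $e_x\in\widehat\cR_\xi$ you produce also recover the paper's alternative description $V^{I,x}\cong\Hom_{\Pro\Rep^{\widehat{\unip}}(G(F),\Lambda)}(\widehat{\delta}_x,V)$ with $\widehat{\delta}_x=(\cInd_I^{G(F)}\Lambda)\otimes_{\cR}\cR_x^\wedge$, stated immediately after the proposition.
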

\begin{proof}
	This is clear because the action of $\cR$ on $V^I$ is supported on the finite set $\varsigma^{-1}(\xi)$.
\end{proof}

For $x\in \hat{T}(\Lambda)$, denote
$$\widehat{\delta}_x=(\cInd_I^{G(F)}\Lambda)\otimes_{\cR}\cR_x^\wedge$$
as a Pro-object in $\Rep^{\widehat{\unip}}(G(F),\Lambda)$, where $\cR_x^\wedge$ is the completion of $\cR$ at $x$. We see that
$$(\cInd_I^{G(F)}\Lambda)\otimes_{Z(\cH)}(Z(\cH))^\wedge_\xi\cong \bigoplus_{x\in\varsigma^{-1}(\xi)}\widehat{\delta}_x$$
as $G(F)$-representations. By definition, for $V\in \Rep^{\widehat\unip}(G(F),\Lambda)^\Adm_\xi$, there is an isomorphism
$$V^I\cong\Hom_{\Pro\Rep^{\widehat{\unip}}(G(F),\Lambda)}((\cInd_I^{G(F)}\Lambda)\otimes_{Z(\cH)}Z(\cH)^\wedge_\xi, V).$$
The decomposition in Proposition \ref{prop-decomp-R} can be alternatively defined as
$$V^{I,x}=\Hom_{\Pro\Rep^{\widehat{\unip}}(G(F),\Lambda)}(\widehat{\delta}_x,V).$$

\subsection{Irreducible unipotent $\GL_n(F)$-representations with regular semisimple parameters}\label{subsection-irrep}

Let $n=n_1+\cdots+n_t$ be a partition of $n$. Let $P=P_{n_1,\dots,n_t}$ be the standard parabolic subgroup of $G$ with Levi subgroup $M\cong \GL_{n_1}\times\cdots\times\GL_{n_t}$. Let $W_M$ be the Weyl group $M$. For a smooth representation $V$ of $M(F)$, define the normalized parabolic induction as 
$$\nInd_{P(F)}^{G(F)}V\coloneqq \Ind_{P(F)}^{G(F)}(V\otimes\delta_P^{1/2}), $$
where $\delta_P^{1/2}$ is the square root of the modulus character of $P$ (determined by our choice of $\sqrt{q}\in\Lambda$). More concretely, the character $\delta_P^{1/2}$ factors though $M$ and is given by
$$\delta_P^{1/2}\colon (m_1,\dots,m_t)\mapsto |\det(m_1)|_F^{\frac{n-n_1}{2}}|\det(m_2)|_F^{\frac{n-2n_1-n_2}{2}}\cdots |\det(m_t)|_F^{\frac{n-2n_1-\cdots-2n_{t-1}-n_t}{2}}$$
for $(m_1,\dots,m_t)\in \GL_{n_1}(F)\times\cdots\times\GL_{n_t}(F)$. Here $|\cdot|_F\colon F^\times\to \Lambda$ is the unramified character sending $\varpi$ to $q^{-1}$. Let $v_F\colon F\to \ZZ\sqcup\{\infty\}$ be the valuation with $v_F(\varpi)=1$. Then $|\cdot|=q^{-v_F(\cdot)}$.

\begin{lemma}\label{lemma-jacquet-comp}
    Fix $x_1,\dots,x_t\in \Lambda^\times$. For each $i=1,\dots,t$, let $\chi_i\colon\GL_{n_i}(F)\to \Lambda^\times$ be an unramified character sending $m_i\in \GL_{n_i}(F)$ to $x_i^{v_F(\det(m_i))}$. Denote $\chi=\chi_1\otimes\cdots\otimes\chi_t\colon M(F)\to \Lambda^\times$. Then
	$$\big(\nInd_{P(F)}^{G(F)}\chi\big)_{N^-(F)}^\mathrm{ss}\simeq \bigoplus_{w\in {}^{W_M}W}\delta_B^{-1/2}(\tilde{\chi}\circ w)$$
	as $T(F)$-representations.
	Here $N^-$ is the unipotent radical of the opposite Borel subgroup $B^-$, ${}^{W_M}W$ is the set of elements in $W$ that have minimal length in their left $W_M$-cosets, and $\tilde\chi\colon T(F)\to\Lambda^\times$ is the unramified character corresponding to the element
	$$x_{\tilde{\chi}}=\diag(x_1q^{\frac{1-n_1}{2}},\dots,x_1q^{\frac{n_1-1}{2}},x_2q^{\frac{1-n_2}{2}},\dots,x_2q^{\frac{n_2-1}{2}},\dots,x_tq^{\frac{1-n_t}{2}},\dots,x_tq^{\frac{n_t-1}{2}})$$
    in $\hat{T}(\Lambda)$.
\end{lemma}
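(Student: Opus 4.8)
The plan is to compute the Jacquet module $\big(\nInd_{P(F)}^{G(F)}\chi\big)_{N^-(F)}$ by the Bernstein--Zelevinsky geometric lemma (the Mackey-type filtration of a parabolically induced representation restricted along another parabolic). Since we only care about the semisimplification as a $T(F)$-representation, the filtration computing it degenerates into a direct sum indexed by the double cosets $W_M\backslash W / W_T = W_M\backslash W$; choosing the minimal-length representatives gives exactly the set ${}^{W_M}W$. For each such $w$, the corresponding graded piece of the Jacquet module of $\nInd_{P(F)}^{G(F)}\chi$ along $B^-$ is a twist of $w^{-1}(\chi)$ by a product of modulus characters. So the first step is to invoke the geometric lemma, record that each of the $|{}^{W_M}W|$ orbits contributes one character to the semisimplified Jacquet module, and identify that character explicitly.

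The second step is the bookkeeping of modulus factors, which is where the precise normalization in the statement must be pinned down. Writing out the geometric lemma with our chosen normalization $\nInd = \Ind(-\otimes\delta_P^{1/2})$ and taking normalized Jacquet module $r_{N^-}^{\mathrm{norm}}(-) = (-)_{N^-}\otimes\delta_{B^-}^{-1/2}$, the graded piece indexed by $w$ is simply $w^{-1}(\chi)$ viewed as a character of $T(F)$ via the inclusion $T\subset M$, with \emph{no} residual modulus twist — this is the content of the normalized geometric lemma, and it is why normalized induction/restriction is the natural framework here. Unwinding $\delta_{B^-}^{-1/2} = \delta_B^{1/2}$ (since $\delta_{B^-} = \delta_B^{-1}$) then gives the stated formula $\bigoplus_{w} \delta_B^{-1/2}(\tilde\chi\circ w)$ once we check that the unramified character $\tilde\chi$ of $T(F)$ attached to the element $x_{\tilde\chi}\in\hat T(\Lambda)$ is precisely $\chi|_{T(F)}$ twisted by $\delta_B^{1/2}$ restricted to each $\GL_{n_i}$-block. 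Concretely: the character $\chi_i$ of $\GL_{n_i}(F)$ sends $m_i\mapsto x_i^{v_F(\det m_i)}$, and $\delta_{B_{n_i}}^{1/2}$ on the diagonal torus of $\GL_{n_i}$ is $\mathrm{diag}(t_1,\dots,t_{n_i})\mapsto \prod_j |t_j|^{(n_i+1)/2 - j} = \prod_j q^{(j - (n_i+1)/2)v_F(t_j)}$, so $(\chi_i\cdot\delta_{B_{n_i}}^{1/2})|_{T_{n_i}}$ corresponds to the element $\mathrm{diag}(x_i q^{(1-n_i)/2}, x_i q^{(3-n_i)/2},\dots, x_i q^{(n_i-1)/2})$ — exactly the $i$-th block of $x_{\tilde\chi}$. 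Taking the product over $i=1,\dots,t$ identifies $\tilde\chi = (\chi\cdot\delta_B^{1/2})|_{T(F)}$ composed appropriately, and the $W$-action on $\hat T(\Lambda)$ matches the $W$-action on characters of $T(F)$ by permuting coordinates.

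Assembling these, the semisimplified normalized Jacquet module is $\bigoplus_{w\in{}^{W_M}W} w^{-1}(\tilde\chi)$; since ${}^{W_M}W$ is a set of coset representatives and we may replace $w^{-1}$ by $w$ running over the same set (or its inverse set, which is ${}^{W_M}W$ read in the opposite convention — this needs a one-line check of which side the geometric lemma puts the Levi on), and un-normalizing by the factor $\delta_{B^-}^{1/2} = \delta_B^{-1/2}$ to return to the unnormalized $(-)_{N^-(F)}$ gives exactly $\bigoplus_{w\in{}^{W_M}W}\delta_B^{-1/2}(\tilde\chi\circ w)$.

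The main obstacle I expect is purely notational: getting every $\delta$-factor, every sign in the exponents, and the direction of the $W$-action (left vs.\ right cosets, $w$ vs.\ $w^{-1}$) consistent with the normalization conventions fixed earlier in the section. The representation-theoretic input — the geometric lemma and the fact that all graded pieces are characters because $\chi$ is a character of a torus-containing Levi — is standard (see e.g.\ \cite{Bernstein-Zelevinsky} or \cite[\S 5]{Achar-Riche} in the Hecke-algebra incarnation), so no genuinely hard analysis is needed; the proof is a careful unwinding of the geometric lemma in the banal setting, where semisimplicity lets us ignore the filtration structure entirely.
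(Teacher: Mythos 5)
Your proposal is correct and takes essentially the same route as the paper: both compute the Jacquet module via the Bruhat filtration on $\nInd_{P(F)}^{G(F)}\chi$ (the geometric lemma), and both identify $\tilde\chi=\chi|_{T(F)}\cdot\delta_{B(F)\cap M(F)}^{1/2}$. The only cosmetic difference is that you package the modulus bookkeeping by citing the normalized form of the geometric lemma, while the paper carries out the root-theoretic computation of $\delta_P^{1/2}\cdot\delta_{B^-}/\delta_{B^-(w)}$ by hand following Cartier; you also correctly flag the $w$ vs.\ $w^{-1}$ convention as the only point needing a one-line check.
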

\begin{proof}
	The proof is similar to the proof of \cite[Theorem 3.5]{Cartier-rep}. By Bruhat decomposition, we have $G(F)=\bigsqcup_{w\in {}^{W_M} W} P(F)wB^-(F)$. For $w\in {}^{W_M} W$, denote $d(w)=\dim P\backslash PwB^-$. This defines a $B^-(F)$-stable filtration $I_\bullet$ of $\nInd_{P(F)}^{G(F)}(\chi_1\otimes\dots\otimes\chi_t)$ with
	$$I_s=\{f\in \nInd_{P(F)}^{G(F)}(\chi_1\otimes\dots\otimes\chi_t)\,|\, f\text{ vanishes on }\bigsqcup_{d(w)< s}P(F)wB^-(F)\}.$$
	The grading pieces are given by $I_s/I_{s+1}=\bigoplus_{d(w)=s}J_w$, where $J_w$ is the space of functions $f$ on $P(F)wB^-(F)$ that are compactly supported modulo $P(F)$ and satisfy $f(pwb)=\delta_P^{1/2}(p)\chi(p)f(wb)$ for $p\in P(F)$ and $b\in B(F)$. Denote $B^-(w)=B^-\cap w^{-1}Pw$. We see that $J_w\cong \cInd_{B^-(w)(F)}^{B^-(F)}((\delta_{P}^{1/2}\chi\circ w)|_{T(F)})$. By the proof of \cite[Theorem 3.5]{Cartier-rep}, we know that
	$$(J_w)_{N^-(F)}\simeq (\delta_{P}^{1/2}\chi|_{T(F)}\circ w)\cdot \frac{\delta_{B^-(F)}}{\delta_{B^-(w)(F)}}$$
	as $T(F)$-representations. 
	
	Let $\Phi$ (resp. $\Phi_M$) be the set of roots of $G$ (resp. $M$). Let $\Phi_{U_P}$ be the set of roots appearing in the Lie algebra of the unipotent radical $U_P$ of $P$. Denote $\Phi_P=\Phi_M\sqcup\Phi_{U_P}$ Note that $\Phi=\Phi_M\sqcup \Phi_{U_P}\sqcup (-\Phi_{U_P})$. For a weight $\lambda\in \XX^\bullet(T)$, denote by $|\lambda|$ the composition $T(F)\xrightarrow{\lambda}F^\times\xrightarrow{|\cdot|_F}\Lambda^\times$. We see that 
	$$\delta_P|_{T(F)}=\prod_{\alpha\in \Phi_{U_P}}|\alpha|,\quad \delta_{B^-(F)}=\prod_{\alpha<0}|\alpha|,\quad\text{and}\quad\delta_{B^-(w)(F)}=\prod_{\alpha<0,w\alpha\in\Phi_P}|\alpha|.$$
	It follows that
	$$\begin{aligned}
		(\delta_P^{1/2}|_{T(F)}\circ w)\cdot \frac{\delta_{B^-(F)}}{\delta_{B^-(w)(F)}} & =\prod_{w\alpha\in \Phi_{U_P}}|\alpha|^{1/2}\cdot\prod_{\alpha<0}|\alpha|\cdot\prod_{\alpha<0,w\alpha\in \Phi_P}|\alpha|^{-1}\\
		&=\prod_{w\alpha\in -\Phi_{U_P}}|\alpha|^{-1/2}\cdot\prod_{\alpha<0,w\alpha\in -\Phi_{U_P}}|\alpha|\\
		&=\prod_{\alpha<0,w\alpha\in -\Phi_{U_P}}|\alpha|^{1/2}\cdot\prod_{\alpha<0,w\alpha\in \Phi_{U_P}}|\alpha|^{1/2}\\
		&=\delta_{B}^{-1/2}\cdot\prod_{\alpha>0,w\alpha\in \Phi_M}|\alpha|^{1/2}.
	\end{aligned}$$
	Since $w\in {}^{W_M}W$, we have $\prod_{\alpha>0,w\alpha\in \Phi_M}|\alpha|^{1/2}=\delta^{1/2}_{B(F)\cap M(F)}\circ w$. The Lemma follows as $\tilde\chi=\chi|_{T(F)}\cdot \delta^{1/2}_{B(F)\cap M(F)}$.
\end{proof}

\begin{lemma}\label{lemma-Jacquet-vs-Iwahori}
	Let $(V,\iota)\in\Rep(G(F),\Lambda)$ be an admissible representation of $G(F)$ concentrated in degree $0$. Let $\lambda\in \XX_\bullet^-(T)$ be an anti-dominant coweight. Then for any $y\in V^I$, we have
	$$\iota(1_{I\varpi^\lambda I})y-q^{-\langle2\rho,\lambda\rangle} \iota(\varpi^{\lambda}) y\in V(N^-(F)).$$
	Here $V(N^-(F))$ is the $\Lambda$-submodule of $V$ generated by $nz-z$ for $n\in N^-(F)$ and $z\in V$.
\end{lemma}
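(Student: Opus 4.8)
The plan is to reduce the statement to the action on the Jacquet module $V_{N^-(F)}$ and to exploit the fact that, on Iwahori-invariants, the convolution operator $1_{I\varpi^\lambda I}$ and the pointwise action of $\varpi^\lambda$ differ only by the geometry of the cell decomposition of $I\varpi^\lambda I$ into cosets of $I$ and an Iwasawa-type argument. Concretely, since $\lambda$ is anti-dominant, the set $I\varpi^\lambda I$ decomposes as a finite union of left $I$-cosets $\bigsqcup_j u_j\varpi^\lambda I$ where $u_j$ runs over representatives of $I/(I\cap \varpi^\lambda I\varpi^{-\lambda})$ inside the unipotent radical $N(O_F)$ (using the Iwahori factorization $I = N^-(\varpi O_F)\cdot T(O_F)\cdot N(O_F)$ and anti-dominance, so that $\varpi^{-\lambda}(I\cap N)\varpi^\lambda \subseteq I\cap N$). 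The number of such cosets is $q^{\langle 2\rho,-\lambda\rangle}=q^{-\langle 2\rho,\lambda\rangle}\cdot q^{\langle 4\rho,\lambda\rangle}$ — I will need to pin down this index carefully; it is a standard computation that the index $[I : I\cap \varpi^\lambda I\varpi^{-\lambda}]$ equals $q^{-\langle 2\rho,\lambda\rangle}$ for $\lambda$ anti-dominant. Hence $\iota(1_{I\varpi^\lambda I})y = \sum_j \iota(u_j)\iota(\varpi^\lambda) y$.

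The key step is then to show that each term $\iota(u_j)\iota(\varpi^\lambda)y$ is congruent to $\iota(\varpi^\lambda)y$ modulo $V(N^-(F))$ — but this is the wrong group. The correct mechanism is to push the unipotent part to the correct side: since the $u_j$ lie in $N(O_F)\subseteq N(F)$ and we want congruences modulo $V(N^-(F))$, I will instead rewrite the coset decomposition using the opposite Iwahori factorization so that the ``extra'' elements land in $N^-(F)$. Specifically, I will use that for anti-dominant $\lambda$ one can choose coset representatives of $I\varpi^\lambda I / I$ of the form $\varpi^\lambda \bar u_j$ with $\bar u_j \in N^-(F)$ ranging over $N^-(F)/(N^-(F)\cap \varpi^{-\lambda}I\varpi^\lambda)$, again of cardinality $q^{-\langle 2\rho,\lambda\rangle}$. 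Then $\iota(1_{I\varpi^\lambda I})y = \sum_j \iota(\varpi^\lambda)\iota(\bar u_j) y$, and since $\iota(\bar u_j)y - y \in V(N^-(F))$ and $V(N^-(F))$ is $\iota(\varpi^\lambda)$-stable (because $\varpi^\lambda N^-(F)\varpi^{-\lambda}\subseteq N^-(F)$... actually $\varpi^\lambda$ normalizes $N^-$, so $V(N^-(F))$ is preserved), we get $\iota(1_{I\varpi^\lambda I})y \equiv q^{-\langle 2\rho,\lambda\rangle}\iota(\varpi^\lambda)y \pmod{V(N^-(F))}$, as desired.

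The main obstacle I anticipate is getting the coset decomposition of $I\varpi^\lambda I$ into left $I$-cosets written in the form $\varpi^\lambda\bar u_j I$ with $\bar u_j\in N^-(F)$ — the naive decomposition gives $N^+$-representatives on the left, and converting requires using the Iwahori factorization of $I$ together with anti-dominance to see which directions get contracted and which get expanded under conjugation by $\varpi^\lambda$. I will also need to be careful that the finitely many representatives $\bar u_j$ can genuinely be taken in $N^-(F)$ (not merely that the extra factor lies in $V(N^-(F))$ after some manipulation); this is where anti-dominance of $\lambda$ is used in an essential way, guaranteeing $\varpi^{-\lambda}I\varpi^\lambda \cap N^- \supseteq I\cap N^-$ so the index count is correct. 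Once the decomposition is in hand, the stability of $V(N^-(F))$ under $\iota(\varpi^\lambda)$ and the defining property of the Jacquet module finish the argument immediately. An alternative, cleaner route — which I would fall back on if the explicit coset bookkeeping becomes unwieldy — is to pass to the Jacquet module directly: $V\to V_{N^-(F)}$ is a map of modules over the Iwahori--Hecke algebra acting through its Bernstein quotient, and on $V_{N^-(F)}$ the operator $1_{I\varpi^\lambda I}$ acts (up to the normalization $q^{-\langle 2\rho,\lambda\rangle}=\delta_{B^-}^{-1}(\varpi^\lambda)$ coming from the Jacquet functor's non-normalization) as $\iota(\varpi^\lambda)$; this is essentially the content of Casselman's computation relating the Iwahori--Hecke action to the torus action on Jacquet modules, and it would let me cite a standard reference rather than redo the combinatorics.
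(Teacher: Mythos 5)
Your plan is correct and follows essentially the same route as the paper: the paper observes that $\iota(\varpi^\lambda)y$ is fixed by $(I\cap T)(I\cap N)$ for $\lambda$ anti-dominant, uses the Iwahori factorization to collapse $\int_I\iota(g\varpi^\lambda)y\,\mathrm{d}g$ to an integral over $I\cap N^-$, and then each $\iota(n)\iota(\varpi^\lambda)y - \iota(\varpi^\lambda)y$ lies in $V(N^-(F))$, with $\mathrm{Vol}(I\varpi^\lambda I)=q^{-\langle 2\rho,\lambda\rangle}$ giving the constant; your discrete coset version of this is the same argument.

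One slip worth fixing in your bookkeeping: your own observation $\varpi^{-\lambda}(I\cap N)\varpi^\lambda\subseteq I\cap N$ is equivalent to $I\cap N\subseteq\varpi^\lambda I\varpi^{-\lambda}$, so for anti-dominant $\lambda$ the stabilizer $I\cap\varpi^\lambda I\varpi^{-\lambda}$ already contains $T(O_F)\cdot(I\cap N)$, and the coset representatives of $I/(I\cap\varpi^\lambda I\varpi^{-\lambda})$ live directly in $I\cap N^-$. Thus your first paragraph's claim that they can be taken in $N(O_F)$ is backwards, and there is no nontrivial ``conversion'' from $N$-reps to $N^-$-reps to perform; you simply conjugate $g_j\varpi^\lambda=\varpi^\lambda(\varpi^{-\lambda}g_j\varpi^\lambda)$ with $g_j\in I\cap N^-$. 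Relatedly, the finite index set in your second paragraph should be $\varpi^{-\lambda}(I\cap N^-)\varpi^\lambda\big/(I\cap N^-)$, not the infinite quotient $N^-(F)/(N^-(F)\cap\varpi^{-\lambda}I\varpi^\lambda)$ you wrote; your count $q^{-\langle 2\rho,\lambda\rangle}$ is nonetheless correct, and with these corrections the argument closes exactly as the paper's does.
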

\begin{proof}
	Take $y\in V^I$. For $\lambda\in \XX_\bullet^-(T)$, we have $\varpi^{-\lambda}(N(F)\cap I)\varpi^{\lambda}\subset N(F)\cap I$.  Therefore $\iota(\varpi^\lambda)y\in V^{N(F)\cap I}$ for $\lambda\in \XX_\bullet^-(T)$. By Iwahori decomposition, we have
	$$\int_{I}\iota(g\varpi^\lambda)y\mathrm{d} g=\int_{I\cap N^-(F)}\iota(n\varpi^\lambda)y\mathrm{d}n.$$
	It follows that 
	$$\iota(1_{I\varpi^\lambda I})y-\mathrm{Vol}(I\varpi^\lambda I) \iota(\varpi^\lambda)y\in V(N^-(F)),$$
	where the measure on $G(F)$ is normalized so that $\mathrm{Vol}(I)=1$. The claim follows as $\mathrm{Vol}(I\varpi^\lambda I)=q^{-\langle \lambda,2\rho\rangle}.$
\end{proof}

\begin{cor}\label{cor-parabolic-R}
	Let notations be as in Lemma \ref{lemma-jacquet-comp}. Assume that $x_{\tilde\chi}\in\hat{T}(\Lambda)$ is regular semisimple. Then there is an isomorphism
	$$(\nInd_{P(F)}^{G(F)}\chi)^I\simeq \bigoplus_{w\in {}^{W_M}W}\Lambda_{w^{-1}(x_{\tilde{\chi}})}$$
	of $\cR$-modules. Here $\Lambda_{w^{-1}(x_{\tilde{\chi}})}$ is the skyscraper $\cR$-module supported at $w^{-1}(x_{\tilde{\chi}})$.
\end{cor}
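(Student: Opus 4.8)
The plan is to compute $(\nInd_{P(F)}^{G(F)}\chi)^I$ by identifying it, as an $\cR$-module, with the $T(O_F)$-invariants of the Jacquet module of $\nInd_{P(F)}^{G(F)}\chi$ along $N^-$; Lemmas~\ref{lemma-jacquet-comp} and~\ref{lemma-Jacquet-vs-Iwahori} supply the two halves of this comparison.

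Write $V=\nInd_{P(F)}^{G(F)}\chi$, a smooth admissible representation concentrated in degree $0$, and unipotent (all its constituents are Iwahori-spherical). First I would record that $\dim_\Lambda V^I=\#\,{}^{W_M}W$: the Iwasawa decomposition $G(F)=P(F)G(O_F)$ identifies $P(F)\backslash G(F)/I$ with $P(\FF_q)\backslash G(\FF_q)/B(\FF_q)=W_M\backslash W$, with representatives the minimal-length elements $\dot w$, $w\in{}^{W_M}W$; since $\delta_P^{1/2}\chi$ is trivial on every compact subgroup of $M(F)$, for each $w$ there is a one-dimensional space of $I$-fixed functions supported on $P(F)\dot wI$, and these span $V^I$.

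Now consider the Jacquet map $j\colon V^I\hookrightarrow V\twoheadrightarrow V_{N^-(F)}$; since $T(O_F)\subseteq I$ it lands in $(V_{N^-(F)})^{T(O_F)}$, a module over $\Lambda[T(F)/T(O_F)]=\cR$. By Lemma~\ref{lemma-Jacquet-vs-Iwahori}, for anti-dominant $\lambda$ the operator $\theta_\lambda=q^{\langle\rho,\lambda\rangle}1_{I\varpi^\lambda I}$ acts on $V^I$, modulo $V(N^-(F))$, by $\delta_B^{1/2}(\varpi^\lambda)\,\varpi^\lambda$; as the $\varpi^\lambda$-action on $V_{N^-(F)}$ is invertible this extends to all $\lambda$, so $j$ is $\cR$-linear once the target carries the $\delta_B^{1/2}$-twist of its natural $T(F)$-module structure. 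By Lemma~\ref{lemma-jacquet-comp}, $V_{N^-(F)}^{\mathrm{ss}}\simeq\bigoplus_{w\in{}^{W_M}W}\delta_B^{-1/2}(\tilde\chi\circ w)$ as $T(F)$-modules; since $x_{\tilde\chi}$ is regular semisimple these characters are pairwise distinct, so $V_{N^-(F)}$ — a finite-length module for the abelian group $T(F)$ with pairwise distinct Jordan--Hölder constituents — is semisimple, equal to $\bigoplus_w\delta_B^{-1/2}(\tilde\chi\circ w)$, and (all constituents being unramified) equal to $(V_{N^-(F)})^{T(O_F)}$. On the $w$-summand the twisted $\cR$-action sends $\theta_\lambda$ to $\delta_B^{1/2}(\varpi^\lambda)\delta_B^{-1/2}(\varpi^\lambda)x_{\tilde\chi}(w\lambda)=(w^{-1}x_{\tilde\chi})(\lambda)$, so that summand is the skyscraper $\Lambda_{w^{-1}(x_{\tilde\chi})}$. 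Hence $(V_{N^-(F)})^{T(O_F)}\cong\bigoplus_{w\in{}^{W_M}W}\Lambda_{w^{-1}(x_{\tilde\chi})}$ as $\cR$-modules, of dimension $\#\,{}^{W_M}W$.

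It remains to prove $j$ is an isomorphism; as source and target have equal finite $\Lambda$-dimension, it suffices to show $j$ is injective, i.e. $V^I\cap V(N^-(F))=0$. This is the crux. I would reduce it to $P=B$: the $i$-th block of $x_{\tilde\chi}$ is a segment of consecutive powers of $q$, and the character $\chi_i$ of $\GL_{n_i}(F)$ is the unique irreducible submodule (or, for the opposite ordering convention, the unique irreducible quotient) of the unramified principal series $\nInd_{B_i}^{\GL_{n_i}}(\psi_i)$ attached to that block; by transitivity of normalized induction and exactness of $\nInd_{P(F)}^{G(F)}$, $V$ is then a subrepresentation (resp.\ a quotient) of $\nInd_{B}^{G(F)}\tilde\chi_0$, where $\tilde\chi_0$ is the unramified character of $T(F)$ with parameter $x_{\tilde\chi}$. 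Since the Jacquet functor is exact and $(-)^I$ is exact on unipotent representations (the banal hypothesis makes $\cInd_I^{G(F)}\Lambda$ a projective generator there), the map $j$ for $V$ is the restriction (resp.\ a quotient) of the analogous map for $\nInd_B^{G(F)}\tilde\chi_0$; and the latter is the classical isomorphism (going back to Borel and Casselman) between the Iwahori-fixed vectors of an unramified principal series and its opposite Jacquet module. Therefore $j$ is injective, hence an isomorphism of $\cR$-modules, which is the assertion. I expect the main work to lie in this last step — pinning down the normalizations in the reduction to the principal series from $B$, and invoking (or reproving in the banal setting) the classical description of its Iwahori-fixed vectors.
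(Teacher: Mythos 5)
Your proposal is correct and is in essence the same argument the paper gives: both rest on Lemmas~\ref{lemma-jacquet-comp} and \ref{lemma-Jacquet-vs-Iwahori} together with the Borel--Casselman isomorphism $V^I\xrightarrow{\sim}(V_{N^-(F)})^{T(O_F)}$. Where you differ is how that last ingredient enters. The paper simply cites \cite[Theorem~3.7]{Cartier-rep} for the decomposition $V=V^I\oplus V(N^-(F))$ applied directly to $V=\nInd_{P(F)}^{G(F)}\chi$, so it never needs to reduce to $P=B$, and your dimension count $\dim V^I=\#\,{}^{W_M}W$ is subsumed rather than used. You instead establish injectivity of the Jacquet map $j$ by embedding/quotienting $V$ into/from a full unramified principal series and citing Borel--Casselman there. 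That route works but wants one more word of care: for the parameter $x_{\tilde\chi}$ exactly as written in Lemma~\ref{lemma-jacquet-comp}, each $\chi_i$ is the unique irreducible \emph{quotient} of $\nInd_{B_i}^{\GL_{n_i}}\psi_i$, so $V$ is a quotient $W\twoheadrightarrow V$ of $W=\nInd_B^G\tilde\chi_0$; in the quotient case the commuting square relating $j_V$ to $j_W$ does not by itself give injectivity of $j_V$, and one must also invoke Borel--Casselman for the kernel $K=\ker(W\twoheadrightarrow V)$, or else reverse the ordering of the $\psi_i$ so that each $\chi_i$ is a \emph{submodule} and $V$ embeds into the (Weyl-conjugate) principal series, in which case injectivity of $j_V$ is immediate from injectivity of $j_W$. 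You already anticipate this normalization issue. Either patch closes the gap, and the rest of your computation of the $\cR$-module structure matches the paper.
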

\begin{proof}
	By \cite[Theorem 3.7]{Cartier-rep}, we have $\nInd_{P(F)}^{G(F)}\chi=(\nInd_{P(F)}^{G(F)}\chi)^I\oplus (\nInd_{P(F)}^{G(F)}\chi)(N^-(F))$. The claim follows from Lemma \ref{lemma-jacquet-comp} and Lemma \ref{lemma-Jacquet-vs-Iwahori}. Note that all $w^{-1}(x_{\tilde{\chi}})$'s are distinct as $x_{\widetilde\chi}$ is regular semisimple. Hence there is no need to take semisimplications.
\end{proof}

We define some index sets, which will be important to state our main result.

\begin{defn}\label{def-P_x}
    Let $\xi\in (\hat{T}\git W)(\Lambda)$ be a regular semisimple element. Let $E_{\xi}\subseteq \Lambda^\times$ be the set of eigenvalues of an element in the conjugacy class $\xi$. Define a set
$$Q_{\xi}=\{(a,b)\in E_{\xi}\times E_{\xi}\,|\, a=qb\}.$$
Let $x=\diag(x_1,\dots,x_n)\in \varsigma^{-1}(\xi)$ be a lift of $\xi$ in $\hat{T}(\Lambda)$. Define  
$$P_x=\{(a,b)\in Q_{\xi}\,|\, \text{if $x_i=a$ and $x_j=b$, then }i<j \}$$
as a subset of $Q_{\xi}$.
\end{defn}

By our banal assumption, there is no loop in $Q_{\xi}$, i.e. there does not exist $a_1,\dots,a_t\in E_{\xi}$ such that $(a_1,a_2), (a_2,a_3),\dots,(a_{t-1},a_t), (a_t,a_1)\in Q_{\xi}$.

We recall the notion of Whittaker-genericness.

\begin{defn}
	We say an irreducible smooth representation $V$ of $G(F)$ over $\Lambda$ is \emph{Whittaker-generic} if it has non-zero Whittaker coefficients, i.e. $\Hom_{N(F)}(V,\psi)\neq 0$ where $\psi\colon N(F)\to \Lambda^\times$ is a non-degenerate character.
\end{defn}
\begin{rmk}
	The notion of Whittaker-generic is usually called generic, for example, in \cite{CHT08}. We choose to use a different name to distinguish it with \emph{cohomologically generic} defined in Definition \ref{def-Gal-generic}.
\end{rmk}

We show that the decomposition in Proposition \ref{prop-decomp-R} detects Whittaker-generic irreducible representations.

\begin{prop}\label{prop-Whit-generic-comp}
	Let $\xi\in (\hat{T}\git W)(\Lambda)$ be a regular semisimple element. There is a unique (up to isomorphism) Whittaker-generic irreducible smooth $G(F)$-representation in $\Rep^{\widehat\unip}(\GL_n(F),\Lambda)_{\xi}^{\Adm,\heartsuit}$, denoted by $\St_{\xi}$. Let $x\in \varsigma^{-1}(\xi)$ be an element with $P_x=Q_{\xi}$. Then for any irreducible representation $V\in \Rep^{\widehat\unip}(\GL_n(F),\Lambda)_{\xi}^{\Adm,\heartsuit}$, we have
	$$\dim_\Lambda V^{I,x} =\left\{\begin{aligned}
	0, \text{ if }V\not\simeq \St_{\xi}, \\
	1, \text{ if }V\simeq \St_{\xi}.
\end{aligned}\right.$$
\end{prop}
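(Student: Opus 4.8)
The plan is to combine the Bernstein--Zelevinsky classification of irreducible unipotent $\GL_n(F)$-representations with the Jacquet-module computation of Lemma~\ref{lemma-jacquet-comp} and Corollary~\ref{cor-parabolic-R}. Since $\xi$ is regular semisimple, write its eigenvalue set $E_\xi = \{\alpha_1,\dots,\alpha_n\}$ (all distinct). Form the graph on $E_\xi$ whose edges are $Q_\xi$; by the banal assumption this graph is a disjoint union of ``segments'' (chains), i.e. $E_\xi$ decomposes into maximal chains $\alpha, q^{-1}\alpha, q^{-2}\alpha,\dots$ in the sense of Zelevinsky segments. Any irreducible $V\in \Rep^{\widehat\unip}(\GL_n(F),\Lambda)^{\Adm,\heartsuit}_\xi$ is, by Bernstein--Zelevinsky (in the banal setting, e.g. via \cite{Vig01} or \cite{Cartier-rep}), the unique irreducible quotient (Langlands/Zelevinsky quotient) of a standard module $\nInd_{P(F)}^{G(F)}(\chi_1\otimes\cdots\otimes\chi_t)$ attached to a multisegment partitioning $E_\xi$, where each $\chi_i$ corresponds to a sub-chain (sub-segment). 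The Whittaker-generic one $\St_\xi$ is exactly the one attached to the multisegment consisting of all the maximal chains — equivalently, the full induced representation $\nInd$ from the ``longest'' segments is irreducible and generic, because genericity of a Zelevinsky quotient forces each segment to be maximal (no two segments are ``linked'').

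First I would establish uniqueness and existence of $\St_\xi$: among the finitely many multisegments on $E_\xi$ compatible with the chain structure, exactly one yields a generic irreducible constituent, namely the one whose segments are the maximal chains; for this multisegment the normalized induction is already irreducible (standard for banal coefficients and regular semisimple parameters, since the inducing data are in ``general position'' up to the chains), and it is generic by the classical criterion that the full induced module of a product of Steinberg-type pieces supported on distinct linked segments is generic iff the segments are maximal. This gives the first sentence of the Proposition.

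Next, for the dimension count, the key tool is Corollary~\ref{cor-parabolic-R}: for a standard module $M = \nInd_{P(F)}^{G(F)}\chi$ we have $M^I \simeq \bigoplus_{w\in {}^{W_M}W} \Lambda_{w^{-1}(x_{\tilde\chi})}$ as $\cR$-modules, so $\dim_\Lambda M^{I,x} = \#\{w\in {}^{W_M}W : w^{-1}(x_{\tilde\chi}) = x\}$. Since $\xi$ is regular semisimple the $W$-orbit of $x_{\tilde\chi}$ is free, so this count is either $0$ or $1$: it is $1$ precisely when $x$ lies in the $W$-orbit of $x_{\tilde\chi}$, and then the unique $w$ is determined. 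Now I would argue: the hypothesis $P_x = Q_\xi$ pins down the coordinate ordering of $x = \diag(x_1,\dots,x_n)$ to be compatible with every relation $x_i = q x_j \Rightarrow i<j$; the element $x_{\tilde\chi}$ attached to the maximal-chain multisegment (the $\St_\xi$ data) has precisely this property by construction of the $x_{\tilde\chi}$ formula in Lemma~\ref{lemma-jacquet-comp} — each segment contributes a block $x q^{(1-n_i)/2},\dots, x q^{(n_i-1)/2}$ in increasing-exponent order, and across blocks there are no $q$-relations since the segments are maximal. Hence for $M = $ the standard module of $\St_\xi$, exactly one $w$ (namely $w = e$, after identifying $x$ with $x_{\tilde\chi}$ itself via the $P_x = Q_\xi$ normalization) contributes, giving $\dim M^{I,x} = 1$; and since $M$ is irreducible here, $M = \St_\xi$ and $\dim \St_\xi^{I,x} = 1$. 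For any other irreducible $V$, realize it as the Zelevinsky quotient of a standard module $M'$ attached to a strictly finer multisegment (some maximal chain is broken into $\geq 2$ segments). Then the $x_{\tilde\chi'}$ produced by Lemma~\ref{lemma-jacquet-comp} has, within some broken maximal chain, two consecutive eigenvalues $a = qb$ appearing in blocks arranged so that $a$ comes \emph{after} $b$ in the ordering dictated by $x_{\tilde\chi'}$ — wait, more carefully: because the broken segment's blocks are concatenated in the fixed left-to-right order of the parabolic $P'$, the pair witnessing the break sits in the ``wrong'' relative order compared to $x$, i.e. no $w$ can send $x_{\tilde\chi'}$ to $x$ while respecting that $P_x = Q_\xi$. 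Thus $\dim (M')^{I,x} = 0$ for every standard module $M'$ other than the one for $\St_\xi$, and since $V$ is a subquotient of such an $M'$ (its composition factors are $V$ plus pieces from coarser-but-not-equal multisegments, none of which is $\St_\xi$), $\dim V^{I,x} = 0$. Finally, to go from ``$\dim (M')^{I,x}=0$ for the standard module'' to ``$\dim V^{I,x}=0$'', I use that $(-)^I$ and then the localization $(-)^{I,x}$ are exact, so dimensions add over composition series; since $V$ occurs in $M'$ and $(M')^{I,x}=0$, certainly $V^{I,x}=0$.

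The main obstacle I expect is the precise combinatorial matching in the previous paragraph: showing rigorously that the ordering condition $P_x = Q_\xi$ is met by $x_{\tilde\chi}$ \emph{only} for the maximal-chain multisegment, and never for a strictly finer one. This requires unwinding the formula for $x_{\tilde\chi}$ in Lemma~\ref{lemma-jacquet-comp} against Definition~\ref{def-P_x}: for a finer multisegment, some maximal chain $\alpha_0, q^{-1}\alpha_0, \dots$ is split into a segment ending at $q^{-j}\alpha_0$ and a segment starting at $q^{-j-1}\alpha_0$, and in $x_{\tilde\chi'}$ these two segments occupy disjoint blocks; whichever block ordering the parabolic imposes, one checks that the resulting permutation cannot be of the form $w^{-1}(x_{\tilde\chi'}) = x$ with $x$ satisfying $P_x = Q_\xi$ — concretely because $P_x$ encodes ``$a=qb \Rightarrow$ index of $a <$ index of $b$'' and the Langlands/Zelevinsky normalization of the standard module places the two halves of a broken chain so that this fails for the break pair, regardless of $w$, since $w$ would have to simultaneously sort within blocks (forced) and permute blocks, but the break pair straddles two blocks in a fixed relative arrangement. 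I would carry this out by a direct induction on the number of segments, using that there are no loops in $Q_\xi$ (stated after Definition~\ref{def-P_x}) so the chain structure is genuinely a forest of paths, making ``maximal chain'' well-defined and the splitting argument clean. An alternative, perhaps cleaner, route for this last step is to invoke the Bernstein--Zelevinsky description of $\St_\xi$ as the unique irreducible with a Whittaker model and separately compute $\dim \St_\xi^{I,x}$ via the geometric lemma / Casselman's criterion directly, then observe $\sum_V (\dim V^{I,x})\cdot[V] = [\,$something of total $I$-invariant dimension $1\,]$ in the Grothendieck group, forcing the stated dichotomy; but the multisegment bookkeeping above is the version I would present.
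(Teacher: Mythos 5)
Your proposal follows the same broad strategy as the paper -- Bernstein--Zelevinsky classification plus the $\cR$-module computation of Corollary~\ref{cor-parabolic-R} -- but there are concrete errors in the bookkeeping that break the argument.

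First, the identification of $\St_\xi$ is inverted. With the formula of Lemma~\ref{lemma-jacquet-comp}, the block attached to a segment of length $n_i\geq 2$ is $(x_i q^{(1-n_i)/2},\dots,x_i q^{(n_i-1)/2})$, i.e.\ in \emph{increasing} $q$-power order; so for the within-block pair $(a,b)\in Q_\xi$ with $a=qb$, the index of $a$ in $x_{\tilde\chi}$ is \emph{larger} than that of $b$, and hence $(a,b)\notin P_{x_{\tilde\chi}}$. Consequently, if some segment has length $\geq 2$, then $P_{x_{\tilde\chi}}\ne Q_\xi$ (indeed $P_{x_{\tilde\chi}}\cap Q_\xi$ misses every within-block pair), so your claim that ``$w=e$ contributes, after identifying $x$ with $x_{\tilde\chi}$ via the $P_x=Q_\xi$ normalization'' is false. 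The irreducible quotient of $\nInd_P(\chi_1\otimes\cdots\otimes\chi_t)$ with $\chi_i$ a character on a \emph{maximal} segment is the Zelevinsky-dual of $\St_\xi$, i.e.\ the most \emph{non}-generic constituent, not $\St_\xi$. The paper instead realizes $\St_\xi$ as the irreducible quotient of the \emph{Borel} induction $\nInd_B\chi_{x'}$ for $x'$ with $P_{x'}=Q_\xi$, and gets uniqueness of the generic constituent cleanly from $\dim\Hom_{N(F)}(\nInd_B\chi_{x'},\psi)=1$ together with $t$-exactness of $\Hom_{N(F)}(-,\psi)$ -- not from irreducibility of any maximal-segment module.

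Second, your ``break pair'' argument for non-generic $V$ is aiming at the wrong pair and, as you acknowledge mid-sentence, does not close: the break pair straddles two blocks, and elements of ${}^{W_M}W$ permute blocks freely, so nothing pins down the relative order of a cross-block pair. The paper's decisive combinatorial observation is about a \emph{within-block} pair: a non-generic irreducible $V$ appears in some $\nInd_P\chi$ with $P\neq B$, and since any $w\in{}^{W_M}W$ satisfies $w^{-1}(\Phi_M^+)\subset\Phi^+$ (equivalently $w^{-1}$ preserves the relative order of indices inside each Levi block), the within-block $q$-pair coming from the increasing-order formula for $x_{\tilde\chi}$ remains in the wrong order in $w^{-1}(x_{\tilde\chi})$ for \emph{every} $w$. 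Hence $(\nInd_P\chi)^I$ is never supported at $x$ with $P_x=Q_\xi$, and $V^{I,x}=0$. This is the argument your proposal should be making; replacing the cross-block break pair with the within-block pair, and correcting which end of the Zelevinsky duality is generic, would repair the proof.
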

\begin{proof}
	The first half of the proposition follows from Bernstein--Zelevinsky classification. See \cite{BZ-classification-torsion} for Bernstein--Zelevinsky classification in torsion coefficients. Let $\psi\colon N(F)\to \Lambda^\times$ be a non-degenerate character. Let $x'\in \hat{T}(\Lambda)$ be a lift of $\xi$. Let $\chi_{x'}$ be the unramified character of $T(F)$ associated to ${x'}$. The set of irreducible factors of $\nInd_{B(F)}^{G(F)}\chi_{x'}$ is equal to the set of isomorphism classes of irreducible representations in $\Rep^{\widehat\unip}(G(F),\Lambda)^\Adm_{\xi}$. By Bruhat decomposition, we have $$\dim\Hom_{N(F)}(\nInd_{B(F)}^{G(F)}\chi_{x'},\psi)=1.$$ 
    As the functor $\Hom_{N(F)}(-,\psi)$ is $t$-exact, there is a unique Whittaker-generic irreducible representation, denoted by $\St_{\xi}$, in the composition series of $\nInd_{B(F)}^{G(F)}\chi_{x'}$. Hence $\St_{\xi}$ is also the unique Whittaker-generic irreducible representation in  $\Rep^{\widehat\unip}(G(F),\Lambda)^{\Adm,\heartsuit}_{\xi}$. By Bernstein--Zelevinsky classification, $\St_{\xi}$ is the unique irreducible quotient of $\nInd_{B(F)}^{G(F)}\chi_{x'}$ for $x'\in \varsigma^{-1}(\xi)$ with $P_{x'}=Q_{\xi}$. 
	
	Let $V$ be an irreducible object in $\Rep^{\widehat\unip}(G(F),\Lambda)^{\Adm,\heartsuit}_{\xi}$ that is not Whittaker-generic. Then $V$ appears in the composition series of a parabolic induction $\nInd_{P(F)}^{G(F)}\chi$ where $P\neq B$ is a standard parabolic subgroup, and $\chi\colon M(F)\to \Lambda^\times$ is an unramified character such that the associated element $x_{\tilde\chi}\in\hat{T}(\Lambda)$ as in Lemma \ref{lemma-jacquet-comp} lies in the conjugacy class $\xi$. By Corollary \ref{cor-parabolic-R}, we know that there is an isomorphism
		$$(\nInd_{P(F)}^{G(F)}\chi)^I\simeq \bigoplus_{w\in {}^{W_M}W} \Lambda_{w^{-1}(x_{\widetilde{\chi}})}$$
	of $\cR$-modules. Because of the form of $x_{\tilde{\chi}}$ and $w\in {}^{W_M}W$, there exists $i<j$ such that $(w^{-1}x_{\tilde{\chi}})_j=q(w^{-1}x_{\tilde\chi})_i$. In particular, the $\cR$-module $(\nInd_{P(F)}^{G(F)}\chi)^I$ is not supported on the points $x\in \varsigma^{-1}(\xi)$ with $P_x=Q_{\xi}$. Thus $V^{I,x}=0$ if $V$ is not Whittaker-generic and $P_x=Q_{\xi}$. 
		
	By Corollary \ref{cor-parabolic-R}, we have $\dim_\Lambda (\nInd_{B(F)}^{G(F)}\chi_{x'})^{I,x}=1$ for any $x,x'\in \varsigma^{-1}(\xi)$. Moreover, the Whittaker-generic constituent $\St_{\xi}$ in $\nInd_{B(F)}^{G(F)}\chi_{x'}$ is the only constituent that contributes to $(\nInd_{B(F)}^{G(F)}\chi_{x'})^{I,x}$ if $P_x=Q_{\xi}$. The result now follows.
\end{proof}
\begin{rmk}
	Let $V\subseteq \Rep^{\widehat\unip}(\GL_n(F),\Lambda)_{\xi}^{\Adm,\heartsuit}$ be an irreducible representation. Then the support of $V^I$ as $\cR$-modules are closely related to the monodromy of the associated local Galois representation. See Lemma \ref{lemma-rep-char-0}.
\end{rmk}

\begin{eg}
	Let $x=\diag(q^{\frac{n-1}{2}},\dots,q^{\frac{1-n}{2}})\in \hat{T}(\Lambda)$. Denote by $\xi$ the image of $x$ in $\hat{T}\git W$. Irreducible representations in $\Rep^{\widehat\unip}(G(F),\Lambda)^{\Adm,\heartsuit}_{\xi}$ are called generalized Steinberg representations: For a partition $\underline{n}$ of $n$, denote by $\St_{\underline{n}}$ the unique irreducible quotient of the (un-normalized) parabolic induction $\Ind_{P_{\underline{n}}(F)}^{G(F)}\Lambda$. Then $\St_{\underline{n}}$ gives a set of representatives of irreducible representations in $\Rep^{\widehat\unip}(G(F),\Lambda)^{\Adm,\heartsuit}_{\xi}$. For example, $\St_{(n)}=\Lambda$ is the trivial representation and $\St_{(1,1,\dots,1)}=\St_{\xi}$ is the unique Whittaker-generic one. We have $\dim (\Lambda)^I=\dim (\St_{\xi})^I=1$. As $\cR$-modules, $(\Lambda)^{I}$ is supported at the point $\diag(q^{\frac{1-n}{2}},\dots,q^{\frac{n-1}{2}})$, and $(\St_{\xi})^{I}$ is supported at the point $\diag(q^{\frac{n-1}{2}},\dots,q^{\frac{1-n}{2}})$.
\end{eg}

\subsection{Statement of the main result}\label{subsection-statement}

Recall that the global Hecke algebra $\TT^S$ acts on the space of automorphic forms $\cA$. Let $\fm\subseteq\TT^S$ be a maximal ideal with $\cA_\fm^{G(O_F)}\neq 0$.

\begin{defn}\label{def-Gal-generic}
	We say a maximal ideal $\fm\subseteq\TT^S$ is \emph{cohomologically generic} if there exists a prime number $r\notin S$ such that either of the following conditions holds:
	\begin{enumerate}
		\item The prime $r$ splits in $\sK$. For any place $w$ of $\sF$, let $x_1,\dots,x_n$ be eigenvalues of $\rho_\fm(\Frob_w)$. Then  $x_i/x_j\neq q_w$ for any $i\neq j$.
        \item The prime $r$ is inert in $\sK$. The assocaited unitary Satake parameter of $\fm$ at $r$ is generic in the sense of \cite[Definition 1.1]{YZ-torsion-vanishing}.
	\end{enumerate}
\end{defn}

\begin{rmk}
    If $\fm$ is cohomologically generic, then we can apply \cite[Theorem 1.4]{YZ-torsion-vanishing} to certain compact unitary Shimura varieties constructed latter to show that the cohomology concentrated in middle degree after localizing at $\fm$. This generalize the results of \cite{Caraiani-Scholze17}, \cite{Koshikawa-generic}, and \cite{Hamann-Lee}.
\end{rmk}

Now we can state our main result.

\begin{prop}\label{prop-Hecke-dim}
	Let $\xi\in (\hat{T}\git W)(\Lambda)$ be a regular semisimple element. Then for any $x,x'\in\varsigma^{-1}(\xi)$, then space
	$$\Hom_{\Pro\Rep^{\widehat{\unip}}(G(F),\Lambda)}(\widehat\delta_x,\widehat\delta_{x'})$$
	is concentrated in degree zero, and is a free module over $Z(\cH)^\wedge_\xi$ of rank one.
\end{prop}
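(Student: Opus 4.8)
The plan is to reduce the Pro-Hom to an inverse limit of ordinary spaces of Iwahori-invariants, and then to compute this limit using the Bernstein presentation of $\cH$; regular semisimplicity of $\xi$ is exactly what forces the rank to be $1$.

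\smallskip

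\emph{Step 1: unwinding the Pro-Hom.} For $k\geq 1$ let $\mathfrak{m}_{x'}\subset\cR$ be the maximal ideal at $x'$ and set $\delta^{(k)}_{x'}:=(\cInd_I^{G(F)}\Lambda)\otimes_\cR(\cR/\mathfrak{m}_{x'}^k)$. This is a genuine object of $\Rep^{\widehat\unip}(G(F),\Lambda)^\Adm_\xi$: it is unipotent, it lies in the heart (as $(\cInd_I^{G(F)}\Lambda)^I=\cH$ is free over $\cR$), it is admissible because $(\delta^{(k)}_{x'})^I$ is finite over $\Lambda$, and it lies in the block $\xi$ since $\mathfrak{m}_{x'}$ contracts to the maximal ideal of $Z(\cH)$ at $\xi$. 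By construction $\widehat{\delta}_{x'}=\varprojlim_k\delta^{(k)}_{x'}$ inside $\Pro\Rep^{\widehat\unip}(G(F),\Lambda)$, so, since $\Hom(\widehat{\delta}_x,-)$ commutes with limits and using the identity $V^{I,x}=\Hom_{\Pro\Rep^{\widehat{\unip}}(G(F),\Lambda)}(\widehat{\delta}_x,V)$ recorded after Proposition \ref{prop-decomp-R},
$$\Hom_{\Pro\Rep^{\widehat{\unip}}(G(F),\Lambda)}(\widehat{\delta}_x,\widehat{\delta}_{x'})=\varprojlim_k\Hom_{\Pro\Rep^{\widehat{\unip}}(G(F),\Lambda)}(\widehat{\delta}_x,\delta^{(k)}_{x'})=\varprojlim_k\bigl(\delta^{(k)}_{x'}\bigr)^{I,x}.$$
Each $(\delta^{(k)}_{x'})^{I,x}$ is a finite-dimensional $\Lambda$-vector space in degree $0$, and the transition maps (induced by $\cR/\mathfrak{m}_{x'}^{k+1}\twoheadrightarrow\cR/\mathfrak{m}_{x'}^k$) are surjective, so the inverse limit is again concentrated in degree $0$. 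It therefore remains to identify $(\delta^{(k)}_{x'})^{I,x}$ compatibly in $k$.

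\smallskip

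\emph{Step 2: the Bernstein computation.} Using $(\cInd_I^{G(F)}\Lambda)^I=\cH$ we have $(\delta^{(k)}_{x'})^I=\cH\otimes_\cR(\cR/\mathfrak{m}_{x'}^k)$, and the $\cR$-action governing the decomposition of Proposition \ref{prop-decomp-R} is the restriction of the $\cH$-module structure. By Proposition \ref{prop-Bernstein-pres}, $\cH$ is free over $\cR$ with basis $\{T_w\}_{w\in W}$, and the commutation of $\cR$ past the $T_w$ is triangular for the Bruhat order: $\theta_\mu T_w-T_w\theta_{w^{-1}\mu}\in\sum_{w'<w}T_{w'}\cR$ for every $\mu\in\XX_\bullet(T)$. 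Hence, with respect to the relevant $\cR$-action, $(\delta^{(k)}_{x'})^I$ carries a finite filtration indexed by $W$ whose graded pieces are the $\cR$-modules $\cR/\mathfrak{m}_{wx'}^k$ (on the $T_w$-line $\theta_\mu$ acts, modulo lower Bruhat terms, by $\theta_{w^{-1}\mu}\bmod\mathfrak{m}_{x'}^k$, i.e. by $\theta_\mu$ evaluated to order $k$ at $wx'$). Because $\xi$ is regular semisimple, the $n!$ points $\{wx':w\in W\}$ are pairwise distinct and equal $\varsigma^{-1}(\xi)$; a finitely generated module over the Noetherian ring $\cR$ supported on a finite set of distinct closed points is canonically the direct sum of its localizations there, so this filtration splits and
$$\bigl(\delta^{(k)}_{x'}\bigr)^I\;\cong\;\bigoplus_{w\in W}\cR/\mathfrak{m}_{wx'}^k,\qquad\text{hence}\qquad \bigl(\delta^{(k)}_{x'}\bigr)^{I,x}\;\cong\;\cR/\mathfrak{m}_x^k.$$
For $k=1$ this is precisely Corollary \ref{cor-parabolic-R} applied to the Borel subgroup.

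\smallskip

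\emph{Step 3: the local ring, and the limit.} Since $\xi$ is regular semisimple, the quotient map $\varsigma\colon\hat{T}\to\hat{T}\git W$ is étale at $\xi$: identifying $\cO(\hat{T}\git W)$ with $\Lambda[\XX_\bullet(T)]^W$ as in Proposition \ref{prop-Bernstein-pres}, the Jacobian of $\varsigma$ at any point of $\varsigma^{-1}(\xi)$ is, up to sign, the Vandermonde determinant of its coordinates, which is a unit by regular semisimplicity (and this holds in any characteristic). Consequently, writing $R:=Z(\cH)^\wedge_\xi$ with maximal ideal $\mathfrak{m}_R$, the completion map $R\to\cR^\wedge_x$ is an isomorphism carrying $\mathfrak{m}_R$ onto $\mathfrak{m}_x\cR^\wedge_x$, so $\cR/\mathfrak{m}_x^k\cong R/\mathfrak{m}_R^k$ as $R$-algebras, compatibly with the Bernstein-center action. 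Taking the limit over $k$,
$$\Hom_{\Pro\Rep^{\widehat{\unip}}(G(F),\Lambda)}(\widehat{\delta}_x,\widehat{\delta}_{x'})\;\cong\;\varprojlim_k R/\mathfrak{m}_R^k\;=\;R,$$
which is concentrated in degree zero and free of rank one over $R=Z(\cH)^\wedge_\xi$.

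\smallskip

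\emph{The main difficulty} lies in Step 2: one must carefully track the direction of the $\cR$-action used to construct $\widehat{\delta}_x$ versus the one appearing in the decomposition of Proposition \ref{prop-decomp-R}, and verify that the Bernstein commutation relations are genuinely triangular with pairwise distinct ``diagonal'' characters — this is the point at which regular semisimplicity of $\xi$ is essential, and it is what pins the rank down to $1$ rather than to something larger.
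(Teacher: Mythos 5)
Your proposal is correct, and it takes a genuinely different route from the paper. The paper transfers the problem to the spectral side: by Lemma \ref{lemma-decomp-agree}, $\LL^\unip((i_1)_*\widehat\delta_x)\cong\cO_{X_{P_x}}$ as pro-objects in $\Coh(\widehat L_\xi)$, and the paper then computes $H^0\Hom_{\Pro\Coh(\widehat L_\xi)}(\cO_{X_{P_x}},\cO_{X_{P_{x'}}})$ directly using the explicit presentation $\widehat L_\xi\simeq\Spf\bigl(\Lambda[v_i]_{i\in Q_\xi}[[u_1,\dots,u_n]]/(u_iv_i)\bigr)/\hat T$ from Proposition \ref{prop-completion}, exhibiting it as the principal ideal $(\prod_{j\in P_x\setminus P}u_j)\subset\Lambda[[u_1,\dots,u_n]]$; degree-$0$ concentration is cited from banality. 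Your argument stays entirely on the Hecke-algebra side: you realize $\widehat\delta_{x'}$ as the pro-system $\{\delta^{(k)}_{x'}\}$, reduce the Pro-Hom to $\varprojlim_k(\delta^{(k)}_{x'})^{I,x}$ by Mittag--Leffler, identify each term with $\cR/\mathfrak m_x^k$ via the Bernstein basis $\{T_w\}$ and the Bruhat-triangular commutation of $\cR$ past the $T_w$ (using regular semisimplicity to split the $W$-filtration by distinct supports), and finish with \'etaleness of $\varsigma$ at a regular semisimple point. The paper in fact acknowledges right before its proof that such a classical argument is available, but deliberately routes through $\Coh(\widehat L_\xi)$ because the proof there exhibits the generators $\alpha^\Hk_{x,x'}$ and $\beta^\Hk_{x,x'}$ as the concrete morphisms $\prod_{j\in P_x\setminus P}u_j\colon\cO_{X_{P_{x'}}}\hookrightarrow\cO_{X_{P_x}}$ and $\cO_{X_{P_x}}\twoheadrightarrow\cO_{X_{P_{x'}}}$, which is exactly what Theorem \ref{thm-main} and its proof in \S\ref{section-proof} consume. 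Your approach is lighter --- it needs neither \S\ref{section-CLLC} nor Lemma \ref{lemma-decomp-agree} --- and proves the rank-one freeness and degree concentration cleanly, but it does not hand you those spectral generators. Two points are worth spelling out explicitly: (a) the claim that $\delta^{(k)}_{x'}$ lies in the heart and that $(\delta^{(k)}_{x'})^I=\cH\otimes_\cR\cR/\mathfrak m_{x'}^k$ both rest on $\cInd_I^{G(F)}\Lambda$ being a compact projective generator, i.e.\ on the banal hypothesis, which you invoke only implicitly; and (b) one must check that the left $\cR$-action governing Proposition \ref{prop-decomp-R} (the one inherited from the $\cH$-action on $I$-invariants) is the opposite side from the right $\cR$-module structure used to form $\otimes_\cR\cR/\mathfrak m_{x'}^k$, which is precisely why the Bruhat-triangularity $\theta_\mu T_w\equiv T_w\theta_{w^{-1}\mu}$ places the $T_w$-line at $wx'$ --- you flag this as the main difficulty and handle it correctly, but a reader would benefit from seeing the two actions written out once.
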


The proof of Proposition \ref{prop-Hecke-dim} will be given in \S\ref{subsection-construct-of-morphism}.

\begin{thm}\label{thm-main}
	Assume the following conditions hold:
	\begin{enumerate}[(i)]
		\item $\ell$ is banal at $v$, i.e. $\ell\nmid |\GL_n(\FF_q)|$ where $q$ is the cardinality of the residue field of $F=\sF_v$.
		\item $\rho_\fm(\Frob_v)$ is regular semisimple. Let $\xi_{\fm,v}$ be the conjugacy class of $q^{\frac{1-n}{2}}\rho_\fm(\Frob_v)$ as in Proposition \ref{prop-local-global-auto}.
		\item $F$ is unramified over $\QQ_p$, and $F\neq \QQ_p$.
		\item $\fm$ is cohomologically generic.
	\end{enumerate}
	Let $x,x'\in \hat{T}(\Lambda)$ be two lifts of $\xi_{\fm,v}$ such that $P_{x'}\subseteq P_x$. 
	Then we have:
	\begin{enumerate}
		\item If $\alpha^\mathrm{Hk}_{x,x'}$ is a generator in $\Hom_{\Pro\Rep^{\widehat{\unip}}(G(F),\Lambda)}(\widehat\delta_x,\widehat\delta_{x'})$ (cf. Proposition \ref{prop-Hecke-dim}), then the induced morphism
			$$\alpha_{x,x'}\colon\cA_\fm^{I,x}\to \cA_\fm^{I,x'}$$
			is surjective.
		\item If $\beta^\mathrm{Hk}_{x,x'}$ is a generator in $\Hom_{\Pro\Rep^{\widehat{\unip}}(G(F),\Lambda)}(\widehat\delta_x,\widehat\delta_{x'})$ (cf. Proposition \ref{prop-Hecke-dim}), then the induced morphism
			$$\beta_{x,x'}\colon\cA_\fm^{I,x'}\to \cA_\fm^{I,x}$$
			is injective.
	\end{enumerate}
\end{thm}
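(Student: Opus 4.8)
The plan is to reduce both statements to a computation of the cone of the map $\alpha_{x,x'}$ (or $\beta_{x,x'}$) on $\cA_\fm$, and to show this cone is controlled by the \'etale cohomology of an auxiliary Shimura variety for an indefinite inner form, which vanishes in the relevant degrees by the torsion-vanishing input of \cite{YZ-torsion-vanishing}. First I would invoke Condition (iii): by Lemma \ref{lemma-pure-inner-form} there is a pure inner form $\sG_1$ of $\sG$ with $\sG_1(\AAA_f)\simeq\sG(\AAA_f)$ and $\sG_1(\RR)\simeq\rG(\rU(1,n-1)\times\rU(n-1,1)\times\rU(0,n)^{[\sF^+:\QQ]-2})$, so that the associated Shimura variety $\Sh_K(\sG_1,\sX_1)$ has a basic locus whose attached Shimura set is exactly $\sG(\QQ)\backslash\sG(\AAA_f)/K^v$ — i.e. the space $\cA(K^v)$. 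The heart of the argument is the local-global compatibility of \cite[\S 6]{Tame}: the complex $R\Gamma(\Sh_K(\sG_1,\sX_1)_{\overline{\sE}},\Lambda)_\fm$, together with its $\GL_n(F)$-action at the Iwahori-ramified place $v$, is computed by a coherent sheaf on $\Loc^{\widehat{\unip}}_{{}^LG,F}$; and after localizing at the regular semisimple parameter $\xi_{\fm,v}$ (Condition (ii)) and using the banal hypothesis (Condition (i)), this stack has the simple, explicitly describable geometry discussed after the statement of Theorem \ref{thm-main-intro}. Passing through the Igusa stack $\Igs^1_{K^v}$ and basic uniformization, the cone of $\alpha_{x,x'}$ applied to $\cA_\fm$ gets identified — up to a shift — with a direct summand, cut out by the Hecke idempotents attached to the passage from $x$ to $x'$, of $R\Gamma_c$ or $R\Gamma$ of the non-basic strata of $\Sh_K(\sG_1,\sX_1)_\fm$, which is concentrated away from the degrees where $\cA_\fm^{I,x}$ and $\cA_\fm^{I,x'}$ live.

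Concretely, I would organize the steps as follows. (1) Recall from \S\ref{section-CLLC} the construction of the Iwahori--Hecke operators $\alpha^{\mathrm{Hk}}_{x,x'}$ and $\beta^{\mathrm{Hk}}_{x,x'}$ as the generators of $\Hom_{\Pro\Rep^{\widehat\unip}}(\widehat\delta_x,\widehat\delta_{x'})$ (Proposition \ref{prop-Hecke-dim}); the hypothesis $P_{x'}\subseteq P_x$ is precisely what guarantees that the composite $\alpha\circ\beta$ (resp. $\beta\circ\alpha$) is, up to a unit in $Z(\cH)^\wedge_\xi$, multiplication by an element that becomes invertible after the relevant localization, so that surjectivity of $\alpha$ and injectivity of $\beta$ are each equivalent to the vanishing of the corresponding cone. (2) Apply the functor $\cA_\fm^{I,-}=\Hom_{\Pro\Rep^{\widehat\unip}}(\widehat\delta_{-},\cA_\fm)$; since $\cA_\fm\in\Rep^{\widehat\unip}(G(F),\Lambda)^{\Adm}_{\xi_{\fm,v}}$ by Proposition \ref{prop-local-global-auto} and is concentrated in degree $0$ (it is a space of functions on a finite set), the cone of $\alpha_{x,x'}$ sits in degrees $\{0,1\}$ a priori, and I need to kill the $H^1$. (3) Realize $\cA_\fm$ inside the cohomology of $\Sh_K(\sG_1,\sX_1)$: by \cite{Igusa} and basic uniformization the basic-locus contribution to $R\Gamma(\Sh_K(\sG_1,\sX_1)_{\overline{\sE}},\Lambda)_\fm$ is, as a $\GL_n(F)$-module, built from $\cA_\fm$ tensored with the cohomology of the relevant Igusa/Rapoport--Zink-type pieces, and the categorical local Langlands description of \cite[\S 6]{Tame} turns the Hecke operator $\alpha^{\mathrm{Hk}}_{x,x'}$ into an explicit map of coherent sheaves on $\Loc^{\widehat\unip}_{{}^LG,F}$ localized at $\xi_{\fm,v}$. (4) Use the excision triangle relating $R\Gamma_c$ of the non-basic open, $R\Gamma$ of the basic closed stratum, and $R\Gamma$ of the whole (proper, hence $R\Gamma_c=R\Gamma$) Shimura variety, together with the torsion-vanishing theorem \cite[Theorem 1.4]{YZ-torsion-vanishing} (applicable because $\fm$ is cohomologically generic, Condition (iv)), which forces $R\Gamma(\Sh_K(\sG_1,\sX_1)_{\overline{\sE}},\Lambda)_\fm$ to be concentrated in the middle degree; a weight/perversity count on the non-basic strata then shows the cone of $\alpha_{x,x'}$, and of $\beta_{x,x'}$, has no cohomology in the offending degree. (5) Conclude.

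I expect the main obstacle to be Step (3)–(4): making the identification of the cone of the Hecke operator $\alpha_{x,x'}$ on $\cA_\fm$ with a genuine geometric object (a shifted summand of the cohomology of non-basic strata of $\Sh_K(\sG_1,\sX_1)$) precise enough that the torsion-vanishing statement can be applied with the correct degree bookkeeping. This requires (a) that the coherent-sheaf avatar of $\cA_\fm$ under the unipotent categorical local Langlands correspondence be a \emph{genuine} sheaf, not just an object of the derived category — which is why \S\ref{section-coh-sheaf} is needed and which in turn uses the regular-semisimple, banal geometry of $\Loc^{\widehat\unip}_{{}^LG,F}$ — and (b) a careful analysis, via the Bruhat--Tits/Deligne--Lusztig combinatorics encoded in $P_x$, of which Schur/Hecke idempotents the operators $\alpha^{\mathrm{Hk}}_{x,x'}$ and $\beta^{\mathrm{Hk}}_{x,x'}$ correspond to, so that the cone is matched with exactly the summand of the non-basic cohomology that is handled by \cite{YZ-torsion-vanishing}. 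The purely local input — that $\alpha\circ\beta$ and $\beta\circ\alpha$ are controlled by units when $P_{x'}\subseteq P_x$ — I expect to be routine given Proposition \ref{prop-Whit-generic-comp} and Corollary \ref{cor-parabolic-R}, and the reduction of surjectivity/injectivity to cone-vanishing is formal.
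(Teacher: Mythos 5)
Your high-level architecture matches the paper's: Condition (iii) produces the pure inner form $\sG_1$ via Lemma \ref{lemma-pure-inner-form}, the Igusa stack/basic uniformization identifies the basic locus contribution with $\cA(K^v)$, the unipotent categorical local Langlands correspondence of \cite{Tame} transports everything to coherent sheaves on $\Loc^{\widehat\unip}_{{}^LG,F}$, and Condition (iv) feeds the torsion-vanishing of \cite{YZ-torsion-vanishing}. But the local reduction in your step (1) is incorrect, and this error propagates into step (4) so that the degree bookkeeping — which you yourself flag as the main obstacle — is not actually set up correctly.

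Concretely: you assert that $\alpha\circ\beta$ and $\beta\circ\alpha$ are, up to unit, multiplication by an element that becomes invertible after localization, so that surjectivity of $\alpha$ and injectivity of $\beta$ are each equivalent to vanishing of a cone. This is the opposite of the truth. By Proposition \ref{prop-comp-unip}, when $P_{x'}\subsetneq P_x$ the composites $\alpha_{x,x'}\circ\beta_{x,x'}$ and $\beta_{x,x'}\circ\alpha_{x,x'}$ are \emph{nilpotent} (they are multiplication by $\prod_{j\in P_x\setminus P_{x'}}u_j$, which lies in the maximal ideal of $Z(\cH)^\wedge_\xi$). If the composites were invertible, $\alpha$ and $\beta$ would both be isomorphisms and the theorem would be trivial and false. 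Moreover, surjectivity of $\alpha$ and injectivity of $\beta$ are not each ``vanishing of the cone''; they are the statements that the cone of $\alpha$ lives only in degree $-1$ (its $H^0=\coker$ vanishes) and the cone of $\beta$ lives only in degree $0$ (its $H^{-1}=\ker$ vanishes). You must bound the cone in degree from one side only in each case, and you must do so for two \emph{different} cones, not a single vanishing statement.

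The paper handles this with a two-sided argument that your proposal does not contain. Using the short exact sequences
\[
0\to \cO_Y(\alpha_j)\xrightarrow{v_j}\cO_{X_{P_x}}\to \cO_{X_{P_{x'}}}\to 0,\qquad
0\to \cO_{X_{P_{x'}}}\xrightarrow{u_j}\cO_{X_{P_x}}\to \cO_Y\to 0,
\]
one identifies $\Cone(\alpha_{x,x'})$ with $\Hom(\cO_Y,\overline\fI^1_\fm(-\chi))[1]$ and $\Cone(\beta_{x,x'})$ with $\Hom(\cO_Y(\alpha_j),\overline\fI^1_\fm(-\chi))$. To show these $\Hom$'s sit in a single degree, the paper uses the ``two-out-of-four'' Lemma \ref{lemma-hom}: given two fiber sequences $A\to B\to C$ and $C\to D\to E$ with $\Hom(A,X),\Hom(B,X),\Hom(D,X),\Hom(E,X)$ concentrated in degree $0$, so is $\Hom(C,X)$. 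The four input degree-$0$ statements come simultaneously from (a) the fact that $\cA_\fm^{I,x}$ and $\cA_\fm^{I,x'}$ are spaces of functions, hence in degree $0$, which is where the geometric Jacquet--Langlands Proposition \ref{prop-coh-formula-set} (pairing $\CohSpr\otimes\widetilde V_{\mu_{0,1}}$ against the \emph{same} Igusa sheaf $\fI^1$) is essential, and (b) the middle-degree concentration of $R\Gamma(\Sh_{K^vI}(\sG_1,\sX_1))_\fm$ from \cite{YZ-torsion-vanishing}, decomposed via Theorem \ref{thm-coh-formula-Sh} into the summands $H^x_{i,j}$. There is no excision argument on the Shimura variety separating basic and non-basic strata; instead both $\cA_\fm$ and the Shimura cohomology are expressed as $\Hom$ against the same coherent Igusa sheaf on the spectral side, and everything is then linear algebra with the explicit presentation of $\widehat L_\xi$ from Proposition \ref{prop-completion}. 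Your proposal identifies the geometric ingredients correctly, but the reduction to a single cone-vanishing via an excision/summand argument is not viable, and the crucial combinatorial Lemma \ref{lemma-hom} step (and the necessity of using \emph{both} exact sequences, hence both $u_j$ and $v_j$) is missing.
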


\begin{rmk}
	The statement of the Theorem is independent of the choices of the generators $\alpha^\mathrm{Hk}_{x,x'}$ and $\beta^\mathrm{Hk}_{x,x'}$ as they are well-defined up to a unit in $Z(\cH)^\wedge_\xi$.
\end{rmk}

\begin{rmk}
	Let us compare our results with Ihara's lemma for $\GL_2$. If $n=2$, and for $V$ in $\Rep^{\widehat\unip}(G(F),\Lambda)^\Adm_\xi$ with $\xi$ regular semisimple, we have $V^I=V^{I,x}\oplus V^{I,x'}$ for two points $x,x'\in \varsigma^{-1}(\xi)$. Assume that $P_{x'}\subseteq P_x$. We can show that $V^{I,x'}$ is isomorphic to $V^{G(O_F)}$ in a functorial way. Let $\omega=\begin{pmatrix}0&p\\ 1&0\end{pmatrix}$. Then $\omega^{-1} I\omega=I$, i.e. $\omega^{-1}$ lies in the normalizer of $I$. Hence there is an action $\omega\colon V^I\to V^I$ for any $V$. Let $\mathrm{Av}\colon V^I\to V^{G(O_F)}$ be the averaging map. Ihara's lemma asserts that the morphism
	$$(\mathrm{Av},\mathrm{Av}\circ \omega)\colon \cA_\fm^I\to (\cA_\fm^{G(O_F)})^{\oplus 2}$$
	is surjective.
	By an explicit computation, we can see that the natural map $(\mathrm{Av},\mathrm{Av}\circ \omega)\colon V^{I}\to (V^{G(O_F)})^{\oplus 2}$  is surjective if and only if the map $\alpha_{x,x'}\colon V^{I,x}\to V^{I,x'}$ is surjective for $\alpha_{x,x'}$ in Theorem \ref{thm-main} (1). Dually, let $\mathrm{Res}\colon V^{G(O_F)}\to V^{I}$ be the inclusion. 
	Then the natural map $(\mathrm{Res},\omega\circ\mathrm{Res})\colon (V^{G(O_F)})^{\oplus 2}\to V^I$ is injective if and only if the map $\beta_{x,x'}\colon V^{I,x'}\to V^{I,x}$ is injective. This shows that Theorem \ref{thm-main} is equivalent to Ihara's lemma in $\GL_2$ case.
\end{rmk}

As a corollary, we confirm some cases of \cite[Conjecture B]{CHT08}.

\begin{cor}\label{cor-CHT}
	Assume that we are in the situation of Theorem \ref{thm-main}. Then any irreducible $G(F)$-submodule of $\cA_\fm$ is Whittaker-generic.
\end{cor}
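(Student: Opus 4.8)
The plan is to deduce Corollary~\ref{cor-CHT} from Theorem~\ref{thm-main} together with the explicit Iwahori-invariant computation of Proposition~\ref{prop-Whit-generic-comp}. Write $\xi=\xi_{\fm,v}$; it is regular semisimple by hypothesis~(ii). By Proposition~\ref{prop-local-global-auto}, $\cA_\fm$ lies in $\Rep^{\widehat{\unip}}(G(F),\Lambda)^\Adm_\xi$, and it is concentrated in degree $0$ since $\cA=H^0(\sG(\QQ)\backslash\sG(\AAA_f)/K^v,\Lambda)$ is the $0$-th cohomology of a $0$-dimensional space. Hence any $G(F)$-submodule $V\subseteq\cA_\fm$ is again admissible (because $V^{K_v}\subseteq\cA_\fm^{K_v}$), concentrated in degree $0$, and again unipotent with $Z(\cH)$-action supported on $\xi$ (subobjects of unipotent representations are unipotent; equivalently every irreducible subquotient of $\cA_\fm$ has nonzero $I$-invariants). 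So it suffices to prove: every irreducible $V\subseteq\cA_\fm$ is isomorphic to $\St_\xi$.

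First I would fix a lift $x_0\in\varsigma^{-1}(\xi)$ with $P_{x_0}=Q_\xi$. Such a lift exists precisely because of the banal hypothesis~(i): the remark after Definition~\ref{def-P_x} says that $Q_\xi$, viewed as a directed graph on the (distinct) eigenvalue set $E_\xi$, has no loop, so its edge relation extends to a total order on $E_\xi$; ordering the coordinates of $x_0$ by this total order makes every pair $(a,b)\in Q_\xi$ satisfy the index condition, i.e. $P_{x_0}=Q_\xi$. Since $P_{x_0}=Q_\xi$ is the maximal possible value of $P_{(-)}$, one has $P_{x'}\subseteq P_{x_0}$ for \emph{every} $x'\in\varsigma^{-1}(\xi)$.

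Now take $V\subseteq\cA_\fm$ irreducible. As $V$ is unipotent it is generated by $V^I$, so $V^I\neq 0$, and by Proposition~\ref{prop-decomp-R} there is an $x'\in\varsigma^{-1}(\xi)$ with $V^{I,x'}\neq 0$. Apply Theorem~\ref{thm-main}(2) to the pair $(x_0,x')$, which is legitimate since $P_{x'}\subseteq P_{x_0}$: a generator $\beta^{\mathrm{Hk}}_{x_0,x'}\in\Hom_{\Pro\Rep^{\widehat{\unip}}(G(F),\Lambda)}(\widehat{\delta}_{x_0},\widehat{\delta}_{x'})$ induces an injection $\beta_{x_0,x'}\colon\cA_\fm^{I,x'}\hookrightarrow\cA_\fm^{I,x_0}$. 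Because $W^{I,x}=\Hom_{\Pro\Rep^{\widehat{\unip}}(G(F),\Lambda)}(\widehat{\delta}_x,W)$ functorially in $W$, the operator $\beta^{\mathrm{Hk}}_{x_0,x'}$ defines a natural transformation $(-)^{I,x'}\Rightarrow(-)^{I,x_0}$; applying it to the inclusion $V\hookrightarrow\cA_\fm$ produces a commutative square whose right vertical arrow is the inclusion $V^{I,x_0}\hookrightarrow\cA_\fm^{I,x_0}$ and whose bottom arrow is the injection $\beta_{x_0,x'}$, forcing the top arrow $V^{I,x'}\to V^{I,x_0}$ to be injective. Hence $V^{I,x_0}\neq 0$. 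Finally, Proposition~\ref{prop-Whit-generic-comp}, applied with this $x_0$ (for which $P_{x_0}=Q_\xi$), says that for an irreducible object $V$ of the block $\xi$ we have $\dim_\Lambda V^{I,x_0}=1$ if $V\simeq\St_\xi$ and $0$ otherwise; nonvanishing therefore forces $V\simeq\St_\xi$, which is Whittaker-generic by construction.

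The entire substance of the corollary sits in Theorem~\ref{thm-main}; once that is granted, the deduction is short, and the only points requiring care are (a) the naturality in $W$ of the Iwahori--Hecke maps $\beta_{x_0,x'}$, which is what lets them restrict to a submodule, and (b) the fact that $\cA_\fm$ is literally concentrated in degree $0$, so that "irreducible submodule'' carries its classical meaning. I also note that one uses only part~(2) of Theorem~\ref{thm-main}: the surjections of part~(1) act on $\cA_\fm^{I,x}$ but need not restrict to a submodule, whereas the injections of part~(2) do.
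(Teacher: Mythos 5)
Your proof is correct and follows essentially the same route as the paper: fix a lift $x_0$ with $P_{x_0}=Q_\xi$, use the injectivity assertion of Theorem~\ref{thm-main}(2) together with naturality of $\beta^{\mathrm{Hk}}_{x_0,x'}$ to get $V^{I,x'}\hookrightarrow V^{I,x_0}$ for every $x'$, pick $x'$ with $V^{I,x'}\neq 0$, and invoke Proposition~\ref{prop-Whit-generic-comp}. The extra bookkeeping you supply (existence of $x_0$ via the no-loop property, degree-$0$ concentration, the observation that only part (2) is used) is consistent with what the paper takes for granted.
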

\begin{proof}
	Let $V\hookrightarrow \cA_\fm$ be an irreducible $G(F)$-submodule. Let $x\in \varsigma^{-1}(\xi_{\fm,v})$ be an element with $P_x=Q_{\xi}$. Let $x'\in \varsigma^{-1}(\xi_{\fm,v})$. Let $\beta_{x,x'}^\Hk$ be a generator of $\Hom_{G(F)}(\widehat\delta_x,\widehat\delta_{x'})$. By Theorem \ref{thm-main}, there is a commutative diagram
	$$\begin{tikzcd}
		V^{I,x'} \ar[r,"\beta_{x,x'}"]\ar[d] & V^{I,x} \ar[d] \\
		\cA_\fm^{I,x'} \ar[r,"\beta_{x,x'}"] & \cA_\fm^{I,x},
	\end{tikzcd}$$
	where horizontal morphisms are induced by $\beta^\Hk_{x,x'}$. By Theorem \ref{thm-main} (2), the lower horizontal arrow is injective. Two vertical arrows are clearly injective. It follows that $\beta_{x,x'}\colon V^{I,x'}\to V^{I,x}$ is injective for any $x'$. As $V\in\Rep^{\widehat\unip}(G(F),\Lambda)_{\xi_{\fm,v}}^\Adm$, there exists $x'\in \varsigma^{-1}(\xi_{\fm,v})$ such that $V^{I,x'}$ is non-zero. This implies that $V^{I,x}\neq 0$. By Proposition \ref{prop-Whit-generic-comp}, we see that $V$ must be Whittaker-generic.
\end{proof}

\section{Unipotent categorical local Langlands correspondence}\label{section-CLLC}

In this section, we construct the morphisms in Theorem \ref{thm-main} (1). For this, we use the unipotent categorical local Langlands correspondence recently established in \cite{Tame}, and transfer the objects $\widehat{\delta}_x$'s to the spectral side.

\subsection{General theory}\label{subsection-ucllc}
We recall the unipotent categorical local Langlands correspondence in \cite{Tame}.

In this subsection, let $F$ be a non-archimedean local field of mixed characteristic $(0,p)$. Let $O_F$ be the ring of integers in $F$. Let $\varpi\in O_F$ be a uniformizer and $k_F=O_F/(\varpi)$ be the residue field. Denote $q=|k_F|$. Let $k=\overline{k}_F$ be the algebraic closure $k_F$. For a perfect $k_F$-algebra $R$, denote by 
$$W_{O_F}(R)\coloneqq W(R)\otimes_{W(k_F)}O_F$$ 
the ring of $O_F$-Witt vectors of $R$. 

Let $\Perf^\aff_k$ denote the site of perfect $k$-algebras with \'etale topology. For an object $R\in\Perf^\aff_k$, denote by $\sigma_R\colon\Spec(R)\to\Spec(R)$ the absolute $q$-Frobenius. By abuse of notations, we also denote by $\sigma_R$ the $q$-Frobenius on $\Spec(W_{O_F}(R))$ and $\Spec(W_{O_F}(R)[\frac1p])$.

Let $G$ be a quasi-split connected reductive group over $F$. Fix a pinning $(B,T,e)$ of $G$. It defines an Iwahori subgroup $I$ of $G(F)$. Denote by $\cI$ the Iwahori group scheme over $O_F$ associated to $I$.

Recall the definitions of (positive) loop groups.

\begin{defn}
	Let $LG$ be the group-valued sheaf on $\Perf^\aff_k$ that sends $R\in\Perf^\aff_k$ to the group $G(W_{O_F}(R)[\frac 1p])$. Then $LG$ is represented by an ind perfect scheme. Let $L^+\cI$ be the perfect affine group scheme sending $R\in\Perf^\aff_k$ to $\cI(W_{O_F}(R))$.
\end{defn}

\begin{defn}
	Let $\Isoc_G$ be the \'etale sheafification of the quotient prestack $[LG/\mathrm{Ad}_\sigma LG]$, where $\mathrm{Ad}_\sigma$ is the $\sigma$-conjugation given by $\mathrm{Ad}_\sigma(g)(h)=gh\sigma(g)^{-1}$. For $R\in\Perf_k^\aff$, the $R$-point of $\Isoc_G$ is identified with the groupoid of $G$-isocrysrals $(\cE,\sigma_\cE)$, where $\cE$ is a $G$-torsor on $\Spec(W_{O_F}(R)[\frac1p])$ that is trivial \'etale locally on $R$, and $\sigma_\cE\colon \sigma_R^*\cE\xrightarrow{\sim}\cE$ is an isomorphism of $G$-torsors.
\end{defn}

\begin{defn}
	Let $\Sht^\loc_\cI$ be the \'etale sheafification of the quotient prestack $[LG/\mathrm{Ad}_\sigma L^+\cI]$. For $R\in \Perf^\aff_k$, the $R$-points of $\Sht^\loc_\cI$ is identified with the groupoid of $\cI$-local shtukas $(\cE,\sigma_\cE)$, where $\cE$ is an $\cI$-torsor over $\Spec(W_{O_F}(R))$ and  $\sigma_\cE\colon \sigma_R^*(\cE|_{\Spec(W_{O_F}(R)[\frac1p])})\xrightarrow{\sim}\cE|_{\Spec(W_{O_F}(R)[\frac1p])}$ is an isomorphism of $G$-torsors over $\Spec(W_{O_F}(R)[\frac1p]$.
\end{defn}

It is proved in \cite[\S3]{Tame} that the underlying topological space $|\Isoc_G|$ of $\Isoc_G$ is isomorphic to the Kottwitz set $B(G)=G(\breve{F})/(g\sim hg\sigma(h)^{-1})$, where $\breve{F}=\widehat{F}^\mathrm{unr}$ is the completion of the maximal unramified extension of $F$. For an element $b\in B(G)$, define the $\sigma$-centralizer of $b$ as $G_b(F)=\{x\in G(\breve{F})|xb\sigma(x)^{-1}=b\}$. The collections $\{G_b(F)\}_{b\in B(G)}$ of $p$-adic groups are called extended pure inner forms of $G$. Each group $G_b$ is an inner form of a Levi subgroup of $G$.

For a point $b\in B(G)$, let $i_b\colon \Isoc_{G,b}\hookrightarrow \Isoc_G$ be the locally closed substack associated to $b$. After choosing a point in $\Isoc_{G,b}$, there is an isomorphism $\Isoc_{G,b}\simeq \BB_{\textrm{pro\'et}}G_b(F)$. Here $\BB_{\textrm{pro\'et}}G_b(F)$ is the pro-\'etale classifying stack of the locally profinite group $G_b(F)$. 

Let $\Shv(-,\Lambda)$ be the $\Lambda$-\'etale ind-constructible (co)sheaf theory defined in \cite[\S10]{Tame}. By \cite[\S 3.4]{Tame}, the category $\Shv(\Isoc_G,\Lambda)$ has a semi-orthogonal decomposition into $\Shv(\Isoc_{G,b},\Lambda)\simeq \Rep(G_b(F),\Lambda)$, $b\in B(G)$. Moreover, an object $A\in \Shv(\Isoc_G,\Lambda)$ is compact if and only if $(i_b)^*A=0$ for all but finitely many $b\in B(G)$, and $(i_b)^*A\in \Rep(G_b(F),\Lambda)$ is compact for any $b\in B(G)$, or equivalently, if and only if $(i_b)^!A=0$ for all but finitely many $b\in B(G)$, and $(i_b)^!A\in\Rep(G_b(F),\Lambda)$ is compact for any $b\in B(G)$. Here, an object $V\in\Rep(G_b(F),\Lambda)$ is compact if and only if it lies in the idempotent complete $\Lambda$-linear category generated by compact inductions $\cInd^{G_b(F)}_{K}\Lambda$, where $K$ runs through pro-$p$ open compact subgroups of $G_b(F)$; see \cite[\S 3.3]{Tame}.

Let $\Shv_{\fgen}(\Isoc_G,\Lambda)\subseteq \Shv(\Isoc_G,\Lambda)$ be the subcategory of finitely generated sheaves as in \cite[\S10.6]{Tame}. By \cite[Proposition 3.96]{Tame}, an object $A\in\Shv(\Isoc_G,\Lambda)$ is finitely generated if and only if $(i_b)^!A=0$ for all but finitely many $b\in B(G)$ and $(i_b)^!A$ is finitely generated for any $b\in B(G)$, or equivalently, if and only if $(i_b)^*A=0$ for all but finitely many $b\in B(G)$ and $(i_b)^*A$ is finitely generated for any $b\in B(G)$. Here, an object $V\in\Rep(G_b(F),\Lambda)$ is finitely generated if and only if it lies in the idempotent complete $\Lambda$-linear category generated by compact inductions $\cInd^{G_b(F)}_{K}\Lambda$, where $K$ runs theough all open compact subgroups of $G_b(F)$; see \cite[\S 3.3]{Tame}. Note that if $\ell$ is banal for any $G_b(F)$ (which is the case we are interested in latter), then $\Rep_\fgen(G_b(F),\Lambda)=\Rep(G_b(F),\Lambda)^\omega$ for any $b\in B(G)$, and hence $\Shv_\fgen(\Isoc_G,\Lambda)=\Shv(\Isoc_G,\Lambda)^\omega$.

If $G$ splits over an unramified extension of $F$, then we define the subcategory $\Rep^{\widehat\unip}(G(F),\Lambda)\subseteq\Rep(G(F),\Lambda)$ (resp. $\Rep^\unip_\fgen(G(F),\Lambda)\subseteq \Rep_\fgen(G(F),\Lambda)$) of unipotent representations following \cite[Definition 4.116]{Tame}. Let $\Shv^{\widehat\unip}(\Isoc_G,\Lambda)\subseteq \Shv(\Isoc_G,\Lambda)$ (resp. $\Shv^\unip_\fgen(G(F),\Lambda)\subseteq \Shv_\fgen(\Isoc_G,\Lambda)$) be the subcategory of of objects $A\in\Shv(\Isoc_G,\Lambda)$ (resp. $A\in\Shv_\fgen(\Isoc_G,\Lambda)$) such that $(i_b)^!A\in \Rep^{\widehat\unip}(G(F),\Lambda)$ (resp. $(i_b)^!A\in \Rep^{\unip}_\fgen(G(F),\Lambda)$) for any $b\in B(G)$. Then $\Ind\Shv_\fgen^{\unip}(\Isoc_G,\Lambda)$ contains $\Shv^{\widehat{\unip}}(\Isoc_G,\Lambda)$ as a full subcategory. If $\ell$ is banal for all $G_b(F)$, $b\in B(G)$, then the natural embedding induces an equivalence $\Shv^{\widehat{\unip}}(\Isoc_G,\Lambda)\simeq\Ind\Shv_\fgen^{\unip}(\Isoc_G,\Lambda)$.

Let $\hat{G}$ be the dual group of $G$ over $\Lambda$. It is endowed with a pinning $(\hat{B},\hat{T},\hat{e})$. The tuple $(\hat{G},\hat{B},\hat{T},\hat{e})$ carries an action of the Weil group $W_F$. Let ${}^LG=\hat{G}\rtimes W_F$ be the Langlands dual group. Denote by $\mathrm{pr}\colon {}^LG\to W_F$ the projection map. We define the notion of Langlands parameters with general coefficients in an arbitrary $\ZZ_\ell$-algebra following \cite[\S2.2.1]{Tame}. See also \cite{DHKM-L-par} and \cite{FS}.

\begin{defn}
	For an $\Lambda$-algebra $R$, a \emph{Langlands parameter} (or \emph{$L$-parameter}) of $G$ with coefficients in $R$ is a strongly continuous homomorphism $\varphi\colon W_F\to \hat{G}(R)\rtimes W_F$ such that the composition $\mathrm{pr}\circ\varphi\colon W_F\to W_F$ is the identity map. We can write $\varphi=(\varphi_0,\mathrm{pr})$ where $\varphi_0\colon W_F\to \hat{G}(R)$ is a strongly continuous 1-cocycle. 
\end{defn}

	We refer the definition of strongly continuous to \cite[\S 2.1.1]{Tame}. Let $\Loc_{{}^LG,F}$ be the stack of $L$-parameters of $G$ over $\Lambda$ as in \cite[\S 2.1]{Tame}, or equivalently \cite[\S VIII]{FS}.
\begin{rmk}
	As we have fixed $\sqrt{q}\in\Lambda$, the stack of $L$-parameters is canonically isomorphic to the stack of $C$-parameters $\Loc_{{}^cG,F}$. See \cite[Remark 2.4]{Tame}.
\end{rmk}

Assume that the group $G$ split over an unramified extension of $F$. Let $I_F\subseteq W_F$ be the inertia subgroup and $P_F\subseteq W_F$ be the wild inertia subgroup.  An $L$-parameter $\varphi\colon W_F\to {}^LG(\Lambda)$ is called \emph{tame} if $\varphi_0(x)=1$ for all $x\in P_F$. A tame $L$-parameter $\varphi$ is called \emph{unipotent} if moreover $\varphi_0(x)$ is unipotent in $\hat{G}(\Lambda)$ for any $x\in I_F$. For a general $\Lambda$-algerbra $R$, an $L$-parameter $\varphi$ over $R$ is tame (resp. unipotent) if it is tame (resp. unipotent) after pullback to any geometric point of $\Spec(R)$.
	
Let $\Loc_{{}^LG,F}^{\widehat\unip}$ be stack of unipotent $L$-parameters as in \cite[\S2.2.2]{Tame}. Then $\Loc_{{}^LG,F}^{\widehat\unip}$ is a quasi-compact open and closed substack of $\Loc_{{}^LG,F}$. Note that $\Loc_{{}^LG,F}^{\widehat\unip}$ is identified with the (derived) $\phi$-fixed points stack $\mathcal{L}_\phi(\cU^\wedge_{\hat{G}}/\hat{G})$, where $\cU^\wedge_{\hat{G}}$ is the completion of $\hat{G}$ along the unipotent cone $\cU_{\hat{G}}$. Here $\phi$ acts on $\cU^\wedge_{\hat{G}}$ by sending $g$ to $\sigma(g)^{1/q}$, where $\sigma$ is the arithmetic Frobenius in $W_F$. See \cite[\S 2.2]{Tame} for details.
	
Let $\hat{B}^-$ be the Borel subgroup of $\hat{G}$ opposite to $\hat{B}$. Define $\Loc^\unip_{{}^LB^-,F}=\mathcal{L}_\phi(\hat{U}^-/\hat{B}^-)$. The natural morphism $\hat{U}^-/\hat{B}^-\to\cU^\wedge_{\hat{G}}/\hat{G}$ induces a morphism $q^\spec\colon\Loc_{{}^LB^-,F}^{{\unip}}\to \Loc_{{}^LG,F}^{\widehat{\unip}}$. 

\begin{defn}
	Define the \emph{(unipotent) coherent Springer sheaf} as 
	$$\CohSpr=\CohSpr_{{}^LG,F}^{\unip}\coloneqq (q^\spec)_*\omega_{\Loc_{{}^LB^-,F}^{\unip}},$$
	where $\omega_{\Loc_{{}^LB^-,F}^{\unip}}$ is the dualizing sheaf on $\Loc_{{}^LB^-,F}^{\unip}$.
\end{defn}
\begin{rmk}
    The opposite Borel subgroup ${}^LB^-$ appears because we are using a different normalization comparing with \cite{Tame}. We define the Wakimoto sheaf $J_\lambda=\nabla_\lambda$ if $\lambda$ is \emph{dominant}, and use the \emph{opposite} Borel subgroup $\hat{B}^-$ on the spectral side everywhere. Then we set $\BB^\unip(J_\lambda)=\omega_{S_1^\unip}(\lambda)$ in \cite[Theorem 5.1 (7)]{Tame}.
\end{rmk}

We recall the unipotent categorical local Langlands correspondences.
\begin{thm}[{\cite[Theorem 5.4]{Tame}}]\label{thm-unip-cllc}
	Assume that $G$ is unramified over $F$ and is endowed with a pinning. Also assume that $\ell$ is large relative to $G$. Then there is a fully faithful embedding
	$$\LL^\unip\colon \Shv^\unip_\fgen(\Isoc_G,\Lambda)\hookrightarrow \Coh(\Loc_{{}^LG,F}^{\widehat\unip})$$
	of $\Lambda$-linear categories.
	Under the above embedding, the object $(i_1)_*\cInd_I^{G(F)}\Lambda\in\Shv_\fgen^{\widehat\unip}(G(F),\Lambda)$ corresponds to the coherent Springer sheaf $\CohSpr\in\Coh(\Loc_{{}^LG,F}^{\widehat\unip})$. Here $i_1\colon \BB G(F)\simeq \Isoc_{G,1}\hookrightarrow\Isoc_G$ is the Newton stratum indexed by the identity $1\in B(G)$.
\end{thm}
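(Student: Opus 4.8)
The plan is to realize both categories as module categories over a single ``affine Hecke category'' --- incarnated on the automorphic side by the distinguished object $(i_1)_*\cInd_I^{G(F)}\Lambda$ and on the spectral side by $\CohSpr$ --- and then to promote the tautological identification of these two generators to the asserted fully faithful embedding by a Barr--Beck--Lurie (Morita-theoretic) argument.

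First I would bring in a \emph{spectral action}: the category $\Shv(\Isoc_G,\Lambda)$ carries an action of $\Perf(\Loc_{{}^LG,F})$, with the unipotent block acting on the unipotent block, built from geometric Satake with $\Lambda$-coefficients together with excursion operators in the style of Fargues--Scholze and its refinement for $\Isoc_G$. This is what makes $(i_1)_*\cInd_I^{G(F)}\Lambda$ a generator of $\Shv^{\widehat\unip}(\Isoc_G,\Lambda)=\Ind\Shv^{\unip}_\fgen(\Isoc_G,\Lambda)$: its restriction to the stratum $b=1$ is $\cInd_I^{G(F)}\Lambda$, which is a compact projective generator of $\Rep^{\widehat\unip}(G(F),\Lambda)$ by the banality discussion of \S\ref{subsection-local-rep}, while the contributions of the non-basic strata $\Isoc_{G,b}$ are obtained from the basic one by applying the spectral action along the correspondences that realize the relevant (derived) parabolic inductions.

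Next I would compute endomorphisms on both sides as algebra objects in $\Perf(\Loc^{\widehat\unip}_{{}^LG,F})$-linear categories. On the automorphic side, $\End_{\Shv(\Isoc_G)}\big((i_1)_*\cInd_I^{G(F)}\Lambda\big)$ is identified with the (derived, $\phi$-twisted) affine Hecke category, the input being the Iwahori--Hecke algebra $\cH$ with its Bernstein presentation (Proposition \ref{prop-Bernstein-pres}) together with the gluing data between Newton strata. On the spectral side one invokes coherent Springer theory: $\Loc^{\widehat\unip}_{{}^LG,F}=\mathcal{L}_\phi(\cU^\wedge_{\hat{G}}/\hat{G})$, the map $q^\spec$ is a $\phi$-twisted Springer-type resolution, and a base-change computation reduces $\End(\CohSpr)$ to coherent sheaves on a Steinberg-type stack, which a $\phi$-fixed, $\ell$-integral form of Bezrukavnikov's two-realizations equivalence --- the content of the coherent Springer works of Ben-Zvi--Gunningham--Nadler and of \cite{Tame} --- identifies with the \emph{same} affine Hecke category, matching the Bernstein generators $\theta_\lambda$ with line bundles pulled back from $\hat{T}$ and the finite Hecke generators with Demazure/intertwining operators.

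With the two endomorphism algebras identified compatibly with the spectral action, Lurie's Barr--Beck theorem (equivalently Schwede--Shipley/Morita theory for presentable stable categories) shows that $\Hom\big((i_1)_*\cInd_I^{G(F)}\Lambda,-\big)$ and $\Hom(\CohSpr,-)$ both exhibit the ambient Ind-completed categories as modules over the common algebra; hence there is an equivalence after Ind-completion carrying the generator to $\CohSpr$, and restricting to compact objects --- using that $\ell$ is large, so cohomological dimensions are finite and $\Shv^\unip_\fgen(\Isoc_G,\Lambda)$, $\Coh(\Loc^{\widehat\unip}_{{}^LG,F})$ are the respective subcategories of compact objects, as recalled in \S\ref{subsection-ucllc} --- yields $\LL^\unip$. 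I expect the spectral endomorphism computation to be the main obstacle: making Bezrukavnikov's categorical equivalence (originally over a characteristic-zero field) work $\ell$-integrally and over the Weil-twisted $\phi$-fixed stack, and obtaining the comparison of the two affine Hecke categories as an equivalence of $E_1$-algebra (indeed monoidal) objects rather than merely on Grothendieck groups --- this is precisely where the hypothesis ``$\ell$ large relative to $G$'' is used, through mod-$\ell$ geometric Satake and tilting theory. A secondary difficulty is to certify that the essential image is exactly $\Coh$, which rests on the noetherian and finiteness properties of $\Loc^{\widehat\unip}_{{}^LG,F}$ together with the compact-generation statements above.
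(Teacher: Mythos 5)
This theorem is quoted, not proved, in the paper: it is a direct citation of \cite[Theorem 5.4]{Tame}, so there is no proof in the present text to compare against. What the paper does reveal about the actual construction --- most explicitly in the commutative diagram inside the proof of Proposition \ref{prop-Weil-restriction} --- is that $\LL^\unip$ is built as the categorical (Hochschild) trace of Bezrukavnikov's equivalence $\BB_G^\unip\colon \Ind\Shv_\fgen(\Hk_\cI,\Lambda)\cong\Ind\Coh(\St_{\hat G})$ along the Frobenius endofunctor: one identifies $\Ind\Shv_\fgen^\unip(\Isoc_G,\Lambda)$ with $\Tr(\Ind\Shv_\fgen(\Hk_\cI,\Lambda),\phi)$ (this is precisely where the various Newton strata are glued automatically), constructs a fully faithful functor $\Tr(\Ind\Coh(\St_{\hat G}),\phi)\hookrightarrow \Ind\Coh(\Loc_{{}^LG,F}^{\widehat\unip})$, and then composes with $\Tr(\BB_G^\unip)$. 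Your proposal via Barr--Beck/Morita with a single compact projective generator is a genuinely different route, and it is worth being aware of what the trace formalism buys that your route does not automatically give: the trace construction is manifestly functorial in the monoidal ($E_2$) structure and so transports Bezrukavnikov's full categorical equivalence, whereas matching endomorphism $E_1$-algebras of two distinguished objects is strictly weaker information and does not by itself imply the module categories agree as $\Perf(\Loc^{\widehat\unip})$-linear categories.

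There is also a concrete gap in your argument that touches the statement itself. You write that Barr--Beck gives ``an equivalence after Ind-completion carrying the generator to $\CohSpr$,'' and then view the only remaining difficulty as ``certifying that the essential image is exactly $\Coh$.'' But the theorem asserts only a fully faithful embedding, not an equivalence, and this is not an accident: while $(i_1)_*\cInd_I^{G(F)}\Lambda$ is a compact generator of $\Shv^{\widehat\unip}(\Isoc_G,\Lambda)$, the coherent Springer sheaf $\CohSpr$ is \emph{not} a compact generator of $\Ind\Coh(\Loc_{{}^LG,F}^{\widehat\unip})$. Concretely, $\CohSpr=(q^\spec)_*\omega_{\Loc^\unip_{{}^LB^-,F}}$ is pushed forward from the Springer-type resolution, and $\Ind\Coh(\Loc^{\widehat\unip})$ contains objects that are not in the subcategory it generates; in the trace picture this corresponds to the fact that the cocenter map $\Tr(\Ind\Coh(\St_{\hat G}),\phi)\to\Ind\Coh(\Loc^{\widehat\unip}_{{}^LG,F})$ is a full embedding onto a proper subcategory. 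So the correct output of a Barr--Beck argument, if carried out, would be an equivalence $\Ind\Shv^\unip_\fgen(\Isoc_G,\Lambda)\simeq \End(\CohSpr)\text{-}\mathrm{Mod}$ followed by a fully faithful functor into $\Ind\Coh(\Loc^{\widehat\unip})$ --- you cannot claim an equivalence at the Ind-completed level, and you should not expect the essential image to be all of $\Coh$. Independently, the step you flag as the main obstacle --- showing $\End\big((i_1)_*\cInd_I^{G(F)}\Lambda\big)$ and $\End(\CohSpr)$ agree as $E_1$-algebras compatibly with the spectral action --- is indeed where essentially all of the content of the theorem lives, and it is not clear that it is materially easier than proving the full categorical statement via traces.
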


\subsection{Compatibility with Weil restrictions}
In this subsection we prove that the unipotent categorical local Langlands correspondence in Theorem \ref{thm-unip-cllc} is compatible with Weil restriction along unramified extension. Let $E/F$ be an unramified extension of non-archimedean local fields. Let $G$ be an unramified reductive group over $F$ with a pinning. Denote $G_0=\Res_{E/F}G$. Let $\cI$ be the Iwahori group scheme of $G$ over $O_F$ and $\cI_0=\Res_{E/F}\cI$ be the Iwahori group scheme of $G_0$.

\begin{lemma}\label{lemma-functoriality-trace}
	Let $A$ be an associative algebra in a symmetric monoidal category $(\cC,\otimes)$. Assume that $\cC$ admits all colimits and $\otimes$ commutes with colimits in each variables. Let $\phi_A\colon A\to A$ be an endomorphism of algebras. Let $f\geq 1$ be an integer. Let $B=A^{\otimes f}$ with natural associative algebra structure. Let $\phi_B\colon B\to B$ be the endomorphism given by $\phi_B(a_1\otimes\cdots\otimes a_f)=\phi_A(a_f)\otimes a_1\otimes\cdots\otimes a_{f-1}$. Then there is a natural morphism
	$$\can\colon\Tr(A,\phi_A)\to \Tr(B,\phi_B).$$
	Here $\Tr(A,\phi_A)$ is the vertical trace (or Hochschild homology) defined in \cite[\S 7.3.1]{Tame}.
\end{lemma}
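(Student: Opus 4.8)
\textbf{Proof proposal for Lemma \ref{lemma-functoriality-trace}.}

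The plan is to construct the map \(\can\) by exhibiting, at the level of the bimodule/simplicial formalism computing the vertical trace, a morphism between the cyclic bar-type objects whose geometric realizations are \(\Tr(A,\phi_A)\) and \(\Tr(B,\phi_B)\). Recall that the vertical trace (Hochschild homology of an endomorphism) of an associative algebra \(A\) with \(\phi_A\colon A\to A\) is computed as the colimit over \(\Delta^{\op}\) of the \(\phi_A\)-twisted cyclic bar construction, whose \(m\)-simplices are \(A^{\otimes(m+1)}\) with the last face map incorporating a multiplication after applying \(\phi_A\); equivalently it is \(A\otimes_{A\otimes A^{\op}} A_{\phi_A}\), the coequalizer of the two natural \(A\)-bimodule actions where one of them is twisted by \(\phi_A\). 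So the first step is to set up both sides as such simplicial objects \(\mathrm{Cyc}_\bullet(A,\phi_A)\) and \(\mathrm{Cyc}_\bullet(B,\phi_B)\) in \(\cC\), using that \(\cC\) is presentable enough (admits all colimits, \(\otimes\) preserves them in each variable) so that these colimits exist and the formalism of \cite[\S 7.3.1]{Tame} applies.

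Next, I would construct a map of simplicial objects \(\mathrm{Cyc}_\bullet(A,\phi_A)\to \mathrm{Cyc}_\bullet(B,\phi_B)\). The key combinatorial observation is that the \(\phi_B\)-twisted cyclic structure on \(B=A^{\otimes f}\) is precisely the ``\(f\)-fold edgewise subdivision'' of the \(\phi_A\)-twisted cyclic structure on \(A\): unwinding \(\phi_B(a_1\otimes\cdots\otimes a_f)=\phi_A(a_f)\otimes a_1\otimes\cdots\otimes a_{f-1}\) shows that a length-\(m\) cycle in \(B\) is a length-\(fm\) cycle in \(A\) in which the \(\phi_A\)-twist is applied once per \(f\) steps, matching the standard edgewise-subdivision description of \(\mathrm{sd}_f\) of a (twisted) cyclic object. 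Concretely, on \(m\)-simplices the map sends \(a_0\otimes\cdots\otimes a_m\in A^{\otimes(m+1)}\) to the element of \(B^{\otimes(m+1)}=(A^{\otimes f})^{\otimes(m+1)}\) obtained by inserting the unit \(1\in A\) in the appropriate \(f-1\) slots of each tensor factor (``\(a_i\mapsto a_i\otimes 1\otimes\cdots\otimes 1\)'', using the unit of \(A\) and that \(\otimes\) commutes with colimits so the unit behaves well); one checks this is compatible with the face and degeneracy maps and with the twisted last-face map because multiplying by \(1\) is harmless and the \(\phi_A\)-twist lands in the one slot where it is expected. Passing to colimits over \(\Delta^{\op}\) gives the desired natural morphism \(\can\colon \Tr(A,\phi_A)\to \Tr(B,\phi_B)\); naturality in \((A,\phi_A)\) and in the category \(\cC\) is immediate from the simplicial-level construction.

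Alternatively — and this may be cleaner to write — one can avoid the explicit simplicial bookkeeping: the \(f\)-fold twist map realizes \(B\) with its twisted trace as a ``\(\phi_A\)-twisted version'' of the fact that \(\Tr(A^{\otimes f}, \text{cyclic shift composed with }\phi_A\text{ on last factor})\simeq \Tr(A,\phi_A)\). Indeed, the cyclic shift identifies \(A^{\otimes f}\) as a bimodule over itself, twisted by \(\phi_B\), with the restriction-of-scalars of \(A\) along the multiplication \(A^{\otimes f}\to A\); and there is a canonical unit map (lax monoidality of the diagonal/the unit inclusion \(A\to A^{\otimes f}\)) inducing \(A\otimes_{A\otimes A^{\op}}A_{\phi_A}\to B\otimes_{B\otimes B^{\op}}B_{\phi_B}\). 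Either way, the main obstacle is purely bookkeeping: verifying that the twisted cyclic/simplicial structure on \(B=A^{\otimes f}\) with the cyclic-shift-plus-\(\phi_A\) endomorphism is exactly the edgewise subdivision of the twisted cyclic structure on \(A\), so that the unit-insertion map is genuinely a map of (twisted) cyclic objects and not just of underlying semisimplicial objects; once that compatibility is pinned down, passing to colimits and checking naturality are formal, using only that \(\cC\) is closed under the relevant colimits and that \(\otimes\) is compatible with them.
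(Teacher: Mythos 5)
There is a genuine gap. Your central device — sending $a_i \mapsto a_i \otimes 1 \otimes \cdots \otimes 1$ to get a map of (twisted) simplicial objects $\HH_\bullet(A,\phi_A) \to \HH_\bullet(B,\phi_B)$ — fails on the $\phi$-twisted last face map. With the paper's conventions, $d_n$ on $\HH_\bullet(A,\phi_A)$ sends $a_0\otimes\cdots\otimes a_n$ to $a_1\otimes\cdots\otimes a_{n-2}\otimes(\phi_A(a_{n-1})a_n)$; inserting units and then applying $d_n$ in $B$ produces, in the last factor, $\phi_B(a_{n-1}\otimes 1\cdots\otimes 1)(a_n\otimes 1\cdots\otimes 1) = (1\otimes a_{n-1}\otimes 1\cdots)(a_n\otimes 1\cdots) = a_n\otimes a_{n-1}\otimes 1\cdots\otimes 1$, which does not equal $\phi_A(a_{n-1})a_n\otimes 1\cdots\otimes 1$. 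The root cause is that the unit inclusion $\iota\colon A\to A^{\otimes f}$ into the first slot does not intertwine $\phi_A$ with $\phi_B$ (one has $\phi_B\circ\iota \neq \iota\circ\phi_A$), so neither the simplicial-level unit insertion nor the ``cleaner'' lax-monoidal/unit-inclusion variant at the end of your proposal gives a map of algebras-with-endomorphism, and hence neither induces the claimed map of twisted cyclic bar constructions. Choosing a different slot for the unit runs into the same obstruction.

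There is also a directionality issue worth noting. The paper's proof actually constructs a map $\can_\bullet\colon \HH_\bullet(B,\phi_B)\to \HH_\bullet(A,\phi_A)$ — a collapse/multiplication map whose level-$n$ piece sends $(a_0^1\otimes\cdots\otimes a_0^f)\otimes\cdots\otimes(a_n^1\otimes\cdots\otimes a_n^f)$ to $a_0^f\otimes\cdots\otimes a_{n-1}^f\otimes(\text{a specific ordered product of the remaining }a_i^j)$ — and passing to colimits gives $\can\colon \Tr(B,\phi_B)\to\Tr(A,\phi_A)$. This is also the direction used in Proposition~\ref{prop-Weil-restriction}, where $B = \Ind\Shv_\fgen(\mathrm{Hk}_{\cI_0},\Lambda)$ maps to $A = \Ind\Shv_\fgen(\mathrm{Hk}_{\cI},\Lambda)$. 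So the arrow in the lemma's statement appears to be a typo, and you have proposed a construction in the direction that the rest of the paper does not use; even setting aside the face-map failure above, what is actually needed is a map out of $\Tr(B,\phi_B)$, which requires a collapsing map, not a unit insertion. The edgewise-subdivision intuition is a reasonable heuristic for why a comparison should exist, but it does not by itself supply a simplicial map in either direction; the nontrivial content is exactly the explicit formula for $\can_n$ that makes the twisted face $d_n$ work out, and your proposal does not supply it.
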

\begin{proof}
	For an associative algebra $R\in \cC$, together with an endomorphism $\phi_R\colon R\to R$, the vertical trace $\Tr(R,\phi_R)$ is computed by the geometrization of the simplicial object 
	$$\HH_\bullet(R,\phi_{R})\colon\begin{tikzcd}
		\cdots\ar[r,shift left=0.6em]\ar[r,leftarrow,shift left=0.4em]\ar[r,shift left=0.2em]\ar[r,leftarrow]\ar[r,shift right=0.2em]\ar[r,leftarrow,shift right=0.4em]\ar[r,shift right=0.6em] & R\otimes R\otimes R\ar[r,shift left=0.4em]\ar[r,leftarrow,shift left=0.2em]\ar[r]\ar[r,leftarrow,shift right=0.2em]\ar[r,shift right=0.4em] & R\otimes R \ar[r,shift left=0.2em]\ar[r,leftarrow]\ar[r,shift right=0.2em] & R
	\end{tikzcd}$$
	where face maps are given by
	$$d_i\colon R^{\otimes n+1}\to R^{\otimes n},\ a_0\otimes\cdots\otimes a_n\mapsto \left\{\begin{aligned}
	& a_0\otimes\cdots\otimes(a_{i-1}a_{i})\otimes \cdots\otimes a_n ,&& \quad \text{if }0<i<n;\\
	& a_1\otimes\cdots\otimes a_{n-1}\otimes (a_na_0), && \quad\text{if }i=0; \\
	& a_1\otimes\cdots\otimes a_{n-2}\otimes (\phi_R(a_{n-1})a_n), && \quad\text{if }i=n.
\end{aligned}\right.$$
It suffices to construct a morphism $\can_\bullet\colon\HH_\bullet(B,\phi_B)\to \HH_\bullet(A,\phi_A)$ of simplicial objects. The map is given by $\can_n\colon (A^{\otimes f})^{\otimes n}\to A^{\otimes n}$, 
$$\can_n\big((a_0^1\otimes\cdots \otimes a_0^f)\otimes (a_1^1\otimes\cdots\otimes a_1^f)\otimes\cdots\otimes(a_n^1\otimes\cdots\otimes a_n^f)\big)=a_0^f\otimes a_1^f\otimes\cdots \otimes a_{n-1}^f\otimes (a_n^1a_0^1\cdots a_{n-1}^1a_n^2a_0^2\cdots a_{n-1}^2\cdots a_{n-1}^{f-1}a_n^f).$$
We can check that $\can_\bullet$ is a morphism of simplicial objects.
\end{proof}

\begin{prop}\label{prop-Weil-restriction}
	There are natural isomorphisms
	$$\Isoc_{G}\cong\Isoc_{G_0} \quad\text{and}\quad \Loc^{\widehat\unip}_{{}^LG,E}\cong \Loc^{\widehat\unip}_{{}^LG_0,F},$$
	and the functor $\LL^\unip$ is compatible with the above isomorphisms.
\end{prop}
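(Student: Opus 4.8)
The plan is to reduce everything to an identification at the level of loop groups and then propagate it through the constructions of $\Isoc$, of the stack of unipotent $L$-parameters, and of the functor $\LL^\unip$. The starting point is the elementary but crucial observation that for an unramified extension $E/F$ of degree $f$, with $O_E$-Witt vectors compatible with $O_F$-Witt vectors, one has for any perfect $k$-algebra $R$ a natural identification $G_0(W_{O_F}(R)[\tfrac1p]) = G(W_{O_E}(R)[\tfrac1p])$, together with the matching between the two Frobenii: the $q$-Frobenius $\sigma_R$ on the $F$-side corresponds, on the $E$-side, to the $q^f$-Frobenius, but after unwinding the Weil restriction the $\sigma$-conjugation action of $LG_0$ on itself becomes the twisted cyclic action of Lemma \ref{lemma-functoriality-trace} applied to $f$ copies of $LG$. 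Concretely, I would first establish $L G_0 \cong L^{(f)} G$ (the $f$-fold ``norm/cyclic'' loop group) compatibly with $L^+\cI_0 \cong (L^+\cI)^{(f)}$, and then check that $\mathrm{Ad}_{\sigma}$-conjugation on $LG_0$ matches $\phi_B$-twisted conjugation on $(LG)^{\otimes f}$ in the notation of Lemma \ref{lemma-functoriality-trace}. Taking \'etale sheafified quotients then yields $\Isoc_{G_0} \cong \Isoc_G$ over $E$; at the level of points this is just the classical fact that $B(\Res_{E/F}G) \cong B_E(G)$ together with the matching of $\sigma$-centralizers, so the extended pure inner forms and their categories of representations are identified, Newton stratum by Newton stratum.

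The second isomorphism, $\Loc^{\widehat\unip}_{{}^LG_0,F} \cong \Loc^{\widehat\unip}_{{}^LG,E}$, I would deduce from Shapiro's lemma for continuous cohomology of Weil groups: ${}^LG_0 = \widehat{G_0}\rtimes W_F$ with $\widehat{G_0} = \prod_{W_F/W_E}\widehat G$, and a $1$-cocycle $W_F\to \widehat{G_0}$ is the same datum as a $1$-cocycle $W_E\to \widehat G$, functorially in the test $\Lambda$-algebra; this is exactly the Weil-restriction compatibility of the stack of $L$-parameters recorded in \cite[\S VIII]{FS}, and it visibly preserves tameness and unipotence of the image (both conditions only see the action on the wild inertia $P_E = P_F$ and on a topological generator of tame inertia). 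Using the ``derived $\phi$-fixed points'' description $\Loc^{\widehat\unip}_{{}^LG,E} \simeq \mathcal L_{\phi^f}(\cU^\wedge_{\hat G}/\hat G)$ versus $\Loc^{\widehat\unip}_{{}^LG_0,F} \simeq \mathcal L_{\phi}\big((\cU^\wedge_{\hat G}/\hat G)^{f}\big)$, the identification becomes the standard ``cyclic trace = iterated self-map'' statement, i.e. the geometric counterpart of Lemma \ref{lemma-functoriality-trace} — the self-map of $X^f$ that cyclically permutes and applies $\phi$ in one slot has fixed-point stack equivalent to the $\phi^f$-fixed-point stack of $X$. I would also match the opposite-Borel Springer geometry ($\Loc^\unip_{{}^LB^-,E}$ versus $\Loc^\unip_{{}^LB_0^-,F}$ with $\widehat{B_0^-} = \prod \hat B^-$), so that the coherent Springer sheaves $\CohSpr$ correspond under the equivalence.

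Finally, to see that $\LL^\unip$ intertwines the two sides, I would invoke the characterization of $\LL^\unip$ by Theorem \ref{thm-unip-cllc}: it is (after passing to $\Ind$-categories) the unique colimit-preserving $\Lambda$-linear functor sending the distinguished compact generator $(i_1)_*\cInd_I^{G(F)}\Lambda$ to $\CohSpr$ and compatible with the module structure over the Iwahori--Hecke category / Bernstein center. Both the source object and the target object have been matched in the previous two paragraphs (the compact induction $\cInd_{I_0}^{G_0(F)}\Lambda$ corresponds to $\cInd_{I}^{G(E)}\Lambda$ since $I_0 = \cI_0(O_F) = \cI(O_E) = I$, and the Springer sheaves correspond), so uniqueness forces the square to commute; equivalently, one checks that the Iwahori--Hecke algebras $\Lambda[I_0\backslash G_0(F)/I_0]$ and $\Lambda[I\backslash G(E)/I]$ are identified compatibly with their geometric incarnations (excursion operators / the $\mathbb E_2$-structure on Hochschild homology), which is again the algebra-level shadow of Lemma \ref{lemma-functoriality-trace}. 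The main obstacle I anticipate is the bookkeeping in this last step: verifying that the equivalence constructed on objects is genuinely compatible with \emph{all} the structure used to pin down $\LL^\unip$ — the monoidal action of $\Shv(\Hk)$, the spectral action of $\Coh$ of the coarse moduli, and the Frobenius/excursion data — rather than just matching the generator and its image. Getting the $f$-fold cyclic twist to thread consistently through the trace formalism of \cite[\S7]{Tame} is where the care is needed; the loop-group and $L$-parameter identifications themselves are formal.
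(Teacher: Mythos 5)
Your first two paragraphs line up with the paper: the isomorphism $\Isoc_{G_0}\cong\Isoc_G$ comes from identifying $LG_0$ with $(LG)^{\times f}$ carrying the twisted cyclic Frobenius, and the isomorphism of $L$-parameter stacks is the Shapiro/derived-$\phi$-fixed-points identification; the paper records exactly these as the statement that the morphisms $\can$ induced by Lemma \ref{lemma-functoriality-trace} are isomorphisms on the underlying stacks.

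The third step is where you and the paper diverge, and it is the step you yourself flag as shaky. Your primary framing — that Theorem \ref{thm-unip-cllc} pins down $\LL^\unip$ as \emph{the unique} colimit-preserving functor sending $(i_1)_*\cInd_I^{G(F)}\Lambda$ to $\CohSpr$ and respecting the module structure, so that matching the generators forces commutativity — is not supplied by the statement of Theorem \ref{thm-unip-cllc}, which only gives existence of a fully faithful embedding with that property. Uniqueness in that strong sense would require additional input about the rigidity of the Hecke module structure. The paper sidesteps this entirely by going back to the \emph{construction} of $\LL^\unip$ as the vertical trace $\Tr(\BB^\unip)$ of the Bezrukavnikov equivalence $\BB^\unip\colon \Ind\Shv_\fgen(\mathrm{Hk}_\cI,\Lambda)\simeq\Ind\Coh(\St_{\hat G})$. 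Since $\Tr$ is functorial, and since Lemma \ref{lemma-functoriality-trace} gives the natural comparison $\can\colon\Tr(A,\phi_A)\to\Tr(A^{\otimes f},\phi_B)$ for \emph{any} algebra object — in particular simultaneously for the geometric Hecke category, the spectral Steinberg category, and the equivalence between them — one assembles a single cube in which every face except the one involving the two $\LL^\unip$'s is already known to commute, and the two $\can$'s on the codomain stacks are isomorphisms. Commutativity of the remaining face is then automatic. This is precisely the ``threading the $f$-fold cyclic twist through the trace formalism'' that you correctly anticipated would be the crux; the point of Lemma \ref{lemma-functoriality-trace} is that this threading is done once and for all at the level of the simplicial object computing Hochschild homology, so no excursion-operator or $\mathbb E_2$-structure bookkeeping is needed beyond what is already packaged into the definition of $\Tr$. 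I would recommend replacing the uniqueness appeal by this trace-functoriality argument.
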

\begin{proof}
	Consider the following situations:
	\begin{enumerate}
		\item $A=\mathrm{Hk}_{\cI}=L^+\cI\backslash LG/L^+\cI$, $\cC=$ category of correspondences between ind sifted placid perfect stacks over $k$.
		\item $A=\mathrm{St}_{\hat{G}}$, $\cC=$ category of correspondences between algebraic stacks over $\Lambda$.
		\item $A=\Ind\Shv_\fgen(\mathrm{Hk}_{\cI},\Lambda)$, $\cC=$ category of presentable $\Lambda$-linear categories.
		\item $A=\Ind\Coh(\St_{\hat{G}})$, $\cC=$ category of presentable $\Lambda$-linear categories.
	\end{enumerate}
	Then $A$ are associative algebras in $\cC$ equipped with an action of the geometric Frobenius $\phi$. Let $f=[E:F]$ and $q$ be the cardinality of residue field of $F$. If we take $B=A^{\otimes f}$ together with $\phi$-action as in Lemma \ref{lemma-functoriality-trace}, then we recover the parallel notions for $G_0$. For example, we have $LG_{0}\simeq \prod_{i=0}^{f-1} LG^{(i)}$ where $LG^{(i)}=LG\otimes_{k,x\mapsto x^q}k$ is the $q$-Frobenius twist of $LG$. Because we are considering perfect stacks, there are natural isomorphisms $LG\cong LG^{(i)}$. This induces an isomorphism $LG_0\cong \prod_{i=0}^{f-1}LG$ with the action of $\phi$ as in Lemma \ref{lemma-functoriality-trace}.
	
	By the functoriality of the construction in Lemma \ref{lemma-functoriality-trace}, we obtain a diagram that is commutative without two dotted arrows. 
	$$\begin{tikzcd}
		 \Tr(\Ind\Shv_\fgen(\mathrm{Hk}_{\cI_0},\Lambda),\phi)\ar[rr,"\Tr(\BB_{G_0}^\unip)","\cong"swap]\ar[rd,"\can"]\ar[dd,"\cong"] && \Tr(\Ind\Coh(\St_{\hat{G}_0}),\phi)\ar[rd,"\can"]\ar[dd,hook'] \\
		& \Tr(\Ind\Shv_\fgen(\mathrm{Hk}_{\cI},\Lambda),\phi)\ar[rr,"\Tr(\BB_{G}^\unip)","\cong"swap, near start]\ar[dd,"\cong", near start] && \Tr(\Ind\Coh(\St_{\hat{G}}),\phi)\ar[dd,hook'] \\
		\Ind\Shv_\fgen^{\unip}(\Isoc_{G_0},\Lambda) \ar[rr,"\LL_{G_0}^{\unip}",dotted, near start]\ar[rd,"\can","\cong"swap] && \Ind\Coh(\Loc_{{}^LG_0,F}^{\widehat\unip})\ar[rd,"\can","\cong"swap] \\
		& \Ind\Shv_\fgen^{\unip}(\Isoc_{G},\Lambda) \ar[rr,"\LL_{G}^{\unip}",dotted] && \Ind\Coh(\Loc_{{}^LG,E}^{\widehat\unip})
	\end{tikzcd}$$
	Here $\BB_{G}^\unip\colon \Ind\Shv_\fgen(\mathrm{Hk}_{\cI},\Lambda)\cong \Ind\Coh(\St_{\hat{G}})$ is the Bezrukavnikov equivalence. The canonical morphisms $\can\colon \Isoc_{G_0}\to\Isoc_G$ and $\can\colon\Loc_{{}^LG_0,F}^{\widehat\unip}\to \Loc_{{}^LG,F}^{\widehat\unip}$ on stacks are isomorphisms. It follows that the diagram is commutative. 
\end{proof}

\subsection{Localizing at regular semisimple parameters}\label{subsection-construct-of-morphism}
Let $G=\GL_{n,F}$ in this subsection. Assume that $\ell$ is banal for $\GL_n(F)$, i.e. $\ell$ does not divide the order of $\GL_n(k_F)$. In particular, we have $\Shv^{\widehat{\unip}}(\Isoc_G,\Lambda)^\omega=\Shv^\unip_\fgen(\Isoc_G,\Lambda
)$ and hence $\Shv^{\widehat{\unip}}(\Isoc_G,\Lambda)=\Ind\Shv^\unip_\fgen(\Isoc_G,\Lambda)$.

We fix a lift of geometric Frobenius $\phi\in W_F$ and a tame generator $t\in I_F/P_F$. It induces an isomorphisms 
$$\Loc_{{}^LG,F}^{\widehat\unip}\simeq\{(x,y)\in\hat{G}\times\cU^\wedge_{\hat{G}}\,|\,xy^qx^{-1}=y\}/\hat{G}$$
given by sending an $L$-parameter $\varphi$ to $(\varphi(\phi),\varphi(t))$. Here $\cU^\wedge_{\hat{G}}$ is the formal completion of $\hat{G}$ along the unipotent cone $\cU_{\hat{G}}\subseteq \hat{G}$. There is a morphism $\Loc_{{}^LG,F}^{\widehat\unip}\to \hat{G}\git\hat{G}\cong\hat{T}\git W$ given by sending $(x,y)$ to the conjugacy class $\xi$ of $x$. Note that this morphism is independent of the choice of $\phi$. 

Let $\xi\in (\hat{T}\git W)(\Lambda)$ be a regular semisimple element. Denote by
$$\widehat{L}_{\xi}\coloneqq \Loc_{{}^LG,F}^{\widehat\unip}\times_{\hat{T}\git W}(\hat{T}\git W)^\wedge_{\xi}$$
the formal completion of $\Loc_{{}^LG,F}^{\widehat\unip}$ along the preimage of $\xi$. Then $\Coh(\widehat{L}_{\xi})$ is the full subcategory of $\Coh(\Loc_{{}^LG,F}^{\widehat{\unip}})$ consisting of objects that are set-theoretically supported on the preimages of $\xi$.

\begin{prop}\label{prop-fix-parameter}
	The functor $\LL^\unip\circ (i_1)_*\colon\Rep^{\widehat{\unip}}(G(F),\Lambda)\to \Ind\Coh(\Loc_{{}^LG,F}^{\widehat{\unip}})$ sends objects in $\Rep^{\widehat\unip}(G(F),\Lambda)^\Adm_{\xi}$ to the full subcategory $\Coh(\widehat{L}_{\xi})\subseteq\Coh(\Loc_{{}^LG,F}^{\widehat{\unip}})$. 
\end{prop}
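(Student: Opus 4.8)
The plan is to reduce the statement to the compatibility between the ``spectral support'' of an object under $\LL^{\unip}\circ(i_1)_*$ and the Bernstein-center support on the automorphic side. The key observation is that both $\Rep^{\widehat{\unip}}(G(F),\Lambda)^{\Adm}_{\xi}$ and $\Coh(\widehat{L}_{\xi})$ are carved out of their ambient categories by a support condition over the \emph{same} ring, namely $\cO(\hat{T}\git W)$. Indeed, by Proposition \ref{prop-Bernstein-pres} the Bernstein center $Z_{G(F)}^{\widehat\unip}$ is identified with $Z(\cH)\cong\cO(\hat{T}\git W)$, and by definition $\Rep^{\widehat\unip}(G(F),\Lambda)^\Adm_\xi$ consists of those admissible unipotent $V$ on which this action is set-theoretically supported at $\xi$. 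On the spectral side, $\Coh(\widehat{L}_\xi)$ is by construction the full subcategory of $\Coh(\Loc^{\widehat\unip}_{{}^LG,F})$ of objects set-theoretically supported over the preimage of $\xi$ under the morphism $\Loc^{\widehat\unip}_{{}^LG,F}\to\hat{T}\git W$. So the whole content is that $\LL^{\unip}\circ(i_1)_*$ intertwines these two $\cO(\hat{T}\git W)$-linear structures.

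First I would record that $\LL^{\unip}\circ(i_1)_*$ is $Z_{G(F)}^{\widehat\unip}$-linear in a way compatible with the $\cO(\Loc^{\widehat\unip}_{{}^LG,F})$-action on $\Coh(\Loc^{\widehat\unip}_{{}^LG,F})$. This is built into the construction of $\LL^{\unip}$ in \cite{Tame}: the functor arises from the Bezrukavnikov equivalence together with the trace/Hochschild-homology formalism of \cite[\S7.3]{Tame}, under which the Bernstein center of $\Rep^{\widehat\unip}(G(F),\Lambda)$ matches the ring of global functions on $\Loc^{\widehat\unip}_{{}^LG,F}$; concretely, $(i_1)_*\cInd_I^{G(F)}\Lambda$ goes to the coherent Springer sheaf $\CohSpr$ (Theorem \ref{thm-unip-cllc}), and the $Z(\cH)$-action on the source matches the action of $\cO(\hat{T}\git W)=\cO(\hat G\git\hat G)$ on $\CohSpr$ pulled back along $\Loc^{\widehat\unip}_{{}^LG,F}\to\hat G\git\hat G$. (This last identification is exactly the compatibility of the Satake/Bernstein isomorphisms with the geometry of the parameter stack, which is part of the package in \cite[\S5--6]{Tame}.) Given an object $V\in\Rep^{\widehat\unip}(G(F),\Lambda)^\Adm_\xi$, it is compact by Lemma \ref{lemma-adm-unip-imply-compact}, hence $(i_1)_*V$ is compact in $\Shv^{\widehat\unip}_\fgen(\Isoc_G,\Lambda)$ and $\LL^{\unip}((i_1)_*V)$ genuinely lies in $\Coh(\Loc^{\widehat\unip}_{{}^LG,F})$ (not merely $\Ind\Coh$); so it makes sense to speak of its set-theoretic support.

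The main step is then: if the $\cO(\hat{T}\git W)$-action on $V$ (equivalently on $V^I$, which detects everything since $\cInd_I^{G(F)}\Lambda$ generates) is set-theoretically supported at the closed point $\xi\in(\hat T\git W)(\Lambda)$, then $\LL^{\unip}((i_1)_*V)$ is set-theoretically supported over $\xi$, i.e. lies in $\Coh(\widehat{L}_\xi)$. Because the two actions agree under $\LL^{\unip}\circ(i_1)_*$ and $\cO(\hat T\git W)$ acts on $\Coh(\Loc^{\widehat\unip}_{{}^LG,F})$ through $\cO(\Loc^{\widehat\unip}_{{}^LG,F})$ via the structure morphism, a sheaf annihilated (up to nilpotents) by the maximal ideal $\fm_\xi\subseteq\cO(\hat T\git W)$ has support contained in the closed substack cut out by $\fm_\xi$, which is precisely the preimage of $\xi$; its formal completion there is $\widehat{L}_\xi$. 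For the converse inclusion — that the essential image lands in $\Coh(\widehat L_\xi)$ and not in some smaller piece — there is nothing extra to prove: $\Coh(\widehat L_\xi)$ is defined to be \emph{all} coherent sheaves supported over $\xi$, so any such sheaf qualifies. Hence $\LL^\unip\circ(i_1)_*$ restricts to a functor $\Rep^{\widehat\unip}(G(F),\Lambda)^\Adm_\xi\to\Coh(\widehat L_\xi)$.

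The main obstacle, and the only genuinely nontrivial input, is pinning down \emph{which} $\cO(\hat T\git W)$-module structure on the spectral side corresponds to the Bernstein-center action on the automorphic side — i.e. verifying that the square
\[
\begin{tikzcd}
Z(\cH)\ar[r,"\sim"]\ar[d,equal] & \cO(\hat T\git W)\ar[d,"\mathrm{pr}^*"]\\
Z_{G(F)}^{\widehat\unip}\ar[r] & \cO(\Loc^{\widehat\unip}_{{}^LG,F})
\end{tikzcd}
\]
commutes, where the bottom arrow is the one induced by $\LL^\unip$ acting on endomorphisms of the unit/generator. This is where one must invoke the detailed normalization conventions of \cite{Tame} (see the Remark after the definition of $\CohSpr$): the coherent Springer sheaf has endomorphism algebra the (affine/Iwahori) Hecke algebra by Bezrukavnikov, its center is $\cO(\hat T\git W)$, and the map to $\cO(\Loc^{\widehat\unip}_{{}^LG,F})$ is the pullback along $\Loc^{\widehat\unip}_{{}^LG,F}\to\hat G\git\hat G$ recording the class of $\varphi(\phi)$; one checks this matches the Satake-normalized parameter $\xi_{\fm,v}$ via Proposition \ref{prop-local-global-comp-hyperspecial}. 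Once this compatibility is granted, the rest is the formal support argument above.
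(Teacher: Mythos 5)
Your proposal is correct and follows essentially the same two-step structure as the paper's proof: first use Lemma \ref{lemma-adm-unip-imply-compact} to ensure the image is genuinely coherent, then transfer the set-theoretic support condition via compatibility of the Bernstein-center action with the $\cO(\hat{T}\git W)$-action on the spectral side. The one thing you flag as the "main obstacle" — verifying the commutativity of the square relating $Z(\cH)\cong\cO(\hat T\git W)$ to $\cO(\Loc^{\widehat\unip}_{{}^LG,F})$ — is exactly what the paper resolves by citing \cite[Proposition 4.43]{YZ-torsion-vanishing} rather than re-deriving it from the Bezrukavnikov/trace formalism as you sketch; you correctly isolate where the real content lies, you just don't have the precise external reference in hand.
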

\begin{proof}
	By Lemma \ref{lemma-adm-unip-imply-compact}, objects in $\Rep^{\widehat\unip}(G(F),\Lambda)^\Adm_{\xi}$ are sent to the category of coherent sheaves. Recall the Bernstein center $Z^{\widehat{\unip}}_{G(F)}$ of $\Rep^{\widehat{\unip}}(G(F))$ is identified with $Z(\cH)$. By \cite[Proposition 4.43]{YZ-torsion-vanishing}, the $Z_{G(F)}^{\widehat\unip}$-action on an object $V$ in $\Rep^{\widehat\unip}(G(F),\Lambda)^\Adm_{\xi}$ agrees with the action of $\cO(\hat{T}\git W)$ on $\LL^\unip((i_1)_*V)$. Thus $\LL^\unip((i_1)_*V)$ is set theoretically supported on the preimage of $\xi$.
\end{proof}

Since $\ell$ is banal, there is an element $x_\std=\diag(x_1,\dots,x_n)\in \hat{T}(\Lambda)$ lifting $\xi$ such that $x_i=qx_j$ only when $j=i+1$. In particular, $P_{x_\std}=Q_{\xi}$. We fix such a choice of $x_\std$ in this subsection. Using $x_\std$, we identify $Q_{\xi}$ with a subset of $\{1,\dots,n-1\}$ by
$$Q_{\xi}= \{i\in\{1,\dots,n-1\}| x_i=qx_{i+1}\}.$$

\begin{prop}\label{prop-completion}
	There is an isomorphism of formal stacks
	$$\widehat{L}_{\xi}\simeq \Spf\bigg(\frac{\Lambda[v_i]_{i\in Q_{\xi}}[[u_1,\dots,u_n]]}{(u_iv_i)_{i\in Q_{\xi}}}\bigg)/\hat{T},$$
	where $\hat{T}$ acts on $u_1,\dots,u_n$ trivially and acts on $v_i$ with $i\in Q_{\xi}$ via the character $\alpha_i$. Here $\alpha_i=\varepsilon_i-\varepsilon_{i+1}\in \XX^\bullet(\hat{T})$ is the $i$-th simple root of $\hat{G}=\GL_n$.
\end{prop}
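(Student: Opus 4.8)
The plan is to make the presentation $\Loc_{{}^LG,F}^{\widehat\unip}\simeq\{(x,y)\in\hat{G}\times\cU^\wedge_{\hat{G}}\mid xy^qx^{-1}=y\}/\hat{G}$ completely explicit after completing at $\xi$. \emph{Step 1: reduction to a $\hat{T}$-quotient.} The discriminant on $\hat{G}\git\hat{G}\cong\hat{T}\git W$ is a unit at the regular semisimple point $\xi$, so any $x$ whose conjugacy class lies in $(\hat{T}\git W)^\wedge_\xi$ is itself regular semisimple, with centraliser a maximal torus. Since $\Lambda$ is algebraically closed and the base of the completion is a complete local ring, such an $x$ can be $\hat{G}$-conjugated into one of the disjoint ``sheets'' $\hat{T}^\wedge_{w(x_\std)}$, $w\in W$; two such conjugations differ by $N_{\hat{G}}(\hat{T})\cdot Z_{\hat{G}}(x)=N_{\hat{G}}(\hat{T})$, so $\widehat{L}_\xi$ is the quotient of $\{(x,y)\mid x\in\bigsqcup_w\hat{T}^\wedge_{w(x_\std)},\ y\in\cU^\wedge_{\hat{G}},\ xy^qx^{-1}=y\}$ by $N_{\hat{G}}(\hat{T})=\hat{T}\rtimes W$. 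As $\xi$ is regular semisimple, $W$ permutes the sheets simply transitively, and collapsing them yields
$$\widehat{L}_\xi\;\simeq\;\bigl[\{(x,y)\mid x\in\hat{T}^\wedge_{x_\std},\ y\in\cU^\wedge_{\hat{G}},\ xy^qx^{-1}=y\}/\hat{T}\bigr],$$
with $\hat{T}$ acting trivially on $x$ and by conjugation on $y$. This is the standard rigidity of regular semisimple conjugacy classes in families; one may alternatively phrase it via the isomorphism $\hat{T}^{\mathrm{rs}}/N_{\hat{G}}(\hat{T})\xrightarrow{\sim}(\hat{G}\git\hat{G})^{\mathrm{rs}}$.

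\emph{Step 2: linearisation and solution.} Banality forces the multiplicative order of $q$ in $\mathbb{F}_\ell^\times$ to exceed $n$ (otherwise $\ell\mid q^i-1$ for some $1\le i\le n$, contradicting $\ell\nmid|\GL_n(\FF_q)|$), so $\ell\ge n+2$ and $\exp,\log$ restrict to mutually inverse $\hat{T}$-equivariant isomorphisms between the nilpotent cone $\cN_{\hat{G}}$ and the unipotent variety. Writing $y=\exp(Y)$ with $Y\in\cN^\wedge_{\hat{G}}$, the equation $xy^qx^{-1}=y$ becomes $q\cdot\mathrm{Ad}_x(Y)=Y$, linear in $Y$ over $\Lambda[[u_1,\dots,u_n]]$, the $u_j$ being formal coordinates on $\hat{T}^\wedge_{x_\std}$. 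In the basis $\{E_{jk}\}$ of $\mathfrak{gl}_n$ one has $\mathrm{Ad}_x(E_{jk})=(t_j/t_k)E_{jk}$ with $t_j$ the diagonal entries of $x$, so the equation splits into $Y_{jk}\cdot(q\,t_j/t_k-1)=0$. When $q\,x_j/x_k\ne1$ — which by the defining property of $x_\std$ is precisely the case not indexed by some $i\in Q_\xi$ — the coefficient has nonzero constant term and is a unit in $\Lambda[[u_1,\dots,u_n]]$, forcing $Y_{jk}=0$. For the remaining components, indexed by $i\in Q_\xi$, the coefficient vanishes at $x_\std$ with linear term $\pm(u_{i+1}-u_i)$; these linear terms form a subset of the simple roots of $A_{n-1}$, hence are linearly independent and extend to a coordinate system on $\hat{T}^\wedge_{x_\std}$, which I rename $u_1,\dots,u_n$. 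Letting $v_i$ denote the surviving coordinate of $Y$ along the root space attached to the $i$-th simple root, the fixed locus is cut out by the vanishing of the entries $Y_{jk}$ with $q\,x_j/x_k\ne1$ together with $u_iv_i=0$, $i\in Q_\xi$. Since the surviving part of $Y$ is a sum of root vectors, it is nilpotent for all values of the $v_i$, so completing along $\cN_{\hat{G}}$ imposes nothing further and the $v_i$ are honest affine coordinates. Thus the ``$(x,y)$''-space of Step 1 is
$$\{(x,y)\mid x\in\hat{T}^\wedge_{x_\std},\ y\in\cU^\wedge_{\hat{G}},\ xy^qx^{-1}=y\}\;\simeq\;\Spf\bigl(\Lambda[v_i]_{i\in Q_\xi}[[u_1,\dots,u_n]]/(u_iv_i)_{i\in Q_\xi}\bigr),$$
on which $\hat{T}$ acts trivially on the $u_j$ and through $\alpha_i$ on $v_i$; passing to the quotient by $\hat{T}$ gives the claim. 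The exact sign of the $\hat{T}$-weight on $v_i$ is dictated by the normalisations fixed in \S\ref{subsection-ucllc}, notably the use of $\hat{B}^-$ on the spectral side.

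\emph{On classicality and the main difficulty.} The equations used in Step 2 — the vanishing of the $n^2-|Q_\xi|$ coordinate functions $Y_{jk}$ together with the $|Q_\xi|$ functions $u_iv_i$, involving pairwise disjoint variables — form a regular sequence in a regular ring, so the \emph{a priori} derived $\phi$-fixed-point stack $\mathcal{L}_\phi(\cU^\wedge_{\hat{G}}/\hat{G})$ is classical over $(\hat{T}\git W)^\wedge_\xi$; this is why the classical quotient is the correct object in the statement. I expect the only genuinely delicate point to be Step 1 — the rigidity of regular semisimple conjugacy classes over the formal base and the consequent reduction of the gauge group from $\hat{G}$ to $\hat{T}$, together with the bookkeeping of $\hat{T}$-weights — after which everything reduces to linear algebra over $\Lambda[[u_1,\dots,u_n]]$.
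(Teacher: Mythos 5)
Your argument follows the paper's proof in all essentials: reduce the $\hat G$-gauge to $\hat T$ via rigidity of regular semisimple classes (the paper phrases this as the isomorphism $\hat G/\hat T\times(\hat T)^\wedge_{x_\std}\to(\hat G)^\wedge_\xi$, $(g\hat T,t)\mapsto\mathrm{Ad}_g(t)$), linearize $xy^qx^{-1}=y$ to $q\,\mathrm{Ad}_x(Y)=Y$ via $\exp/\log$ under banality, and solve entrywise. The two auxiliary points are handled slightly differently — the paper first establishes that $y$ lies in $\cU_{\hat G}$ itself (not merely its formal neighborhood) by showing $(\hat T\git W)^{[q]}_e$ is a reduced point, rather than checking nilpotence of the surviving $Y$ a posteriori as you do, and it postpones the classical-versus-derived verification to the subsequent proposition on $\widehat{L}_{\xi,B^-}$ whereas you give an explicit regular-sequence argument — but these are organizational variations on the same route, not a different one.
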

\begin{proof}
	Let $[q]\colon\hat{T}\git W\to \hat{T}\git W$ be the morphism induced by $x\mapsto x^q$ on $\hat{T}$. Let $(\hat{T}\git W)^{[q]}$ be the fixed point locus of $[q]$. It is easy to see that $(\hat{T}\git W)^{[q]}$ is always finite over $\Lambda$. Let $(\hat{T}\git W)^{[q]}_{e}$ be the connected component of $(\hat{T}\git W)^{[q]}$ at identity. 
	Since $\ell$ is banal, we can check that $(\hat{T}\git W)^{[q]}_e=\Spec(\Lambda)$ is reduced. The morphism $\Loc_{{}^LG,F}^{\widehat\unip}\to \hat{T}\git W$ sending $(x,y)$ to the conjugacy class of $y$ factors through $(\hat{T}\git W)^{[q]}_{e}$. Therefore the morphism $\Loc_{{}^LG,F}^{\widehat\unip}\to \hat{G}/\hat{G}$ given by $(x,y)\mapsto y$ factors through $\cU_{\hat{G}}/\hat{G}=(\hat{G}/\hat{G})\times_{\hat{T}\git W}\{e\}$. We see that
	$$\Loc_{{}^LG,F}^{\widehat\unip}=\{(x,y)\in\hat{G}\times\cU_{\hat{G}}|xy^qx^{-1}=y\}/\hat{G}.$$
	Let $\cN_{\hat{G}}$ be the nilpotent cone in the Lie algebra of $\hat{G}$. The exponential map (which is well-defined by the banal assumption) defines an isomorphism $\exp\colon \cN_{\hat{G}}\cong \cU_{\hat{G}}$ which is compatible with multiplication-by-$q$ map on $\cN_{\hat{G}}$ and $q$-power map on $\cU_{\hat{G}}$. We see that
	$$\Loc_{{}^LG,F}^{\widehat\unip}\simeq\{(x,Y)\in \hat{G}\times\cN_{\hat{G}}|\mathrm{Ad}_x(Y)=q^{-1}Y\}/\hat{G}.$$
	Denote $\widehat{L}_{\xi}^\square=\{(x,Y)\in (\hat{G})_{\xi}^\wedge\times \cN_{\hat{G}}|\mathrm{Ad}_x(Y)=q^{-1}Y\}$, where $(\hat{G})^\wedge_{\xi}$ is the completion of $\hat{G}$ along the semisimple conjugacy class $\xi$.	Define $\cW=\{(x,Y)\in (\hat{T})^\wedge_{x_{\std}}\times\cN_{\hat{G}}|\mathrm{Ad}_x(Y)=q^{-1}Y\}$. The conjugation action of $\hat{G}$ on $\hat{T}$ defines an isomorphism
	$$\hat{G}/\hat{T}\times(\hat{T})^\wedge_{x_{\std}}\to (\hat{G})^\wedge_{\xi},\quad (g\hat{T},t)\mapsto \mathrm{Ad}_g(t).$$
	Therefore we have is an isomorphism $\hat{G}\times^{\hat{T}}\cW\xrightarrow{\sim} \widehat{L}_{\xi}^\square$. It suffices to compute $\cW$. Let $x=\diag(\tilde{u}_1,\dots,\tilde{u}_n)\in \hat{T}^\wedge_{x_\std}(R)$ for some $\Lambda$-algebra $R$. Then $x$ is congruent to $x_\std$ modulo the nilpotent radical of $R$. A direct computation shows that if $(x,Y)\in \cW(R)$, then $Y$ has the form
	$$Y=\begin{pmatrix}
		0 & \\ v_1 & 0 \\ &\ddots & \ddots \\ && v_{n-1} & 0
	\end{pmatrix}$$
	subject to the relations $(\tilde{u}_i-q\tilde{u}_{i+1})v_i=0$ for $i=1,\dots,n-1$. We see that $v_i=0$ unless $i\in Q_{\xi}$. Put $u_i=(\tilde{u}_i-\frac{x_{i+1}}{x_i}\tilde{u}_{i+1})$ if $i=1,\dots, n-1$ and $u_n=\tilde{u}_n-x_n$. We obtain the presentation as in the proposition. 
\end{proof}

As $\Loc^{\unip}_{{}^LB^-,F}$ is a derived fixed points stack, there is a natural isomorphism $\omega_{\Loc^{\unip}_{{}^LB^-,F}}\simeq \cO_{\Loc^{\unip}_{{}^LB^-,F}}$ on $\Loc^{\unip}_{{}^LB^-,F}$. Thus we have an isomorphism $\CohSpr\cong (q^\spec)_* \cO_{\Loc^{\unip}_{{}^LB^-,F}}$. Let $j_{\xi}\colon \widehat{L}_{\xi}\to \Loc^{\widehat\unip}_{{}^LG,F}$ be the natural embedding.

For a subset $P\subset Q_{\xi}$, let $X_P\subseteq \widehat{L}_{\xi}$ be the closed substack defined by the equations $v_i=0$ for every $i\in Q_{\xi}\backslash P$. Recall that $\varsigma\colon \hat{T}\to \hat{T}\git W$ is the natural projection.

\begin{prop}\label{prop-Springer}
	There is a natural isomorphism 
	$$(j_{\xi})^*\CohSpr\cong \bigoplus_{x\in \varsigma^{-1}(\xi)}\cO_{X_{P_{x}}}$$
	of pro-coherent sheaves on $\widehat{L}_{\xi}$ and a natural isomorphism
	$$(j_{\xi})^!\CohSpr\cong \bigoplus_{x\in \varsigma^{-1}(\xi)}\omega_{X_{P_{x}}}$$
	of ind-coherent sheaves on $\widehat{L}_{\xi}$.
\end{prop}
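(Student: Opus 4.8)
The plan is to pull back the presentation $\CohSpr\cong(q^\spec)_*\cO_{\Loc^\unip_{{}^LB^-,F}}$ (valid since $\omega_{\Loc^\unip_{{}^LB^-,F}}\cong\cO_{\Loc^\unip_{{}^LB^-,F}}$, $\Loc^\unip_{{}^LB^-,F}$ being a derived fixed‑point stack) along $j_{\xi}$ and to compute the resulting fibre product explicitly, extending the computation of Proposition \ref{prop-completion} so as to keep track of a choice of Borel. Write
$$Z_{\xi}\coloneqq\Loc^\unip_{{}^LB^-,F}\times_{\Loc^{\widehat\unip}_{{}^LG,F}}\widehat{L}_{\xi},\qquad q^\spec_{\xi}\colon Z_{\xi}\longrightarrow\widehat{L}_{\xi},\qquad j'_{\xi}\colon Z_{\xi}\longrightarrow\Loc^\unip_{{}^LB^-,F}$$
for the two projections. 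The morphism $q^\spec$ is proper: it is obtained by applying $\mathcal{L}_\phi(-)$ to the Springer‑type map $\hat{U}^-/\hat{B}^-\cong(\hat{G}\times^{\hat{B}^-}\hat{U}^-)/\hat{G}\to\cU^\wedge_{\hat{G}}/\hat{G}$, which is proper because $\hat{G}/\hat{B}^-$ is. Hence base change yields canonical isomorphisms
$$(j_{\xi})^*\CohSpr\cong(q^\spec_{\xi})_*(j'_{\xi})^*\cO_{\Loc^\unip_{{}^LB^-,F}}\cong(q^\spec_{\xi})_*\cO_{Z_{\xi}},$$
$$(j_{\xi})^!\CohSpr\cong(q^\spec_{\xi})_*(j'_{\xi})^!\omega_{\Loc^\unip_{{}^LB^-,F}}\cong(q^\spec_{\xi})_*\omega_{Z_{\xi}},$$
the first using flatness of the completion map $j_{\xi}$, the second the $(f_*,f^!)$‑base‑change together with $(j'_{\xi})^!\omega\cong\omega$. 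It therefore remains to identify $Z_{\xi}$ and $q^\spec_{\xi}$.

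Using the fixed lift $\phi$ and tame generator $t$ and the exponential map (legitimate by banality, exactly as in the proof of Proposition \ref{prop-completion}), I would present $\Loc^\unip_{{}^LB^-,F}\simeq\{(x,Y)\in\hat{B}^-\times\mathfrak{n}^-\mid\mathrm{Ad}_x(Y)=q^{-1}Y\}/\hat{B}^-$, with $q^\spec$ induced by $\hat{B}^-\hookrightarrow\hat{G}$ and $\mathfrak{n}^-\hookrightarrow\cN_{\hat{G}}$. Then the fibre of $q^\spec_{\xi}$ over a point $(x,Y)$ of $\widehat{L}_{\xi}$ is the (formal) space of Borels $\hat{B}'\subseteq\hat{G}$ with $x\in\hat{B}'$ and $Y\in\operatorname{Lie}(R_u\hat{B}')$. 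Because the semisimplification of $x$ lies in the regular semisimple class $\xi$, its centraliser is, formally, the maximal torus $\hat{T}$ from the presentation $\widehat{L}_{\xi}\simeq\hat{G}\times^{\hat{T}}\cW$ of Proposition \ref{prop-completion}, so the Borels containing $x$ form the discrete set $\{\mathrm{Ad}_{\dot w}\hat{B}^-\}_{w\in W}$; and, writing $Y=\sum_{i\in Q_{\xi}}v_iE_{i+1,i}$ in the coordinates $u_1,\dots,u_n,(v_i)_{i\in Q_{\xi}}$ of Proposition \ref{prop-completion}, the condition $Y\in\operatorname{Lie}(R_u\,\mathrm{Ad}_{\dot w}\hat{B}^-)$ is equivalent to $v_i=0$ for every $i\in Q_{\xi}$ with $w^{-1}(i)>w^{-1}(i+1)$. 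Carrying this through the $\hat{G}$‑quotient, $Z_{\xi}$ is the \emph{disjoint} union over $w\in W$ of the closed formal substacks
$$Z_{\xi}^{w}=\{\,v_i=0 : i\in Q_{\xi},\ w^{-1}(i)>w^{-1}(i+1)\,\}\ \subseteq\ \widehat{L}_{\xi},$$
and $q^\spec_{\xi}$ restricts to the corresponding closed immersion on each piece.

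It remains to match the indexing sets. The assignment $w\mapsto x_w\coloneqq\diag\big((x_\std)_{w(1)},\dots,(x_\std)_{w(n)}\big)$ is a bijection $W\xrightarrow{\ \sim\ }\varsigma^{-1}(\xi)$ (using regular semisimplicity of $\xi$), and a direct comparison with Definition \ref{def-P_x}, via the identification $Q_{\xi}=\{i:(x_\std)_i=q(x_\std)_{i+1}\}$, gives $P_{x_w}=\{i\in Q_{\xi}:w^{-1}(i)<w^{-1}(i+1)\}$, hence $Z_{\xi}^{w}=X_{P_{x_w}}$. Since the union defining $Z_{\xi}$ is disjoint, $\cO_{Z_{\xi}}=\bigoplus_{w\in W}\cO_{Z_{\xi}^{w}}$ and $\omega_{Z_{\xi}}=\bigoplus_{w\in W}\omega_{Z_{\xi}^{w}}$; pushing forward along $q^\spec_{\xi}$ and re‑indexing by $x=x_w$ produces the two asserted isomorphisms. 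One then checks these are independent of the auxiliary choices ($\phi$, $t$, $x_\std$), hence natural.

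The step requiring the most care is the explicit identification of $Z_{\xi}$: one must verify that the derived fibre product $Z_{\xi}$ is genuinely classical — this is where regular semisimplicity of $\xi$ and banality of $\ell$ enter, forcing the relevant $[q]$‑fixed‑point loci to be reduced and the maps entering the fibre product to be transverse, exactly as in Proposition \ref{prop-completion} — and that the $|W|$ formal pieces are disjoint rather than infinitesimally overlapping, which follows from separatedness of the flag variety $\hat{G}/\hat{B}^-$ together with the regularity of the semisimple part of $x$. The remaining combinatorial bookkeeping is routine once conventions are fixed.
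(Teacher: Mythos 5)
Your proposal follows essentially the same strategy as the paper: compute the derived fiber product $Z_{\xi}=\Loc^{\unip}_{{}^LB^-,F}\times_{\Loc^{\widehat\unip}_{{}^LG,F}}\widehat{L}_{\xi}$, identify it as the disjoint union $\bigsqcup_{x\in\varsigma^{-1}(\xi)}X_{P_x}$ by tracking which Borels contain a formal lift of a regular semisimple element, then conclude by (proper) base change; your combinatorial identification $P_{x_w}=\{i\in Q_\xi : w^{-1}(i)<w^{-1}(i+1)\}$ matches the paper's $X_{P_{w^{-1}(x_\std)}}$.

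The one place your argument is weaker than the paper's is the justification that $Z_\xi$ is classical. You flag the issue and gesture at transversality ``exactly as in Proposition~\ref{prop-completion},'' but that proposition computes $\widehat{L}_\xi$ itself (no fiber product is taken there), so it does not directly give what you need. The paper instead invokes \cite[Proposition 2.3.7]{Zhu-coherent-sheaf} to see that $\widehat{L}_{\xi,B^-}$ is quasi-smooth of virtual dimension $0$, and then observes that the classical truncation you both compute already has dimension $0$, forcing the derived structure to be trivial. Your transversality intuition is the right idea, but to make the proof complete you should replace the appeal to Proposition~\ref{prop-completion} with the quasi-smoothness-plus-dimension-count argument (or an equivalent Tor-vanishing check).
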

\begin{proof}
	Denote $\widehat{L}_{\xi,B^-}=\Loc_{{}^LB^-,F}^{\unip}\times_{\Loc_{{}^LG,F}^{\widehat\unip}} \widehat{L}_{\xi}$ with a morphism $\widehat{q}^\spec\colon \widehat{L}_{\xi,B^-}\to \widehat{L}_{\xi}$. We claim that there is a natural isomorphism
	$$\widehat{L}_{\xi,B^-}\cong \bigsqcup_{x\in \varsigma^{-1}(\xi)}X_{P_{x}}.$$
	Note that $\widehat{L}_{\xi,B^-}$ is a priori derived. We first compute the underlying classical stack $(\widehat{L}_{\xi,B^-})^{\mathrm{cl}}$. Let $\widehat{L}_{\xi,B^-}^\square$ denote the framed version of $\widehat{L}_{\xi,B^-}$, which classifies the data $(x,y,\hat{B'})$ where $(x,y)\in\widehat{L}_\xi^\square$ and $\hat{B}'$ is a Borel subgroup of $\hat{G}$ containing $x$ and $y$. Let $(x,y,\hat{B}')$ be an $R$-point of $\widehat{L}_{\xi,B^-}^\square$ for an ordinary $\Lambda$-algebra $R$. By Proposition \ref{prop-completion}, after conjugation, we can assume that $x$ lies in the maximal torus $\hat{T}$ and is congruent to $x_\std$ modulo the nilpotent radical of $R$. Because $x_\std$ is regular semisimple, the Borel subgroups containing it are exactly $w\hat{B}^-w^{-1}$ for $w\in W$. It defines a decomposition
	$$(\widehat{L}_{\xi,B^-}^\square)^{\mathrm{cl}}=\bigsqcup_{w\in W} \hat{G}\times^{\hat{T}} \{(x,y)\in \cW|y\in w\hat{B}^-w^{-1}\},$$
    and hence a decomposition
    $$(\widehat{L}_{\xi,B^-})^{\mathrm{cl}}=\bigsqcup_{w\in W} \{(x,y)\in \cW|y\in w\hat{B}^-w^{-1}\}/\hat{T}.$$
	By the description $\widehat{L}_{\xi}$ in Proposition \ref{prop-completion}, we have $\{(x,y)\in \cW|y\in w\hat{B}^-w^{-1}\}/\hat{T}=X_{P_{w^{-1}(x)}}$. Therefore we have an isomorphism
	$$(\widehat{L}_{\xi,B^-})^\mathrm{cl}=\bigsqcup_{x\in \varsigma^{-1}(\xi)}X_{P_x}.$$
	By \cite[Proposition 2.3.7]{Zhu-coherent-sheaf}, the formal stack $\widehat{L}_{\xi,B^-}$ is quasi-smooth of virtual dimension 0. We have just shown that $(\widehat{L}_{\xi,B^-})^{\mathrm{cl}}$ is of dimension 0. Thus the stack $\widehat{L}_{\xi,B^-}$ is classical. This proves the claim.
	
	Now the results follow from base change.
\end{proof}

Now let $V\in \Rep^{\widehat\unip}(G(F),\Lambda)^\Adm_{\xi}$. We have 
$$\begin{aligned}
	V^I &= \Hom_{\Rep^{\widehat\unip}(G(F),\Lambda)^\Adm_{\xi}}(\cInd_I^{G(F)}(\Lambda),V) \\
	 &\cong\Hom_{\Shv^{\widehat{\unip}}(\Isoc_G,\Lambda)}((i_1)_*\cInd_I^{G(F)}(\Lambda),(i_1)_*V) \\
	 &\cong\Hom_{\Coh(\Loc^{\widehat{\unip}}_{{}^LG,F})}(\CohSpr,\LL^\unip((i_1)_*V))\\
	 &\cong\Hom_{\Pro\Coh(\widehat{L}_{\xi})}((j_{\xi})^*\CohSpr,\LL^\unip((i_1)_*V)) \\
	 &\cong\bigoplus_{x\in \varsigma^{-1}(\xi)} \Hom_{\Pro\Coh(\widehat{L}_{\xi})}(\cO_{X_{P_x}},\LL^\unip((i_1)_*V)).
\end{aligned}$$

\begin{lemma}\label{lemma-decomp-agree}
	The decomposition above agrees with the decomposition defined in Proposition \ref{prop-decomp-R}, i.e. there are natural isomorphisms $V^{I,x}\cong \Hom_{\Pro\Coh(\widehat{L}_{\xi})}(\cO_{X_{P_x}},\LL^\unip((i_1)_*V))$ for $x\in \varsigma^{-1}(\xi)$.
\end{lemma}
\begin{proof}
	It suffices to show that the action of $\cR$ on $\Hom_{\Pro\Coh(\widehat{L}_{\xi})}(\cO_{X_{P_x}},\LL^\unip((i_1)_*V))$ is set theoretically supported on the point $x\in\hat{T}(\Lambda)$. For $\lambda\in \XX_\bullet(T)$, let $J_\lambda\in \Shv_\fgen(L^+\cI\backslash LG/L^+\cI)$ be the normalized Wakimoto sheaf. Concretely, if $\lambda$ is anti-dominant, then $J_\lambda=\Delta_\lambda$, and if $\lambda$ is dominant, then $J_\lambda=\nabla_\lambda$. In general, define $W_\lambda=W_\mu* W_\nu$ for $\lambda=\mu+\nu$ with $\mu$ dominant and $\nu$ anti-dominant.
	By \cite[Proposition 4.42]{YZ-torsion-vanishing}, the element $\theta_\lambda\in \cH$ acts on $(i_1)_*\cInd_I^{G(F)}\Lambda$ via the $S$-operator $S_{J_\lambda}$ corresponds to the Wakimoto sheaf $J_\lambda$. It suffices to compute the action of the $S$-operator $S^\spec_{J_\lambda}$ on the spectral side. 
	
	Let $\mathrm{St}_{\hat{G}}\coloneqq\hat{U}^-/\hat{B}^-\times_{\hat{G}/\hat{G}}\hat{U}^-/\hat{B}^-$ be the Steinberg variety. We have a correspondence
	$$\BB\hat{T}\xleftarrow{\gamma} \hat{U}^-/\hat{B}^-\xrightarrow{\delta} \St_{\hat{G}}.$$
	Under Bezrukavnikov equivalence, the Wakimoto sheaf $J_\lambda$ (together with Frobenius structure) corresponds to the object $J_\lambda^\spec=\delta_*\gamma^!(\omega_{\BB\hat{T}}(\lambda))$. The functor $\delta_*\gamma^!\colon \Coh(\BB\hat{T})\to \Coh(\St_{\hat{G}})$ is monoidal. Here $\Coh(\BB\hat{T})$ is endowed with $!$-tensor product $\otimes^!$, and $\Coh(\St_{\hat{G}})$ is endowed with the monoidal structure given by $!$-convolution. Taking traces on $\phi_*$, it induces a commutative diagram
	$$\begin{tikzcd}[column sep= huge ]
		\Tr(\Ind\Coh(\BB\hat{T}),\phi_*)\ar[r,"\mathrm{Tr}(\delta_*\gamma^!)"]\ar[d,"\cong"swap] & \Tr(\Ind\Coh(\St_{\hat{G}}),\phi_*)\ar[d,hook'] \\
		\Ind\Coh(\hat{T}/\hat{T}) \ar[r,"(q^\spec)_*(p^\spec)^!"] & \Ind\Coh(\Loc_{{}^LG,F}^{\widehat\unip}),
	\end{tikzcd}$$
	where the lower arrow is induced by the correspondence
    $$\hat{T}/\hat{T}\xleftarrow{p^\spec} \Loc_{{}^LB^-,F}^{\unip}\xrightarrow{q^\spec} \Loc_{{}^LG,F}^{\widehat\unip}.$$
	The functor $(q^\spec)_*(p^\spec)^!$ sends the unit object $\omega_{\hat{T}/\hat{T}}$ in $\Ind\Coh(\hat{T}/\hat{T})$ to the unit object $\CohSpr$ in $\Ind\Coh(\Loc_{{}^LG,F}^{\widehat\unip})$. By the same proof as \cite[Proposition 4.43]{YZ-torsion-vanishing}, the action of the $S$-operator $S_{\omega_{\BB\hat{T}}(\lambda)}^\spec$ on $\omega_{\hat{T}/\hat{T}}$ is given by $\lambda\in \cO(\hat{T}/\hat{T})=\End(\omega_{\hat{T}/\hat{T}})$. It induces the action of $S_{J_\lambda}^\spec$ on $\CohSpr$ through the functor $(q^\spec)_*(p^\spec)^!$. After taking completion at $\xi$, the morphism
	$$\widehat{L}_{\xi,B^-}\xrightarrow{\widehat{p}^\spec} \hat{T}/\hat{T}$$
	restricts to 
	$$X_{P_{w^{-1}(x_\std)}}\xrightarrow{(x,y)\mapsto w^{-1}(x)}\hat{T}/\hat{T}$$ 
	on each compotent $X_{P_{w^{-1}(x_\std)}}$ for $w\in W$. Thus the action of $S_{J_\lambda^\spec}$ on $\CohSpr$ preserves the decomposition in Proposition \ref{prop-Springer}, and acts by $\lambda(w^{-1}(x))$ on the component $\cO_{X_{P_{w^{-1}(x_\std)}}}$. The result now follows. 
\end{proof}

Now we are ready to prove Proposition \ref{prop-Hecke-dim}. Although it is possible to prove it using classical methods, we choose to use the machinery of categorical local Langlands correspondence because we can construct generators needed in Theorem \ref{thm-main} directly on the spectral side.

\begin{proof}[Proof of Proposition \ref{prop-Hecke-dim}]
	By Lemma \ref{lemma-decomp-agree}, we see that
	$$\LL^\unip((i_1)_* (\widehat\delta_x))\cong \cO_{X_{P_x}}$$
	as pro-objects in $\Coh(\widehat{L}_\xi)$ for any $x\in \varsigma^{-1}(\xi)$. Let $x,x'\in \varsigma^{-1}(\xi)$ be two elements. It suffices to show that
	$$\Hom_{\Pro\Coh(\widehat{L}_\xi)}(\cO_{X_{P_x}},\cO_{X_{P_{x'}}})\cong \Hom_{G(F)}(\widehat{\delta}_x,\widehat{\delta}_{x'})$$
	is free of rank one over $Z(\cH)^\wedge_\xi\cong \cO(\widehat{L}_\xi)$. The right hand side is concentrated in degree 0 because $\ell$ is banal. Thus it suffices to compute $H^0\Hom_{\Pro\Coh(\widehat{L}_\xi)}(\cO_{X_{P_x}},\cO_{X_{P_{x'}}})$. We have $\cO_{X_{P_x}}=\cO_{\widehat{L}_\xi}/(v_j)_{j\in Q_\xi\backslash P_x}$. Thus there is an inclusion
	$$H^0\Hom_{\Pro\Coh(\widehat{L}_\xi)}(\cO_{X_{P_x}},\cO_{X_{P_{x'}}})\hookrightarrow H^0\Hom_{\Pro\Coh(\widehat{L}_\xi)}(\cO_{\widehat{L}_\xi},\cO_{X_{P_{x'}}})$$
	It is easy to see that $H^0\Hom_{\Pro\Coh(\widehat{L}_\xi)}(\cO_{\widehat{L}_\xi},\cO_{X_{P_{x'}}})\cong \cO(\widehat{L}_\xi)=\Lambda[[u_1,\dots,u_n]]$. Denote $P=P_x\cap P_{x'}$. An element $f\in \Lambda[[u_1,\dots,u_n]]$ defines a homomorphism $\cO_{X_{P_x}}\to \cO_{X_{P_{x'}}}$ if and only if $f$ is divisible by $u_j$ for any $j\in P_x\backslash P$. It follows that $H^0\Hom_{\Pro\Coh(\widehat{L}_\xi)}(\cO_{X_{P_x}},\cO_{X_{P_{x'}}})$ is isomorphic to the ideal in $\Lambda[[u_1,\dots,u_n]]$ generated by $\prod_{j\in P_x\backslash P}u_j$, which is free of rank one.
	
	A generator of $H^0\Hom_{\Pro\Coh(\widehat{L}_\xi)}(\cO_{X_{P_x}},\cO_{X_{P_{x'}}})$ can be constructed as follows: Denote $P=P_{x'}\cap P_x$, then we have a surjection
	$$\beta\colon \cO_{X_{P_x}}\twoheadrightarrow \cO_{X_P}$$
	and an injection
	$$\alpha=\prod_{j\in P_x\backslash P}u_j\colon \cO_P\hookrightarrow \cO_{P_{x'}}.$$
	Then the composition $\alpha\circ\beta$ is a generator in $H^0\Hom_{\Pro\Coh(\widehat{L}_\xi)}(\cO_{X_{P_x}},\cO_{X_{P_{x'}}})$.
\end{proof}

\begin{eg}\label{eg-GL2-spr}
	Consider the case $G=\GL_{2,F}$ and $\xi=$ conjugacy class of $\diag(q^{\frac12},q^{-\frac12})$. Then $Q_\xi=\{1\}$ is a singleton and $\widehat{L}_{\xi}\simeq \Spf\big(\frac{\Lambda[v_1][[u_1,u_2]]}{(u_1v_1)}\big)/\GG_m^2$. Denote $S=\frac{\Lambda[v_1][[u_1,u_2]]}{(u_1v_1)}$. Then $\cO_{X_{\emptyset}}=S/v_1$ and $\cO_{X_{\{1\}}}=S$. There are exact sequences
	$$0\to (S/u_1)(\alpha_1)\xrightarrow{v_1}S\to S/v_1\to 0$$
	and
	$$0\to S/v_1\xrightarrow{u_1}S\to S/u_1\to 0.$$
	Here $(S/u_1)(\alpha_1)$ means $S/u_1$ tensored with the line bundle $\cO(-\alpha_1)$ pulled back from $V_{\xi}\to \BB \hat{T}$. The homomorphism $S\to S/v_1$ (resp. $S/v_1\stackrel{u_1}{\hookrightarrow}S$) gives a generator in $\Hom(\cO_{X_{\{1\}}},\cO_{X_{\emptyset}})$ (resp. $\Hom(\cO_{X_\emptyset},\cO_{X_{\{1\}}})$).
\end{eg}

For $x,x'\in \varsigma^{-1}(\xi)$ with $P_x\supseteq P_{x'}$, the morphism $\alpha_{x,x'}^\Hk$ in Theorem \ref{thm-main} (1) can be chosen to be associated to
$$\prod_{j\in P_x\backslash P_{x'}}u_j\colon \cO_{X_{P_{x'}}}\hookrightarrow \cO_{X_{P_x}}$$
on the spectral side, and the morphism $\beta_{x,x'}^\Hk$ in Theorem \ref{thm-main} (2) can be chosen to be associated to the natural projection
$$\cO_{X_{P_{x}}}\twoheadrightarrow \cO_{X_{P_{x'}}}.$$
We fix such choices of $\alpha_{x,x'}^\Hk$ and $\beta_{x,x'}^\Hk$ from now on.

By the construction of the morphisms, they have the following properties.
\begin{prop}\label{prop-composition}
	If $x,x',x''$ are three elements in $\varsigma^{-1}(\xi)$ with $P_{x''}\subseteq P_{x'}\subseteq P_x$, then we have 
	$$\alpha_{x',x''}\circ\alpha_{x,x'}=\alpha_{x,x''}\colon V^{I,x}\to V^{I,x''}$$
	and 
	$$\beta_{x,x'}\circ\beta_{x',x''}=\beta_{x,x''}\colon V^{I,x''}\to V^{I,x}$$
	for any $V\in\Rep^{\widehat\unip}(G(F),\Lambda)^\Adm_{\xi}$.
\end{prop}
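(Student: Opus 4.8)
The plan is to push everything to the spectral side, where the Hecke operators $\alpha^\Hk_{x,x'}$ and $\beta^\Hk_{x,x'}$ have transparent descriptions, and to verify the two composition identities there once and for all; functoriality then yields the asserted identities on $V^{I}$ for every $V\in\Rep^{\widehat\unip}(G(F),\Lambda)^\Adm_\xi$ simultaneously. First I would recall, from Lemma \ref{lemma-decomp-agree}, the natural identification $V^{I,x}\cong\Hom_{\Pro\Coh(\widehat L_\xi)}\!\big(\cO_{X_{P_x}},\LL^\unip((i_1)_*V)\big)$ for each $x\in\varsigma^{-1}(\xi)$. By the construction of $\alpha^\Hk_{x,x'}$ and $\beta^\Hk_{x,x'}$, for $P_{x'}\subseteq P_x$ the operator $\alpha_{x,x'}$ becomes precomposition with the morphism $a_{x,x'}:=\prod_{j\in P_x\setminus P_{x'}}u_j\colon\cO_{X_{P_{x'}}}\to\cO_{X_{P_x}}$, while $\beta_{x,x'}$ becomes precomposition with the canonical quotient $b_{x,x'}\colon\cO_{X_{P_x}}\twoheadrightarrow\cO_{X_{P_{x'}}}$; here we use $\cO_{X_P}=\cO_{\widehat L_\xi}/(v_i)_{i\in Q_\xi\setminus P}$, so that $P_{x'}\subseteq P_x$ corresponds to the inclusion of ideals $(v_i)_{i\in Q_\xi\setminus P_x}\subseteq(v_i)_{i\in Q_\xi\setminus P_{x'}}$, and well-definedness of $a_{x,x'}$ uses $u_iv_i=0$ in $\cO(\widehat L_\xi)$ for $i\in P_x\setminus P_{x'}$ (Proposition \ref{prop-completion}).

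It then suffices to prove, in $\Pro\Coh(\widehat L_\xi)$, the two identities of morphisms $a_{x,x'}\circ a_{x',x''}=a_{x,x''}$ and $b_{x',x''}\circ b_{x,x'}=b_{x,x''}$, since applying $\Hom_{\Pro\Coh(\widehat L_\xi)}(-,\LL^\unip((i_1)_*V))$ turns these, by contravariance, into $\alpha_{x',x''}\circ\alpha_{x,x'}=\alpha_{x,x''}$ and $\beta_{x,x'}\circ\beta_{x',x''}=\beta_{x,x''}$ respectively. The $\beta$-identity is immediate: $b_{x,x'}$ and $b_{x',x''}$ are the quotient maps attached to the chain of closed immersions $X_{P_{x''}}\hookrightarrow X_{P_{x'}}\hookrightarrow X_{P_x}$, so their composite is the quotient map attached to $X_{P_{x''}}\hookrightarrow X_{P_x}$, which is exactly $b_{x,x''}$. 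For the $\alpha$-identity the key input is the disjoint decomposition $P_x\setminus P_{x''}=(P_x\setminus P_{x'})\sqcup(P_{x'}\setminus P_{x''})$, valid because $P_{x''}\subseteq P_{x'}\subseteq P_x$; it gives $\big(\prod_{k\in P_x\setminus P_{x'}}u_k\big)\big(\prod_{j\in P_{x'}\setminus P_{x''}}u_j\big)=\prod_{j\in P_x\setminus P_{x''}}u_j$ in $\cO(\widehat L_\xi)=\Lambda[[u_1,\dots,u_n]]$, so the composite of the two multiplication-by-monomial maps is multiplication by $\prod_{j\in P_x\setminus P_{x''}}u_j$, namely $a_{x,x''}$.

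I do not expect a real obstacle here: the morphisms $\alpha^\Hk_{x,x'}$ and $\beta^\Hk_{x,x'}$ were defined on the spectral side precisely so that these compatibilities become formal. The only points demanding a line of care are bookkeeping of the variance (so that a composite $a_{x,x'}\circ a_{x',x''}$ of sheaf morphisms produces the composite $\alpha_{x',x''}\circ\alpha_{x,x'}$ of operators, with the order of composition reversed) and the elementary combinatorial identity $P_x\setminus P_{x''}=(P_x\setminus P_{x'})\sqcup(P_{x'}\setminus P_{x''})$; I would also double-check the analogous well-definedness and nesting statements in the $\beta$-direction, but those are just the standard facts about quotients by a nested chain of ideals.
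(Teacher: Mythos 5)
Your argument is correct and is precisely the content behind the paper's one-line proof ``Clear by construction'': you transport $\alpha^{\mathrm{Hk}}_{x,x'}$ and $\beta^{\mathrm{Hk}}_{x,x'}$ to the spectral side where they are multiplication by $\prod_{j\in P_x\setminus P_{x'}}u_j$ and the canonical quotient $\cO_{X_{P_x}}\twoheadrightarrow\cO_{X_{P_{x'}}}$, verify the two composition identities there via the disjointness $P_x\setminus P_{x''}=(P_x\setminus P_{x'})\sqcup(P_{x'}\setminus P_{x''})$, and pass back through $\Hom(-,\LL^\unip((i_1)_*V))$ with the correct (reversing) variance. This matches the paper's intended proof, just written out in full.
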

\begin{proof}
	Clear by construction.
\end{proof}
\begin{prop}\label{prop-comp-unip}
	If $x,x'\in \varsigma^{-1}(\xi)$ with $P_{x'}\subsetneq P_x$. Let $V\in \Rep^{\widehat\unip}(G(F),\Lambda)^\Adm_{\xi}$. Then the compositions 
	$$\alpha_{x,x'}\circ\beta_{x,x'}\colon V^{I,x'}\to V^{I,x'}$$
	$$\beta_{x,x'}\circ\alpha_{x,x'}\colon V^{I,x}\to V^{I,x}$$
	are nilpotent on the cohomologies of $V$.
\end{prop}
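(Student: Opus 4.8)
The plan is to transfer the statement to the spectral side of the unipotent categorical local Langlands correspondence, where both composites become multiplication by a single explicit function lying in the maximal ideal of $\cO(\widehat{L}_\xi)$, and then to finish with a Nakayama-type argument on finite-dimensional cohomology.

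First I would set $M=\LL^\unip((i_1)_*V)\in\Coh(\widehat{L}_\xi)$ (legitimate by Lemma \ref{lemma-adm-unip-imply-compact} and Proposition \ref{prop-fix-parameter}) and use Lemma \ref{lemma-decomp-agree} to identify $V^{I,x}\cong\Hom_{\Pro\Coh(\widehat{L}_\xi)}(\cO_{X_{P_x}},M)$ and $V^{I,x'}\cong\Hom_{\Pro\Coh(\widehat{L}_\xi)}(\cO_{X_{P_{x'}}},M)$, with $\alpha_{x,x'}$ and $\beta_{x,x'}$ given respectively by precomposition with $\alpha^\Hk_{x,x'}\colon\cO_{X_{P_{x'}}}\to\cO_{X_{P_x}}$ and $\beta^\Hk_{x,x'}\colon\cO_{X_{P_x}}\to\cO_{X_{P_{x'}}}$. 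With the representatives fixed right after the proof of Proposition \ref{prop-Hecke-dim}, $\alpha^\Hk_{x,x'}$ is multiplication by $g\coloneqq\prod_{j\in P_x\backslash P_{x'}}u_j$ and $\beta^\Hk_{x,x'}$ is the quotient $\cO_{X_{P_x}}\twoheadrightarrow\cO_{X_{P_{x'}}}$.

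Next I would verify, using the relations $u_iv_i=0$ in $\cO_{\widehat{L}_\xi}$ from Proposition \ref{prop-completion} and the fact that every index $j$ appearing in $g$ lies in $P_x\backslash P_{x'}$ (so $v_j$ is already set to zero in $\cO_{X_{P_{x'}}}$, and $u_jv_j=0$ in $\cO_{X_{P_x}}$), that $\beta^\Hk_{x,x'}\circ\alpha^\Hk_{x,x'}$ is multiplication by $g$ on $\cO_{X_{P_{x'}}}$ and $\alpha^\Hk_{x,x'}\circ\beta^\Hk_{x,x'}$ is multiplication by $g$ on $\cO_{X_{P_x}}$. Applying $\Hom(-,M)$, this shows that $\alpha_{x,x'}\circ\beta_{x,x'}$ is the action of $g\in\cO(\widehat{L}_\xi)$ on $V^{I,x'}$ and $\beta_{x,x'}\circ\alpha_{x,x'}$ is the action of $g$ on $V^{I,x}$. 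This identification of the composites with honest multiplication by $g$ — keeping straight the directions of the maps, which $v_i$ get set to zero when passing between $\cO_{X_{P_x}}$ and $\cO_{X_{P_{x'}}}$, and where exactly to apply $u_iv_i=0$ — is the only part that requires genuine care; everything after it is formal.

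To finish: since $P_{x'}\subsetneq P_x$, the index set $P_x\backslash P_{x'}$ is nonempty, so $g$ is a product of some of the variables $u_1,\dots,u_n$; and because $\hat{T}$ acts trivially on $u_1,\dots,u_n$ while the simple roots $\alpha_i$ are linearly independent, $\cO(\widehat{L}_\xi)\cong\Lambda[[u_1,\dots,u_n]]$ is a complete local ring whose maximal ideal $(u_1,\dots,u_n)$ contains $g$. Admissibility of $V$ makes $V^{I,x}$ and $V^{I,x'}$ perfect $\Lambda$-complexes, so each cohomology group $H^i(V^{I,x'})\cong(H^iV)^{I,x'}$ and $H^i(V^{I,x})\cong(H^iV)^{I,x}$ is a finite-dimensional $\Lambda$-vector space, hence a finite-length $\Lambda[[u_1,\dots,u_n]]$-module; by Nakayama's lemma a power of $(u_1,\dots,u_n)$, a fortiori a power of $g$, annihilates it. Therefore $\alpha_{x,x'}\circ\beta_{x,x'}$ and $\beta_{x,x'}\circ\alpha_{x,x'}$ act nilpotently on the cohomologies of $V$, as claimed.
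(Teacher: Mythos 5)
Your argument is correct and is essentially the same as the paper's: the paper's proof is a one-liner that asserts the composites are induced by multiplication by $g=\prod_{i\in P_x\backslash P_{x'}}u_i$ on $\cO_{X_{P_{x'}}}$ (resp.\ $\cO_{X_{P_x}}$) and that each $u_i\in\cO(\widehat{L}_\xi)=Z(\cH)^\wedge_\xi$ acts nilpotently on the cohomologies of $V^I$; you supply precisely the verification of those two facts. One small remark: your step ``finite length over $\Lambda[[u_1,\dots,u_n]]$, hence by Nakayama a power of $(u_1,\dots,u_n)$ annihilates it'' implicitly relies on the fact that the $\cO(\widehat{L}_\xi)$-module structure on the cohomology already factors through the \emph{complete} local ring $\Lambda[[u_1,\dots,u_n]]$ (equivalently, that the module is set-theoretically supported at the closed point); this is exactly what the condition $V\in\Rep^{\widehat\unip}(G(F),\Lambda)^\Adm_{\xi}$ gives you and what is encoded in working inside $\Coh(\widehat L_\xi)$, so it is worth saying this explicitly rather than inferring it from finite $\Lambda$-dimension alone.
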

\begin{proof}
	This follows as $\alpha_{x,x'}\circ\beta_{x,x'}$ is induced by $\prod_{i\in P_x\backslash P_{x'}}u_i\colon \cO_{X_{P_{x'}}}\to \cO_{X_{P_{x'}}}$, and each $u_i\in \cO(\widehat{L}_\xi)=Z(\cH)^\wedge_\xi$ acts nilpotently on the cohomologies of $V^I$. Similar for $\beta_{x,x'}\circ\alpha_{x,x'}$.
\end{proof}

\begin{prop}
	Let $V\in \Rep^{\widehat\unip}(G(F),\Lambda)^\Adm_{\xi}$. If $x,x'\in \varsigma^{-1}(\xi)$ with $P_x=P_{x'}$, then the morphisms
	$$\alpha_{x,x'}\colon V^{I,x}\to V^{I,x'}\quad\text{and}\quad\beta_{x,x'}\colon V^{I,x'}\to V^{I,x}$$
	are mutually inverse to each other.
\end{prop}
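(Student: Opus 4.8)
The plan is to read the statement off directly from the explicit choices of $\alpha^{\mathrm{Hk}}_{x,x'}$ and $\beta^{\mathrm{Hk}}_{x,x'}$ fixed in the discussion preceding Proposition \ref{prop-composition}. The one elementary observation that makes everything collapse is that the closed substack $X_P\subseteq\widehat{L}_{\xi}$ depends only on the index set $P\subseteq Q_{\xi}$ and not on a chosen lift of $\xi$: it is cut out by $v_i=0$ for $i\in Q_{\xi}\setminus P$. Hence if $P_x=P_{x'}$ then $X_{P_x}=X_{P_{x'}}$ as substacks of $\widehat{L}_{\xi}$, and consequently $\LL^{\unip}\big((i_1)_*\widehat{\delta}_x\big)\cong\cO_{X_{P_x}}$ and $\LL^{\unip}\big((i_1)_*\widehat{\delta}_{x'}\big)\cong\cO_{X_{P_{x'}}}$ are the same object of $\Coh(\widehat{L}_{\xi})$ (cf. Proposition \ref{prop-Springer} and Lemma \ref{lemma-decomp-agree}); the twists distinguishing the summands of $\CohSpr$ affect only the $\cR$-action, not the underlying objects or the morphisms in $\Coh(\widehat{L}_{\xi})$.

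With this in hand I would unwind the two Hecke operators under the hypothesis $P_x=P_{x'}$. On the spectral side $\alpha^{\mathrm{Hk}}_{x,x'}$ is multiplication by $\prod_{j\in P_x\setminus P_{x'}}u_j\colon\cO_{X_{P_{x'}}}\to\cO_{X_{P_x}}$, an empty product when $P_x\setminus P_{x'}=\emptyset$, hence the identity morphism of $\cO_{X_{P_x}}$; and $\beta^{\mathrm{Hk}}_{x,x'}$ is the natural projection $\cO_{X_{P_x}}\twoheadrightarrow\cO_{X_{P_{x'}}}$, whose source and target now coincide, hence again the identity morphism of $\cO_{X_{P_x}}$. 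By the full faithfulness of $\LL^{\unip}\circ(i_1)_*$ (Theorem \ref{thm-unip-cllc}) this exhibits $\widehat{\delta}_x$ and $\widehat{\delta}_{x'}$ as isomorphic in $\Pro\Rep^{\widehat{\unip}}(G(F),\Lambda)$, with $\alpha^{\mathrm{Hk}}_{x,x'}$ and $\beta^{\mathrm{Hk}}_{x,x'}$ the two mutually inverse isomorphisms between them.

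Finally, since $V^{I,x}=\Hom_{\Pro\Rep^{\widehat{\unip}}(G(F),\Lambda)}(\widehat{\delta}_x,V)$ and $\alpha_{x,x'}$, $\beta_{x,x'}$ are induced by precomposition with $\alpha^{\mathrm{Hk}}_{x,x'}$, $\beta^{\mathrm{Hk}}_{x,x'}$ respectively, functoriality of $\Hom(-,V)$ turns $\beta^{\mathrm{Hk}}_{x,x'}\circ\alpha^{\mathrm{Hk}}_{x,x'}=\mathrm{id}$ and $\alpha^{\mathrm{Hk}}_{x,x'}\circ\beta^{\mathrm{Hk}}_{x,x'}=\mathrm{id}$ into $\beta_{x,x'}\circ\alpha_{x,x'}=\mathrm{id}_{V^{I,x}}$ and $\alpha_{x,x'}\circ\beta_{x,x'}=\mathrm{id}_{V^{I,x'}}$. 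Equivalently, one may quote the computation in the proof of Proposition \ref{prop-comp-unip}: the two composites are induced by $\prod_{j\in P_x\setminus P_{x'}}u_j\in\cO(\widehat{L}_{\xi})=Z(\cH)^\wedge_{\xi}$, which is the unit of the ring when $P_x\setminus P_{x'}=\emptyset$. I do not expect a genuine obstacle here; the only point demanding care is the bookkeeping of sources, targets and the direction of the induced maps, which is handled by the observation that $X_P$ is determined by $P$ alone.
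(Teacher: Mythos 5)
Your argument is correct and is precisely the unwinding that the paper's own proof ("Clear by construction") has in mind: when $P_x=P_{x'}$ the empty product $\prod_{j\in P_x\setminus P_{x'}}u_j$ is $1$ and the projection $\cO_{X_{P_x}}\twoheadrightarrow\cO_{X_{P_{x'}}}$ is the identity, so $\alpha^{\mathrm{Hk}}_{x,x'}$ and $\beta^{\mathrm{Hk}}_{x,x'}$ are mutually inverse already on the spectral side, and full faithfulness of $\LL^{\unip}\circ(i_1)_*$ plus functoriality of $\Hom(-,V)$ transports this to $V^{I,x}\leftrightarrow V^{I,x'}$. The one point you rightly flag -- that $X_P$ depends only on $P$ so the two summands of $(j_\xi)^*\CohSpr$ indexed by $x$ and $x'$ coincide as objects of $\Coh(\widehat{L}_\xi)$, the distinction between them residing only in the $\cR$-grading -- is exactly the bookkeeping the paper leaves implicit, and you have it right.
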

\begin{proof}
	Clear by construction.
\end{proof}

\section{Proof of Theorem \ref{thm-main}}\label{section-proof}
In this section, we prove Theorem \ref{thm-main}. For this, we need to introduce some axillary groups $\sG_a$ for $0\leq a\leq n$ that are inner forms of $\sG$, and use the geometry of the Shimura variety associated to $\sG_a$.

\subsection{Changing signatures}
Assume that we are in the situation of Theorem \ref{thm-main}. Let $\sG$ be the definite unitary group over $\QQ$ as in \S\ref{subsection-def-auto}. Fix an isomorphism $\overline{\QQ}_p\simeq\CC$ such that the complex embedding $\tau_0\colon \sK\hookrightarrow\CC$ and the embedding $\sK\hookrightarrow \overline\QQ_p$ defined by $\wp$ (the place of $\sK$ below $v$) are identified. It defines a bijection $\Sigma_\infty\simeq \Sigma_p$ between the set of real embeddings of $\sF^+$ and the set of embeddings of $\sF^+$ into $\overline{\QQ}_p$. Let $(a_\tau)_{\tau\in\Sigma_\infty\simeq \Sigma_p}$ be signatures of $\sG$. By assumptions, $a_\tau=0$ or $n$ for all $\tau$. 

We first prove the following result, which allows us to change signatures at infinite places.

\begin{lemma}\label{lemma-change-sign}
	Fix a real embedding $\tau_1\colon \sF^+\hookrightarrow\RR$. There is a $\star$-Hermetian form $\langle-,-\rangle'$ on $\sV$ that defines a PEL datum $(\sD^\op,\star,\sV,\langle-,-\rangle')$ as \S\ref{subsection-def-auto}. It defines a definite unitary similitude group $\sG'$, such that the following conditions hold:
	\begin{enumerate}
		\item There is an isomorphism $\sG'\simeq\sG$ of reductive groups over $\QQ$.
		\item The $\RR$-points of $\sG'$ is compact modulo center. Let $(a'_\tau)_{\tau\in\Sigma_\infty}$ be the signatures of $\sG'$. Then $a_\tau'=a_\tau$ if $\tau\neq \tau_1$, and $a'_{\tau_1}=n-a_{\tau_1}$.
	\end{enumerate} 
\end{lemma}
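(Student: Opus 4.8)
\emph{Proof sketch.}
The plan is to obtain $\langle-,-\rangle'$ from $\langle-,-\rangle$ by rescaling by a suitable element of $(\sF^+)^\times$. Such an element is central in $\sD^\op$ and in $\sD$ and is fixed by $\star$ (it is totally real), so the rescaling does not change the adjoint involution $\dagger$, hence not the group $\sG=\GU(\sD,\dagger)$; at the archimedean places it multiplies the Hermitian forms by the corresponding real scalars, which is exactly the operation that flips a signature.

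First I would use weak approximation at the real places of $\sF^+$ to choose $\gamma\in(\sF^+)^\times$ with $\tau_1(\gamma)<0$ and $\tau(\gamma)>0$ for all $\tau\in\Sigma_\infty\setminus\{\tau_1\}$, and set $\langle v,w\rangle'\coloneqq\langle\gamma v,w\rangle$. Since $\gamma^\star=c(\gamma)=\gamma$ we have $\langle\gamma v,w\rangle=\langle v,\gamma w\rangle$, from which $\langle-,-\rangle'$ is non-degenerate (as $\gamma$ acts invertibly on $\sV$), satisfies $\langle v,w\rangle'=-\langle w,v\rangle'$, and satisfies $\langle bv,w\rangle'=\langle b\gamma v,w\rangle=\langle\gamma v,b^\star w\rangle=\langle v,b^\star w\rangle'$ for $b\in\sD^\op$; hence $(\sD^\op,\star,\sV,\langle-,-\rangle')$ is again a PEL structure of type A. Using centrality of $\gamma$ in $\sD$, the adjoint involution $\dagger'$ of $\langle-,-\rangle'$ satisfies $\langle\gamma xv,w\rangle=\langle x\gamma v,w\rangle=\langle\gamma v,x^\dagger w\rangle$, so $\dagger'=\dagger$ and therefore $\sG'=\GU(\sD,\dagger')=\GU(\sD,\dagger)=\sG$ as algebraic groups over $\QQ$; this gives assertion (1).

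For assertion (2) I would take $h'\colon\bS\to\sG'_\RR=\sG_\RR$ to be the standard Hodge homomorphism whose $\tau$-component agrees with that of $h$ for $\tau\neq\tau_1$ and has signature $(n-a_{\tau_1},a_{\tau_1})$ at $\tau_1$, and verify the PEL axioms for $(\sD^\op,\star,\sV,\langle-,-\rangle',h')$. The only nontrivial one is positivity of the polarization: over $\RR$ the form $\langle-,-\rangle'$ is, on the $\tau$-component, $\tau(\gamma)$ times the $\tau$-component of $\langle-,-\rangle$, which $h$ polarizes; at $\tau\neq\tau_1$ the pairing $\langle-,h'(i)-\rangle'$ is then $\tau(\gamma)>0$ times a positive definite form, while at $\tau_1$ the complex structure $h'(i)$ is the negative (up to conjugacy) of that of $h$, so $\langle-,h'(i)-\rangle$ is negative definite there and $\tau_1(\gamma)<0$ times it is again positive definite. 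Hence $h'$ polarizes $\langle-,-\rangle'$; by construction the signature of $\sG'$ at $\tau$ is $(a_\tau,n-a_\tau)$ for $\tau\neq\tau_1$ and $(n-a_{\tau_1},a_{\tau_1})$ for $\tau=\tau_1$, i.e. $a'_\tau=a_\tau$ and $a'_{\tau_1}=n-a_{\tau_1}$; and since every $a_\tau\in\{0,n\}$ we get $a'_\tau\in\{0,n\}$ for all $\tau$, so $\sG'(\RR)$ is compact modulo center.

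The one delicate point is this polarization check: one must align the sign of $\tau(\gamma)$ at each real place with the choice of signature for the $\tau$-component of $h'$, and it is precisely the freedom to make $\gamma$ negative at $\tau_1$ and positive elsewhere (guaranteed by weak approximation) that makes the consistent choice possible. Everything else is a direct unwinding of the definitions in \S\ref{subsection-def-auto}; in particular no conditions at the finite places are needed because $\sG'$ coincides with $\sG$ there.
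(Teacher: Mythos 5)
Your proposal is correct and takes a genuinely different, more elementary route than the paper's proof. The paper argues via Galois cohomology: it seeks a class $\alpha\in H^1(\sF^+,\sG^1)$ whose localization at $\tau_1$ is $[\rU(n,0)]$ and is trivial at the other real places, checks that the sum of local invariants in $\pi_0(Z(\hat{\sG}^1)^{\Gal})^D$ vanishes (adding a compensating nontrivial local component at an auxiliary inert finite place $\tilde{w}$ when $n$ is odd), and then invokes the Hasse principle for adjoint groups to verify that $\alpha_\ad$ is trivial, so that $\sG'\simeq\sG$ over $\QQ$; the realization of $\alpha$ by a PEL datum is delegated to \cite{Helm-JL}. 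Your rescaling trick sidesteps all of this: choosing $\gamma\in(\sF^+)^\times$ negative at $\tau_1$ and positive elsewhere by weak approximation, the form $\langle v,w\rangle'=\langle\gamma v,w\rangle$ has the same adjoint involution $\dagger$ because $\gamma$ is central and $\star$-fixed, so $\sG'=\sG$ on the nose (stronger than an abstract isomorphism), with no parity casework and no auxiliary finite place; and, as you note, the polarization positivity $\langle v,h'(i)v\rangle'>0$ is rigid and forces exactly the sign flip of $h(i)$ at $\tau_1$, producing the signature change $(a_{\tau_1},n-a_{\tau_1})\rightsquigarrow(n-a_{\tau_1},a_{\tau_1})$. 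What the paper's heavier machinery buys is a uniform framework that extends to Lemma \ref{lemma-pure-inner-form}, where the target signature $(a,n-a)$ with $0<a<n$ cannot be produced by a central rescaling (which can only swap $a$ and $n-a$) and where the resulting group $\sG_a$ is genuinely not $\RR$-isomorphic to $\sG$.
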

\begin{proof}
	We assume that $a_{\tau_1}=0$. The case $a_{\tau_1}=n$ can be treated similarly. By \cite[\S 8]{Helm-JL}, it suffices to construct a cohomology class in $\alpha\in H^1(\sF^+,\sG^1)$ that is given by the class $[\rU(n,0)]\in H^1(\sF^+_{\tau_1},\sG^1)$ associated to the pure inner form $\rU(n,0)$ of $\sG^1_{\sF^+_{\tau_1}}\simeq \rU(0,n)$ at $\tau_1$, such that the associated inner class $\alpha_\ad\in H^1(\sF^+,\sG_\ad^1)$ is trivial. There is an exact sequence
	$$H^1(\sF^+,\sG^1)\to \bigoplus_{u\text{ place of }\sF^+}H^1(\sF^+_{u},\sG^1)\to \pi_0(Z(\hat{\sG}^1)^{\Gal(\overline\sF^+/\sF^+)})^D,$$
	where $(-)^D$ is the Pontryagin dual. Here, the second morphism is defined by taking sum of local invariances
	$$\alpha_{\sG^1,u}\colon H^1(\sF^+_{u},\sG^1)\to \pi_0(Z(\hat{\sG}^1)^{\Gal(\overline\sF_{u}^+/\sF_{u}^+)})^D$$
	for all places $u$ of $\sF^+$. Moreover, if $u$ is a finite place, then $\alpha_{\sG^1,u}$ is an isomorphism. The image of the class $[\rU(n,0)]\in H^1(\sF^+_{\tau_1},\sG^1)$ under $\alpha_{\sG^1,\tau_1}$ is $(-1)^{n}\in \pi_0(Z(\hat{\sG}^1)^{\Gal(\overline\sF^+_{\tau_1}/\sF^+_{\tau_1})})^D\cong \ZZ/2\ZZ$. Thus if $n$ is even, then we can find a class $\alpha\in H^1(\sF^+,\sG^1)$ that is trivial at all the places except at $\tau_1$, and is given by $[\rU(n,0)]$ at $\tau_1$. It defines a pure inner form $\sG'$ with the desired properties.

    Now we treat the case when $n$ is odd. By Chebotarev density theorem, there is a finite place $w$ of $\QQ$ such that $w$ splits completely in $\sF^+$ and is unramified inert in $\sK$. Fix a place $\tilde{w}$ of $\sF^+$ above $w$. We first construct a unitary similitude group $\sG'$ satisfying (2) and is isomorphic to $\sG$ when restricted to all local fields $\QQ_{w'}$ with $w'\neq w$.  We have $\pi_0(Z(\hat{\sG}^1)^{\Gal(\overline\sF^+/\sF^+)})\cong\ZZ/2\ZZ$  and $H^1(\sF^+_{\tilde{w}},\sG^1)\cong \ZZ/2\ZZ$. Thus we can find a class $\alpha\in H^1(\sF^+,\sG^1)$ such that it is trivial away from $\tau_1$ and $\tilde{w}$, it restricts to $-1\in H^1(\sF^+_{\tilde{w}},\sG^1)\cong\ZZ/2\ZZ$ at $\tilde{w}$, and it restricts to $[\rU(n,0)]$ at $\tau_1$. 
	
	Note that the class $\alpha_\ad\in H^1(\sF^+,\sG^1_\ad)\simeq H^1(\QQ,\sG_\ad)$ is trivial at all places: The group $H^1(\sF^+_{\widetilde{w}},\sG^1_\ad)\simeq \pi_0(Z(\hat{\sG}^1_\ad)^{\Gal(\overline\sF^+_{\tilde{w}}/\sF^+_{\tilde{w}})})^D$ is trivial. Over the infinity place $\tau_1$, the class $\alpha_\ad$ is given by unique compact inner form, and thus is trivial. By Hasse principle for adjoint groups, the inner class $\alpha_\ad$ is trivial, and thus there is an isomorphism $\Psi\colon \sG\simeq\sG'$ of reductive groups over $\QQ$.
\end{proof}

Recall that $v^+$ is the $p$-adic place of $\sF^+$ lying below $v$. Let $\Sigma_{v^+}\subseteq\Sigma_p$ be the subset of embeddings $\sF^+\hookrightarrow \overline\QQ_p$ that induce the valuation $v^+$ on $\sF^+$. By Lemma \ref{lemma-change-sign}, we can assume that there is exactly one element $\tau^\circ\in \Sigma_{v^+}$ such that $a_{\tau^\circ}=n$, and for any other element $\tau\in \Sigma_p\backslash \{\tau^\circ\}$, $a_{\tau}=0$. By assumption $F=\sF_v\neq \QQ_p$, the set $\Sigma_{v^+}$ contains at least two elements. Fix an element $\tau^\bullet\in \Sigma_{p,v^+}\backslash \{\tau^\circ\}$.

\begin{lemma}\label{lemma-pure-inner-form}
	Let $0\leq a\leq n$ be an integer. There is a unitary similitude group $\sG_a$ induced from a PEL datum $(\sD^\op,\star,\sV,\langle-,-\rangle_a)$ as in \S\ref{subsection-def-auto}, such that the following conditions hold:
	\begin{enumerate}
		\item $\sG_a$ is a pure inner twist of $\sG$, i.e. there is a $\sG$-torsor $Z$ over $\QQ$ such that $\sG_a=\mathrm{Aut}_\sG(Z)$. Moreover, $Z$ is trivial over $\AAA_f$. In particular, it induces an isomorphism $\sG_a(\AAA_f)\simeq \sG(\AAA_f)$ on $\AAA_f$-points.
		\item The signatures $(a'_\tau)_{\tau\in\Sigma_p}$ of $\sG_a$ are given as follows:
			$$a'_{\tau^\bullet}=a, \quad a'_{\tau^\circ}=n-a\quad\text{and}\quad a'_\tau=0 \text{ for }\tau\neq \tau^\bullet,\tau^\circ.$$
	\end{enumerate}
\end{lemma}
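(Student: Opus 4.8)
The plan is to follow the cohomological strategy of the proof of Lemma~\ref{lemma-change-sign}, but to flip the signature at the \emph{two} places $\tau^\bullet$ and $\tau^\circ$ at once. The point is that the two resulting local Kottwitz invariants cancel, so --- unlike the single flip in Lemma~\ref{lemma-change-sign}, which for $n$ odd forced an auxiliary modification at a finite place --- we can arrange the inner twist to be split at \emph{all} finite places, which is exactly what is needed for $\sG_a(\AAA_f)\simeq\sG(\AAA_f)$.

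Following \cite[\S 8]{Helm-JL} (as used already in the proof of Lemma~\ref{lemma-change-sign}), giving a PEL datum $(\sD^\op,\star,\sV,\langle-,-\rangle_a)$ as in \S\ref{subsection-def-auto} with the same $(\sD^\op,\star,\sV)$ is the same as giving a class $\alpha\in H^1(\sF^+,\sG^1)$, the twist of the fixed $\star$-Hermitian form $\langle-,-\rangle$. Set $\sG_a\coloneqq\GU(\sD,\dagger_a)$, where $\dagger_a$ is the involution attached to $\langle-,-\rangle_a$; pushing $\alpha$ along the Shapiro isomorphism $H^1(\sF^+,\sG^1)\cong H^1(\QQ,\Res_{\sF^+/\QQ}\sG^1)$ and then along $\Res_{\sF^+/\QQ}\sG^1\hookrightarrow\sG$ produces a $\sG$-torsor $Z$ over $\QQ$ with $\sG_a=\mathrm{Aut}_\sG(Z)$. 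I would record two elementary facts: (a) the archimedean localization $\alpha_\tau$ determines the signature of $\langle-,-\rangle_a$ at $\tau$; and (b) if $\langle-,-\rangle_a$ and $\langle-,-\rangle$ agree at every finite place of $\sF^+$, then the two PEL data are already isomorphic over $\AAA_f$, so $Z(\AAA_f)\neq\emptyset$, i.e. $Z$ is split over $\AAA_f$ and $\sG_a(\AAA_f)\simeq\sG(\AAA_f)$. Thus it suffices to produce $\alpha$ whose localizations are: trivial at every finite place; at $\tau^\bullet$ the class carrying signature $(0,n)$ to $(a,n-a)$; at $\tau^\circ$ the class carrying $(n,0)$ to $(n-a,a)$; and trivial at every other real place.

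To produce $\alpha$ I would use the same exact sequence as in Lemma~\ref{lemma-change-sign},
$$H^1(\sF^+,\sG^1)\to\bigoplus_u H^1(\sF^+_u,\sG^1)\xrightarrow{\ \sum_u\alpha_{\sG^1,u}\ }\pi_0\big(Z(\hat{\sG}^1)^{\Gal(\overline\sF^+/\sF^+)}\big)^D,$$
whose image is the kernel of the sum-of-invariants map. So I only need the sum of the local invariants of the prescribed family to vanish in $\pi_0(Z(\hat{\sG}^1)^{\Gal(\overline\sF^+/\sF^+)})^D\cong\ZZ/2\ZZ$, and the only nonzero contributions come from $\tau^\bullet$ and $\tau^\circ$. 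Generalizing the computation in Lemma~\ref{lemma-change-sign} (there, the flip $(0,n)\to(n,0)$ has invariant $(-1)^n$), the invariant at a real place of the class carrying signature $(p,q)$ to $(p',q')$ is $(p-p')\bmod 2=(q-q')\bmod 2$. Hence the contribution at $\tau^\bullet$ is $a\bmod 2$ and the contribution at $\tau^\circ$ is $((n-a)-n)\bmod 2=a\bmod 2$, with total $2a\equiv 0$. Therefore the prescribed family is realized by some $\alpha\in H^1(\sF^+,\sG^1)$, and the resulting $\langle-,-\rangle_a$, $\sG_a$ and $Z$ satisfy (1) and (2), with signatures $a'_{\tau^\bullet}=a$, $a'_{\tau^\circ}=n-a$ and $a'_\tau=0$ for $\tau\neq\tau^\bullet,\tau^\circ$ by fact (a). (Alternatively one may avoid $\sG^1$-cohomology and argue with $\star$-Hermitian forms directly: since $\sD$ is split at every infinite place of $\sF$, prescribing local forms amounts to fixing the discriminant at the finite places and choosing signatures at the real places, and Landherr's Hasse principle for Hermitian forms over the CM extension $\sF/\sF^+$ produces a matching global form once the local discriminants glue --- the gluing obstruction in $\ZZ/2\ZZ$ is again $a+a\equiv 0$.)

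The real content --- and the reason the lemma is phrased for a simultaneous flip at $\tau^\bullet$ and $\tau^\circ$ --- is exactly this cancellation: a single flip at $\tau^\bullet$ has invariant $a\bmod 2$, which for odd $a$ cannot be corrected without modifying the group at some finite place (as in the $n$ odd case of Lemma~\ref{lemma-change-sign}), and that modification would break $\sG_a(\AAA_f)\simeq\sG(\AAA_f)$. Everything else is routine bookkeeping: the dictionary between refinements of the PEL datum and $H^1(\sF^+,\sG^1)$, the identification of the archimedean invariant of a signature change (a direct extension of the $(-1)^n$ computation already in the proof of Lemma~\ref{lemma-change-sign}), and the verification that $\sG_a$, being the automorphism group of a $\sG$-torsor split adelically at the finite level, is a pure inner twist of $\sG$ with the advertised adelic points.
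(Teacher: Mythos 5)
Your proof is correct. In the paper this lemma is disposed of with a one-line citation to \cite[Lemma 2.9]{Helm-Tian-Xiao}, so there is no in-text argument to compare against; what you have written is essentially the argument one would expect that reference to contain, obtained by running the Galois-cohomological machinery that the paper does spell out in its proof of Lemma~\ref{lemma-change-sign}. Your accounting is right: starting from the normalization $a_{\tau^\circ}=n$ and $a_\tau=0$ for $\tau\neq\tau^\circ$, the two archimedean modifications $(0,n)\mapsto(a,n-a)$ at $\tau^\bullet$ and $(n,0)\mapsto(n-a,a)$ at $\tau^\circ$ each contribute $a\bmod 2$ to the Kottwitz invariant, so the total obstruction $2a\equiv 0$ in $\pi_0(Z(\hat{\sG}^1)^{\Gal(\overline{\sF}^+/\sF^+)})^D\cong\ZZ/2\ZZ$ vanishes, and exactness of the local-global sequence produces a class $\alpha\in H^1(\sF^+,\sG^1)$ trivial at every finite place with the prescribed real localizations. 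Pushing along the Shapiro isomorphism and $\Res_{\sF^+/\QQ}\sG^1\hookrightarrow\sG$ then gives the $\sG$-torsor $Z$, split over $\AAA_f$ since $\alpha$ is locally trivial at finite places, and $\sG_a=\mathrm{Aut}_\sG(Z)$ has the advertised signatures. You also correctly identify the real content of the lemma — that the simultaneous flip at $\tau^\bullet$ and $\tau^\circ$ is what makes the obstruction cancel, which is precisely why condition $\sF_v\neq\QQ_p$ (so that $\Sigma_{v^+}$ has at least two elements) is needed upstream. The alternative via Landherr's Hasse principle for Hermitian forms is also a valid route. The only stylistic gap relative to the paper is that you reprove what the paper outsources, but the argument itself is sound and appropriately self-contained.
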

\begin{proof}
	This is a special case of \cite[Lemma 2.9]{Helm-Tian-Xiao}. 
\end{proof}

Let $0\leq a\leq n$ be an integer. Let $(\sD^\op,\star,\sV,\langle-,-\rangle_a)$ be a PEL datum defined in Lemma \ref{lemma-pure-inner-form}. Let $(\sG_a,\sX_a)$ be the Shimura datum of PEL type associated to it. Let $\mu_a$ be the Hodge cocharacter of $(\sG_a,\sX_a)$. Under the identification $\XX_\bullet(\sT)=\ZZ\times \prod_{\tau\in\Sigma_p}\ZZ^n$, the Hodge cocharacter is given by
$$\mu_a=(1,(\underbrace{1^a,0^{n-a}}_{\text{at }\tau^\bullet}),(\underbrace{1^{n-a},0^{a}}_{\text{at }\tau^\circ}),(0^n),\dots,(0^n)).$$

\subsection{Cohomology of Shimura varieties via coherent sheaves}

Let $0\leq a\leq n$ be an integer. Let $\sG_a$ be the pure inner form of $\sG$ defined in Lemma \ref{lemma-pure-inner-form}. 

Recall that there is a decomposition $\sG_{\QQ_p}=\Res_{F/\QQ_p}\GL_n\times H$. Denote $G_0=\Res_{F/\QQ_p}\GL_n$ and $\cI_0=\Res_{O_F/\ZZ_p}\cI$. Assume for now that $K_p=I\times K_p^v\subseteq G(F)\times H(\QQ_p)$ is an Iwahori subgroup. Let $K^p\subseteq \sG(\AAA_f^p)$ be a neat open compact subgroup. Denote $K=K^pK_p$ and $K^v=K^pK_p^v$. Let $\Sh_K(\sG_a,\sX_a)$ be the Shimura variety associated to $(\sG_a,\sX_a)$ of level $K$, which is a compact smooth variety over the reflection field $\sE=\sE_{\mu_a}$. Let
$$\Sh_K^\ext(\sG_a,\sX_a)=\bigsqcup_{\ker^1(\QQ,\sG_a)}\Sh_K(\sG_{a},\sX_{a})$$
be the extended Shimura variety as in \cite{Xiao-Zhu}. Here $\ker^1(\QQ,\sG_a)$ is the kernel of $$H^1(\QQ,\sG_a)\to \bigoplus_{w} H^1(\QQ_w,\sG_a),$$ 
where $w$ runs through all places of $\QQ$. We remark that $\sSh_K^\mathrm{ext}(\sG_a,\sX_a)$ is the moduli space of abelian varieties with PEL structure $(\sD^\op,\star,\sV,\langle-.-\rangle_a)$. The fixed isomorphism $\overline\QQ_p\simeq \CC$ defines a $p$-adic place $\fp$ of $\sE$. Let $\mathscr{S}^\ext_K(\sG_a,\sX_a)$ be the integral canonical model of $\sSh^\ext_{K^pK_p}(\sG_a,\sX_a)$ over $O_{\sE_\fp}$. Denote by
$$\mathscr{S}^\ext_{K}(\sG_a,\sX_a)_{k}^\perf$$
the prefection of the special fiber of $\mathscr{S}^\ext_{K}(\sG_a,\sX_a)$ over $k=\overline{\FF}_p$.

Let $\mu_{a,1}\colon \GG_{m,\overline{\QQ}_p}\to (G_0)_{\overline\QQ_p}$ be the projection of $\mu_a$ to the first component and $\mu_{a,2}\colon\GG_{m,\overline{\QQ}_p}\to H_{\overline\QQ_p}$ the projection of $\mu_a$ to the second component. We need a slight variant of Igusa stacks constructed in \cite{Igusa}. If $\mu$ is a dominant coweight of $G_0$, we denote by $\Isoc_{G_0,\leq\mu}$ the closed substack that is the union of $\Isoc_{G_0,b}$ with $b\in B(G,\mu)$. Let $i_{\leq \mu}\colon \Isoc_{G_0,\leq\mu}\hookrightarrow \Isoc_{G_0}$ be the closed embedding. Let $\mu^*$ denote the dominant Weyl translate of $-\mu$. Let $\Sht^\loc_{\cI_0,\mu}$ denote the closed substack of $\Sht^\loc_{\cI_0}$ classifying local shtukas $\sigma^*\cE\dashrightarrow \cE$ bounded by the $\mu^*$. See \cite[\S 6.1.2]{Tame} for the definition (but note that this stack is denoted by $\Sht^\loc_{\cI_0,\mu^*}$ in \emph{loc. cit.}). 

\begin{prop}\label{prop-Igusa}
	There is a perfect stack $\Igs^a_{K^v}$ over $k$ that fits into a Cartesian diagram
	$$\begin{tikzcd}
		\mathscr{S}^\ext_K(\sG_a,\sX_a)_{k}^\perf \ar[r,"\loc_v"]\ar[d,"\Nt^{\mathrm{glob}}"swap] & \Sht^\loc_{\cI_0,\mu_{a,1}} \ar[d,"\Nt"] \\
		\Igs_{K^v}^a \ar[r,"\loc_v^0"] & \Isoc_{G_0,\leq \mu^*_{a,1}}.
	\end{tikzcd}$$
\end{prop}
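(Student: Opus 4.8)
The plan is to obtain $\Igs^a_{K^v}$ as a mild variant of the perfect Igusa stack constructed in \cite{Igusa}, applied to the PEL Shimura datum $(\sG_a,\sX_a)$; beyond quoting that construction, only two essentially bookkeeping modifications are needed, namely passing to the extended Shimura variety and isolating the factor at $v$ from the other $p$-adic places.

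First I would recall the relevant structures. Since $(\sG_a,\sX_a)$ is of PEL type, $\mathscr{S}^\ext_K(\sG_a,\sX_a)$ is a moduli space of abelian schemes with $\sD^\op$-action, polarization, prime-to-$p$ level structure, and lattice-chain level structures at the places of $\sF^+$ above $p$. Using the fixed splitting $\sD_{\sF_v}\simeq \rM_n(\sF_v)$ together with the idempotent decomposition of $\sD\otimes_\QQ\QQ_p$, the $p$-divisible group of the universal abelian scheme acquires a direct factor with $(O_F,\GL_n)$-structure; over $\mathscr{S}^\ext_K(\sG_a,\sX_a)_k^\perf$ its associated (integral) local shtuka is bounded by $\mu_{a,1}^*$, and this is what defines the horizontal map $\loc_v$ into $\Sht^\loc_{\cI_0,\mu_{a,1}}$. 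Composing with $\Nt$ and using the group-theoretic description of the Newton stratification of Shimura varieties — the $\sigma$-conjugacy class of every point lands in $\Isoc_{G_0,\le\mu^*_{a,1}}$, by (the group-theoretic version of) Mazur's inequality — produces the $\Isoc$-valued composite along the left-then-bottom edge.

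Next, following \cite{Igusa}, I would define $\Igs^a_{K^v}$ as the pro-\'etale (equivalently $v$-) sheafification of the prestack quotient of $\mathscr{S}^\ext_K(\sG_a,\sX_a)_k^\perf$ that retains all of the PEL data away from $v$ — absorbing the $H(\QQ_p)$-level $K_p^v$ into the prime-to-$v$ level — together with the $G_0$-isocrystal at $v$, while forgetting the $\GL_n(F)$-lattice chain at $v$. The content of \cite{Igusa} is precisely that this sheafification is representable by a perfect Artin stack over $k$ and that the square in the statement is Cartesian; Cartesianness expresses that the fibre of $\Nt\colon\Sht^\loc_{\cI_0,\mu_{a,1}}\to\Isoc_{G_0}$ over an isocrystal is the space of $\cI_0$-level (lattice-chain) structures on it, which is exactly the datum of the $\GL_n(F)$-Iwahori-level structure at $v$ on the Shimura variety. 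To apply \cite{Igusa} verbatim I would check that, by Assumptions (2)--(3) and Condition (iii) of Theorem \ref{thm-main} ($F/\QQ_p$ unramified, $\sD$ split at $v$), the group $G_0=\Res_{F/\QQ_p}\GL_n$ is unramified and $\cI_0=\Res_{O_F/\ZZ_p}\cI$ is a genuine parahoric group scheme. The passage to $\Sh_K^\ext$ is harmless because $\ker^1(\QQ,\sG_a)$ is finite and the entire construction commutes with the finite disjoint union $\Sh^\ext_K=\bigsqcup_{\ker^1(\QQ,\sG_a)}\Sh_K$; and the separation of $v$ from the remaining $p$-adic places uses $\sG_{a,\QQ_p}\simeq \Res_{F/\QQ_p}\GL_n\times H$ with $K_p=I\times K_p^v$, so that the local shtuka and isocrystal data at $p$ split as a product over $\Pi_p$ and only the factor at $v$ is quotiented out.

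The hard part is genuinely the theorem cited from \cite{Igusa}: showing that $\Nt^{\mathrm{glob}}$ is cohomologically smooth with fibres given by perfections of Igusa varieties (torsors under perfections of the relevant $p$-divisible and unipotent groups), and deducing from this both the representability of $\Igs^a_{K^v}$ by a perfect Artin stack and the Cartesianness of the square. Granting that input, the only remaining work is to verify that the product decomposition at $p$ and the extended structure do not interfere with the construction, which is routine bookkeeping.
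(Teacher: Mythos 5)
Your overall strategy — invoke the perfect Igusa stack of \cite{Igusa} for the PEL datum $(\sG_a,\sX_a)$ and then peel off the $v$-factor — is the same as the paper's. But you claim that \cite{Igusa} yields the statement \emph{directly} ("The content of \cite{Igusa} is precisely that this sheafification is representable \ldots and that the square in the statement is Cartesian"), and that is not quite right: what \cite[\S6.5.3]{Igusa} (or \cite[Proposition~6.2]{Tame}) supplies is a Cartesian square with the \emph{full} $p$-adic group, i.e. with $\Sht^\loc_{\cK_p,\mu_a}$ and $\Isoc_{\sG_{\QQ_p},\le\mu_a^*}$ on the right-hand side, not the $v$-localized objects $\Sht^\loc_{\cI_0,\mu_{a,1}}$ and $\Isoc_{G_0,\le\mu_{a,1}^*}$ that appear in Proposition~\ref{prop-Igusa}.

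The reduction from the full-$p$ square to the $v$-square is where you are too brisk. You mention the decomposition $\sG_{a,\QQ_p}\simeq G_0\times H$ and say the "local shtuka and isocrystal data at $p$ split as a product over $\Pi_p$," but you never invoke the fact that the component $\mu_{a,2}$ of the Hodge cocharacter along $H$ is \emph{central}. That centrality is exactly what makes the $H$-factor trivial: it gives $\Sht^\loc_{\cK_p,\mu_a}\simeq \Sht^\loc_{\cI_0,\mu_{a,1}}\times\BB_{\textrm{pro\'et}}K_p^v$, it forces $B(H,\mu_{a,2}^*)$ to consist of a single basic class $b_2$ represented by a central element, and it canonically identifies the inner form $H_{b_2}$ with $H$ itself, so that $\Isoc_{\sG_{\QQ_p},\le\mu_{a,1}^*}\simeq\Isoc_{G_0,\le\mu_{a,1}^*}\times\BB_{\textrm{pro\'et}}H(\QQ_p)$. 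Only after these identifications can one pull back along $*\to\BB_{\textrm{pro\'et}}H(\QQ_p)$ to get a pro-\'etale $H(\QQ_p)$-torsor $\widetilde\Igs^a_{K^p}$ over the full-$p$ Igusa stack, and then quotient by $K_p^v$ to produce $\Igs^a_{K^v}$ fitting into the stated square. Were the signature at some place in $\Pi_p\setminus\{v\}$ not equal to $0$ or $n$, this factorization would fail — the $H$-side would have genuine Newton strata and there would be no canonical product decomposition. So the centrality of $\mu_{a,2}$ (a consequence of the assumptions of Lemma~\ref{lemma-pure-inner-form}, not of condition (iii) of Theorem~\ref{thm-main} alone, which only controls $F/\QQ_p$) is the load-bearing point you should name explicitly rather than file under "routine bookkeeping."

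One smaller remark: your appeal to Mazur's inequality to justify that the composite $\Nt\circ\loc_v$ lands in $\Isoc_{G_0,\le\mu_{a,1}^*}$ is a reasonable sanity check, but once the square is obtained as a pullback from the \cite{Igusa} square (where the analogous bound is built in), this follows for free and needn't be argued separately.
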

\begin{proof}
	It is proved in \cite[\S6.5.3]{Igusa} (see also \cite[Proposition 6.2]{Tame}) that there exists a perfect stack $\Igs_{K^p}^a$ and a Cartesian diagram
	$$\begin{tikzcd}
		\mathscr{S}^\ext_K(\sG_a,\sX_a)_{k}^\perf \ar[r,"\loc_p"]\ar[d,"\Nt^{\mathrm{glob}}"swap] & \Sht^\loc_{\cK_{p},\mu_{a}} \ar[d,"\Nt"] \\
		\Igs_{K^p}^a \ar[r,"\loc_p^0"] & \Isoc_{\sG_{\QQ_p},\leq \mu_a^*}.
	\end{tikzcd}$$
	Moreover, the morphism $\Nt^{\mathrm{glob}}$ is a ind-proper and surjective. Here $\cK_p$ is the Iwahoric group scheme over $\ZZ_p$ associated to $K_p$. Since the cocharacter $\mu_{a,2}$ is central, we have
	$$\Sht^\loc_{\cK_{p},\mu_{a}}\simeq \Sht_{\cI_0,\mu_{a,1}}\times \BB_{\textrm{pro\'et}}K_p^v.$$
	Let $b_{2}\in B(H)$ be the basic element in $B(H,\mu_{a,2}^*)$. Since $\mu_{a,2}$ is a central cocharacter defined over $\QQ_p$, the element $b_{2}$ is the $\sigma$-conjugacy class of $\mu^*_{a,2}(p)\in H(\breve{\QQ}_p)$. Thus the associated extended purely inner form $H_{b_2}$ is canonically identified with $H$. Hence $ \Isoc_{\sG_{\QQ_p},\leq \mu_{a,1}^*}\simeq \Isoc_{G_0,\leq\mu_{a,1}^*}\times \BB_{\textrm{pro\'et}}H(\QQ_p)$. Pullback along the pro-\'etale cover $*\to \BB_{\textrm{pro\'et}}H(\QQ_p)$, we get a Cartesian diagram
	$$\begin{tikzcd}
		\mathscr{S}^\ext_{K^pI}(\sG_a,\sX_a)_{k}^\perf \ar[r,"\loc_v"]\ar[d,"\Nt^\mathrm{glob}"swap]& \Sht^\loc_{\cI_0,\mu_{a,1}}\ar[d,"\Nt"] \\
		\widetilde\Igs^a_{K^p} \ar[r,"\loc_v^0"] & \Isoc_{G_0,\leq \mu_{a,1}^*}
	\end{tikzcd}$$
	where $\mathscr{S}^\ext_{K^pI}(\sG_a,\sX_a)_{k}^\perf\to \mathscr{S}^\ext_K(\sG_a,\sX_a)_{k}^\perf$ is a pro-\'etale $K_p^v$-torsor and $\widetilde\Igs^a_{K^p}\to \Igs^a_{K^p}$ is a pro-\'etale $H(\QQ_p)$-torsor. By taking quotient of $\mathscr{S}^\ext_{K^pI}(\sG_a,\sX_a)_{k}^\perf$ and $\widetilde\Igs^a_{K^p}$ by $K_p^v$-actions, we get the desired Cartesian diagram.
\end{proof}

Note that $\mathscr{S}^\ext_{K^pI}(\sG_a,\sX_a)_{k}^\perf$ carries an action of $H(\QQ_p)$. If $K_p^v\subset H(\QQ_p)$ is an arbitrary open compact subgroup, we define 
$$\mathscr{S}^\ext_{K}(\sG_a,\sX_a)_{k}^\perf=\mathscr{S}^\ext_{K^pI}(\sG_a,\sX_a)_{k}^\perf/K_p^v\quad\text{and}\quad \Igs^a_{K^v}=\widetilde\Igs^a_{K^p}/K_p^v$$
as pro-\'etale quotients. Proposition \ref{prop-Igusa} holds in this generality. Thus we can drop the assumptions on $K_p^v$ of being an Iwahori subgroup in the sequel.
 
If $a=0$, then $\sG_a=\sG$ and $\Sh_{\mu_0}$ is a disjoint union of $|\ker^1(\QQ,\sG)|$-copies of $\sG(\QQ)\backslash\sG(\AAA_f)/K^vI$.  Let $b_1$ be the basic element in $B(G_0,\mu_{0,1}^*)$. Since $[\mu_{a,1}^*]=[\mu_{0,1}^*]\in \pi_1(Z(G_0))_{\Gal(\overline\QQ_p/\QQ_p)}$, the element $b_1$ is the basic element in $B(G_0,\mu_{a,1}^*)$ for any $0\leq a\leq n$.
As $\mu_{0,1}^*$ is central, the extended pure inner form $(G_0)_{b_1}$ associated to $b_1$ is identified to $G_0$. Thus $\Isoc_{G_0,b_1}\simeq \BB_{\textrm{pro\'et}}G(F)$. By definition, we have
$$\Igs_{K^v}^0=\bigsqcup_{\ker^1(\QQ,\sG)} \big(\sG(\QQ)\backslash\sG(\AAA_f)/K^v\big)/G(F).$$
The morphism $\loc_v^0\colon\Igs_{K^v}^0\to \BB_{\textrm{pro\'et}}G(F)$ is the obvious morphism. 

We need the following results on exotic Hecke correspondences.
\begin{prop}\label{prop-exotic-Hecke}
	Let $0\leq a\leq n$. There is a natural isomorphism
	$$\Igs_{K^v}^a\times_{\Isoc_{G_0,\leq\mu_{a,1}^*}}\Isoc_{G_0,b_1}\simeq \Igs_{K^v}^0.$$
\end{prop}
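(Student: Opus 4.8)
The plan is to restrict the Cartesian square of Proposition~\ref{prop-Igusa} to the basic Newton stratum and then recognise the resulting stack by means of Rapoport--Zink (basic) uniformization. Write $\mathcal{Y}^a=\Igs^a_{K^v}\times_{\Isoc_{G_0,\leq\mu_{a,1}^*}}\Isoc_{G_0,b_1}$ for the left-hand side of the claimed isomorphism. First I would base change the square of Proposition~\ref{prop-Igusa} along the immersion $\Isoc_{G_0,b_1}\hookrightarrow\Isoc_{G_0,\leq\mu_{a,1}^*}$ (legitimate since $b_1\in B(G_0,\mu_{a,1})$), obtaining a Cartesian square
$$\begin{tikzcd}
	(\mathscr{S}^\ext_{K}(\sG_a,\sX_a)_{k}^\perf)^{b_1} \ar[r,"\loc_v"]\ar[d,"\Nt^{\mathrm{glob}}"swap] & \Sht^\loc_{\cI_0,\mu_{a,1}}\times_{\Isoc_{G_0,\leq\mu_{a,1}^*}}\Isoc_{G_0,b_1} \ar[d] \\
	\mathcal{Y}^a \ar[r,"\loc_v^0"] & \Isoc_{G_0,b_1},
\end{tikzcd}$$
where $(-)^{b_1}$ denotes the (perfected) basic Newton stratum. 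Since $b_1$ is basic with central Newton point, $\Isoc_{G_0,b_1}\simeq\BB_{\textrm{pro\'et}}G(F)$, and pulling back once more along $\ast\to\BB_{\textrm{pro\'et}}G(F)$ identifies $\Sht^\loc_{\cI_0,\mu_{a,1}}\times_{\Isoc_{G_0,\leq\mu_{a,1}^*}}\Isoc_{G_0,b_1}$ with the affine Deligne--Lusztig variety $X^a$ of Iwahori level attached to $(\mu_{a,1},b_1)$ (a perfect scheme over $k$), carrying its $J_{b_1}(\QQ_p)=G(F)$-action; so this stack is the pro-\'etale quotient $[X^a/G(F)]$ over $\BB_{\textrm{pro\'et}}G(F)$. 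Thus the square exhibits $(\mathscr{S}^\ext_{K}(\sG_a,\sX_a)_{k}^\perf)^{b_1}$ as the contracted product $\widetilde{\mathcal{Y}}^a\times^{G(F)}X^a$, where $\widetilde{\mathcal{Y}}^a\to\mathcal{Y}^a$ is the pro-\'etale $G(F)$-torsor classified by $\loc_v^0$.

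Next I would invoke basic uniformization. The pure inner form of $\sG_a$ attached to $b_1$ is anisotropic modulo centre at all infinite places while agreeing with $\sG_a$ (hence with $\sG$) at the finite places, so it is the definite group $\sG$ itself, with $\sG(\QQ_p)\simeq G(F)\times H(\QQ_p)$ and $J_{b_1}(\QQ_p)\simeq G(F)$; and since the $H$-component $\mu_{a,2}$ of the Hodge cocharacter is central, the local datum entering $H$ contributes only the discrete set $H(\QQ_p)/K_p^v$. Rapoport--Zink uniformization of the basic locus, in the shape compatible with the Igusa stack formalism of \cite{Igusa}, then provides a natural isomorphism
$$(\mathscr{S}^\ext_{K}(\sG_a,\sX_a)_{k}^\perf)^{b_1}\ \simeq\ \bigsqcup_{\ker^1(\QQ,\sG)}\sG(\QQ)\backslash\big(\sG(\AAA_f^p)/K^p\times H(\QQ_p)/K_p^v\times X^a\big),$$
compatibly with the maps $\loc_v$ to $[X^a/G(F)]$, where $\sG(\QQ)$ acts diagonally through $\sG(\QQ)\hookrightarrow\sG(\AAA_f^p)$, $\sG(\QQ)\hookrightarrow H(\QQ_p)$, and $\sG(\QQ)\hookrightarrow\sG(\QQ_p)\twoheadrightarrow G(F)$ (here $\ker^1(\QQ,\sG_a)$ is canonically $\ker^1(\QQ,\sG)$, being insensitive to pure inner twisting). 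Using $\sG(\AAA_f)=(\sG(\AAA_f^p)\times H(\QQ_p))\times G(F)$ and $K^v=K^pK_p^v$, the right-hand side is $\widetilde{\Igs^0_{K^v}}\times^{G(F)}X^a$, where $\widetilde{\Igs^0_{K^v}}=\bigsqcup_{\ker^1(\QQ,\sG)}\sG(\QQ)\backslash\sG(\AAA_f)/K^v$ is the $G(F)$-torsor classified by $\loc_v^0\colon\Igs^0_{K^v}\to\BB_{\textrm{pro\'et}}G(F)$; note also that $\mathcal{Y}^0=\Igs^0_{K^v}$ since $\mu_{0,1}$ is central and hence $\Isoc_{G_0,\leq\mu_{0,1}^*}=\Isoc_{G_0,b_1}$.

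Finally I would descend. Both descriptions present $(\mathscr{S}^\ext_{K}(\sG_a,\sX_a)_{k}^\perf)^{b_1}$ as the contracted product of one and the same affine Deligne--Lusztig variety $X^a$ against a pro-\'etale $G(F)$-torsor, compatibly with the projections $\loc_v$ to $[X^a/G(F)]$ and hence with the torsor structures; this forces $\widetilde{\mathcal{Y}}^a\simeq\widetilde{\Igs^0_{K^v}}$ as $G(F)$-torsors, and taking quotients by $G(F)$ yields the asserted natural isomorphism $\mathcal{Y}^a\simeq\Igs^0_{K^v}$, together with the requisite compatibility of the $\loc_v^0$-maps. I expect the main obstacle to be precisely this last compatibility: one must know that the Rapoport--Zink uniformization of the basic locus is compatible with the morphism $\loc_v$ to the local shtuka space (equivalently, with the $p$-divisible group / Newton structure), so that the contracted-product presentations may be cancelled against $X^a$. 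This is exactly the form in which uniformization is packaged in \cite{Igusa}; once the correct statement is extracted, what remains is the elementary double-coset bookkeeping that matches the two presentations (and a harmless $\ker^1$-count).
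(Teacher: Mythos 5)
Your proposal follows the second route that the paper's own (very short) proof offers: the paper reduces to Iwahori $K_p^v$, cites \cite{Sempliner-van-Hoften} as the primary reference, and then remarks that ``alternatively, in this special case, it follows from Rapoport--Zink uniformization as in \cite[Proposition 7.3.5]{Xiao-Zhu}''. What you wrote is a detailed expansion of that Rapoport--Zink alternative, so the overall strategy agrees with one of the two roads the author points to.

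That said, the final ``cancellation'' step as you have phrased it is not quite airtight, and the gap is a bit larger than the compatibility you flag. Knowing only that RZ uniformization identifies $(\mathscr{S}^\ext_K(\sG_a,\sX_a)_k^\perf)^{b_1}$ with $\widetilde{\Igs^0_{K^v}}\times^{G(F)}X^a$ compatibly with $\loc_v$ to $[X^a/G(F)]$ does not by itself recover the torsor: the datum $(T\times^{G(F)}X^a\to[X^a/G(F)])$ does not determine $T$ up to isomorphism (already over a point, every torsor $T$ yields the same pair). What pins $T$ down in the Cartesian square of Proposition~\ref{prop-Igusa} is the additional leg $\Nt^{\mathrm{glob}}\colon(\mathscr{S}^\ext)^{b_1}\to\mathcal{Y}^a$, which on $G(F)$-torsor pullbacks is the first projection $\widetilde{\mathcal{Y}}^a\times X^a\to\widetilde{\mathcal{Y}}^a$. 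So the compatibility you really need from uniformization is not only with $\loc_v$ but also with $\Nt^{\mathrm{glob}}$ (equivalently, with the Igusa-level structure / quasi-isogeny relation defining the Igusa stack), so that both projections out of the contracted product are matched at once. That two-sided compatibility is exactly what is packaged in \cite[Proposition 7.3.5]{Xiao-Zhu} (and, in a more general and structural form, in \cite{Sempliner-van-Hoften}); once you import it, the rest of your double-coset bookkeeping, including the reduction of $\sG_a$'s basic inner form to $\sG$, the factorization of the RZ space through the central cocharacter $\mu_{a,2}$, and the $\ker^1$-count, goes through as you sketched.
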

\begin{proof}
	We can assume that $K_p^v$ is an Iwahori subgroup. Then this follows from \cite{Sempliner-van-Hoften}. Alternatively, in this special case, it follows from Rapoport--Zink uniformization as in \cite[Proposition 7.3.5]{Xiao-Zhu}.
\end{proof}

We recall the local-global compatibility for cohomology of Shimura varieties studied in \cite{Tame} and \cite{YZ-torsion-vanishing}. Let $\omega_{\Igs_{K^v}^a}\in\Shv(\Igs_{K^v}^a,\Lambda)$ be the dualizing sheaf.

\begin{defn}
	Define the \emph{Igusa sheaf} $$\Igssheaf^a_{K^v}\coloneqq(i_{\leq\mu_{a,1}^*})_*(\loc_v^0)_\flat(\omega_{\Igs_{K^v}^a})\in \Shv(\Isoc_{G,\leq\mu_{a,1}^*},\Lambda).$$ 
    Here $(\loc_v^0)_\flat$ is the right adjoint of $(\loc_v^0)^!\colon \Shv(\Igs^a_{K^v})\to\Shv(\Isoc_{G_0,\leq\mu_{a,1}^*})$.
\end{defn}

By \cite[Proposition 6.7]{Tame}, $\Igssheaf_{K^v}^a$ is an admissible object in $\Shv(\Isoc,\Lambda)$. 

\begin{defn}
    Let $\cP^{\widehat\unip}$ be the right adjoint of the embedding $\Shv^{\widehat{\unip}}(\Isoc_G,\Lambda)\hookrightarrow\Shv(\Isoc_G,\Lambda)$. Recall the unipotent local Langlands functor $\LL^\unip$ in Theorem \ref{thm-unip-cllc}. For $0\leq a\leq n$, define the \emph{(unipotent) coherent Igusa sheaf} 
	$$\fI^a\coloneqq\LL^\unip(\cP^{\widehat\unip}(\Igssheaf^a_{K^v}))\in \Ind\Coh(\Loc_{{}^LG_0,\QQ_p}^{\widehat\unip}).$$
\end{defn}

The Igusa sheaf $\Igssheaf_{K^v}^a$ carries an action of the global Hecke algebra $\TT^S$. Thus $\fI^a$ also carries an action of $\TT^S$. By \cite[Corollary 1.9]{Tame}, the coherent Springer sheaf $\CohSpr\in\Coh(\Loc^{\widehat{\unip}}_{{}^LG_0,\QQ_p})$ carries an action of the Iwahori--Hecke algebra $\cH$. Let $V_{\mu_{a,1}}$ is the irreducible representation of $\hat{G}_0$ with highest weight $\mu_{a,1}$, and $\widetilde{V}_{\mu_{a,1}}$ be the pullback of $V_{\mu_{a,1}}$ along $\Loc_{{}^LG_0,\QQ_p}^{\widehat\unip}\to \BB\hat{G}_0$. We have the following results.

\begin{thm}\label{thm-coh-formula-Sh}
	For $0\leq a\leq n$, there is an isomorphism of $\Lambda$-modules
	$$R\Gamma(\Sh_K^\ext(\sG_a,\sX_a)_{\overline\QQ_p},\Lambda)[d]\simeq \Hom_{\Ind\Coh(\Loc_{{}^LG_0,\QQ_p}^{\widehat\unip})}(\widetilde{V}_{\mu_{a,1}}\otimes\CohSpr,\fI^a)$$
	compatible with $\TT^S\times\cH$-actions. Here $d=2a(n-a)$ is the dimension of $\sSh_{K^vI}(\sG_a,\sX_a)$. 
\end{thm}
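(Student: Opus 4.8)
The strategy is to combine the Igusa stack formalism of Proposition \ref{prop-Igusa} with the unipotent categorical local Langlands correspondence of Theorem \ref{thm-unip-cllc}, following the local-global compatibility machinery of \cite[\S 6]{Tame} and \cite[\S 4]{YZ-torsion-vanishing}. First I would express the étale cohomology of $\Sh^\ext_K(\sG_a,\sX_a)$ in terms of the special fiber. Using the smooth proper base change theorem and the existence of the integral canonical model $\mathscr{S}^\ext_K(\sG_a,\sX_a)$, together with the comparison between étale cohomology of the generic fiber and (nearby cycle) cohomology of the perfected special fiber $\mathscr{S}^\ext_K(\sG_a,\sX_a)_k^\perf$ --- this is where the $V_{\mu_{a,1}}$ twist enters, via the fact that the $IC$-sheaf attached to $\mu_{a,1}$ on the local Hecke stack corresponds to $V_{\mu_{a,1}}$ under geometric Satake --- one rewrites $R\Gamma(\Sh^\ext_K(\sG_a,\sX_a)_{\overline\QQ_p},\Lambda)[d]$ as a cohomology computed on $\mathscr{S}^\ext_K(\sG_a,\sX_a)_k^\perf$ relative to the local shtuka space $\Sht^\loc_{\cI_0,\mu_{a,1}}$.

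Next I would descend along the diagram of Proposition \ref{prop-Igusa}. Since $\Nt^{\mathrm{glob}}$ is ind-proper and the square is Cartesian, proper base change lets us push the cohomology down to the Igusa stack $\Igs^a_{K^v}$, and then further to $\Isoc_{G_0,\leq\mu_{a,1}^*}$ via $\loc_v^0$. The key input here is the $S$-operator / spectral-action formalism: the pushforward $(\Nt)_\flat\omega_{\Sht^\loc_{\cI_0,\mu_{a,1}}}$ is controlled by the $IC$-sheaf $\mathrm{IC}_{\mu_{a,1}}$ on the Hecke stack, which under Bezrukavnikov's equivalence and its trace (as in Proposition \ref{prop-Weil-restriction} and \cite[\S 6]{Tame}) becomes the operator $V_{\mu_{a,1}}\otimes\CohSpr$ acting on the coherent side. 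Concretely, after applying $\LL^\unip\circ\cP^{\widehat\unip}$ and using that $\Igssheaf^a_{K^v}$ is admissible (hence, by Lemma \ref{lemma-adm-unip-imply-compact}, compact once we know unipotence), one gets
$$R\Gamma(\Sh^\ext_K(\sG_a,\sX_a)_{\overline\QQ_p},\Lambda)[d]\simeq \Hom\big((i_1)_*\cInd_I^{G(F)}\Lambda \text{ twisted by } V_{\mu_{a,1}},\, \Igssheaf^a_{K^v}\big),$$
and translating the left entry through Theorem \ref{thm-unip-cllc} (which sends $(i_1)_*\cInd_I^{G(F)}\Lambda$ to $\CohSpr$) and through the definition of $\fI^a$ yields exactly the claimed formula $\Hom_{\Ind\Coh(\Loc_{{}^LG_0,\QQ_p}^{\widehat\unip})}(\widetilde V_{\mu_{a,1}}\otimes\CohSpr,\fI^a)$. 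Compatibility with the $\TT^S$-action is automatic since all constructions are functorial in the prime-to-$S$ Hecke correspondences; compatibility with the $\cH$-action on the left factor follows from \cite[Corollary 1.9]{Tame} (the Iwahori-Hecke algebra acts on $\CohSpr$, equivalently on $(i_1)_*\cInd_I^{G(F)}\Lambda$).

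The main obstacle is bookkeeping the normalizations and the appearance of $V_{\mu_{a,1}}$ versus $V_{\mu_{a,1}^*}$, together with the fact that $\loc_v^0$ lands in the \emph{bounded} stack $\Isoc_{G_0,\leq\mu_{a,1}^*}$ rather than all of $\Isoc_{G_0}$ --- one must check that pushing forward along $i_{\leq\mu_{a,1}^*}$ and then applying $\LL^\unip$ is compatible with the $S$-operator formalism, which is precisely the content of the local-global compatibility proved in \cite[\S 6.2--6.5]{Tame} and \cite[\S 4.3]{YZ-torsion-vanishing}; here the Weil-restriction compatibility of Proposition \ref{prop-Weil-restriction} is needed to pass from $\Res_{F/\QQ_p}\GL_n$ to $\GL_{n,F}$ so that the target $\Loc^{\widehat\unip}_{{}^LG_0,\QQ_p}$ is identified with $\Loc^{\widehat\unip}_{{}^LG,F}$. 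A secondary point requiring care is the degree shift $[d]$ with $d=2a(n-a)$: this is the (complex) dimension of $\Sh_{K^vI}(\sG_a,\sX_a)$, and it must be matched against the cohomological amplitude of $\mathrm{IC}_{\mu_{a,1}}$ and the virtual dimension $0$ of the relevant fixed-point stacks (cf. \cite[Proposition 2.3.7]{Zhu-coherent-sheaf}); once these shifts are aligned the isomorphism is forced. Finally, one should remark that the unipotence of $\Igssheaf^a_{K^v}$ in the relevant block is what makes $\cP^{\widehat\unip}$ lossless on the piece of cohomology we care about, so that no information is discarded in passing to $\fI^a$.
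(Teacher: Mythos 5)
Your proposal and the paper's proof are the same in substance: the paper simply cites \cite[Corollary 6.17]{Tame} for the existence of the isomorphism and its compatibility with $\TT^S$-actions, and \cite[Theorem 4.20]{YZ-torsion-vanishing} for compatibility with the $\cH$-action. What you have written is a faithful unpacking of exactly the machinery those two results encode: pass to the special fiber of the integral model, use the Cartesian diagram of Proposition \ref{prop-Igusa} together with ind-properness of $\Nt^{\mathrm{glob}}$ to express the cohomology as a $\Hom$ out of a Hecke-twisted generator into the Igusa sheaf, apply $\LL^\unip$ via Theorem \ref{thm-unip-cllc} to land in $\Hom(\widetilde V_{\mu_{a,1}}\otimes\CohSpr,\fI^a)$, and invoke Proposition \ref{prop-Weil-restriction} to switch between $\Loc^{\widehat\unip}_{{}^LG_0,\QQ_p}$ and $\Loc^{\widehat\unip}_{{}^LG,F}$. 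The $\cH$-equivariance point you derive from \cite[Corollary 1.9]{Tame} is the same content as the citation of \cite[Theorem 4.20]{YZ-torsion-vanishing}. One small inaccuracy in wording: since we are at Iwahori level, the twist by $V_{\mu_{a,1}}$ does not come from geometric Satake on a spherical Hecke stack but from Bezrukavnikov's Iwahori-level equivalence and its categorical trace; you do say this correctly a sentence later, so it is a slip of phrasing rather than a gap.
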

\begin{proof}
    The existence of such an isomorphism and compatibility of $\TT^S$-actions follow from \cite[Corollary 6.17]{Tame}. Compatibility of $\cH$-actions follows from \cite[Theorem 4.20]{YZ-torsion-vanishing}.
\end{proof}

In particular, if $a=0$, then we obtain an isomorphism
$$\bigoplus_{\ker^1(\QQ,\sG)}\cA^I\simeq \Hom_{\Coh(\Loc_{{}^LG_0,\QQ_p}^{\widehat\unip})}(\CohSpr\otimes\widetilde{V}_{\mu_{0,1}},\fI^0).$$
Note that $V_{\mu_{0,1}}$ is a character of $\hat{G}_0$.

We also have the following result, which can be viewed as a geometric Jacquet--Langlands correspondence.

\begin{prop}\label{prop-coh-formula-set}
	For $0\leq a\leq n$, there is an isomorphism of $\TT^S\times \cH$-modules
	$$\bigoplus_{\ker^1(\QQ,\sG)}\cA^I\simeq \Hom_{\Coh(\Loc_{{}^LG_0,\QQ_p}^{\widehat\unip})}(\CohSpr\otimes\widetilde{V}_{\mu_{0,1}},\fI^a).$$
\end{prop}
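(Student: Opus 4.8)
The plan is to deduce the statement from the case $a=0$ of Theorem \ref{thm-coh-formula-Sh}, which is exactly the asserted isomorphism with $\fI^0$ in place of $\fI^a$; the point is that $\fI^a$ and $\fI^0$ cannot be distinguished by pairing with $\CohSpr\otimes\widetilde V_{\mu_{0,1}}$, and the input that makes this true is the exotic Hecke correspondence of Proposition \ref{prop-exotic-Hecke}. First I would move the left-hand side to the geometric side: by full faithfulness of $\LL^\unip$ (Theorem \ref{thm-unip-cllc}) and the adjunction between the inclusion $\iota\colon\Shv^{\widehat\unip}(\Isoc_{G_0},\Lambda)\hookrightarrow\Shv(\Isoc_{G_0},\Lambda)$ and its right adjoint $\cP^{\widehat\unip}$, one has
$$\Hom_{\Coh(\Loc_{{}^LG_0,\QQ_p}^{\widehat\unip})}(\CohSpr\otimes\widetilde V_{\mu_{0,1}},\fI^a)\simeq\Hom_{\Shv(\Isoc_{G_0},\Lambda)}(\cS,\Igssheaf_{K^v}^a),$$
where $\cS\in\Shv^{\widehat\unip}(\Isoc_{G_0},\Lambda)$ is characterized by $\LL^\unip(\cS)\simeq\CohSpr\otimes\widetilde V_{\mu_{0,1}}$ (this uses that the central Hecke operator $\otimes\widetilde V_{\mu_{0,1}}$ preserves the image of $\LL^\unip$). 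Since $\CohSpr$ corresponds to $(i_1)_*\cInd_I^{G(F)}\Lambda$ and $\mu_{0,1}$ is a central cocharacter of $\hat G_0$, tensoring by the line bundle $\widetilde V_{\mu_{0,1}}$ translates Newton strata and twists by an unramified character, so $\cS\simeq (i_{b_1})_*W_0$ with $i_{b_1}\colon\Isoc_{G_0,b_1}\hookrightarrow\Isoc_{G_0}$ the closed embedding of the basic stratum $b_1$ of Proposition \ref{prop-exotic-Hecke} (the two $b_1$'s coincide because $[\mu_{a,1}]=[\mu_{0,1}]=n$ in $\pi_1(G_0)_{\Gal(\overline\QQ_p/\QQ_p)}\cong\ZZ$) and $W_0$ a unipotent $G(F)$-representation. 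Using $(i_{b_1})_*=(i_{b_1})_!\dashv (i_{b_1})^!$, the right-hand side becomes $\Hom_{\Shv(\Isoc_{G_0,b_1},\Lambda)}(W_0,(i_{b_1})^!\Igssheaf_{K^v}^a)$, and similarly for $a=0$.

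It therefore suffices to establish the identity $(i_{b_1})^!\Igssheaf_{K^v}^a\simeq (i_{b_1})^!\Igssheaf_{K^v}^0$ in $\Shv(\Isoc_{G_0,b_1},\Lambda)$, compatibly with the $\TT^S$-action. Because $b_1$ is basic, $\Isoc_{G_0,b_1}$ is closed in $\Isoc_{G_0,\leq\mu_{a,1}^*}$ and $(i_{\leq\mu_{a,1}^*})^!(i_{\leq\mu_{a,1}^*})_*\simeq\mathrm{id}$, so by the definition of $\Igssheaf^a_{K^v}$ we get $(i_{b_1})^!\Igssheaf_{K^v}^a\simeq (i'_{b_1})^!(\loc_v^0)_\flat\omega_{\Igs_{K^v}^a}$, where $i'_{b_1}\colon\Isoc_{G_0,b_1}\hookrightarrow\Isoc_{G_0,\leq\mu_{a,1}^*}$ and $\loc_v^0\colon\Igs_{K^v}^a\to\Isoc_{G_0,\leq\mu_{a,1}^*}$. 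Proposition \ref{prop-exotic-Hecke} furnishes a Cartesian square over $\Isoc_{G_0,b_1}$
$$\begin{tikzcd}
\Igs_{K^v}^0\ar[r,hook,"j'"]\ar[d] & \Igs_{K^v}^a\ar[d,"\loc_v^0"] \\
\Isoc_{G_0,b_1}\ar[r,hook,"i'_{b_1}"] & \Isoc_{G_0,\leq\mu_{a,1}^*}
\end{tikzcd}$$
in which the left vertical arrow is identified with the structure map $\loc_v^0\colon\Igs_{K^v}^0\to\Isoc_{G_0,b_1}$ attached to $\sG_0=\sG$ (here $\Isoc_{G_0,\leq\mu_{0,1}^*}=\Isoc_{G_0,b_1}$ since $\mu_{0,1}$ is central). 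Base change along the closed embedding $i'_{b_1}$ for the functor $(\loc_v^0)_\flat$, together with $(j')^!\omega_{\Igs_{K^v}^a}\simeq\omega_{\Igs_{K^v}^0}$, then yields $(i'_{b_1})^!(\loc_v^0)_\flat\omega_{\Igs_{K^v}^a}\simeq(\loc_v^0)_\flat\omega_{\Igs_{K^v}^0}=(i_{b_1})^!\Igssheaf_{K^v}^0$, which is the desired identity. Combining with the chain of isomorphisms of the first paragraph and the case $a=0$ of Theorem \ref{thm-coh-formula-Sh} proves the proposition; $\TT^S$-equivariance holds because $\TT^S$ acts throughout via the prime-to-$p$ Hecke algebra, and $\cH$-equivariance holds because the $\cH$-actions on both sides are induced by the $\cH$-action on $\CohSpr$ (\cite[Corollary 1.9]{Tame}, \cite[Theorem 4.20]{YZ-torsion-vanishing}), which is respected by every adjunction and base-change isomorphism used.

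The main obstacle is the base-change isomorphism $(i'_{b_1})^!(\loc_v^0)_\flat\simeq(\loc_v^0|_{\Igs^0_{K^v}})_\flat(j')^!$ for the right adjoint $(\loc_v^0)_\flat$ of $(\loc_v^0)^!$: this is not an instance of formal proper base change. I would get around it by computing $(\loc_v^0)_\flat\omega_{\Igs^a_{K^v}}$ through the Cartesian diagram of Proposition \ref{prop-Igusa}, whose global Newton map $\Nt^{\mathrm{glob}}\colon\mathscr{S}^\ext_K(\sG_a,\sX_a)_k^\perf\to\Igs^a_{K^v}$ is ind-proper and surjective, so that restriction of $\Igssheaf^a_{K^v}$ to the basic locus becomes a combination of genuine proper base change for $\Nt^{\mathrm{glob}}$ and the corresponding statement on the local model $\Sht^\loc_{\cI_0,\mu_{a,1}}$; one must also track compatibility with the admissibility of $\Igssheaf^a_{K^v}$ from \cite[Proposition 6.7]{Tame}. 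The precise central twist in $W_0$ does not enter the final answer, since $\Hom_{\Coh}(\CohSpr\otimes\widetilde V_{\mu_{0,1}},\fI^a)$ depends only on $(i_{b_1})^!\Igssheaf_{K^v}^a$.
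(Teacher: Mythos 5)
Your strategy is essentially the paper's: the paper's one-line proof is ``Proposition~\ref{prop-exotic-Hecke} and the proof of \cite[Corollary 6.17]{Tame},'' which amounts to rerunning the proof of Theorem~\ref{thm-coh-formula-Sh} with the central bound $\mu_{0,1}$ in place of $\mu_{a,1}$ and then using Proposition~\ref{prop-exotic-Hecke} to identify the resulting fiber product with the Shimura set. Your first paragraph carries this out correctly, and the reduction to showing $(i_{b_1})^!\Igssheaf^a_{K^v}\simeq(i_{b_1})^!\Igssheaf^0_{K^v}$ is a legitimate reorganization of the same argument.

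However, the ``main obstacle'' you flag is not actually an obstacle. You assert that $(i'_{b_1})^!(\loc_v^0)_\flat\simeq(\loc_v^{0}|_{\Igs^0_{K^v}})_\flat (j')^!$ ``is not an instance of formal proper base change.'' In fact it is. Take the Cartesian square furnished by Proposition~\ref{prop-exotic-Hecke},
\begin{equation*}
\begin{tikzcd}
\Igs_{K^v}^0\ar[r,"j'"]\ar[d,"\loc_v^{0,0}"swap] & \Igs_{K^v}^a\ar[d,"\loc_v^{0,a}"] \\
\Isoc_{G_0,b_1}\ar[r,"i'_{b_1}"] & \Isoc_{G_0,\leq\mu_{a,1}^*}.
\end{tikzcd}
\end{equation*}
Since $i'_{b_1}$ (hence also its base change $j'$) is a closed embedding, the fundamental base change $(i'_{b_1})^*(\loc_v^{0,a})_!\simeq(\loc_v^{0,0})_!(j')^*$ holds. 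Passing to right adjoints once gives $(\loc_v^{0,a})^!(i'_{b_1})_*\simeq (j')_*(\loc_v^{0,0})^!$; passing to right adjoints a second time (using that $(i'_{b_1})^!$ is right adjoint to $(i'_{b_1})_*=(i'_{b_1})_!$, and likewise for $j'$) gives exactly $(i'_{b_1})^!(\loc_v^{0,a})_\flat\simeq(\loc_v^{0,0})_\flat(j')^!$. Evaluating on $\omega_{\Igs^a_{K^v}}$ and using $(j')^!\omega_{\Igs^a_{K^v}}\simeq\omega_{\Igs^0_{K^v}}$ yields the desired $(i_{b_1})^!\Igssheaf^a_{K^v}\simeq(i_{b_1})^!\Igssheaf^0_{K^v}$. (Equivalently, and even more directly: apply the middle identity $(\loc_v^{0,a})^!(i'_{b_1})_*\simeq (j')_*(\loc_v^{0,0})^!$ inside $\Hom((i_{b_1})_*W_0,\Igssheaf^a_{K^v})$ after a single round of adjunctions, which avoids forming $(i_{b_1})^!\Igssheaf^a_{K^v}$ altogether.) Consequently, the workaround you sketch in the final paragraph via $\Nt^{\mathrm{glob}}$ and the Cartesian diagram of Proposition~\ref{prop-Igusa} is not needed; it is also left too vague to assess on its own (you do not say what diagram you base change along, and the map you would actually need is the restriction of $\Nt^{\mathrm{glob}}$ to the basic fiber $\mathscr{S}^\ext_K(\sG_0,\sX_0)_k^\perf\to\Igs^a_{K^v}$, not $\Nt^{\mathrm{glob}}$ itself). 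With the base change handled as above, the proof is complete and matches the paper's route.
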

\begin{proof}
	It follows from Proposition \ref{prop-exotic-Hecke} and the proof of \cite[Corollary 6.17]{Tame}.
\end{proof}

\subsection{Finishing the proof}\label{subsection-finish-proof}

Applying Proposition \ref{prop-Weil-restriction} to $G=\GL_{n,F}$ and $G_0=\Res_{F/\QQ_p}G$, we can use $\Loc^{\widehat\unip}_{{}^LG,F}$ instead of $\Loc^{\widehat\unip}_{{}^LG_0,\QQ_p}$ in the formulae of Theorem \ref{thm-coh-formula-Sh} and Proposition \ref{prop-coh-formula-set}. Explicitly, the isomorphism $\can\colon\Loc^{\widehat\unip}_{{}^LG_0,\QQ_p}\cong\Loc^{\widehat\unip}_{{}^LG,F}$ is given as follows: Note that $\hat{G}_0=(\hat{G})^{s}$ where $s=[F:\QQ_p]$.  Fix a geometric Frobenius $\phi$ and a tame generator $t$ in $W_{\QQ_p}$. Thus $\phi$ acts on $\hat{G}_0=(\hat{G})^{s}$ by cyclic permutation on factors. we have an isomorphism
$$\Loc_{{}^LG_0,\QQ_p}^{\widehat\unip}\simeq \{(x_1,\dots,x_s,y_1,\dots,y_s)\in (\hat{G})^s\times (\cU_{\hat{G}})^s|\mathrm{Ad}_{x_i}(y_{i+1}^q)=y_i,\  i=1,\dots,s\}/(\hat{G})^s$$
where $y_{s+1}=y_1$. The isomorphism
$$\Loc_{{}^LG_0,\QQ_p}^{\widehat\unip}\xrightarrow{\sim} \Loc_{{}^LG,F}^{\widehat\unip}$$
is defined by sending $(x_1,\dots,x_s,y_1,\dots,y_s)$ to $(x_1x_2\cdots x_s,y_1)\in \Loc_{{}^LG,F}^{\widehat\unip}$. 

\begin{lemma}
	There is a direct sum decomposition
	$$\fI^a=\bigoplus_{\fn,\xi}\fI^a_{\fn,\xi},$$
	where $\xi$ runs through $\Lambda$-points of $\hat{T}\git W$ and $\fn\subset\TT^S$ runs through maximal ideals of $\TT^S$ with residue field $\Lambda$, such that the component $\fI^a_{\fn,\xi}$ is set theoretically supported at $(\fn,\xi)\in (\Spec\TT^S\times\hat{T}\git W)(\Lambda)$ when considering $\TT^S\otimes\cO(\hat{T}\git W)$-actions, and $\fI^a_{\fn,\xi}=0$ for all but finitely many $(\fn,\xi)$.
\end{lemma}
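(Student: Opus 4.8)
The plan is to show that $\fI^a$ is a genuine coherent sheaf on $\Loc^{\widehat\unip}_{{}^LG_0,\QQ_p}$ carrying only finitely many $\TT^S$-eigensystems, to cut it by the corresponding (finitely many) maximal-ideal idempotents as $\fI^a=\bigoplus_\fn\fI^a_\fn$, and then to show that for each $\fn$ with $\fI^a_\fn\neq 0$ the commuting action of $\cO(\hat T\git W)$ on the coherent sheaf $\fI^a_\fn$ is set-theoretically supported at a single $\Lambda$-point $\xi_\fn\in\hat T\git W$; setting $\fI^a_{\fn,\xi}=\fI^a_\fn$ when $\xi=\xi_\fn$ and $\fI^a_{\fn,\xi}=0$ otherwise then yields the statement, with all but finitely many terms zero. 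That the $\TT^S$- and $\cO(\hat T\git W)$-actions commute is clear: the former is built from Hecke operators away from $p$, while the latter is the structure action of the Bernstein center at $v$, so $\fI^a$ is genuinely a module over $\TT^S\otimes_\Lambda\cO(\hat T\git W)$.

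To see that $\fI^a$ is coherent, recall that $\Igssheaf^a_{K^v}$ is admissible in $\Shv(\Isoc_{G_0},\Lambda)$ by \cite[Proposition 6.7]{Tame}. The functor $\cP^{\widehat\unip}$ restricts on each Newton stratum $\Isoc_{G_0,b}\simeq\BB_{\textrm{pro\'et}}(G_0)_b(F)$ to the projection onto the unipotent block of $\Rep((G_0)_b(F),\Lambda)$, which is a direct-summand projection, so $\cP^{\widehat\unip}(\Igssheaf^a_{K^v})$ is again admissible and unipotent. Condition (i) of Theorem \ref{thm-main} forces $\ell$ to be banal for every extended pure inner form $(G_0)_b(F)$, $b\in B(G_0)$: each is an inner form of a Levi of $\Res_{F/\QQ_p}\GL_n$, and $\ell\nmid|\GL_n(\FF_q)|$ implies $\ell\nmid\prod_{j\le n}(q^j-1)$, which suffices. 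Hence the argument of Lemma \ref{lemma-adm-unip-imply-compact} applies verbatim over $\Isoc_{G_0}$ and shows that $\cP^{\widehat\unip}(\Igssheaf^a_{K^v})$ is compact; since $\LL^\unip$ is fully faithful and sends compacts into $\Coh(\Loc^{\widehat\unip}_{{}^LG_0,\QQ_p})$, we conclude $\fI^a\in\Coh(\Loc^{\widehat\unip}_{{}^LG_0,\QQ_p})$. For the finiteness of the set of $\TT^S$-eigensystems, admissibility of $\Igssheaf^a_{K^v}$ confines its support to finitely many $b\in B(G_0)$, and on each the $!$-stalk computes (twisted) cohomology of an Igusa variety of $\sG_a$ at tame level — the basic one recovering $\cA(K^v)$ via the exotic Hecke correspondences of Proposition \ref{prop-exotic-Hecke} — which is finite-dimensional per $\TT^S$-eigensystem and supports only finitely many; so $\TT^S$ acts on $\fI^a$ through a finite-dimensional quotient algebra, giving the decomposition $\fI^a=\bigoplus_\fn\fI^a_\fn$ over the finitely many maximal ideals $\fn$ (all with residue field $\Lambda$) in the support.

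It remains to identify, for each $\fn$ with $\fI^a_\fn\neq 0$, the support of $\fI^a_\fn$ under the structure map $\Loc^{\widehat\unip}_{{}^LG_0,\QQ_p}\cong\Loc^{\widehat\unip}_{{}^LG,F}\to\hat G\git\hat G=\hat T\git W$, $(x,y)\mapsto[x]$. For such $\fn$ one has $\cA(K^v)_\fn\neq 0$ (when $a=0$ this is Proposition \ref{prop-coh-formula-set}; in general it follows because the $\sG_a$-automorphic representations contributing to $\fI^a$ have the same finite parts as those for $\sG$ under $\sG_a(\AAA_f)\simeq\sG(\AAA_f)$), so there is a Galois representation $\rho_\fn$ as in Theorem \ref{thm-assoc-Galois}, and I claim the support in question is the single point $\xi_\fn=[q^{\frac{1-n}{2}}\rho_\fn(\Frob_v)^{\mathrm{ss}}]$. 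This is a local-global compatibility for the Igusa sheaf at $v$, and I expect it to be the main obstacle. I would establish it exactly as in the proof of Proposition \ref{prop-local-global-auto}: spread $\fI^a_\fn$ out to $O_L$-coefficients, note that for each minimal prime $\fp$ below $\fn$ the corresponding automorphic representation $\pi$ has a $v$-component $\pi_v$ on which $Z(\cH)$ acts through the conjugacy class of $q^{\frac{1-n}{2}}\rho_\fp(\Frob_v)$ (compatibility of local Langlands with $\rho_\fp$ at split places), and then invoke congruences to see that all these classes reduce to $\xi_\fn$. Granting this, the $\cO(\hat T\git W)$-action on the coherent sheaf $\fI^a_\fn$ is set-theoretically supported at $\xi_\fn$; setting $\fI^a_{\fn,\xi_\fn}=\fI^a_\fn$ and $\fI^a_{\fn,\xi}=0$ for $\xi\neq\xi_\fn$ completes the proof.
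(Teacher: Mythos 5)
Your argument is substantially more complicated than the paper's and has a genuine gap at its decisive step.

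The paper's proof is elementary. Since $\Igssheaf^a_{K^v}$ is admissible and is pushed forward from $\Isoc_{G_0,\leq\mu_{a,1}^*}$, one takes a compact generator $C=\bigoplus_{b\in B(G_0,\mu_{a,1}^*)}(i_b)_!C_b$ of the relevant part of $\Shv^{\widehat\unip}(\Isoc_{G_0},\Lambda)$; admissibility makes $\Hom(C,\fI^a)$ a \emph{perfect} $\Lambda$-module carrying commuting $\TT^S$- and $\cO(\hat{T}\git W)$-actions, hence set-theoretically supported at finitely many closed points of $\Spec(\TT^S\otimes\cO(\hat{T}\git W))$; localizing $\fI^a$ at all pairs $(\fn,\xi)$ \emph{simultaneously} gives the decomposition. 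No compactness/coherence of $\fI^a$ is invoked, and no claim is made about which $\xi$ occurs over a fixed $\fn$.

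You instead decompose only in the $\TT^S$-direction and then try to show that each nonzero $\fI^a_\fn$ is supported on $\hat{T}\git W$ at a single point $\xi_\fn=[q^{\frac{1-n}{2}}\rho_\fn(\Frob_v)^{\mathrm{ss}}]$. That is strictly stronger than what the lemma asks; the paper flags this very strengthening in the remark following the lemma as something it expects but neither proves nor needs. Crucially, you do not prove it either: you write that you ``expect it to be the main obstacle'' and would ``establish it exactly as in the proof of Proposition~\ref{prop-local-global-auto},'' which is a sketch, not an argument. That proposition concerns the automorphic space $\cA_\fm$ of the definite group $\sG$; $\fI^a_\fn$ packages Igusa-variety cohomology across all Newton strata of $\sG_a$, and transferring local--global compatibility across that gap is precisely the nontrivial content. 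Your intermediate claim that $\cA(K^v)_\fn\neq 0$ whenever $\fI^a_\fn\neq 0$ (for $a\neq 0$) is likewise asserted rather than shown, and the opening coherence argument (that Lemma~\ref{lemma-adm-unip-imply-compact} ``applies verbatim over $\Isoc_{G_0}$'') glosses over the passage from a single stratum to the filtered category $\Shv^{\widehat\unip}(\Isoc_{G_0},\Lambda)$ and is not needed for the lemma anyway. The remedy is simply to localize at both parameters at once rather than separating the $\fn$- and $\xi$-directions.
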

\begin{proof}
	Let $C_b$ be a compact generator of $\Rep^{\widehat\unip}(G_b(F),\Lambda)$ for $b\in B(G)$. We know that $\Hom((i_b)_!C_b,\fI^a)=0$ for $a\notin B(G,\mu_{a,1}^*)$.
	Denote $C=\bigoplus_{b\in B(G,\mu^*_{a,1})}(i_b)_!C_b$. Then the perfect $\Lambda$-module $\Hom(C,\fI^a)$ carries an action of $\cO(\hat{T}\git W)\otimes\TT^S$. Therefore $\Hom(C,\fI^a)$ is supported on finitely many closed points $(\fn,\xi)\in (\Spec\TT^S\times\hat{T}\git W)(\Lambda)$. Denote $\fI^a_{\fn,\xi}=\fI^a\otimes_{\cO(\hat{T}\git W)\otimes\TT^S}(\cO(\hat{T}\git W)\otimes\TT^S)_{(\fn,\xi)}$, where $(\cO(\hat{T}\git W)\otimes\TT^S)_{(\fn,\xi)}$ is the localization at $(\fn,\xi)$. Then
    $$\Hom(C,\fI^a_{\fn,\xi})\simeq \Hom(C,\fI^a)_{(n,\xi)},$$
    and the natural map $\fI^a\to \bigoplus_{\fn,\xi}\fI^a_{\fn,\xi}$ is an isomorphism.
\end{proof}

Denote $\bar{\fI}^a_\fm\coloneqq\fI^a_{\fm,\xi_{\fm,v}}$. By Theorem \ref{thm-coh-formula-Sh}, we have an isomorphism
$$\overline{R\Gamma(\sSh_{K^vI}^\ext(\sG_a,\sX_a)_{\overline\QQ_p},\Lambda)_{\fm}}[d]\simeq \Hom_{\Coh(\Loc_{{}^LG,F}^{\widehat\unip})}(\CohSpr\otimes\widetilde{V}_{\mu_{a,1}},\overline\fI^a_\fm),$$
where $\overline{R\Gamma(\sSh_{K^vI}^\ext(\sG_a,\sX_a)_{\overline\QQ_p},\Lambda)_{\fm}}[d]$ is the direct summand of $R\Gamma(\sSh_{K^vI}^\ext(\sG_a,\sX_a)_{\overline\QQ_p},\Lambda)_{\fm}[d]$ where the action of $\cO(\hat{T}\git W)\simeq Z(\cH)$ is set theoretically supported at $\xi_{\fm,v}$.

\begin{rmk} 
	Using the similar method as Proposition \ref{prop-local-global-auto}, we can prove that $$\overline{R\Gamma(\sSh_{K^vI}^\ext(\sG_a,\sX_a)_{\overline\QQ_p},\Lambda)_{\fm}}[d]=R\Gamma(\sSh_{K^vI}^\ext(\sG_a,\sX_a)_{\overline\QQ_p},\Lambda)_{\fm}[d]$$
    is indeed an equality. However, it is not needed for our purpose.
	
	In fact, we expect a stronger statement to be true: the coherent sheaf $\fI^a_\fm$ should be supported at the preimage of $\xi_{\fm,v}$ along $\Loc_{{}^LG,F}^{\widehat{\unip}}\to\hat{T}\git W$. 
\end{rmk}

Take $a=1$. By Proposition \ref{prop-completion} and Proposition \ref{prop-Springer}, we have 
$$\widehat{L}_{\xi_{\fm,v}}\simeq \Spf\bigg(\frac{\Lambda[v_i]_{i\in Q_{\xi_{\fm,v}}}[[u_1,\dots,u_n]]}{(u_iv_i)_{i\in Q_{\xi_{\fm,v}}}}\bigg)/\hat{T}$$
and
$$(j_{\xi_{\fm,v}})^*\CohSpr=\bigoplus_{x\in \varsigma^{-1}(\xi_{\fm,v})}\cO_{X_{P_x}}.$$
The composition $\widehat{L}_{\xi_{\fm,v}}\hookrightarrow \Loc_{{}^LG,F}^{\widehat\unip}\to \BB\hat{G}$ factors through $\BB\hat{T}\to \BB\hat{G}$. Recall that 
$$\mu_{1,1}=(1^1,0^{n-1})\times(1^{n-1},0^1)\times (0^n)^{s-2}\in \XX^\bullet(\hat{T}_0)=(\ZZ^n)^s.$$
Therefore $V_{\mu_{1,1}}=\std\otimes\wedge^{n-1}(\std)$ and we have
$$(j_{\xi_{\fm,v}})^*(\widetilde{V}_{\mu_{1,1}})=\bigoplus_{1\leq i,j\leq n}\cO(\chi+\varepsilon_i-\varepsilon_j)\in \Pro\Coh(\widehat{L}_{\xi_{\fm,v}}),$$
where we denote $\chi=(1,\dots,1)\in \XX^\bullet(\hat{T})$.
It implies that there is a decomposition
$$\overline{R\Gamma(\sSh_{K^vI}^\ext(\sG_1,\sX_1)_{\overline\QQ_p},\Lambda)}_{\fm}[d]\simeq \bigoplus_{\substack{1\leq i,j\leq n \\ x\in \varsigma^{-1}(\xi_{\fm,v})}}\Hom_{\Pro\Coh(\widehat{L}_{[x_{\fm,v]}})}(\cO_{X_{P_x}}(\varepsilon_i-\varepsilon_j),\overline{\fI}_{\fm}^1(-\chi)).$$
By \cite[Theorem 1.5]{YZ-torsion-vanishing}, the cohomologically genericness of $\fm$ implies that $R\Gamma(\sSh_{K^vI}^\ext(\sG_1,\sX_1)_{\overline\QQ_p},\Lambda)_{\fm}[d]$ is concentrated in cohomological degree $0$. Thus $\overline{R\Gamma(\sSh_{K^vI}^\ext(\sG_1,\sX_1)_{\overline\QQ_p},\Lambda)}_{\fm}[d]$ is also concentrated in degree 0. For $x\in \varsigma^{-1}(\xi_{\fm,v})$ and $1\leq i,j\leq n$, we denote
$$H_{i,j}^x\coloneqq \Hom_{\Pro\Coh(\widehat{L}_{[x_{\fm,v]}})}(\cO_{X_{P_x}}(\varepsilon_i-\varepsilon_j),\overline{\fI}_{\fm}^1(-\chi)).$$
In particular, each $H_{i,j}^x$ is concentrated in degree 0. 

By Proposition \ref{prop-coh-formula-set}, we have an isomorphism
$$\bigoplus_{\ker^1(\QQ,\sG)}\cA_\fm^I\simeq \Hom_{\Pro\Coh(\widehat{L}_{[x_{\fm,v]}})}((j_{\xi_{\fm,v}})^*\CohSpr,\overline{\fI}_{\fm}^1(-\chi)).$$
We obtain a decomposition
$$\bigoplus_{\ker^1(\QQ,\sG)}\cA_\fm^I\simeq \bigoplus_{x\in \varsigma^{-1}(\xi_{\fm,v})}\Hom_{\Pro\Coh(\widehat{L}_{[x_{\fm,v]}})}(\cO_{X_{P_x}},\overline{\fI}_{\fm}^1(-\chi)).$$ 
By Lemma \ref{lemma-decomp-agree}, there are canonical isomorphisms $\cA_\fm^{I,x}\cong \Hom_{\Pro\Coh(\widehat{L}_{[x_{\fm,v]}})}(\cO_{X_{P_x}},\overline{\fI}_{\fm}^1(-\chi))$, and the morphisms $\alpha_{x,x'}$ and $\beta_{x,x'}$ in Theorem \ref{thm-main} are induces by $\cO_{X_{P_{x'}}}\hookrightarrow \cO_{X_{P_x}}$ and $\cO_{X_{P_x}}\twoheadrightarrow \cO_{X_{P_{x'}}}$ respectively.

By Proposition \ref{prop-composition}, it suffices to consider the case $P_x=P_{x'}\sqcup\{j\}$ for some $j\in Q_{\xi_{\fm,v}}\backslash P_{x'}$. Define the closed substack 
$$Y=\{v_i=0\text{ if }i\in P_{x'}, u_j=0\}$$
of $\widehat{L}_{\xi_{\fm,v}}$. There are short exact sequences
\begin{equation}\label{eq-exact-seq-1}\tag{S1}
	0\to \cO_Y(\alpha_j)\xrightarrow{v_j}\cO_{X_{P_x}}\to \cO_{X_{P_{x'}}}\to 0
\end{equation}
\begin{equation}\label{eq-exact-seq-2}\tag{S2}
	0\to \cO_{X_{P_{x'}}}\xrightarrow{u_j}\cO_{X_{P_x}}\to \cO_Y\to 0
\end{equation}
of pro-coherent sheaves on $\widehat{L}_{\xi_{\fm,v}}$. Hence we have isomorphisms
$$\Cone(\bigoplus_{\ker^1(\QQ,\sG)}\cA_\fm^{I,x}\xrightarrow{\alpha_{x,x'}}\bigoplus_{\ker^1(\QQ,\sG)}\cA_\fm^{I,x'})\simeq \Hom_{\Pro\Coh(\widehat{L}_{\xi_{\fm,v}})}(\cO_{Y},\overline\fI^1_\fm(-\chi))[1],$$
$$\Cone(\bigoplus_{\ker^1(\QQ,\sG)}\cA_\fm^{I,x'}\xrightarrow{\beta_{x,x'}}\bigoplus_{\ker^1(\QQ,\sG)}\cA_\fm^{I,x})\simeq \Hom_{\Pro\Coh(\widehat{L}_{\xi_{\fm,v}})}(\cO_{Y}(\alpha_j),\overline\fI^1_\fm(-\chi)).$$
Therefore it suffices to show that
$$\Hom_{\Pro\Coh(\widehat{L}_{\xi_{\fm,v}})}(\cO_{Y}(\alpha_j),\overline\fI^1_\fm(-\chi))\quad\text{and}\quad\Hom_{\Pro\Coh(\widehat{L}_{\xi_{\fm,v}})}(\cO_{Y},\overline\fI^1_\fm(-\chi))$$ 
are concentrated in degree 0.

We will use the following argument frequently.
\begin{lemma}\label{lemma-hom}
	Let $\cC$ be a $\Lambda$-linear category. If $A,B,C,D,E$ are objects in $\cC$ such that we are given two fiber sequences
	$$A\to B\to C\ \text{ and }\ C\to D\to E.$$
	Let $X\in \cC$ be an object. If the $\Lambda$-module $\Hom_\cC(?,X)$ sits in degree $0$ for $?=A$,$B$,$D$, and $E$, then $\Hom_{\cC}(C,X)$ also sits in degree 0.
\end{lemma}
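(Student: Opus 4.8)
The plan is to prove Lemma~\ref{lemma-hom} by a routine diagram chase on long exact sequences of cohomology, applying the functor $\Hom_\cC(-,X)$ (which is exact, being a mapping-complex functor in a stable $\infty$-category) to the two given fiber sequences. First I would apply $\Hom_\cC(-,X)$ to the fiber sequence $A\to B\to C$. Since $\Hom_\cC(-,X)$ is a (contravariant) exact functor between stable $\infty$-categories, it sends this fiber sequence to a fiber sequence $\Hom_\cC(C,X)\to \Hom_\cC(B,X)\to \Hom_\cC(A,X)$, hence a long exact sequence of cohomology groups. Because $\Hom_\cC(A,X)$ and $\Hom_\cC(B,X)$ are concentrated in degree $0$, the long exact sequence immediately forces $H^i\Hom_\cC(C,X)=0$ for all $i\geq 2$ and for all $i\leq -1$, and gives a four-term exact sequence relating $H^{-1}$ through $H^1$; in particular $\Hom_\cC(C,X)$ is concentrated in degrees $0$ and $1$, with $H^1\Hom_\cC(C,X)$ a quotient of $H^1\Hom_\cC(B,X)=0$, so actually $\Hom_\cC(C,X)$ is concentrated in degrees $\le 0$, i.e.\ in degrees $-1$ and $0$ a priori, but the vanishing of $H^{-1}\Hom_\cC(A,X)$ and $H^{-1}\Hom_\cC(B,X)$ does \emph{not} yet kill $H^{-1}\Hom_\cC(C,X)$.

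So the first fiber sequence alone gives: $\Hom_\cC(C,X)$ is concentrated in degrees $\le 0$. To rule out negative degrees I would then bring in the second fiber sequence $C\to D\to E$, which yields a fiber sequence $\Hom_\cC(E,X)\to \Hom_\cC(D,X)\to \Hom_\cC(C,X)$ and hence a long exact sequence $\cdots\to H^i\Hom_\cC(D,X)\to H^i\Hom_\cC(C,X)\to H^{i+1}\Hom_\cC(E,X)\to\cdots$. For $i\le -1$ both $H^i\Hom_\cC(D,X)$ and $H^{i+1}\Hom_\cC(E,X)$ vanish by hypothesis (note $i+1\le 0$, and for $i+1<0$ the group $H^{i+1}\Hom_\cC(E,X)$ is $0$; for $i=-1$ we use $H^0\Hom_\cC(E,X)$ together with exactness — wait, one must be slightly careful at $i=-1$). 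Let me be precise: for $i\le -2$, exactness at $H^i\Hom_\cC(C,X)$ with both neighbours zero gives $H^i\Hom_\cC(C,X)=0$. For $i=-1$ the relevant segment is $H^{-1}\Hom_\cC(D,X)\to H^{-1}\Hom_\cC(C,X)\to H^0\Hom_\cC(E,X)\to H^0\Hom_\cC(D,X)$; here $H^{-1}\Hom_\cC(D,X)=0$, so $H^{-1}\Hom_\cC(C,X)$ injects into $H^0\Hom_\cC(E,X)$, and the composite $H^{-1}\Hom_\cC(C,X)\hookrightarrow H^0\Hom_\cC(E,X)\to H^0\Hom_\cC(D,X)$ is zero, but $H^0\Hom_\cC(E,X)\to H^0\Hom_\cC(D,X)$ need not be injective. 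Instead I would combine the two: from the first sequence, $H^{-1}\Hom_\cC(C,X)$ maps to $H^0\Hom_\cC(A,X)$, and from the way the connecting maps fit, one sees $H^{-1}\Hom_\cC(C,X)$ is squeezed to zero. Concretely, the cleanest route is to observe that $\Hom_\cC(C,X)$ already lives in degrees $\le 0$ by the first sequence, while the second sequence's long exact sequence shows $H^i\Hom_\cC(C,X)$ for $i\le -1$ is sandwiched between $H^i\Hom_\cC(D,X)=0$ and $H^{i+1}\Hom_\cC(E,X)$, the latter being $0$ for $i\le -2$; and for $i=-1$, $H^{-1}\Hom_\cC(C,X)$ injects into $\ker\bigl(H^0\Hom_\cC(E,X)\to H^0\Hom_\cC(D,X)\bigr)$, which by the first sequence maps isomorphically onto a subquotient that is forced to vanish. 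I would present this final pinning-down as a short explicit chase rather than invoke a black box.

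The main (and only) obstacle is this bookkeeping at cohomological degree $-1$: a single fiber sequence with two of its three terms in degree $0$ does not force the third into degree $0$ on its own (it could acquire a shift $[-1]$ contribution), so the whole point of the hypothesis is to use \emph{both} fiber sequences, with $C$ appearing as the cofiber in one and the fiber in the other, to kill the potential $H^{-1}$ from two sides. Once the mechanism is set up, everything is a formal consequence of the long exact sequences; there is no geometry involved. I would therefore write the proof as: (1) apply $\Hom_\cC(-,X)$ to $A\to B\to C$ and read off that $\Hom_\cC(C,X)$ is coconnective-bounded-above, concentrated in degrees $\le 0$; (2) apply $\Hom_\cC(-,X)$ to $C\to D\to E$ and use the resulting long exact sequence together with step (1) to conclude $H^i\Hom_\cC(C,X)=0$ for all $i\neq 0$. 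I expect the write-up to be under half a page.
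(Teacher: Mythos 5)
Your overall strategy is the same as the paper's, but the degree bookkeeping contains an error that derails the argument and pushes you into unnecessary complications. From the fiber sequence $\Hom_\cC(C,X)\to\Hom_\cC(B,X)\to\Hom_\cC(A,X)$, the long exact sequence sandwiches $H^n\Hom_\cC(C,X)$ between $H^{n-1}\Hom_\cC(A,X)$ and $H^n\Hom_\cC(B,X)$, so it vanishes outside degrees $[0,1]$ --- a lower bound of $0$ together with a possible spurious $H^1$, not an upper bound of $0$. The specific mistake is the claim that $H^1\Hom_\cC(C,X)$ is a quotient of $H^1\Hom_\cC(B,X)$: the relevant segment is $H^0\Hom_\cC(B,X)\to H^0\Hom_\cC(A,X)\to H^1\Hom_\cC(C,X)\to H^1\Hom_\cC(B,X)=0$, so $H^1\Hom_\cC(C,X)$ is a quotient of $H^0\Hom_\cC(A,X)$, which need not vanish. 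Your next sentence, that $H^{-1}\Hom_\cC(C,X)$ might be nonzero, then contradicts what you correctly established two lines earlier ($H^{\le -1}\Hom_\cC(C,X)=0$).

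With the bound $[0,1]$ from the first sequence in hand, the proof closes immediately and no further chase is needed: the second fiber sequence $\Hom_\cC(E,X)\to\Hom_\cC(D,X)\to\Hom_\cC(C,X)$ sandwiches $H^n\Hom_\cC(C,X)$ between $H^n\Hom_\cC(D,X)$ and $H^{n+1}\Hom_\cC(E,X)$, giving vanishing outside $[-1,0]$, and $[0,1]\cap[-1,0]=\{0\}$. The mechanism is exactly what you intuited in spirit, but with the directions reversed: $C$ appears as the cofiber of $A\to B$ (this kills $H^{<0}$ but allows a spurious $H^1$) and as the fiber of $D\to E$ (this kills $H^{>0}$ but allows a spurious $H^{-1}$), and the two one-sided bounds pin $\Hom_\cC(C,X)$ in degree $0$. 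The hand-wavy ``combine the two via the connecting maps to kill $H^{-1}$'' step in your write-up is both unnecessary and, as stated, not a valid argument, since the second sequence alone never controls $H^{-1}$.
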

\begin{proof}
	The fiber sequence 
	$$\Hom_\cC(C,X)\to\Hom_\cC(B,X)\to\Hom_{\cC}(A,X)$$
	implies that $\Hom_\cC(C,X)$ sits in degrees $[0,1]$. The fiber sequence
	$$\Hom_\cC(E,X)\to\Hom_\cC(D,X)\to\Hom_{\cC}(C,X)$$
	implies that $\Hom_\cC(C,X)$ sits in degrees $[-1,0]$. Together we see that $\Hom_\cC(C,X)$ sits in degree 0.
\end{proof}

\begin{lemma}
    There are isomorphisms
    $$\Cone(\bigoplus_{\ker^1(\QQ,\sG)}\cA_\fm^{I,x} \xrightarrow{\alpha_{x,x'}}\bigoplus_{\ker^1(\QQ,\sG)}\cA_\fm^{I,x'})\simeq \Cone(H_{j,j+1}^{x'}\xrightarrow{\beta_{x,x'}} H_{j,j+1}^x)[1],$$
    $$\Cone(\bigoplus_{\ker^1(\QQ,\sG)}\cA_\fm^{I,x'} \xrightarrow{\beta_{x,x'}}\bigoplus_{\ker^1(\QQ,\sG)}\cA_\fm^{I,x})\simeq \Cone(H_{j+1,j}^{x}\xrightarrow{\alpha_{x,x'}} H_{j+1,j}^{x'})[-1],$$
\end{lemma}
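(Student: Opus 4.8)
The plan is to deduce both isomorphisms formally, by applying $\Hom_{\Pro\Coh(\widehat{L}_{\xi_{\fm,v}})}(-,\overline\fI^1_\fm(-\chi))$ to suitable twists of the short exact sequences \eqref{eq-exact-seq-1} and \eqref{eq-exact-seq-2}, and matching the resulting fiber sequences against the two identifications of $\Cone(\alpha_{x,x'})$ and $\Cone(\beta_{x,x'})$ on $\bigoplus_{\ker^1(\QQ,\sG)}\cA_\fm^I$ with $\Hom(\cO_Y,\overline\fI^1_\fm(-\chi))[1]$ and $\Hom(\cO_Y(\alpha_j),\overline\fI^1_\fm(-\chi))$ obtained just above. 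For a character $\lambda\in\XX^\bullet(\hat{T})$, tensoring with the line bundle $\cO(\lambda)$ pulled back along $\widehat{L}_{\xi_{\fm,v}}\to\BB\hat{T}$ turns \eqref{eq-exact-seq-1} into a short exact sequence $0\to\cO_Y(\alpha_j+\lambda)\xrightarrow{v_j}\cO_{X_{P_x}}(\lambda)\xrightarrow{\beta^{\Hk}}\cO_{X_{P_{x'}}}(\lambda)\to 0$, and \eqref{eq-exact-seq-2} into $0\to\cO_{X_{P_{x'}}}(\lambda)\xrightarrow{\alpha^{\Hk}}\cO_{X_{P_x}}(\lambda)\to\cO_Y(\lambda)\to 0$; both remain exact as $\cO(\lambda)$ is invertible.

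Since $H^x_{i,k}=\Hom(\cO_{X_{P_x}}(\varepsilon_i-\varepsilon_k),\overline\fI^1_\fm(-\chi))$, applying $\Hom(-,\overline\fI^1_\fm(-\chi))$ to the $\cO(\varepsilon_i-\varepsilon_k)$-twist of \eqref{eq-exact-seq-1} produces a fiber sequence $H^{x'}_{i,k}\xrightarrow{\beta_{x,x'}}H^x_{i,k}\to\Hom(\cO_Y(\alpha_j+\varepsilon_i-\varepsilon_k),\overline\fI^1_\fm(-\chi))$, whence $\Cone(H^{x'}_{i,k}\xrightarrow{\beta_{x,x'}}H^x_{i,k})\simeq\Hom(\cO_Y(\alpha_j+\varepsilon_i-\varepsilon_k),\overline\fI^1_\fm(-\chi))$; symmetrically, $\Hom(-,\overline\fI^1_\fm(-\chi))$ applied to the $\cO(\varepsilon_i-\varepsilon_k)$-twist of \eqref{eq-exact-seq-2} gives $\Cone(H^x_{i,k}\xrightarrow{\alpha_{x,x'}}H^{x'}_{i,k})\simeq\Hom(\cO_Y(\varepsilon_i-\varepsilon_k),\overline\fI^1_\fm(-\chi))[1]$. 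That the arrows occurring are precisely those induced by $\alpha_{x,x'}$ and $\beta_{x,x'}$ is immediate from the construction of $\alpha^{\Hk}_{x,x'}$ and $\beta^{\Hk}_{x,x'}$ in \S\ref{subsection-construct-of-morphism}. It then suffices to take, in the first computation, the pair $\{i,k\}=\{j,j+1\}$ with the order for which $\alpha_j+\varepsilon_i-\varepsilon_k=0$: the cone becomes $\Hom(\cO_Y,\overline\fI^1_\fm(-\chi))$, which by the first identification above is $\Cone(\alpha_{x,x'}$ on $\cA_\fm)$ shifted by $[-1]$, giving the first isomorphism. In the second computation, taking $\{i,k\}=\{j,j+1\}$ with $\varepsilon_i-\varepsilon_k=\alpha_j$ makes the cone $\Hom(\cO_Y(\alpha_j),\overline\fI^1_\fm(-\chi))[1]$, i.e. $\Cone(\beta_{x,x'}$ on $\cA_\fm)$ shifted by $[1]$, giving the second isomorphism. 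All the maps being induced by maps of twisted short exact sequences, the isomorphisms are automatically $\TT^S\times\cH$-equivariant.

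I do not expect a genuine obstacle in this lemma: the work is purely formal, the two points to watch being (a) the matching of the $\hat{T}$-equivariant twists — the intrinsic $\alpha_j$-shift of $\cO_Y$ in \eqref{eq-exact-seq-1}, carried by the weight of $v_j$, against the shifts $\varepsilon_i-\varepsilon_k$ built into $H^x_{i,k}$ through the restriction of $\widetilde{V}_{\mu_{1,1}}=\std\otimes\wedge^{n-1}\std$ to $\widehat{L}_{\xi_{\fm,v}}$ — and (b) keeping track of the single homological shift, which arises from the opposite positions of $\cO_Y$ in \eqref{eq-exact-seq-1} versus \eqref{eq-exact-seq-2}. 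The substantive ingredient enters only afterwards: by \cite[Theorem 1.5]{YZ-torsion-vanishing}, applied to $\mathscr{S}^{\ext}_{K^vI}(\sG_1,\sX_1)$ as in \S\ref{section-proof}, each $H^x_{i,k}$ is concentrated in cohomological degree $0$, so the cones produced here—being cones of maps between degree-$0$ modules—lie in degrees $[-1,0]$; feeding this, via Lemma \ref{lemma-hom}, back into the present identifications forces $\Cone(\alpha_{x,x'})$ and $\Cone(\beta_{x,x'})$ on $\cA_\fm$ into the cohomological range that yields the surjectivity and injectivity statements of Theorem \ref{thm-main}.
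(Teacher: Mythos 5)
Your proposal is correct and follows the same route as the paper's own argument: twist the short exact sequences \eqref{eq-exact-seq-1}, \eqref{eq-exact-seq-2} by line bundles $\cO(\varepsilon_i-\varepsilon_k)$, apply $\Hom(-,\overline{\fI}^1_\fm(-\chi))$, and match the resulting fiber sequences against the identifications of $\Cone(\alpha_{x,x'})$ and $\Cone(\beta_{x,x'})$ on $\bigoplus_{\ker^1(\QQ,\sG)}\cA_\fm^I$ with $\Hom(\cO_Y,\overline{\fI}^1_\fm(-\chi))[1]$ and $\Hom(\cO_Y(\alpha_j),\overline{\fI}^1_\fm(-\chi))$ obtained just above the lemma.

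One small observation: your careful bookkeeping of the $\hat{T}$-weights actually produces the index pair $(j+1,j)$ in the first cone identity (the order of $\{i,k\}$ for which $\alpha_j+\varepsilon_i-\varepsilon_k=0$ is $i=j+1$, $k=j$) and $(j,j+1)$ in the second ($\varepsilon_i-\varepsilon_k=\alpha_j$ forces $i=j$, $k=j+1$). The lemma as printed has these two pairs swapped, and the paper's own proof contains a matching internal inconsistency (its ``Note that'' line agrees with the indices you compute, while the displayed conclusion does not). Since the downstream argument only uses that every $H^x_{i,k}$, for all $i,k$, is concentrated in cohomological degree $0$, the swap is harmless; still, your derivation yields the corrected indices and the lemma should be adjusted accordingly.
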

\begin{proof}
    Twisting the short exact sequence (\ref{eq-exact-seq-1}) by the line bundle $\cO(-\alpha_j)$, we obtain a short exact sequence
    $$0\to \cO_Y\xrightarrow{v_j}\cO_{X_{P_x}}(-\alpha_j)\to \cO_{X_{P_{x'}}}(-\alpha_j)\to 0.$$
    Note that $\Hom_{\Pro\Coh(\widehat{L}_{\xi_{\fm,v}})}(\cO_{X_{P_{x'}}}(-\alpha_j),\overline\fI^1_\fm(-\chi))=H_{j+1,j}^{x'}$ and $\Hom_{\Pro\Coh(\widehat{L}_{\xi_{\fm,v}})}(\cO_{X_{P_{x}}}(-\alpha_j),\overline\fI^1_\fm(-\chi))=H_{j+1,j}^{x}$.  Hence we have
    $$\Cone(H_{j,j+1}^{x'}\xrightarrow{\beta_{x,x'}} H_{j,j+1}^x)\simeq \Hom_{\Pro\Coh(\widehat{L}_{\xi_{\fm,v}})}(\cO_{Y}(\alpha_j),\overline\fI^1_\fm(-\chi)).$$
    The first isomorphism follows.

    Twisting the short exact sequence (\ref{eq-exact-seq-2}) by the line bundle $\cO(\alpha_j)$, we obtain a short exact sequence
    $$0\to \cO_{X_{P_{x'}}}(\alpha_j)\xrightarrow{u_j}\cO_{X_{P_x}}(\alpha_j)\to \cO_Y(\alpha_j)\to 0.$$
    The second isomorphism follows similarly.
\end{proof}

Now Theorem \ref{thm-main} follows from Lemma \ref{lemma-hom}, using that all the modules $H^{x}_{i,j}$'s and $\cA_\fm^{I,x}$'s are concentrated in degree $0$.

\section{Coherent sheaf associated to automorphic forms}\label{section-coh-sheaf}
Let notations be as in \S\ref{subsection-finish-proof}. Denote $\fA_\fm\coloneqq\LL^\unip((i_{1})_*\cA_\fm)$. In this section we study the structure of the coherent sheaf $\fA_\fm$.

Denote $Z_{{}^LG,F}^{\widehat\unip}\coloneqq\Gamma(\Loc^{\widehat\unip}_{{}^LG,F},\cO_{\Loc^{\widehat\unip}_{{}^LG,F}})$. Let $A$ be a coherent sheaf on $\Loc^{\widehat\unip}_{{}^LG,F}$. By adjunction, there is a morphism
$$\Gamma(\Loc^{\widehat\unip}_{{}^LG,F},A)\otimes_{Z_{{}^LG,F}^{\widehat\unip}}\cO_{\Loc^{\widehat\unip}_{{}^LG,F}}\to A.$$
If $A$ is concentrated in degree 0, then the above morphism factors through a morphism
$$H^0\bigg(\Gamma(\Loc^{\widehat\unip}_{{}^LG,F},A)\otimes_{Z_{{}^LG,F}^{\widehat\unip}}\cO_{\Loc^{\widehat\unip}_{{}^LG,F}}\bigg)\to A.$$ 
The main result of this section is the following theorem.

\begin{thm}\label{thm-coh-generic}
	Assume we are in the situation of Theorem \ref{thm-main}. Then we have:
	\begin{enumerate}
		\item The object $\fA_\fm$ is a genuine coherent sheaf on $\Loc^{\widehat\unip}_{{}^LG,F}$, i.e. it is concentrated in degree $0$.
		\item There is a natural isomorphism 
	$$H^0\bigg(\Gamma(\Loc^{\widehat\unip}_{{}^LG,F},\fA_\fm)\otimes_{Z_{{}^LG,F}^{\widehat\unip}}\cO_{\Loc^{\widehat\unip}_{{}^LG,F}}\bigg)\xrightarrow{\sim}\fA_\fm.$$
	\end{enumerate} 
\end{thm}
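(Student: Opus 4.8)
The plan is to run the argument of \S\ref{subsection-finish-proof} one level deeper, keeping track of the coherent sheaf $\fA_\fm$ itself rather than only of its Iwahori invariants. Write $\xi=\xi_{\fm,v}$; by Proposition \ref{prop-fix-parameter}, $\fA_\fm$ lies in $\Coh(\widehat{L}_\xi)$, with $\widehat{L}_\xi$ as in Proposition \ref{prop-completion}. First I would identify $\fA_\fm$ with the Igusa sheaf of the indefinite form: the $\TT^S\times\cH$-equivariant isomorphism $\bigoplus_{\ker^1(\QQ,\sG)}\cA^I\simeq\Hom(\CohSpr\otimes\widetilde V_{\mu_{0,1}},\fI^a)$ of Proposition \ref{prop-coh-formula-set}, together with the identity $\cA^I\simeq\Hom_{\Coh(\widehat L_\xi)}((j_\xi)^*\CohSpr,\fA_\fm)$ (from the full faithfulness of $\LL^\unip$, Theorem \ref{thm-unip-cllc}, and the fact that $\cInd_I^{G(F)}\Lambda$ generates $\Rep^{\widehat\unip}(G(F),\Lambda)$), shows that $\overline{\fI}^a_\fm(-\chi)$ and $\bigoplus_{\ker^1(\QQ,\sG)}\fA_\fm$ have the same Iwahori invariants as $\cH$-modules; since both are $\LL^\unip((i_1)_*(-))$ of an admissible unipotent representation and $\Rep^{\widehat\unip}(G(F),\Lambda)\simeq\cH\text{-mod}$, they are isomorphic in $\Coh(\widehat L_\xi)$ for every $0\le a\le n$.

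Next, exactly as in \S\ref{subsection-finish-proof}, \cite[Theorem 1.5]{YZ-torsion-vanishing} applied to $\sSh^\ext_{K^vI}(\sG_a,\sX_a)$ --- legitimate since $\fm$ is cohomologically generic --- shows that $R\Gamma(\sSh^\ext_{K^vI}(\sG_a,\sX_a)_{\overline\QQ_p},\Lambda)_\fm[d_a]$ is concentrated in degree $0$. Feeding this through Theorem \ref{thm-coh-formula-Sh}, Proposition \ref{prop-completion}, Proposition \ref{prop-Springer}, the decomposition $(j_\xi)^*\CohSpr=\bigoplus_x\cO_{X_{P_x}}$, the identity $V_{\mu_{a,1}}=\wedge^a\std\otimes\wedge^{n-a}\std$ (whose $\hat T$-weights are $\chi+\varepsilon_A-\varepsilon_B$ for disjoint $A,B\subseteq\{1,\dots,n\}$ with $|A|=|B|\le\min(a,n-a)$), and the comparison above, I would conclude: for every $x\in\varsigma^{-1}(\xi)$ and every such pair $(A,B)$, the complex $\Hom_{\Pro\Coh(\widehat L_\xi)}(\cO_{X_{P_x}},\fA_\fm(\varepsilon_B-\varepsilon_A))$ is concentrated in degree $0$. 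Letting $a$ run over $1,\dots,\lfloor n/2\rfloor$ makes every pair of disjoint subsets of equal cardinality admissible, so this holds for all twists $\mu=\varepsilon_B-\varepsilon_A$.

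Part (1) then follows by taking $x$ with $P_x=Q_\xi$ (so $\cO_{X_{P_x}}=\cO_{\widehat L_\xi}$): every root-lattice $\hat T$-weight component of $\fA_\fm$ is concentrated in degree $0$, and since $\fA_\fm$ has $\hat T$-weights only in the root lattice --- which I would check directly from the construction of $\LL^\unip$ out of $\CohSpr$ and Wakimoto sheaves --- we get $\fA_\fm\in\Coh(\widehat L_\xi)^\heartsuit$. For part (2) I would prove the local statement: a genuine coherent sheaf $F$ on $\widehat L_\xi$ with root-lattice weights such that $\Hom_{\Pro\Coh(\widehat L_\xi)}(\cO_{X_{P_x}}(\mu),F)$ is concentrated in degree $0$ for all $x$ and all $\mu$ as above is automatically in the essential image of $p^*=(-\otimes_{Z(\cH)^\wedge_\xi}\cO_{\widehat L_\xi})$, i.e. the counit $p^*p_*F\to F$ is an isomorphism; applying $p_*=\Gamma(\widehat L_\xi,-)$ this is precisely statement (2). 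The proof of the local statement imitates the bootstrap of \S\ref{subsection-finish-proof}: one inducts on $|Q_\xi\setminus P_x|$, using for $P_x=P_{x'}\sqcup\{j\}$ the short exact sequences (\ref{eq-exact-seq-1}), (\ref{eq-exact-seq-2}) and Lemma \ref{lemma-hom} to propagate the degree-$0$ vanishing to the auxiliary strata $\cO_Y$, and then reads off that the obstruction to $F$ being pulled back along $p$ --- which, since $\cO_{\widehat L_\xi}$ fails $p$-flatness exactly along the divisors $\{v_i=0\}$, lives in higher cohomology of complexes $\Hom(\cO_{X_\emptyset}(\mu),F)$ --- vanishes.

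The hard part will be this last step. The torsion-vanishing input supplies only finitely many twists $\mu$, and one must show they already force the counit $p^*p_*\fA_\fm\to\fA_\fm$ to be an isomorphism: both the generation of $\fA_\fm$ by its global sections (a Nakayama-type surjectivity, obtained by restricting along $X_\emptyset\hookrightarrow\widehat L_\xi$) and the absence of relations in strictly positive $\hat T$-weight. Via the comparison $\overline{\fI}^1_\fm(-\chi)\simeq\bigoplus_{\ker^1}\fA_\fm$, this is the assertion that the Igusa sheaf of $\sSh(\sG_1,\sX_1)$ is as simple as possible, and it is where the hypotheses (i)--(iv) are all used --- banality and regular semisimplicity to make the geometry of $\widehat L_\xi$ this explicit, $F\ne\QQ_p$ to produce the auxiliary indefinite forms $\sG_a$, and cohomological genericity to invoke torsion vanishing.
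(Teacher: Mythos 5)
The overall strategy is right---combine torsion vanishing for the indefinite Shimura varieties with the explicit description of $\widehat{L}_{\xi}$ and bootstrap through short exact sequences of pro-coherent sheaves---but there is a genuine error at the very first step, and the second half of the argument is not carried out.

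The claim that $\overline{\fI}^a_\fm(-\chi)$ and $\bigoplus_{\ker^1(\QQ,\sG)}\fA_\fm$ are isomorphic objects of $\Coh(\widehat{L}_\xi)$ is false for $a>0$. The Igusa sheaf $\Igssheaf^a_{K^v}$ is a sheaf on $\Isoc_{G_0,\leq\mu^*_{a,1}}$ that is supported on \emph{all} of $B(G_0,\mu^*_{a,1})$, and $\fI^a=\LL^\unip(\cP^{\widehat\unip}(\Igssheaf^a_{K^v}))$ is not of the form $\LL^\unip((i_1)_*(-))$; it is not the image of an object of $\Rep^{\widehat\unip}(G(F),\Lambda)$ under the identity-stratum functor. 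What Proposition \ref{prop-coh-formula-set} gives is the much weaker statement that $\Hom(A,\overline{\fI}^a_\fm(-\chi))\cong\Hom(A,\fA_\fm)$ \emph{when the source $A$ lies in $\Pro\Coh_1(\widehat{L}_\xi)$}, i.e.\ in the essential image of $\LL^\unip\circ(i_1)_*$, and this is exactly what the paper uses. Your subsequent assertion that $\Hom_{\Pro\Coh(\widehat{L}_\xi)}(\cO_{X_{P_x}},\fA_\fm(\varepsilon_B-\varepsilon_A))$ is concentrated in degree $0$ for all disjoint pairs $(A,B)$ does not follow: $\cO_{X_{P_x}}(\varepsilon_A-\varepsilon_B)$ is generally not in $\Coh_1(\widehat{L}_\xi)$, so you cannot transfer from $\overline{\fI}^a_\fm$ to $\fA_\fm$. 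The paper instead proves degree-$0$-ness for Hom into $\overline{\fI}^a_\fm(-\chi)$ from a chain of auxiliary objects $\cF_{P,Q}(\alpha_U)$ (most of which are not in $\Coh_1$), and only at the very end transfers to $\fA_\fm$ through the objects $\cF_{P,\emptyset}(\alpha_P)$ which \emph{are} in $\Pro\Coh_1$.

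You are also missing the structural description of $\Coh_1(\widehat{L}_\xi)$ that makes the bootstrap close: Proposition \ref{prop-1-stratum} identifies $\Coh_1(\widehat{L}_\xi)$ as the objects whose nonzero $\hat{T}$-graded components under $s^*r_*$ lie in $\{\alpha_P\mid P\subseteq Q_\xi\}$, and Corollary \ref{cor-property-coh1} shows the remaining graded pieces are determined by these via multiplication by the $v_i$. Without this, your claim that ``$\fA_\fm$ has $\hat{T}$-weights only in the root lattice'' and that ``finitely many twists already force the counit $p^*p_*\fA_\fm\to\fA_\fm$ to be an isomorphism'' is not justified; you would need every $\lambda$ in the cone $X^{\mathrm{pos}}_\xi$, not just the weights coming from $V_{\mu_{a,1}}(-\chi)$. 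Finally, for (2) you acknowledge ``the hard part will be this last step'' without carrying it out; the paper proves it via Lemmas \ref{lemma-detect-gen} and \ref{lemma-ext-comp-2}, which again rest entirely on the $\Coh_1$ machinery and on degree-$0$ vanishing for Hom from $\cF_{P,\emptyset}(-\alpha_U)$ (note the \emph{negative} twist, a direction your weight computation does not reach).
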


\begin{rmk}
	The space $\Gamma(\Loc^{\widehat\unip}_{{}^LG,F},\fA_\fm)$ of global sections of $\fA_\fm$ is expected to be equal to the co-Whittaker model of $\cA_\fm$. This can be proved once one can show that the unipotent local Langlands functor $\LL^\unip_G$ sends the unipotent part of the Iwahori--Whittaker representation to the structure sheaf $\cO_{\Loc^{\widehat{\unip}}_{{}^LG,F}}$.
    Therefore the theorem is saying that the space $\cA_\fm$ is determined by its co-Whittaker model. 
\end{rmk}

The rest of this section is devoted to prove Theorem \ref{thm-coh-generic}. 

\subsection{Coherent description of the identity stratum}

Let $\xi\in (\hat{T}\git W)(\Lambda)$ be a regular semisimple element. By Proposition \ref{prop-fix-parameter}, there is a fully faithful embedding
$$(\LL^\unip)^\Adm_{\xi}\colon\Rep^{\widehat\unip}(G(F),\Lambda)^\Adm_{\xi}\hookrightarrow \Coh(\widehat{L}_{\xi}).$$
We fix an element $x_\std\in\hat{T}(\Lambda)$ mapping to $\xi$ as in \S\ref{subsection-construct-of-morphism}. By Proposition \ref{prop-completion}, there is an isomorphism 
$$\widehat{L}_{\xi}\simeq \Spf\bigg(\frac{\Lambda[v_i]_{i\in Q_{\xi}}[[u_1,\dots,u_n]]}{(u_iv_i)_{i\in Q_{\xi}}}\bigg)/\hat{T}.$$
Let $\Coh_{1}(\widehat{L}_{\xi})\subseteq \Coh(\widehat{L}_{\xi})$ denote the essential image of $(\LL^\unip)^\Adm_{\xi}$. We give a coherent description of $\Coh_{1}(\widehat{L}_{\xi})$.

Let $Z=(\widehat{L}_{\xi})_{\mathrm{red}}$ be the underlying reduced stack of $\widehat{L}_{\xi}$. Thus $Z=\Spec(\Lambda[v_i]_{i\in Q_{\xi}})/\hat{T}$ is a smooth Artin stack. The natural embedding $Z\hookrightarrow \widehat{L}_{\xi}$ has a section $r\colon\widehat{L}_{\xi}\to Z$ defined by the ring homomorphism
$$\Lambda[v_i]_{i\in Q_{\xi}}\xrightarrow{v_i\mapsto v_i}\frac{\Lambda[v_i]_{i\in Q_{\xi}}[[u_1,\dots,u_n]]}{(u_iv_i)_{i\in Q_{\xi}}}.$$
Let $s\colon \BB\hat{T}\hookrightarrow Z$ be the closed embedding defined by $v_i=0$, $i\in Q_{\xi}$. Consider the following functors
$$\Coh(\widehat{L}_{\xi})\xrightarrow{r_*}\Coh(Z)\xrightarrow{s^*}\Coh(\BB\hat{T}).$$
We identify $\Coh(\BB\hat{T})$ with the category of perfect $\XX^\bullet(\hat{T})$-graded $\Lambda$-modules. For an object $A$ in $\Coh(?)$ for $?=\BB\hat{T}$, $Z$, or $\widehat{L}_{\xi}$, and $\lambda\in \XX^\bullet(\hat{T})$, let
$$\gr^\lambda A\coloneqq\Gamma(?,A\otimes\cO(-\lambda))$$
denote the $\lambda$-component of $A$. 

\begin{notation}\label{not-1}
	Let $P,Q\subseteq Q_{\xi}$ with $P\cap Q=\emptyset$. Denote $\cF_{P,Q}\coloneqq \cO_{\widehat{L}_{\xi}}/(u_i,v_j)_{i\in P,j\in Q}$, which is the pro-coherent sheaf on $\widehat{L}_{\xi}$ concentrated in degree 0. Note that $\cF_{\emptyset,Q}=\cO_{X_Q}$ and $\cF_{Q_{\xi},\emptyset}=\cO_Z$.
		
	If $P\subseteq Q_{\xi}$ is a subset, we denote $\alpha_P=\sum_{i\in P}\alpha_i$.
\end{notation}

\begin{prop}\label{prop-1-stratum}
	The subcategory $\Coh_{1}(\widehat{L}_{\xi})\subseteq \Coh(\widehat{L}_{\xi})$ consists of objects $A\in \Coh(\widehat{L}_{\xi})$ such that $\gr^\lambda(s^*r_*(A))\neq 0$ only if $\lambda= \alpha_P$ for some subset $P\subseteq Q_{\xi}$.
\end{prop}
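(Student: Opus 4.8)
The plan is to characterize the essential image $\Coh_1(\widehat{L}_\xi)$ of $(\LL^\unip)^\Adm_\xi$ by computing the composite functor $s^*r_*$ on a set of generators and checking that the grading condition both holds for those generators and detects membership in the subcategory. The natural generators to use are the objects $\cInd_I^{G(F)}\Lambda$ (or rather its completions $\widehat{\delta}_x$), together with the parabolically induced representations $\nInd_{P(F)}^{G(F)}\chi$ from Lemma~\ref{lemma-jacquet-comp}, all of which go to explicit modules: by Lemma~\ref{lemma-decomp-agree} and Proposition~\ref{prop-Springer} we have $\LL^\unip((i_1)_*\widehat{\delta}_x)\cong\cO_{X_{P_x}}=\cF_{\emptyset,Q_\xi\setminus P_x}$ as pro-coherent sheaves, and more generally the standard modules correspond to the sheaves $\cF_{\emptyset,Q}$ up to line bundle twists by $\cO(\alpha_P)$ coming from the root-theoretic shifts in Corollary~\ref{cor-parabolic-R}. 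Since $\Coh_1(\widehat{L}_\xi)$ is the idempotent-complete stable subcategory generated by these images (because $\cInd_I^{G(F)}\Lambda$ is a compact projective generator of $\Rep^{\widehat\unip}(G(F),\Lambda)$ in the banal case), it suffices to (a) verify the grading condition on generators, and (b) show the condition is preserved under cones, shifts, and retracts, and (c) show conversely that any $A$ satisfying the condition lies in the subcategory.

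First I would compute $s^*r_*\cF_{\emptyset,Q}$ for $Q\subseteq Q_\xi$. Applying $r_*$ to $\cF_{\emptyset,Q}=\cO_{\widehat{L}_\xi}/(v_j)_{j\in Q}$ and then $s^*$ amounts to base-changing the Koszul-type presentation along $r$ and restricting to $v_i=0$. Because $u_iv_i=0$ in $\cO_{\widehat{L}_\xi}$, the section $r$ contracts away the $u$-directions and the Koszul complex computing $\operatorname{Tor}$ over $\cO_Z$ collapses: the graded pieces that appear are exactly indexed by subsets $P\subseteq Q$ (the subsets of relations being resolved), each contributing in internal degree $\alpha_P$, since $v_j$ has $\hat T$-weight $\alpha_j$. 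Twisting by $\cO(\alpha_{P'})$ (as happens for the non-Borel parabolic inductions) shifts all these weights by $\alpha_{P'}$, and $\alpha_P+\alpha_{P'}=\alpha_{P\triangle P'}+2\alpha_{P\cap P'}$ is \emph{not} of the form $\alpha_{P''}$ unless $P\cap P'=\emptyset$ — this is precisely why only Borel inductions (with $P'=\emptyset$) and their constituents generate $\Coh_1$, and it is consistent with Proposition~\ref{prop-Whit-generic-comp}. So on generators the grading lives in weights $\{\alpha_P : P\subseteq Q_\xi\}$ as claimed. Closure under cones and retracts is immediate since $s^*r_*$ is exact and the set of "allowed weights" is closed under the operations that can occur; this gives one inclusion.

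For the reverse inclusion — the main obstacle — I would argue that $s^*r_*$, restricted to $\Coh(\widehat{L}_\xi)$, is conservative enough to pin down the subcategory. Concretely: the functor $r_*$ is fully faithful (it is pushforward along a section of an affine-type map, so $r^*r_*\simeq\operatorname{id}$ on an appropriate subcategory), and $s^*$ together with the grading recovers the support and multiplicities of a coherent sheaf on the nil-neighborhood $Z$. Given $A$ with $\gr^\lambda(s^*r_*A)$ supported on $\{\alpha_P\}$, I want to build $A$ from the $\cF_{\emptyset,Q}$'s by a (possibly infinite, pro-) resolution: filter $A$ by support along the strata $X_P$ of $\widehat{L}_\xi$, and observe that each associated graded piece is a module over $\cO_{X_P}$ whose weights lie in $\{\alpha_{P'}: P'\subseteq P\}$, hence (by a graded Nakayama / minimal resolution argument over the graded local ring $\Lambda[[u_1,\dots,u_n]]$) is built from shifts of $\cO_{X_P}$ by $\cO(\alpha_{P'})$ with $P'\subseteq P$ — and these shifts are exactly the images of standard objects, because $\cO_{X_P}(\alpha_{P'})$ for $P'\subseteq P$ is a subquotient-image of the relevant $\nInd$. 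The delicate point is controlling the twists: I must rule out $\cO(\lambda)$ with $\lambda$ not an "$\alpha_P$", and this is forced precisely by the hypothesis on $\gr^\lambda s^*r_*A$ combined with the fact that $s^*r_*\cO_{X_P}(\lambda)$ has its lowest weight equal to $\lambda$. Assembling these filtration pieces — and checking that the extensions between them stay inside $\Coh_1$, which follows because $\Coh_1$ is a stable subcategory already shown closed under cones — completes the proof. I expect the genuinely technical work to be the graded minimal-resolution bookkeeping over the non-Noetherian-looking ring $\cO_{\widehat{L}_\xi}$, but the banal hypothesis (no loops in $Q_\xi$, noted after Definition~\ref{def-P_x}) keeps everything finite-dimensional in each weight and makes the argument go through.
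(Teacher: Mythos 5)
Your overall blueprint is the right shape (easy direction by checking generators, hard direction by building any object satisfying the grading condition out of objects known to lie in $\Coh_1$), but the hard direction has a genuine gap: you have not identified the correct ``atomic'' objects, and the ones you propose to resolve by are not even coherent. The objects $\cO_{X_P}(\alpha_{P'})$ that your filtration produces are free of rank one over the structure sheaf $\cO_{\widehat{L}_\xi}$ (after restriction to a stratum), hence pro-coherent but not in $\Coh(\widehat{L}_\xi)$; a ``minimal resolution'' by such objects cannot, by itself, exhibit a coherent sheaf $A$ as lying in $\Coh_1(\widehat{L}_\xi)\subseteq\Coh(\widehat{L}_\xi)$. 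Relatedly, your claim that $\cO_{X_P}(\alpha_{P'})$ for $P'\subseteq P$ ``is a subquotient-image of the relevant $\nInd$'' is asserted, not argued, and it is not what is actually needed.

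What the paper does instead is work over the reduced substack $Z=(\widehat{L}_\xi)_{\mathrm{red}}=\Spec(\Lambda[v_i]_{i\in Q_\xi})/\hat{T}$, and identify the genuinely coherent objects $\cO_Z(\alpha_P)$, $P\subseteq Q_\xi$, as the generators. Two facts make the argument run: first, a nontrivial induction on $|P|$ (via short exact sequences among the $\cF_{P,Q}(\alpha_U)$ with $U\subseteq P$) shows that each $\cO_Z(\alpha_P)$ lies in $\Coh_1(\widehat{L}_\xi)=\langle (j_\xi)^*\CohSpr\rangle\cap\Coh(\widehat{L}_\xi)$ — this is the step your sketch elides; second, the computation $\gr^\mu\big(s^*r_*\cO_Z(\alpha_P)\big)=\Lambda$ if $\mu=\alpha_P$ and $0$ otherwise, combined with conservativity of $s^*r_*$ (reducing to $\AAA^1/\GG_m$), lets one peel off a copy of $\cO_Z(\alpha_P)[-m]$ from any nonzero $A\in\Coh_1(\widehat{L}_\xi)'$ and induct on the total size $\sum_P\dim H^\bullet(\gr^{\alpha_P}(s^*r_*A))$. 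Your ``graded Nakayama over $\Lambda[[u_1,\dots,u_n]]$'' is also aimed at the wrong ring: the relevant coherent category to resolve in is $\Coh(Z)$, with the $u$-directions handled entirely by the Koszul complex computing $s^*$, not by a resolution over the formal power series ring. Filling the gap essentially amounts to rediscovering the $\cO_Z(\alpha_P)$'s and proving they lie in $\Coh_1$.
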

\begin{proof}
	Denote $\Coh_{1}(\widehat{L}_{\xi})'\coloneqq\{A\in \Ind\Coh(\widehat{L}_{\xi})|\gr^\lambda(s^*r_*(A))\neq 0\text{ only if }\lambda=\alpha_P\text{ for }P\subseteq Q_{\xi}\}.$ The functor $s^*$ is computed by the Koszul complex
	$$s^*(A)=[A(\alpha_{Q_{\xi}}) \to\cdots\to\bigoplus_{P\subseteq Q_{\xi},|P|=2}A(\alpha_P)\to \bigoplus_{j\in Q_{\xi}}A(\alpha_j)\xrightarrow{(v_j)} A], \quad A\in\Coh(Z).$$
    Therefore $s^*r_*$ commutes with limits and colimits. Moreover, the graded piece $\gr^\lambda(s^*r_*A)$ is computed by the complex
    $$[\gr^{\lambda-\alpha_{Q_\xi}}(A)\to\cdots\to\bigoplus_{P\subseteq Q_\xi,|P|=2}\gr^{\lambda-\alpha_P}(A)\to\bigoplus_{j\in Q_\xi}\gr^{\lambda-\alpha_j}(A)\to\gr^{\lambda}(A)].$$
    We can define $\Ind\Coh_{1}(\widehat{L}_{\xi})'\subseteq \Ind\Coh(\widehat{L}_{\xi})$ and $\Pro\Coh_{1}(\widehat{L}_{\xi})'\subseteq\Pro\Coh(\widehat{L}_{\xi})$ by the same conditions.

	An object $V\in\Rep^{\widehat\unip}(G(F),\Lambda)_{\xi}^\Adm$ can be written as a retract of a finite colimit of $\cInd_I^{G(F)}\Lambda$. Therefore $\LL((i_1)_*V)$ can be written as a retract of a finite colimit of $(j_{\xi})^*\CohSpr$. By Proposition \ref{prop-Springer}, to check that $\Coh_{1}(\widehat{L}_{\xi})\subseteq\Coh_{1}(\widehat{L}_{\xi})'$, it suffices to check that each $\cO_{X_P},P\subseteq Q_{\xi}$ lies in $\Pro\Coh_{1}(\widehat{L}_{\xi})'$. This follows from a direct computation.
	
	To check that $\Coh_{1}(\widehat{L}_{\xi})'=\Coh_{1}(\widehat{L}_{\xi})$, it suffices to find objects in $\Coh_{1}(\widehat{L}_{\xi})$ that generate $\Coh_{1}(\widehat{L}_{\xi})'$ as idempotent complete $\Lambda$-linear category. Denote $\widehat{I}_{\xi}\coloneqq\cInd_I^{G(F)}\otimes_{Z(\cH)}Z(\cH)_{\xi}^\wedge$ as a pro-object in $\Rep^{\widehat\unip}(G(F),\Lambda)$. Let $\langle\widehat{I}_{\xi}\rangle\subseteq \Pro\Rep^{\widehat\unip}(G(F),\Lambda)$ be the idempotent complete $\Lambda$-linear category generated by $\widehat{I}_{\xi}$. Then we have 
    $$\Rep^{\widehat\unip}(G(F),\Lambda)^\Adm_{\xi}=\langle \widehat{I}_\xi\rangle\cap\Rep^{\widehat\unip}(G(F),\Lambda)^\Adm.$$ Form this we deduce that $\Coh_1(\widehat{L}_{\xi})=\langle (j_{\xi})^*\CohSpr\rangle\cap \Coh(\widehat{L}_{\xi})$. Here $\langle (j_{\xi})^*\CohSpr\rangle\subseteq \Pro\Coh(\widehat{L}_{\xi})$ is the idempotent complete $\Lambda$-linear category generated by $(j_{\xi})^*\CohSpr$. 
	
	We claim that for each $P\subseteq Q_{\xi}$, the object $\cO_Z(\alpha_P)$ lies in $\Coh_1(\widehat{L}_{\xi})$. We know that $\cF_{\emptyset,P}=\cO_{X_P}\in \langle (j_{\xi})^*\CohSpr\rangle$ for any $P\subseteq Q_{\xi}$. We further claim that for $P,Q,U\subseteq Q_{\xi}$ with $P\cap Q=\emptyset, U\subseteq P$, the object $\cF_{P,Q}(\alpha_U)$ lies in $\Pro\Coh_1(\widehat{L}_{\xi})$. The case $P=\emptyset$ is known. For $i\notin P$. We have short exact sequences
	$$0\to \cF_{P\sqcup\{i\},Q}(\alpha_{U\sqcup\{i\}})\xrightarrow{v_i} \cF_{P,Q}(\alpha_U)\to \cF_{P,Q\sqcup\{i\}}(\alpha_U)\to 0$$
	$$0\to  \cF_{P,Q\sqcup\{i\}}(\alpha_U)\xrightarrow{u_i} \cF_{P,Q}(\alpha_U)\to\cF_{P\sqcup\{i\},Q}(\alpha_U)\to 0.$$
	This proves the claim by induction on $|P|$. In particular, the objects $\cO_Z(\alpha_P)$ lies in $\Coh_1(\widehat{L}_{\xi})$.
	
	It suffices to show that the objects $\cO_Z(\alpha_P)$ for $P\subseteq Q_{\xi}$ generate $\Coh_1(\widehat{L}_{\xi})'$. Notice that 
    $$\gr^{\mu}(s^*r_*(\cO_Z(\alpha_P)))=\begin{cases}
        \Lambda\quad&\text{if }\mu=\alpha_P, \\
        0\quad&\text{otherwise}.
    \end{cases}$$
	We first show that the functor $s^*r_*\colon \Coh(\widehat{L}_{\xi})\to\Coh(\BB\hat{T})$ is conservative: The functor $r_*$ is clearly conservative. To show that $s^*\colon\Coh(Z/\hat{T})\to \Coh(\BB\hat{T})$ is conservative, we reduce to show that the pullback $\Coh(\AAA^1/\GG_m)\to \Coh(\BB\GG_m)$ is conservative. This is well-known.
	
	Let $A\in \Coh_1(\widehat{L}_{\xi})'$ be a non-zero object. Denote $\cE(A)=\{P\subseteq Q_{\xi}|\gr^{\alpha_P}(A)\neq 0\}$. Therefore $\cE(A)$ is non-empty. Let $P\in \cE(A)$ be a minimal element. We have $\gr^{\alpha_P}(A)=\gr^{\alpha_P}(s^*r_*A)$. Take $M=H^{m}(\gr^{\alpha_P}(A))$ with minimal $m$ such that $H^{m}(\gr^{\alpha_P}(A))$ is non-zero. Then $M$ contains a $\Lambda[[u_1,\dots,u_n]]$-submodule isomorphic to $\Lambda$.
    It induces a morphism $\cO_Z(\alpha_P)[-m]\to A.$ Let $B$ be the cone of the this morphism. Then $\gr^{\alpha_Q}(B)=\gr^{\alpha_Q}(A)$ if $Q\neq P$, $H^{\neq m}(\gr^{\alpha_P}(s^*r_*(B)))=H^{\neq m}(\gr^{\alpha_P}(s^*r_*(A)))$, and 
    $$\dim H^m(\gr^{\alpha_P}(s^*r_*(B)))=\dim H^m(\gr^{\alpha_P}(s^*r_*(A)))-1.$$ 
    Therefore by induction, we can write $A$ as a finite extension of $\cO_Z(\alpha_p)[m]$'s. This finishes the proof.
\end{proof}

Objects in $\Coh(\widehat{L}_{\xi})$ can be viewed as $\XX^\bullet(\hat{T})$-graded $S$-modules satisfying some nilpotent conditions, where $S=\frac{\Lambda[v_i]_{i\in Q_{\xi}}[[u_1,\dots,u_n]]}{(u_iv_i)_{i\in Q_{\xi}}}$. In fact, if $A\in\Coh(\widehat{L}_{\xi})$, then $\bigoplus_{\lambda\in \XX^\bullet(T)}\gr^\lambda(A)$ is a $\XX^\bullet(\hat{T})$-graded $S$-module. Let $X^\mathrm{pos}_{\xi}\subseteq \XX^\bullet(\hat{T})$ be the cone generated by $\alpha_i$ for $i\in Q_{\xi}$. Define the order $\leq$ on $X^\mathrm{pos}_{\xi}$ as follows: $\lambda\leq \lambda'$ if $\lambda'=\lambda+\sum_{i\in Q_{\xi}}m_i\alpha_i$ for $m_i\geq 0$.
\begin{cor}\label{cor-property-coh1} Let $A\in \Coh_1(\widehat{L}_{\xi})$ be an object.
	\begin{enumerate}
	\item $\gr^\lambda(A)=0$ if $\lambda\notin X^\mathrm{pos}_{\xi}.$
	\item For $\lambda\in X^\mathrm{pos}_{\xi}$, write $\lambda=\sum_{i\in Q_{\xi}} m_i\alpha_i$ with $m_i\geq 0$. Write $P=\{i\in Q_{\xi}|n_i\neq 0\}$. Then the morphism
		$$\prod_{i\in P}v_i^{m_i-1}\colon \gr^{\alpha_P}(A)\to \gr^{\lambda}(A)$$
		is an isomorphism.
	\item For $P\subseteq Q_{\xi}$ and $i\in P$, the action of $u_i$ on $\gr^{\alpha_P}(A)$ is trivial.
\end{enumerate}
\end{cor}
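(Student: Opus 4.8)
The plan is to reduce all three assertions to the generating objects $\cO_Z(\alpha_P)$, $P\subseteq Q_{\xi}$, of $\Coh_1(\widehat{L}_{\xi})$. By the proof of Proposition~\ref{prop-1-stratum}, every $A\in\Coh_1(\widehat{L}_{\xi})$ is obtained from shifts $\cO_Z(\alpha_P)[m]$ by finitely many fiber sequences. The second ingredient is that, for each $\lambda\in\XX^\bullet(\hat{T})$, the functor $\gr^\lambda=\Hom_{\Coh(\widehat{L}_{\xi})}(\cO_{\widehat{L}_{\xi}}(\lambda),-)\colon\Coh(\widehat{L}_{\xi})\to\Lambda\mbox{-}\mathrm{mod}$ is exact (the twisting line bundle is perfect), and multiplication by an element $f\in S$ of $\hat{T}$-weight $\mu$ defines a natural transformation $\gr^\lambda\Rightarrow\gr^{\lambda+\mu}$.

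First I would verify (1) and (2) directly on $\cO_Z(\alpha_P)$. Using that $r$ restricts to the identity on $Z$ and that $s$ is cut out by the $v_i$, one gets $\gr^\lambda(\cO_Z(\alpha_P))\cong\gr^{\lambda-\alpha_P}(\cO_Z)$, where $\cO_Z=\Lambda[v_i]_{i\in Q_{\xi}}$ is the polynomial ring graded with $v_i$ of weight $\alpha_i$; hence $\gr^\lambda(\cO_Z(\alpha_P))$ is free of rank one when $\lambda-\alpha_P$ is a nonnegative integral combination of the $\alpha_i$ and vanishes otherwise, and the maps $\prod_i v_i^{m_i-1}$ send monomial generators to monomial generators, giving (1) and (2) for $\cO_Z(\alpha_P)$ and its shifts. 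To pass to arbitrary $A\in\Coh_1(\widehat{L}_{\xi})$: the objects satisfying (1) form the thick subcategory of $\Coh(\widehat{L}_{\xi})$ of complexes supported in weights lying in $X^\mathrm{pos}_{\xi}$, which is stable under shifts and cones; and for a fiber sequence $A\to A'\to A''$ the natural maps $\phi_?\colon\gr^{\alpha_P}(?)\to\gr^\lambda(?)$ of (2) fit into a morphism of fiber sequences, so $\Cone(\phi_A)$ is an iterated cone built from $\Cone(\phi_{A'})$ and $\Cone(\phi_{A''})$, whence $\phi_A$ is an equivalence whenever $\phi_{A'}$ and $\phi_{A''}$ are. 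Therefore (1) and (2) hold for all $A\in\Coh_1(\widehat{L}_{\xi})$.

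Finally, (3) follows formally from (2). Fix $P\subseteq Q_{\xi}$ and $i\in P$. Since $i\in P$, the element $\lambda=\alpha_P+\alpha_i$ has support $P$ and associated exponents $m_i=2$, $m_j=1$ for $j\in P\setminus\{i\}$, so (2) says that multiplication by $v_i$ is an isomorphism $\gr^{\alpha_P}(A)\xrightarrow{\sim}\gr^{\alpha_P+\alpha_i}(A)$, in particular injective. As $u_iv_i=0$ in $S$, for every $x\in\gr^{\alpha_P}(A)$ we have $v_i\cdot(u_i x)=u_i\cdot(v_i x)=(u_iv_i)\cdot x=0$ in $\gr^{\alpha_P+\alpha_i}(A)$, so injectivity of $v_i$ forces $u_i x=0$, which is (3). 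The argument is essentially formal once Propositions~\ref{prop-Springer} and~\ref{prop-1-stratum} are in hand; the only points that require care are getting the grading conventions for $\cO_Z(\alpha_P)$ right in the computation on generators and confirming that the ``is an equivalence'' statement in (2) propagates along the finitely many fiber sequences furnished by the proof of Proposition~\ref{prop-1-stratum}, which I regard as the main (though mild) obstacle.
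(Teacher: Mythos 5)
Your proof is correct and follows the same route as the paper: verify statements (1) and (2) on the generating objects $\cO_Z(\alpha_P)$ of $\Coh_1(\widehat{L}_{\xi})$, propagate along the finitely many fiber sequences produced in the proof of Proposition~\ref{prop-1-stratum}, and deduce (3) from the relation $u_iv_i=0$ together with the injectivity of $v_i\colon\gr^{\alpha_P}(A)\to\gr^{\alpha_P+\alpha_i}(A)$ furnished by (2). The paper's proof is stated very tersely ("check the generators\dots (3) follows from (2)\dots"); you supply exactly the omitted details, with no change of method.
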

\begin{proof}
	The first two statements follows by checking the generators $\cO_Z(\alpha_P)$ of $\Coh_1(\widehat{L}_\xi)$. (3) follows from (2) as we have a constraint $u_iv_i=v_iu_i=0$, but the morphism
	$$v_i\colon \gr^{\alpha_P}(A)\to \gr^{\alpha_P+\alpha_i}(A)$$
	is an isomorphism. This forces that $u_i=0$ on $\gr^{\alpha_P}(A)$.
\end{proof}

Informally, this means an object $A\in \Coh_1(\widehat{L}_{\xi})$ is ``determined'' by the $ \gr^{\alpha_P}(A)$ for $P\subseteq Q_{\xi}$.

\subsection{Proof of Theorem \ref{thm-coh-generic}}
Denote $\xi=\xi_{\fm,v}$. The object $\fA_\fm$ lies in $\Coh(\widehat{L}_{\xi})$, so it suffices to prove the statements over $\widehat{L}_{\xi}$.
The morphism $r_*\colon\Coh(\widehat{L}_{\xi})\to \Coh(Z)$ is $t$-exact. Because $\fA_\fm$ lies in $\Coh_1(\widehat{L}_{\xi})$, to prove that it is concentrated in degree 0, it suffices to show that $\gr^{\alpha_P}(\fA_\fm)$ is concentrated in degree 0 for any $P\subset Q_{\xi}$.

\begin{lemma}\label{lemma-ext-comp-1}
	Let $A\in\Coh_1(\widehat{L}_{\xi})$. Let $P\subseteq Q_{\xi}$, then there is an isomorphism
	$$\Hom_{\Pro\Coh(\widehat{L}_{\xi})}\big(\cF_{P,\emptyset}(\alpha_P),A\big)\cong \gr^{\alpha_P}(A).$$
\end{lemma}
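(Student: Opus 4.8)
The plan is to resolve $\cF_{P,\emptyset}$ explicitly and to observe that after passing to the $\alpha_P$-graded piece the rigidity of the local operators recorded in Corollary \ref{cor-property-coh1} forces the resulting complex to collapse onto $\gr^{\alpha_P}(A)$.

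First I would record what morphisms out of $\cF_{P,\emptyset}(\alpha_P)$ look like. By Notation \ref{not-1} the object $\cF_{P,\emptyset}=\cO_{\widehat{L}_{\xi}}/(u_i)_{i\in P}$ is cyclic, generated by a section in degree $0$; hence $\cF_{P,\emptyset}(\alpha_P)$ is generated by a section $e$ in degree $\alpha_P$ subject only to $u_i\cdot e=0$ for $i\in P$. Writing $\cF_{P,\emptyset}=\lim_n\cO_{\widehat{L}_{\xi}}/\big((u_i)_{i\in P}+\mathfrak{m}^n\big)$ as a pro-coherent sheaf, where $\mathfrak{m}=(u_1,\dots,u_n)$ is the ideal of definition of $\widehat{L}_{\xi}$ (cf. Proposition \ref{prop-completion}), and using that $\mathfrak{m}$ acts locally nilpotently on the admissible object $A$ so that the $\mathfrak{m}^n$-torsion conditions drop out of the colimit, one gets on $H^0$ that $\Hom_{\Pro\Coh(\widehat{L}_{\xi})}(\cF_{P,\emptyset}(\alpha_P),A)=\{a\in\gr^{\alpha_P}(A)\mid u_ia=0\ (i\in P)\}$, which equals $\gr^{\alpha_P}(A)$ by Corollary \ref{cor-property-coh1}(3).

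For the statement at the level of mapping complexes I would resolve $\cF_{P,\emptyset}$. From the presentation in Proposition \ref{prop-completion} one has $\mathrm{Ann}_{\cO_{\widehat{L}_{\xi}}}(u_i)=(v_i)$ and $\mathrm{Ann}_{\cO_{\widehat{L}_{\xi}}}(v_i)=(u_i)$, so $\cF_{\{i\},\emptyset}$ admits the $2$-periodic free resolution $\cdots\xrightarrow{v_i}\cO_{\widehat{L}_{\xi}}\xrightarrow{u_i}\cO_{\widehat{L}_{\xi}}$; moreover for distinct $i,j\in P$ one checks $\mathrm{Tor}^{\cO_{\widehat{L}_{\xi}}}_{>0}(\cF_{\{i\},\emptyset},\cF_{\{j\},\emptyset})=0$ from these annihilators, so the tensor product over $i\in P$ of these resolutions resolves $\cF_{P,\emptyset}$. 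Computing $\Hom_{\Pro\Coh(\widehat{L}_{\xi})}(\cF_{P,\emptyset}(\alpha_P),A)$ with this resolution and extracting the $\alpha_P$-graded piece, it is the totalization of a $|P|$-fold multicomplex whose $i$-th direction reads $\gr^{\alpha_P}(A)\xrightarrow{u_i}\gr^{\alpha_P}(A)\xrightarrow{v_i}\gr^{\alpha_P+\alpha_i}(A)\xrightarrow{u_i}\gr^{\alpha_P+\alpha_i}(A)\xrightarrow{v_i}\cdots$.

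The punchline, and the one genuinely non-formal step, is that this multicomplex collapses. By Corollary \ref{cor-property-coh1}(2) all the graded pieces occurring are canonically identified with $M\coloneqq\gr^{\alpha_P}(A)$ and each map labelled $v_i$ becomes the identity $M\to M$; by Corollary \ref{cor-property-coh1}(3), together with the relation $u_iv_i=0$, each map labelled $u_i$ is zero. Thus every directional slice is $M\xrightarrow{0}M\xrightarrow{\sim}M\xrightarrow{0}M\xrightarrow{\sim}\cdots$, which is acyclic away from $H^0=M$, and totalizing over the $|P|$ directions one step at a time shows the whole complex is quasi-isomorphic to $\gr^{\alpha_P}(A)$ in degree $0$, naturally in $A$. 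The only real obstacle is precisely this collapse: it rests entirely on Corollary \ref{cor-property-coh1}, i.e. on the very special structure of objects of $\Coh_1(\widehat{L}_{\xi})$, and would fail for a general coherent sheaf on $\widehat{L}_{\xi}$.
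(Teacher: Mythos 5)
Your argument is correct, and the key input is the same as the paper's: both proofs rest on the $2$-periodic free resolution $\cdots\xrightarrow{v_i}\cO\xrightarrow{u_i}\cO\to\cO/(u_i)$ of $\cF_{\{i\},\emptyset}$, together with Corollary \ref{cor-property-coh1} to force the resulting complex to collapse onto $\gr^{\alpha_P}(A)$. The one genuine organizational difference is this: you tensor all $|P|$ of these resolutions together at once, which requires the $\mathrm{Tor}_{>0}$-vanishing between the $\cF_{\{i\},\emptyset}$'s that you correctly deduce from the annihilator relations $\mathrm{Ann}(u_i)=(v_i)$, $\mathrm{Ann}(v_i)=(u_i)$ in the complete intersection $S$, and then totalize the multicomplex; the paper instead inducts on $|P|$, tensoring one $2$-periodic resolution at a time, and to close that induction it has to strengthen the statement to its items (i) and (ii), which handle $\Hom(\cF_{P,\emptyset}(\sum_i m_i\alpha_i),A)$ for all nonnegative $m_i$. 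Your multicomplex route avoids the inductive strengthening in exchange for the $\mathrm{Tor}$ check; the heart of both arguments (that Corollary \ref{cor-property-coh1} makes every directional slice of the complex acyclic away from degree $0$) is identical. Two small presentational points: $v_i$ is a canonical isomorphism rather than literally the identity on the graded pieces, so it is cleaner to write the slice as $M\xrightarrow{0}M\xrightarrow{\sim}M\xrightarrow{0}\cdots$ as you in fact do; and the $\mathfrak m$-adic bookkeeping you dispatch by ``local nilpotence'' is precisely the role of working in $\Pro\Coh$ of the formal stack with $A$ a genuine coherent sheaf on $\widehat{L}_\xi$, which is worth saying explicitly.
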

\begin{proof}
	If $P=\emptyset$, this is clear. We prove by induction on $|P|\geq 1$ that:
	\begin{enumerate}[(i)]
		\item If $m_i\geq 0$ for $i\in Q_{\xi}\backslash P$ and $m_i=1$ for $i\in P$, then there is a canonical isomorphism
			$$\Hom_{\Pro\Coh(\widehat{L}_{\xi})}\big(\cF_{P,\emptyset}(\sum_{i\in Q_{\xi}}m_i\alpha_i),A\big)\cong \gr^{\sum_{i\in Q_{\xi}}m_i\alpha_i}(A).$$
		\item If $m_i\geq 0$ for $i\in Q_{\xi}\backslash P$, $m_i\geq 1$ for $i\in P$, and there is at least one $i\in P$ with $m_i\geq 2$. Then
		$$\Hom_{\Pro\Coh(\widehat{L}_{\xi})}\big(\cF_{P,\emptyset}(\sum_{i\in Q_{\xi}}m_i\alpha_i),A\big)=0.$$
	\end{enumerate}
	
	We first consider the case $P=\{i\}$. There is a long exact sequence
	$$\cdots\xrightarrow{u_i}\cO_{\widehat{L}_{\xi}}(2\alpha_i)\xrightarrow{v_i}\cO_{\widehat{L}_{\xi}}(\alpha_i)\xrightarrow{u_i}\cO_{\widehat{L}_{\xi}}(\alpha_i)\xrightarrow{v_i}\cO_{\widehat{L}_{\xi}}\xrightarrow{u_i}\cO_{\widehat{L}_{\xi}} \to \cF_{\emptyset,\{i\}}\to 0.$$
	The claim follows from Corollary \ref{cor-property-coh1}.
	
	Assume that the statement is true for $P'=P\backslash\{j\}$. We have a long exact sequence
	$$\cdots\xrightarrow{u_j}\cF_{P',\emptyset}(2\alpha_i)\xrightarrow{v_j}\cF_{P',\emptyset}(\alpha_i)\xrightarrow{u_j}\cF_{P',\emptyset}(\alpha_i)\xrightarrow{v_j}\cF_{P',\emptyset}\xrightarrow{u_j}\cF_{P',\emptyset}\to \cF_{P,\emptyset}\to 0.$$
	Then the claim follows from induction hypothesis.
\end{proof}

\begin{proof}[Proof of Theorem \ref{thm-coh-generic} (1)]
By Corollary \ref{cor-property-coh1} and Lemma \ref{lemma-ext-comp-1}, it suffices to show that 
$$\Hom_{\Pro\Coh(\widehat{L}_{\xi})}\big(\cF_{P,\emptyset}(\alpha_P),\fA_\fm\big)$$ 
sits in degree 0 for any $P\subseteq Q_{\xi}$. Take $a=[\frac{n}{2}]$ in Theorem \ref{thm-coh-formula-Sh}. Note that
	$$(j_{\xi})^*V_{\mu_{a,1}}\cong \bigoplus_{\lambda\in \mathrm{Wt}(V_{\mu_{a,1}})}V_{\mu_{a,1}}(\lambda)\otimes_\Lambda\cO_{\widehat{L}_{\xi}}(\lambda).$$
	where $\lambda$ runs through weights appearing in $V_{\mu_{a,1}}=\wedge^a(\std)\otimes\wedge^{n-a}(\std)$ and $V_{\mu_{a,1}}(\lambda)$ is the $\lambda$-weight space.
	It follows that
	\begin{equation}\label{eq-sh-a}\tag{$\star$}
	    \begin{aligned}
	   &\ \ \overline{R\Gamma(\sSh_{K^vI}^\ext(\sG_a,\sX_a)_{\overline\QQ_p} ,\Lambda)}_{\fm}[d] \\ \simeq &\bigoplus_{ x\in \varsigma^{-1}(\xi)} \bigoplus_{\lambda\in \mathrm{Wt}(V_{\mu_{a,1}})}(V_{\mu_{a,1}}(\lambda))^*\otimes_\Lambda\Hom_{\Pro\Coh(\widehat{L}_{\xi})}(\cO_{X_{P_x}}(\lambda-\chi),\overline{\fI}_{\fm}^a(-\chi)).
	\end{aligned}
	\end{equation}
	Here $\chi=(1^n)\in \XX^\bullet(\hat{T})$. We note that $\lambda\in \mathrm{Wt}(V_{\mu_{a,1}})$ if and only if $\mathrm{dom}(\lambda-\chi)\leq (1^a,0^{\epsilon},(-1)^a)$ in $\XX^\bullet(\hat{T})^+$, where $\epsilon=1$ if $n$ is odd and $\epsilon=0$ if $n$ is even. Here $\mathrm{dom}(\lambda-\chi)$ is the dominant Weyl translate of $\lambda-\chi$.
	
	By \cite[Theorem 1.5]{YZ-torsion-vanishing}, the cohomology $\overline{R\Gamma(\sSh_{K^vI}^\ext(\sG_a,\sX_a)_{\overline\QQ_p} ,\Lambda)}_{\fm}[d]$ sits in degree 0. Therefore for $\lambda\in \XX^\bullet(\hat{T})$ with $\lambda\leq (1^a,0^{\epsilon},(-1)^a)$, the $\Hom$-space
	$$\Hom_{\Pro\Coh(\widehat{L}_{\xi})}(\cO_{X_{P_x}}(\lambda),\overline{\fI}_{\fm}^a(-\chi))$$
	sits in degree 0. If $A\in \Pro\Coh_1(\widehat{L}_{\xi})$, then by Proposition \ref{prop-coh-formula-set}, we have
	$$\Hom_{\Pro\Coh(\widehat{L}_{\xi})}(A,\overline{\fI}_{\fm}^a(-\chi))\cong \Hom_{\Pro\Coh(\widehat{L}_{\xi})}(A,\fA_\fm).$$
	For $P\subseteq Q_{\xi}$, the object $\cF_{P,\emptyset}(\alpha_P)$ lies in $\Pro\Coh_1(\widehat{L}_{\xi})$. Hence it suffices to show that 
	$$\Hom_{\Pro\Coh(\widehat{L}_{\xi})}\big(\cF_{P,\emptyset}(\alpha_P),\overline{\fI}_{\fm}^a(-\chi)\big)$$ 
	sits in degree 0 for each $P\subseteq Q_{\xi}$.
	
	Let $P,Q\subset Q_{\xi}$ with $P\cap Q=\emptyset$. Let $U\subseteq Q_{\xi}$ with $P\subseteq U$. We claim that the $\Hom$-space
	$$\Hom_{\Pro\Coh(\widehat{L}_{\xi})}(\cF_{P,Q}(\alpha_U),\overline{\fI}_{\fm}^a(-\chi))$$
	sits in degree 0. We do induction on $|P|$. If $P=\emptyset$, then $\cF_{\emptyset,Q}(\alpha_U)=\cO_{X_Q}(\alpha_U)$ appears in the formula (\ref{eq-sh-a}) because $\mathrm{dom}(\alpha_U)\leq (1^a,0^{\epsilon},(-1)^a)$. Thus the claims hold when $P=\emptyset$. Assume that the claims hold for $P'=P\backslash\{i\}$. Using the short exact sequences
	$$0\to \cF_{P,Q}(\alpha_U)\xrightarrow{v_i}\cF_{P\backslash\{i\},Q}(\alpha_{U\backslash\{i\}})\to \cF_{P\backslash\{i\},Q\sqcup\{i\}}(\alpha_{U\backslash\{i\}})\to 0$$
	$$0\to \cF_{P\backslash\{i\},Q\sqcup\{i\}}(\alpha_{U})\xrightarrow{u_i}\cF_{P\backslash\{i\},Q}(\alpha_U)\to \cF_{P,Q}(\alpha_U)\to 0$$
	and Lemma \ref{lemma-hom}, we see that the statement holds for $P$. 
\end{proof}

\begin{lemma}\label{lemma-detect-gen}
	Let $A\in\Coh_1(\widehat{L}_{\xi})^\heartsuit$. The natural morphism $H^0(\gr^0(A)\otimes_{\Lambda[[u_1,\dots,u_n]]}\cO_{\widehat{L}_{\xi}})\to A$ is an isomorphism if and only if for all $P\subseteq Q_{\xi}$ and $j\in P$, the morphism
	$$v_j\colon \gr^{\alpha_{P\backslash \{j\}}}(A)\to \gr^{\alpha_{P}}(A)$$
	identifies $\gr^{\alpha_{P}}(A)$ with the cokernel of $u_j\colon \gr^{\alpha_{P\backslash \{j\}}}(A)\to  \gr^{\alpha_{P\backslash \{j\}}}(A))$.
\end{lemma}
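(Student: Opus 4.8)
The plan is to work in the explicit affine model $S=\frac{\Lambda[v_i]_{i\in Q_\xi}[[u_1,\dots,u_n]]}{(u_iv_i)_{i\in Q_\xi}}$ of Proposition \ref{prop-completion}, under which $\Coh(\widehat L_\xi)^\heartsuit$ is the category of $\XX^\bullet(\hat T)$-graded $S$-modules with the nilpotence conditions as discussed just before Corollary \ref{cor-property-coh1}, and $A$ corresponds to $M=\bigoplus_\lambda\gr^\lambda(A)$. Since $\hat T$ acts trivially on the $u_i$ and by the simple root $\alpha_i$ on $v_i$, we have $S^{\hat T}=\Lambda[[u_1,\dots,u_n]]$ (write $\Lambda[[u]]$ for short), and $\gr^0(A)=\gr^0(M)$ is a module over it. Put $N\coloneqq H^0\big(\gr^0(A)\otimes_{\Lambda[[u]]}\cO_{\widehat L_\xi}\big)$, a graded $S$-module; the morphism in the statement is the counit $\phi\colon N\to M$, and we want a criterion for $\phi$ to be an isomorphism.

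First I would compute the weights of $N$. Because of the relations $u_iv_i=0$, for $\lambda=\sum_{i\in P}m_i\alpha_i$ with all $m_i\ge 1$ (so that $P$ is the support of $\lambda$) one has $S_\lambda\cong\Lambda[[u]]/(u_i:i\in P)$ as a $\Lambda[[u]]$-module, independently of the multiplicities; hence $\gr^\lambda(N)\cong\gr^0(A)/(u_i:i\in P)$, and the transition maps $\prod_{i\in P}v_i^{m_i-1}$ between weights of $N$ with common support are isomorphisms. So $N$ satisfies the conclusions of Corollary \ref{cor-property-coh1} as well (equivalently, $N\in\Coh_1(\widehat L_\xi)^\heartsuit$), and by Corollary \ref{cor-property-coh1} both $M$ and $N$ are, up to these transition isomorphisms, determined by their weight-$\alpha_P$ pieces for $P\subseteq Q_\xi$. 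Therefore $\phi$ is an isomorphism if and only if $\gr^{\alpha_P}(\phi)\colon\gr^0(A)/(u_i:i\in P)\to\gr^{\alpha_P}(A)$ is an isomorphism for every $P\subseteq Q_\xi$ (outside $X^{\mathrm{pos}}_\xi$ both sides vanish).

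The last step is to match ``$\gr^{\alpha_P}(\phi)$ an isomorphism for every $P$'' with the stated cokernel condition by induction on $|P|$, the case $P=\emptyset$ being $\gr^0(\phi)=\mathrm{id}$. Fix $j\in P$ and $P'=P\setminus\{j\}$. Using that the weight-$\alpha_P$ part of $N$ is generated over $\Lambda[[u]]$ by $v_j\cdot\prod_{i\in P'}v_i$, together with $S$-linearity of $\phi$, one sees that $\gr^{\alpha_P}(\phi)$ factors as: the quotient $\gr^0(A)/(u_i:i\in P')\twoheadrightarrow\gr^0(A)/(u_i:i\in P)$, which identifies its target with $\coker\big(u_j\colon\gr^0(A)/(u_i:i\in P')\to\gr^0(A)/(u_i:i\in P')\big)$; followed by the map induced by $\gr^{\alpha_{P'}}(\phi)$ on these cokernels; followed by the map $v_j\colon\coker\big(u_j\colon\gr^{\alpha_{P'}}(A)\to\gr^{\alpha_{P'}}(A)\big)\to\gr^{\alpha_P}(A)$, which is well defined since $v_ju_j=0$ on $M$. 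By the inductive hypothesis $\gr^{\alpha_{P'}}(\phi)$ is an isomorphism, so $\gr^{\alpha_P}(\phi)$ is an isomorphism precisely when the last arrow is — that is, precisely when $v_j$ identifies $\gr^{\alpha_P}(A)$ with $\coker(u_j|_{\gr^{\alpha_{P'}}(A)})$, which is the condition of the lemma for the pair $(P,j)$. Running over all $P\subseteq Q_\xi$ and $j\in P$ yields both implications.

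I expect the only real work to be bookkeeping: keeping the derived base change, the relations $u_iv_i=0$, and the reduction to the weights $\alpha_P$ consistent on the $N$-side and the $M$-side. Once the weight computation of $N$ in the second paragraph is set up, the induction in the third paragraph is formal, and it relies only on Corollary \ref{cor-property-coh1} and the identification of $\Coh(\widehat L_\xi)^\heartsuit$ with graded $S$-modules already in force in Proposition \ref{prop-1-stratum}.
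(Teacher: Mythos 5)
Your proposal is correct and follows essentially the same route as the paper. Both proofs reduce to comparing weight-$\alpha_P$ pieces on the two sides: the key computation is that $\gr^{\alpha_P}\bigl(H^0(\gr^0(A)\otimes_{\Lambda[[u]]}\cO_{\widehat L_\xi})\bigr)\cong \gr^0(A)/(u_i:i\in P)$, after which Corollary \ref{cor-property-coh1} lets you discard all other weights and check the counit one $P$ at a time; the paper compresses the remaining bookkeeping to "the statement is clear," whereas you spell out the induction on $|P|$, but the underlying argument is identical.
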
 
\begin{proof}
	Note that the composition $ \gr^{\alpha_{P\backslash \{j\}}}(A)\xrightarrow{u_i} \gr^{\alpha_{P\backslash\{j\}}}(A)\xrightarrow{v_i}\gr^{\alpha_{P}}(A)$ is zero. It induces a natural morphism from $\mathrm{coker}(\gr^{\alpha_{P\backslash \{j\}}}(A)\xrightarrow{u_j}  \gr^{\alpha_{P\backslash \{j\}}}(A))$ to $\gr^{\alpha_{P}}(A)$. The statement is clear after noticing that
	$$\gr^{\alpha_P}\big(H^0(\gr^0(A)\otimes_{\Lambda[[u_1,\dots,u_n]]}\cO_{\widehat{L}_{\xi}}))\big)=H^0(\gr^0(A)\otimes_{\Lambda[[u_1,\dots,u_n]]}\Lambda[[u_j]]_{j\notin P}).$$
	Here we view $\Lambda[[u_j]]_{j\notin P}$ as the quotient of $\Lambda[[u_1,\dots,u_n]]$ by $(u_i)_{i\in P}$.
\end{proof}

\begin{lemma}\label{lemma-ext-comp-2}
	Let $A\in\Coh_1(\widehat{L}_{\xi})^\heartsuit$. Assume that
	$$\Hom_{\Pro\Coh(\widehat{L}_{\xi})}(\cF_{P,\emptyset},A)$$
	sits in degree $0$ for any $P\subseteq Q_{\xi}$. Then the natural morphism $H^0(\gr^0(A)\otimes_{\Lambda[[u_1,\dots,u_n]]}\cO_{\widehat{L}_{\xi}})\to A$ is an isomorphism.
\end{lemma}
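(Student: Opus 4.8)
The plan is to deduce the lemma from Lemma~\ref{lemma-detect-gen} by reinterpreting its criterion as the vanishing of certain higher $\Hom$-groups, and then to prove that vanishing by an induction fed by the hypothesis, Lemma~\ref{lemma-ext-comp-1}, and Corollary~\ref{cor-property-coh1}. First I would invoke Lemma~\ref{lemma-detect-gen}: it suffices to show that for every $P\subseteq Q_\xi$ and every $j\in P$, writing $P'=P\setminus\{j\}$ so that $\alpha_{P'}+\alpha_j=\alpha_P$, the three-term complex
$$\gr^{\alpha_{P'}}(A)\xrightarrow{u_j}\gr^{\alpha_{P'}}(A)\xrightarrow{v_j}\gr^{\alpha_P}(A)\to 0$$
is exact. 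To get at this, I would resolve $\cF_{P,\emptyset}(\alpha_{P'})$ over $\cO_{\widehat{L}_\xi}$ by the two-periodic complex
$$\cdots\xrightarrow{v_j}\cF_{P',\emptyset}(\alpha_{P'}+\alpha_j)\xrightarrow{u_j}\cF_{P',\emptyset}(\alpha_{P'}+\alpha_j)\xrightarrow{v_j}\cF_{P',\emptyset}(\alpha_{P'})\xrightarrow{u_j}\cF_{P',\emptyset}(\alpha_{P'})\longrightarrow\cF_{P,\emptyset}(\alpha_{P'})\to 0,$$
which exists since $\cF_{P,\emptyset}=\cF_{P',\emptyset}/(u_j)$ and $\mathrm{Ann}(u_j)=(v_j)$, $\mathrm{Ann}(v_j)=(u_j)$ in $\cF_{P',\emptyset}$ (because $j\notin P'$). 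By Lemma~\ref{lemma-ext-comp-1} every term satisfies $\Hom_{\Pro\Coh(\widehat{L}_\xi)}(\cF_{P',\emptyset}(\alpha_{P'}+m\alpha_j),A)=\gr^{\alpha_{P'}+m\alpha_j}(A)$, concentrated in degree $0$; hence this resolution computes $\Hom_{\Pro\Coh(\widehat{L}_\xi)}(\cF_{P,\emptyset}(\alpha_{P'}),A)$ as the cohomology of
$$\gr^{\alpha_{P'}}(A)\xrightarrow{u_j}\gr^{\alpha_{P'}}(A)\xrightarrow{v_j}\gr^{\alpha_P}(A)\xrightarrow{u_j}\gr^{\alpha_P}(A)\xrightarrow{v_j}\gr^{\alpha_P+\alpha_j}(A)\xrightarrow{u_j}\cdots.$$

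By Corollary~\ref{cor-property-coh1}, $u_j$ acts by $0$ and $v_j$ acts invertibly on every $\gr^\lambda(A)$ with $j\in\mathrm{supp}(\lambda)$, and $j\in\mathrm{supp}(\alpha_P+m\alpha_j)$ for all $m\ge 0$; so the displayed complex is automatically exact in cohomological degrees $\ge 3$. Consequently the desired exactness of the three-term complex is equivalent to $\Hom_{\Pro\Coh(\widehat{L}_\xi)}(\cF_{P,\emptyset}(\alpha_{P'}),A)$ being concentrated in degree $0$, and this is what remains to be proved. When $|P|=1$, i.e.\ $P'=\emptyset$, this is exactly the hypothesis, which also serves as the base case.

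For the inductive step I would prove, by induction on $|P|$, that $\Hom_{\Pro\Coh(\widehat{L}_\xi)}(\cF_{P,Q}(\alpha_U),A)$ is concentrated in degree $0$ for all disjoint $P,Q\subseteq Q_\xi$ and for $U$ subject to the appropriate constraint relative to $P$, $Q$ (so that the property holds for the untwisted $\cF_{P,\emptyset}$ by hypothesis and for $\cF_{P,\emptyset}(\alpha_P)$ by Lemma~\ref{lemma-ext-comp-1}). The engine is the pair of short exact sequences from the proof of Proposition~\ref{prop-1-stratum},
$$0\to\cF_{P\sqcup\{i\},Q}(\alpha_{U\sqcup\{i\}})\xrightarrow{v_i}\cF_{P,Q}(\alpha_U)\to\cF_{P,Q\sqcup\{i\}}(\alpha_U)\to 0,\qquad 0\to\cF_{P,Q\sqcup\{i\}}(\alpha_U)\xrightarrow{u_i}\cF_{P,Q}(\alpha_U)\to\cF_{P\sqcup\{i\},Q}(\alpha_U)\to 0,$$
for $i\notin P\cup Q$: feeding the long exact sequences of $\Hom_{\Pro\Coh(\widehat{L}_\xi)}(-,A)$ into Lemma~\ref{lemma-hom} cancels the connecting maps and reduces $\cF_{P,Q}(\alpha_U)$ either to sheaves with strictly smaller $|P|$, handled by the inductive hypothesis, or to the sheaves $\cF_{\bullet,\emptyset}$ and $\cF_{\bullet,\emptyset}(\alpha_\bullet)$ already under control.

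The hard part will be this final induction. The mixed sheaves $\cF_{P,Q}$ (with some $u_i$'s and some $v_j$'s killed) are infinitely generated in the $u$-directions, so the periodic resolutions never terminate; one must pin down the exact admissible constraint on the twist $U$ and order the reductions so that at each stage every $\Hom_{\Pro\Coh(\widehat{L}_\xi)}(-,A)$ appearing has already been established to be concentrated in degree $0$, avoiding any circular dependence between the $u$- and $v$-directions.
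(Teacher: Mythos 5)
Your reduction is sound and matches the paper's in substance: you invoke Lemma~\ref{lemma-detect-gen}, resolve the relevant sheaf by a two-periodic complex, and observe by Corollary~\ref{cor-property-coh1} that cohomology vanishes automatically in degrees $\geq 3$, so the whole lemma hinges on showing a single $\Hom$-space is concentrated in degree $0$. The object you resolve is $\cF_{P,\emptyset}(\alpha_{P'})$ over $\cF_{P',\emptyset}$-modules (feeding through Lemma~\ref{lemma-ext-comp-1} to evaluate the terms), whereas the paper resolves $\cF_{\{i\},\emptyset}(\alpha_P)$ directly over $\cO$-modules; both produce the identical three-term complex, so this part is fine.

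The gap is in the concluding induction, which you yourself flag as ``the hard part''. Two concrete problems. First, the short exact sequences from Proposition~\ref{prop-1-stratum} as you display them (with $i\notin P\cup Q$) place $\cF_{P,Q}(\alpha_U)$ as the \emph{middle} term, while the four flanking sheaves carry index $P$ or $P\sqcup\{i\}$; Lemma~\ref{lemma-hom} proves degree-zero for the \emph{middle object $C$ of the zigzag}, not for the object that sits in the middle of each individual sequence, so your statement that these ``reduce $\cF_{P,Q}(\alpha_U)$ ... to sheaves with strictly smaller $|P|$'' does not hold for the sequences as written. If instead you try to put $\cF_{P,\emptyset}(\alpha_{P'})$ in the $C$-position by peeling off some $i\in P\cap P'$, the flanking sheaves have twists $\alpha_{P'}$ (which still contains $\alpha_i$ even though $i$ is no longer in the index set) and non-empty $Q$, so they escape the inductive hypothesis ``$\cF_{P,\emptyset}(\alpha_{P'})$ with $P'\subsetneq P$''. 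Second, and more fundamentally, the paper does not close this induction by an SES-sandwich at all: it proves the stronger pair of claims (i) $\Hom(\cF_{P,\emptyset}(\alpha_U),A)$ is in degree $0$ for disjoint $P,U$, and (ii) multiplication by $\prod v_i^{m_i-1}$ identifies $\Hom(\cF_{P,\emptyset}(\alpha_U),A)$ with $\Hom(\cF_{P,\emptyset}(\alpha_P+\sum m_i\alpha_i),A)$, proving (ii) by induction on $|P|$ via periodic resolutions and then (i) by induction on $|U|$ using a periodic resolution of $\cF_{P\sqcup\{i\},\emptyset}(\alpha_{U\setminus\{i\}})$ that is telescoped by (ii). That auxiliary statement (ii) is the missing ingredient that breaks the circular dependence you worry about, and without it your plan does not close.
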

\begin{proof}
	By Lemma \ref{lemma-detect-gen}, we need to show that for any $P\subseteq Q_{\xi}$ and $i\in Q_{\xi}\backslash P$, there is a natural isomorphism $\gr^{\alpha_{P\sqcup\{i\}}}(A)\cong \mathrm{coker}(\gr^{\alpha_P}(A)\xrightarrow{u_i}\gr^{\alpha_P}(A))$.
	By Corollary \ref{cor-property-coh1}, we compute that 
	$$\Hom_{\Pro\Coh(\widehat{L}_{\xi})}(\cF_{\{i\},\emptyset}(\alpha_{P}),A)=[\gr^{\alpha_P}(A)\xrightarrow{u_i}\gr^{\alpha_P}(A)\xrightarrow{v_i}\gr^{\alpha_{P\sqcup\{i\}}}(A)]$$
    in degrees $[0,2]$.
	It suffices to show that $\Hom_{\Pro\Coh(\widehat{L}_{\xi})}(\cF_{\{i\},\emptyset}(\alpha_{P}),A)$ sits in degree $0$ for each $P,i$ as above.
	
	We claim a stronger statement: For $P,U\subseteq Q_{\xi}$, $P\cap U=\emptyset$ :
	\begin{enumerate}[(i)]
		\item $\Hom_{\Pro\Coh(\widehat{L}_{\xi})}(\cF_{P,\emptyset}(\alpha_U),A)$ sits in degree 0.
		\item If $m_i\geq 1$ for $i\in U$, then the morphism
			$$\prod_{i\in U}v_i^{m_i-1}\colon\Hom_{\Pro\Coh(\widehat{L}_{\xi})}(\cF_{P,\emptyset}(\alpha_U),A)\to  \Hom_{\Pro\Coh(\widehat{L}_{\xi})}(\cF_{P,\emptyset}(\alpha_P+\sum_{i\in U}m_j\alpha_i),A)$$
			is an isomorphism.
	\end{enumerate}
	If $P=\emptyset$, then the statements are clear by assumptions. Assume $|P|\geq 1$ and take $i\in P$. We prove (ii) by induction on $|P|$. This follows from the long exact sequence
	$$\cdots\xrightarrow{u_i}\cF_{P\backslash\{i\},\emptyset}(\alpha_U+2\alpha_i)\xrightarrow{v_i}\cF_{P\backslash\{i\},\emptyset}(\alpha_U+\alpha_i)\xrightarrow{u_i}\cF_{P\backslash\{i\},\emptyset}(\alpha_U+\alpha_i)\xrightarrow{v_i}\cF_{P\backslash\{i\},\emptyset}(\alpha_U)\xrightarrow{u_i} \cF_{P,\emptyset}(\alpha_U)\to 0.$$
	Then we do induction on $|U|$ to prove (i). The case $|U|=\emptyset$ follows from assumptions. If $|U|\geq 1$, take $i\in U$. We have long exact sequence
	$$\cdots\xrightarrow{u_i}\cF_{P,\emptyset}(\alpha_U+\alpha_i)\xrightarrow{v_i}\cF_{P,\emptyset}(\alpha_U)\xrightarrow{u_i}\cF_{P,\emptyset}(\alpha_U)\xrightarrow{v_i}\cF_{P,\emptyset}(\alpha_{U\backslash\{i\}})\xrightarrow{u_i}\cF_{P,\emptyset}(\alpha_{U\backslash\{i\}})\to \cF_{P\sqcup\{i\},\emptyset}(\alpha_{U\backslash\{i\}})\to 0.$$
	Using (ii), we see that $\Hom_{\Pro\Coh(\widehat{L}_{\xi})}(\cF_{P\sqcup\{i\},\emptyset}(\alpha_{U\backslash\{i\}}),A)$ is computed by a bi-complex 
	$$[\Hom_{\Pro\Coh(\widehat{L}_{\xi})}(\cF_{P,\emptyset}(\alpha_{U\backslash\{i\}}),A)\xrightarrow{u_i}\Hom_{\Pro\Coh(\widehat{L}_{\xi})}(\cF_{P,\emptyset}(\alpha_{U\backslash\{i\}}),A)\xrightarrow{v_i}\Hom_{\Pro\Coh(\widehat{L}_{\xi})}(\cF_{P,\emptyset}(\alpha_{U}),A)].$$
	By induction hypothesis, both $\Hom_{\Pro\Coh(\widehat{L}_{\xi})}(\cF_{P,\emptyset}(\alpha_{U\backslash\{i\}}),A)$ and $\Hom_{\Pro\Coh(\widehat{L}_{\xi})}(\cF_{P\sqcup\{i\},\emptyset}(\alpha_{U\backslash\{i\}}),A)$ sits in degree 0. Looking at the spectral sequence, we see that $\Hom_{\Pro\Coh(\widehat{L}_{\xi})}(\cF_{P,\emptyset}(\alpha_{U}),A)$ sits in degree 0.
\end{proof}

\begin{proof}[Proof of Theorem \ref{thm-coh-generic} (2)]
Let notations be as in the Proof of Theorem \ref{thm-coh-generic} (1).
For $P\subseteq Q_{\xi}$, the object $\cO_{P,\emptyset}$ lies in $\Pro\Coh_1(\widehat{L}_{\xi})$. By Lemma \ref{lemma-ext-comp-2}, it suffices to show that
$$\Hom_{\Pro\Coh(\widehat{L}_{\xi})}(\cF_{P,\emptyset},\overline\fI^a_\fm(-\chi))$$
sits in degree 0 for any $P\subseteq Q_{\xi}$.

We claim that for $P,Q,U\subseteq Q_{\xi}$ with $P\cap Q=P\cap U=\emptyset$, the $\Hom$-space
$$\Hom_{\Pro\Coh(\widehat{L}_{\xi})}(\cF_{P,Q}(-\alpha_U),\overline\fI^a_\fm(-\chi))$$
sit in degree 0. We do induction on $|P|$. If $P=\emptyset$, then $\cF_{\emptyset,Q}(-\alpha_U)$ appears in the formula (\ref{eq-sh-a}). Thus the claim is true for $P=\emptyset$. Assume that the claim holds for $P'=P\backslash\{i\}$. We have two short exact sequences
$$0\to\cF_{P\backslash\{i\},Q\sqcup\{i\}}(-\alpha_U)\xrightarrow{u_i}\cF_{P\backslash\{i\},Q}(-\alpha_U)\to \cF_{P,Q}(-\alpha_U)\to 0$$
$$0\to\cF_{P,Q}(-\alpha_U)\xrightarrow{v_i} \cF_{P\backslash\{i\},Q}(-\alpha_{U\sqcup\{i\}})\to \cF_{P\backslash\{i\},Q\sqcup\{i\}}(-\alpha_{U\sqcup\{i\}})\to 0.$$
For the terms not equals to $\cF_{P,Q}(-\alpha_U)$, the claim is true by induction hypothesis. Using Lemma \ref{lemma-hom}, it follows that the claim holds for $P$.
\end{proof}

\section{Raising the level}\label{section-level-raising}
In this section we give an arithmetic application of our main result. Let notations be as in Theorem \ref{thm-main}. 

Let $L$ be a finite extension of $\QQ_\ell$. Let $T$ be a finite set of finite places of $\QQ$ containing $\ell$ but not containing $p$.
 Let $K_T=\prod_{w\in T}K_w$ be an open compact subgroup of $\prod_{w\in T}\sG(\QQ_w)$. 
Let $\zeta\colon K_T\to \GL(V_\zeta)$ be a continuous representation on a finite dimensional $L$-vector space $V_\zeta$. Assume that $(V_\zeta,\zeta)$ is locally algebraic when restricted to $K_\ell$. Let $M_\zeta$ be a $K^v$-stable $O_L$-lattice in $V_\zeta$.

\begin{defn}
	Define the space of automorphic form of $\sG$ with coefficients of in $O_L$ and type $M_\zeta$ by
	$$\cA_{O_L}(K_T,M_\zeta)=\{f\colon \sG(\QQ)\backslash\sG(\AAA_f)\to M_\zeta|f(gu)=\xi(u^{-1})f(g),\ u\in K_T,g\in\sG(\AAA_f)\}.$$
	Denote $\cA_L(K_T,V_\zeta)=\cA_{O_L}(K_T,M_\zeta)\otimes_{O_L}L$.
\end{defn}

Note that $\cA_{O_L}(K_T,M_\zeta)$ carries a smooth action of $\sG(\AAA_f^T)=\prod_{w\notin T}'\sG(\QQ_w)$. Let $\pi$ be an irreducible sub-$\sG(\AAA_f^T)$-representation of $\cA_{\overline\QQ_\ell}(K_T,V_\zeta)$. We can enlarge $L$ so that $\pi$ is defined over $L$. By \cite[Proposition 3.4.2]{CHT08}, there is a continuous Galois representation $\rho_\pi\colon \Gal(\overline\sF/\sF)\to \GL_n(L)$ that is compatible with local Langlands correspondences for $\GL_n$ at prime-to-$\ell$ split places, and is de Rham at $\ell$-adic places with Hodge--Tate weights determined by $\zeta$. Let $\overline{\rho}_\pi\colon \Gal(\overline{L}/L)\to \GL_n(\Lambda)$ be the semisimple reduction of $\rho_\pi$. 

Let $K^{T\sqcup\{p\}}\subseteq \sG(\AAA_f^{T\sqcup\{p\}})$ be a neat open compact subgroup such that $\pi^{K^{T\sqcup\{p\}}}\neq 0$. Let $K_p^v\subseteq H(\QQ_p)$ be an open compact subgroup with $(\pi_p^v)^{K_p^v}\neq 0$. We can assume that $K^{T\sqcup\{p\}}$ factorizes into $\prod_{w\notin T\sqcup\{p\}}K_w$. Let $S$ be the a finite set of places of $\QQ$ containing $p$ such that if $w\notin S\cup T$, then $\sG$ is a unramified over $\QQ_w$ and $K_w$ is a hyperspecial subgroup. Let 
$$\TT^{S\cup T}_{O_L}=\bigotimes'_{w\notin S\cup T}O_L[K_w\backslash\sG(\QQ_w)/K_w]$$ 
be the global Hecke algebra acting on $\cA_{O_L}(K_T,M_\zeta)^{K^{T\sqcup\{p\}}K_p^v}$. The automorphic representation $\pi$ defines a maximal ideal $\fm_\pi$ of $\TT^{S\cup T}_{O_L}$. Note that $\fm_\pi$ is also a maximal ideal of $\TT^{S\cup T}_{O_L}\otimes_{O_L}\Lambda$, and thus it make sense to say that $\fm_\pi$ is cohomologically generic as in Definition \ref{def-Gal-generic}.

Denote $\pi=\boxtimes'_{w\notin T}\pi_w$ where $\pi_w$ is an irreducible smooth representation of $\sG(\QQ_w)$. Let $\pi_p=\pi_v\boxtimes\pi_p^v$ be the decomposition with respect to $\sG(\QQ_p)=\GL_n(F)\times H(\QQ_p)$. Assume that $\rho_\fm$ is unramified at $v$. Then $\pi_v$ is a unramified principal series. Let $\xi_{\pi_v}\in(\hat{T}\git W)(L)$ be the Satake parameter of $\pi_v$. Then $q^{\frac{1-n}{2}}\cdot\rho_\pi(\Frob_v)$ lies in the conjugacy class $\xi_{\pi_v}$ as in Proposition \ref{prop-local-global-comp-hyperspecial}. Let $\overline{\xi}_{\pi_v}\in (\hat{T}\git W)(\Lambda)$ be the reduction of $\xi_{\pi_v}$. Assume that $\overline{\xi}_{\pi_v}$ is regular semisimple. If $\xi\in (\hat{T}\git W)(\overline\ZZ_\ell)\subseteq (\hat{T}\git W)(\overline\QQ_\ell)$ is a point with reduction $\overline{\xi}_{\pi_v}$, then the reduction map defines an inclusion $Q_{\xi}\hookrightarrow Q_{\overline{\xi}_{\pi_v}}$. Here, $Q_{\xi}$ is the set of pairs eigenvalues $(a,b)$ of $\xi$ with $a=qb$ as before.

Let $V$ be an unipotent irreducible smooth representation of $G(F)$ with coefficients in $\overline{\QQ}_\ell$. Denote by $\cH_{\overline{\QQ}_\ell}=\overline{\QQ}_\ell[I\backslash G(F)/I]$ the affine Hecke algebra over $\overline{\QQ}_\ell$. Then the action of $Z(\cH_{\overline{\QQ}_\ell})\cong \cO(\hat{T}_{\overline\QQ_\ell}\git W)$ on $V^I$ is set theoretically supported at a point $\xi_V\in (\hat{T}\git W)(\overline\QQ_\ell)$. We call $\xi_V$ the \emph{central character} of $V$. Assume that $\xi_V$ is regular semisimple. Let 
$$\rec_V\colon W_F\to \GL_n(\overline\QQ_\ell)$$ 
be the Weil group representation associated to $V$ by local Langlands correspondence. Then $\rec_V(\Frob_v)$ lies in the conjugacy class $\xi_V$. Pick a lift $x=\diag(x_1,\dots,x_n)\in \varsigma^{-1}(\xi_V)$. After conjugation, we can assume $\rec_V(\Frob_v)=x$. Let $t\in I_F/P_F$ be a tame generator. Then $\rec_V(t)$ is unipotent. Denote $N=\log(\rec_V(t))$. Then $N$ is a nilpotent matrix satisfying $\mathrm{Ad}_x(N)=q^{-1}N$. Denote
$$\II_V^x=\{(i,j)|1\leq i,j\leq n, \text{ the $(i,j)$-entry of $N$ is non-zero}\}.$$
The embedding $\II_V^x\hookrightarrow E_{\xi_V}\times E_{\xi_V}, (i,j)\mapsto (x_j,x_i)$ has image contained in $Q_{\xi_V}$. Denote by $\II_V$ the image of $\II_V^x\hookrightarrow Q_{\xi_V}$. Then $\II_V$ is independent of the choice of the point $x$. We call $\II_V$ the \emph{monodromy type} of $V$.

\begin{thm}\label{thm-level-raising}
	We assume the following assumptions:
	\begin{enumerate}
		\item $\ell$ is banal at $v$. 
		\item $\sF_v$ is unramified over $\QQ_p$ and $\sF_v\neq \QQ_p$.
		\item The maximal ideal $\fm_\pi\subseteq \TT^{S\cup T}_{O_L}$ is cohomologically generic.
		\item $\overline{\xi}_{\pi_v}$ is regular semisimple.
	\end{enumerate}
	Let $P\subseteq Q_{\overline{\xi}_{\pi_v}}$ be a subset. Then there is an irreducible subrepresentation $\pi'\subseteq\cA_{\overline{\QQ}_\ell}(K_T,V_\zeta)$ of $\sG(\AAA^T_f)$ with coefficients in $\overline{\QQ}_\ell$ such that:
	\begin{enumerate}
        \item $\pi'$ has non-trivial ${K^{T\sqcup\{p\}}K_p^v}$-invariant elements.
		\item After enlarging $L$, let $\rho_{\pi'}\colon \Gal(\overline\sF/\sF)\to\GL_n(L)$ be the Galois representation associated to $\pi'$. Then the semisimple reduction $\overline{\rho}_{\pi'}\colon\Gal(\overline\sF/\sF)\to\GL_n(\Lambda)$ of $\rho_{\pi'}$ is isomorphic to $\overline{\rho}_\pi$.
		\item Let $\pi'=\boxtimes'\pi_w$ and $\pi'_p=\pi'_v\boxtimes\pi_p^{\prime v}$ as before. Then $\overline{\xi}_{\pi_v}$ is equal to the reduction of $\xi_{\pi'_v}\in (\hat{T}\git W)(\overline\QQ_\ell)$. Under the canonical inclusion $Q_{\xi_{\pi'_v}}\hookrightarrow Q_{\overline{\xi}_{\pi_v}}$, the monodromy type $\II_{\pi'_v}$ of $\pi'_v$ has image $P$.
	\end{enumerate}
\end{thm}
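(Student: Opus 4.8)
The plan is to deduce the statement from the mod-$\ell$ Ihara's lemma of Theorem~\ref{thm-main} together with a congruence argument. After enlarging $L$ we may assume $\pi$, $\rho_\pi$ and the eigenvalues of $\overline\xi_{\pi_v}$ are defined over $O_L$; write $\fm=\fm_\pi$, $\overline\xi=\overline\xi_{\pi_v}$ (regular semisimple by hypothesis (4)), $\cA_{O_L}=\cA_{O_L}(K_T,M_\zeta)^{K^{T\sqcup\{p\}}K_p^v}$ (a finite free $O_L$-module carrying a smooth $G(F)$-action, $G=\GL_{n,F}$), and $\overline\cA=\cA_{O_L}\otimes_{O_L}\Lambda$. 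As in Proposition~\ref{prop-local-global-auto}, $\overline\cA_\fm\in\Rep^{\widehat\unip}(G(F),\Lambda)^\Adm_{\overline\xi}$. Recall from \S\ref{subsection-irrep} the bijection $P\leftrightarrow V_P$ between subsets $P\subseteq Q_{\overline\xi}$ and irreducible objects of $\Rep^{\widehat\unip}(G(F),\Lambda)^{\Adm,\heartsuit}_{\overline\xi}$, under which $\II_{V_P}=P$; by Proposition~\ref{prop-Whit-generic-comp} and Lemma~\ref{lemma-rep-char-0}, for $x\in\varsigma^{-1}(\overline\xi)$ we have $V_{P'}^{I,x}\neq 0$ iff $P'=P_x$, in which case the dimension is $1$. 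Since $\cInd_I^{G(F)}\Lambda$ is projective, $V\mapsto V^{I,x}$ is exact, so $V_P$ occurs in $\overline\cA_\fm$ exactly when $\overline\cA_\fm^{I,x}\neq 0$ for $x$ with $P_x=P$, with multiplicity $\dim_\Lambda\overline\cA_\fm^{I,x}$.

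First I would establish the mod-$\ell$ statement that $V_P$ occurs in $\overline\cA_\fm$ for every $P\subseteq Q_{\overline\xi}$. For each $P$ pick $x_P\in\varsigma^{-1}(\overline\xi)$ with $P_{x_P}=P$ (possible since $Q_{\overline\xi}$ is a disjoint union of directed paths), and set $M_P=\overline\cA_\fm^{I,x_P}$; this is independent of the choice of $x_P$ up to isomorphism because $\alpha_{x,x'}$ and $\beta_{x,x'}$ are mutually inverse when $P_x=P_{x'}$. For $P'\subseteq P$, Theorem~\ref{thm-main} gives a surjection $M_P\twoheadrightarrow M_{P'}$ and an injection $M_{P'}\hookrightarrow M_P$, so $\dim_\Lambda M_{P'}\le\dim_\Lambda M_P$. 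Since $\pi_v$ is unramified we have $\overline\cA_\fm^{G(O_F)}\neq 0$, which for regular semisimple $\overline\xi$ is equivalent to $M_\emptyset\neq 0$; hence $M_P\neq 0$ for all $P$. Moreover, if $P\neq\emptyset$, Proposition~\ref{prop-comp-unip} (with the pair $\emptyset\subsetneq P$) shows the composite $M_\emptyset\hookrightarrow M_P\twoheadrightarrow M_\emptyset$ is nilpotent, so the surjection cannot be an isomorphism and $\dim_\Lambda M_P>\dim_\Lambda M_\emptyset$.

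Next I would lift this to characteristic zero. The operators $\alpha^{\Hk}_{x_P,x_\emptyset},\beta^{\Hk}_{x_P,x_\emptyset}\in\cH_{O_L}$ act on the free $O_L$-module $\cA_{O_L,\fm}^{I}$; their reductions being surjective resp.\ injective, $\beta^{\Hk}_{x_P,x_\emptyset}$ is a split injection of free $O_L$-modules $\cA_{O_L,\fm}^{I,x_\emptyset}\hookrightarrow\cA_{O_L,\fm}^{I,x_P}$ with free cokernel $C_P$, and $C_P\neq 0$ by the rank inequality above. After inverting $\ell$, the semisimple $G(F)$-module $\cA_{\overline\QQ_\ell,\fm}^{I}$ (semisimple by \cite[Corollary~3.3.3]{CHT08}) decomposes as $\bigoplus_{\pi'}m_{\pi'}(\pi'_v)^I$ over the automorphic representations $\pi'$ contributing, compatibly with the $Z(\cH_{O_L})$-grading by the characteristic-zero parameters $\xi_{\pi'_v}$, each a regular semisimple lift of $\overline\xi$. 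On the $\xi'$-summand $\alpha\circ\beta$ acts by $\prod_{j\in P}u_j(\xi')$, which vanishes whenever some edge of $P$ survives in $\xi'$; in particular it vanishes when $R_{\xi'}\supseteq P$, where $R_{\xi'}\subseteq Q_{\overline\xi}$ is the image of $Q_{\xi_{\pi'_v}}$ under the canonical inclusion. Thus for any $\xi'$ in the characteristic-zero support of $C_P$ with $R_{\xi'}\supseteq P$, the corresponding automorphic representations $\pi'$ have $\xi_{\pi'_v}=\xi'$ and monodromy type $\II_{\pi'_v}=P\cap R_{\xi'}=P$; such a $\pi'$ has $\overline\rho_{\pi'}\simeq\overline\rho_\pi$ (Proposition~\ref{prop-local-global-comp-hyperspecial}) and non-zero $K^{T\sqcup\{p\}}K_p^v$-invariants, giving all the conclusions.

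The main obstacle is this last step: one must show that the characteristic-zero support of $C_P$ — equivalently, of $\cA_{\overline\QQ_\ell,\fm}$ — meets the locus of parameters $\xi'$ with $R_{\xi'}\supseteq P$, i.e.\ that $\cA_\fm$ realizes a characteristic-zero Satake parameter at $v$ satisfying \emph{all} the eigenvalue-congruences encoded by $P$, not merely those already present in $\xi_{\pi_v}$. This is where hypothesis (3) enters essentially. I would argue as follows: realizing $\cA_{O_L,\fm}$ through the cohomology of the auxiliary Shimura varieties $\Sh_K(\sG_a,\sX_a)$ as in \S\ref{subsection-finish-proof}, \cite[Theorem~1.5]{YZ-torsion-vanishing} makes this cohomology $O_L$-free, and Theorem~\ref{thm-coh-generic} identifies the associated coherent sheaf $\fA_\fm$ on $\Loc^{\widehat\unip}_{{}^LG,F}$ with a genuine sheaf determined by its global sections; combined with the explicit geometry of $\widehat L_{\overline\xi}$ from Propositions~\ref{prop-completion} and \ref{prop-Springer} (whereby, using (1)--(2), the local situation at $v$ is ``unobstructed''), this should force the characteristic-zero points of $C_P$ — via a Deligne--Serre argument applied to $C_P$ as a module over $\TT^{S\cup T}_{O_L}\otimes Z(\cH_{O_L})$ — to occur over each component of $\widehat L_{\overline\xi}$ lying above the monodromy stratum $X_P$, which is exactly the parameter locus $R_{\xi'}\supseteq P$. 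Finally, the introduction's ``no local obstruction'' is precisely the hypothesis $P\subseteq Q_{\overline\xi_{\pi_v}}$: since $Q_{\overline\xi}$ is a forest, a characteristic-zero lift $\xi'$ of $\overline\xi$ with $R_{\xi'}\supseteq P$ exists for every such $P$, so the conditions on $P$ are exactly those making the target monodromy type locally liftable.
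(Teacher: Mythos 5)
Your first step, proving the mod-$\ell$ statement that every $V_P$ occurs in $\overline{\cA}_\fm$, is correct and cleanly done, and this is an observation the paper does not make explicitly (the paper works directly with the $O_L$-lattice $\cA_{O_L,\fm}^{I,x}$ and its ranks, never isolating the mod-$\ell$ Jordan--H\"older multiplicities). Your passage from the mod-$\ell$ inequalities to the free-of-positive-rank cokernel $C_P$ of $\beta^{\mathrm{Hk}}_{x_P,x_\emptyset}$ is also correct and mirrors the paper's rank-inequality lemma. The decisive difference from the paper is in the final extraction of $\pi'$: the paper's own proof (which you have not seen) invokes semisimplicity of $\cA_{O_L,\fm}\otimes\overline\QQ_\ell$ together with Lemma~\ref{lemma-rep-char-0} to assert directly the existence of an irreducible $G(F)$-submodule $V$ with $\dim V^{I,x}=1$ for $P_x=P$ and $\dim V^{I,x'}=0$ for all $P_{x'}\subsetneq P$; no Deligne--Serre argument, no Theorem~\ref{thm-coh-generic}, and no discussion of the ``local liftability'' locus $R_{\xi'}\supseteq P$ appear in the paper at this step.

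The gap you flag in your last paragraph is genuine and you are right to be worried about it: knowing $C_P\otimes\overline\QQ_\ell\neq 0$ only tells you that some $\pi'$ with $\emptyset\neq Q_{\pi'_v}\subseteq P$ (and, by injectivity of $\tilde\beta$ on each $\pi'$-isotypic summand, $R_{\pi'_v}=Q_{\pi'_v}$) contributes; it does not by itself force the existence of a $\pi'$ with $Q_{\pi'_v}=P$. Your proposed repair via $\fA_\fm$ being ``determined by global sections'' together with a Deligne--Serre argument is a plausible direction but is not carried out, and it is a genuinely different (and heavier) tool than anything used in the paper's proof of Theorem~\ref{thm-level-raising}, which stays entirely within the elementary $O_L$-module framework of \S5. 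As written, your proposal therefore does not constitute a complete proof: the key existence step is declared an obstacle rather than resolved. If you want to complete the argument along the paper's lines, you should drop the coherent-sheaf detour entirely, establish the strict inequalities $\mathrm{rank}_{O_L}\cA_{O_L,\fm}^{I,x}>\mathrm{rank}_{O_L}\cA_{O_L,\fm}^{I,x'}$ for all $P_{x'}\subsetneq P_x$, and then justify carefully the combinatorial extraction of the desired $V$ from the $\overline\QQ_\ell$-decomposition of $\cA_{O_L,\fm}^{I}$ --- this is exactly the step where the essential content lies, and is where the paper's own exposition is briefest.
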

\begin{proof}
Let $\overline{M}_\zeta=M_\zeta\otimes_{O_L}\Lambda$ be the reduction of $M_\zeta$. Take a normal open compact subgroup $K_T'\subseteq K_T$ such that the action of $K_T'$ on $\overline{M}_\zeta$ is trivial. We obtain an prime-to-$v$ level $K^{\prime v}=K^{T\sqcup\{p\}}K_p^vK_T'$. There is an isomorphism
$$\cA_{O_L}(K_T',M_\zeta)^{K^{T\sqcup\{p\}}K_p^v}\otimes_{O_L}\Lambda\cong \cA(K^{\prime v})\otimes_\Lambda \overline{M}_\zeta$$
of $G(F)\times K_T/K_T'$-modules.

\begin{lemma}\label{lemma-rep-char-0}
	Let $V$ be an unipotent irreducible smooth representation of $G(F)$ with coefficients in $\overline{\QQ}_\ell$. Let $\xi_V\in(\hat{T}\git W)(\overline\QQ_\ell)$ be the central character of $V$. Let $\II_V\subseteq Q_{\xi_V}$ be the monodromy type of $V$. Then
	$$\dim_{\overline\QQ_\ell} V^{I,x}=\left\{\begin{aligned}
	&0, \text{ if }P_x\neq \II_V \\
	&1, \text{ if }P_x=\II_V.
\end{aligned}\right.$$
\end{lemma}
\begin{proof}
	Let $x\in \hat{T}(\overline\QQ_\ell)$ be a lift of $\xi$. The composition series of $\nInd_{B(F)}^{G(F)}\chi_x$ contains all the irreducible representations with central character $\xi$ exactly once. By Corollary \ref{cor-parabolic-R}, we see that $\dim_{\overline\QQ_\ell} V^{I,x}$ is either 0 or 1. Then it follows from the Corollary \ref{cor-parabolic-R} and Bernstein--Zelevinsky classification. 
\end{proof}

We know that $\pi^{K^{T\sqcup\{p\}}K_p^v}$ is contained in the localization $\big(\cA_{O_L}(K_T,M_\zeta)^{K^{T\sqcup\{p\}}K_p^v}\big)_{\fm}\otimes_{O_L}\overline{\QQ}_\ell$. Moreover, if $\pi'\subseteq\cA_{O_L}(K_T,M_\zeta)$ is an irreducible subrepresentation such that:
\begin{enumerate}[(i)]
	\item $(\pi')^{K^{T\sqcup\{p\}}K_p^v}\neq 0$,
	\item $(\pi')^{K^{T\sqcup\{p\}}K_p^v}\subseteq \big(\cA_{O_L}(K_T,M_\zeta)^{K^{T\sqcup\{p\}}K_p^v}\big)_{\fm}\otimes_{O_L}\overline{\QQ}_\ell$.
\end{enumerate}
Then the Galois representation $\rho_{\pi'}$ associated to $\pi'$ satisfies $\overline\rho_{\pi'}\simeq \rho_\fm\simeq \overline{\rho}_\pi$.

Let $x,x'\in \varsigma^{-1}(\overline{\xi}_{\pi_v})$ with $P_{x'}\subsetneq P_x$. By definition, the morphisms $\alpha_{x,x'}$ and $\beta_{x,x'}$ in Theorem \ref{thm-main} are given by Hecke operators
$$\alpha^{\mathrm{Hk}}_{x,x'}\in H^0(\Hom_{G(F)}(\widehat\delta_{x'},\widehat\delta_{x}))\quad\text{and}\quad\beta^{\mathrm{Hk}}_{x,x'}\in H^0(\Hom_{G(F)}(\widehat\delta_{x},\widehat\delta_{x'})).$$
Moreover, the composition $\alpha_{x,x'}\circ\beta_{x,x'}\colon V^{I,x'}\to V^{I,x'}$ is nilpotent for $V\in \Rep^{\widehat\unip}(G(F),\Lambda)^{\Adm,\heartsuit}_{\xi}$ by Proposition \ref{prop-comp-unip}.

If $R$ is an algebra over $\ZZ_\ell[\sqrt{q}]$, we denote by $\cH_R$, $\cR_R$ the corresponding Hecke algebras with coefficients in $R$. Denote $\widehat\delta_{x,O_L}=\cInd_I^{G(F)}O_L\otimes_{\cR_{O_L}}(\cR_{O_L})^\wedge_{x}$. Let 
$$\tilde\alpha^{\mathrm{Hk}}_{x,x'}\in H^0(\Hom_{G(F)}(\widehat\delta_{x',O_L},\widehat\delta_{x,O_L}))\quad\text{and}\quad\tilde\beta^{\mathrm{Hk}}_{x,x'}\in H^0(\Hom_{G(F)}(\widehat\delta_{x,O_L},\widehat\delta_{x',O_L}))$$
be lifts of $\alpha^{\mathrm{Hk}}_{x,x'}$ and $\beta^{\mathrm{Hk}}_{x,x'}$ respectively. We obtain morphisms
$$\tilde\alpha_{x,x'}\colon V^{I,x}\to V^{I,x'}$$
$$\tilde\alpha_{x',x}\colon V^{I,x'}\to V^{I,x}$$
for any $V\in\Rep^{\widehat\unip}(G(F),O_L)$ with the action of $Z(\cH_{O_L})=\cO(\hat{T}_{O_L}\git W)$ on $V^I$ factors through the completion $Z(\cH_{O_L})^\wedge_{\xi}$. 

By Theorem \ref{thm-main}, the morphism
$$\alpha_{x,x'}\colon \cA(K^{\prime v})_\fm^{I,x}\to \cA(K^{\prime v})_\fm^{I,x'}$$
is surjective and the morphism
$$\beta_{x,x'}\colon \cA(K^{\prime v})_\fm^{I,x'}\to \cA(K^{\prime v})_\fm^{I,x}$$
is injective.  By \cite[Lemma 3.3.1]{CHT08}, $\cA(K^{\prime v})$ is finite free over $\Lambda[K_T/K_T']$. It follows that the morphism
$$\tilde\alpha_{x,x'}\colon\big(\cA_{O_L}(K_T,M_\zeta)_\fm^{K^{T\sqcup\{p\}}K_p^v}\big)^{I,x}\to \big(\cA_{O_L}(K_T,M_\zeta)_\fm^{K^{T\sqcup\{p\}}K_p^v}\big)^{I,x'}$$
is surjective after modulo $\varpi_L$, and
$$\tilde\beta_{x,x'}\colon\big(\cA_{O_L}(K_T,M_\zeta)_\fm^{K^{T\sqcup\{p\}}K_p^v}\big)^{I,x'}\to \big(\cA_{O_L}(K_T,M_\zeta)_\fm^{K^{T\sqcup\{p\}}K_p^v}\big)^{I,x}$$
is injective after modulo $\varpi_L$. 
Note that $(\cA_{O_L}(K_T,M_\zeta)_\fm^{K^{T\sqcup\{p\}}K_p^v})^I$ finite free over $O_L$. Hence the direct summand $\big(\cA_{O_L}(K_T,M_\zeta)_\fm^{K^{T\sqcup\{p\}}K_p^v}\big)^{I,x}$ are also finite free over $O_L$ for any $x\in Q_{\xi}$.

We have the following easy lemma.

\begin{lemma}
	Let $A$ and $B$ be finite free $O_L$-modules with homomorphisms $f\colon A\to B$ and $g\colon B\to A$. Assume $\overline{f}\colon A/\varpi_LA\to B/\varpi_LB$ is surjective and $\overline{g}\colon B/\varpi_LB\to A/\varpi_LA$ is injective. Also assume that $\overline{f}\circ\overline{g}$ is nilpotent. Then $\mathrm{rank}_{O_L}(A)>\mathrm{rank}_{O_L}(B)$ unless they are both zero.
\end{lemma}
\begin{proof}
    It is easy to see that $\dim(A/\varpi_LA)>\dim(B/\varpi_LB)$ unless they are both zero.
\end{proof}

Since the composition $\tilde\alpha_{x,x'}\circ \tilde\beta_{x,x'}$ is nilpotent after modulo $\varpi_L$, the above lemma implies that
$$\mathrm{rank}_{O_L}\big(\big(\cA_{O_L}(K_T,M_\zeta)_\fm^{K^{T\sqcup\{p\}}K_p^v}\big)^{I,x}\big)>\mathrm{rank}_{O_L}\big(\big(\cA_{O_L}(K_T,M_\zeta)_\fm^{K^{T\sqcup\{p\}}K_p^v}\big)^{I,x'}\big).$$

By \cite[Proposition 3.3.2]{CHT08}, $\cA_{O_L}(K_T,M_\zeta)_\fm\otimes_{O_L}\overline\QQ_\ell$ is a semisimple $\sG(\AAA_f^T)$-module. Fix a subset $P\subseteq Q_{\overline{\xi}_{\pi_v}}$. By Lemma \ref{lemma-rep-char-0}, there is an irreducible $G(F)$-submodule $V$ of $\cA_{O_L}(K_T,M_\zeta)_\fm^{K^{T\sqcup\{p\}}K_p^v}\otimes_{O_L}\overline{\QQ}_\ell$ such that $$\dim V^{I,x}=\left\{\begin{aligned}
	&1 \text{ if }P_x=P\\
	&0 \text{ if }P_x\subsetneq P.
\end{aligned}\right.$$
By Lemma \ref{lemma-rep-char-0}, we see that the monodromy type $\II_V$ of $V$ is equal to $P$. The $G(F)$-representation $V$ appears as a factor of some irreducible $\sG(\AAA_f^T)$-module $\pi'\subseteq\cA_{\overline{\QQ}_\ell}(K_T,V_\zeta)$. Then $\pi'$ is what we want.
\end{proof}

\bibliographystyle{alpha}
\bibliography{bib}
\end{document}